\documentclass[11pt, reqno]{amsart}

\usepackage[margin=1.0in]{geometry}
\usepackage[english]{babel}
\usepackage[utf8]{inputenc}
\usepackage{amsmath,amssymb,amsthm, comment, mathtools}
\usepackage{hyperref,mathrsfs}




\newcommand{\R}{\mathbb{R}}

\newcommand{\N}{\mathcal{N}} 
\newcommand{\n}{n} 

\newcommand{\E}{\mathcal{E}}
\newcommand{\A}{\mathcal{A}}
\newcommand{\B}{\mathcal{B}}

\newcommand{\K}{K}

\newcommand{\F}{\mathcal{F}}

\DeclareMathOperator{\tr}{tr}

\newcommand{\D}{\mathcal{D}}

\newcommand{\pa}{\partial}
\newcommand{\ve}{\varepsilon}

\newcommand{\dist}{\mathrm{dist}}

\newcommand{\Dtb}{\hat{D}_t}

\newcommand{\ts}{\mathscr{E}}

\renewcommand{\Re}{\operatorname{Re}}
\renewcommand{\Im}{\operatorname{Im}}

\newcommand{\na}{D} 
\newcommand{\tn}{\overline{\na}\,}
\newcommand{\wt}{\beta} 

\let\div\relax 

\DeclareMathOperator{\curl}{curl}
\DeclareMathOperator{\div}{div}


\numberwithin{equation}{section}

\newtheorem{theorem}{Theorem}
\numberwithin{theorem}{section}
\newtheorem{prop}{Proposition}
\numberwithin{prop}{section}
\newtheorem{cor}{Corollary}
\numberwithin{cor}{section}
\newtheorem{lemma}{Lemma}
\numberwithin{lemma}{section}
\theoremstyle{remark}
\newtheorem*{remark}{Remark}
\theoremstyle{definition}
\newtheorem{defi}{Definition}

\title{On the lifespan of three-dimensional
gravity water waves with vorticity}
\author{Daniel Ginsberg}
\address{Department of Mathematics, Johns Hopkins University, Baltimore MD 21218}
\email{dginsbe5@math.jhu.edu}
\begin{document}

\begin{abstract}
  We prove a long-term regularity result for three-dimensional gravity
water waves with small initial data but nonzero initial vorticity. We consider
solutions whose vorticity vanishes
on the free boundary and use this to derive a
system for the evolution of the free boundary which reduces to the
Zakharov/Craig-Sulem formulation in the irrotational case. We are able to
continue the solution until a time determined by the size of the initial
vorticity in such a way that if the vorticity is zero, one recovers
a lifespan $T\sim \ve^{-N}$ where $N$ can be taken arbitrarily large if the
initial data is taken to be arbitrarily smooth.
\end{abstract}

\maketitle

\section{Introduction}
The motion of an inviscid incompressible fluid occupying a region
$\D = \cup_{0 \leq t \leq T} \{ t \} \times \D_t$, $\D_t \subset \R^3$
is described
by the fluid velocity $v = (v_1,v_2, v_3)$
and a non-negative function $p$ known as the pressure.
If the fluid body is subject to the force of gravity, then
the equations of motion are given by Euler's equations:
\begin{align}
 \big(\pa_t + v^k \pa_k ) v_i &= -\pa_i p - e_3  \text{ in } \D_t,
 \quad
 \text{ where } \pa_i = \frac{\pa}{\pa x^i},
 \label{mom}
\end{align}
and conservation of mass:
\begin{align}
 \div v = \pa_i v^i & = 0, \quad \textrm{ in } \D_t.
 \label{mass}
\end{align}
Here, we are using the Einstein summation
convention and summing over repeated upper and lower indices and
writing $v^i = \delta^{ij}v_j$.
We have also chosen units so that the acceleration due to gravity is one
and are writing $e_3 = (0,0,1)$.
Fluid particles on the boundary move with the velocity of the fluid,
so that:
\begin{align}
v\cdot \n = \kappa,
 \label{freebdy}
\end{align}
where $\kappa$ is the normal velocity of $\pa \D_t$ and $\n$ is the unit
normal to $\pa \D_t$.
We assume that $\D_t$ is given by
$\D_t = \{ (x_1, x_2, y) : x_1, x_2 \in \R^2, y \leq h(t,x_1,x_2)\}$ for some function $h$,
 in which case
\eqref{freebdy} can be re-written as:
\begin{align}
 \pa_t h + v^1 \pa_1 h + v^2 \pa_2 h = v^3
 \quad \text{ on } \pa \D_t.
 \label{freebdy2}
\end{align}

If the fluid body moves in vaccuum and there is no surface tension on the
boundary then the pressure satisfies:
\begin{align}
 p = 0 \textrm{ on } \pa \D_t.
 \label{vaccuum}
\end{align}

Given $h_0:\R^2 \to \R$, set $\D_0 = \{(x_1, x_2, y) | y \leq h_0(x_1, x_2)\}$.
If $v_0 : \D_0 \to \R^3$ is a vector field satisfying the constraint
$\div v_0 = 0$, we want
to find a function $h$ and a vector field $v$ so that with $\D_t = \{
(x_1, x_2, y) | y \leq h(t,x_1, x_2)\}$, $v$ satisfyies \eqref{mom}-\eqref{mass}
and the initial conditions:
\begin{align}
 h(0, x_1, x_2) = h_0(x_1, x_2), \quad v = v_0 \quad \textrm{ on } \{0\}\times \D_0.
 \label{initial}
\end{align}
This problem is ill-posed
unless the following ``Taylor sign condition'' holds (see \cite{Ebin1987}):
\begin{align}
-\nabla_\n p(x,t) \geq \delta_0 > 0 \textrm{ on } \pa \D_t,
\quad \textrm{ where } \nabla_\n = \n^i \nabla_i,
\label{tsc}
\end{align}
where $\n$ denotes the unit normal to $\pa \D_t$.
This condition ensures that the pressure is positive in the interior of the fluid
and prevents the Rayleigh-Taylor instability from occuring.

In the irrotational
case ($\omega \equiv \curl v = 0)$, the velocity $v$ is given by $v = \nabla \psi$ for a
harmonic function $\psi : \D_t \to \R$, and the motion of the fluid is determined
entirely by $h$ and $\varphi = \psi\big|_{\pa \D_t}$. This, and related problems
have been studied extensively by several authors
in the case that the fluid domain $\D_t$ is diffeomorphic to the
half-space. See
for example \cite{Deng2017}, \cite{Ionescu2014},
\cite{Wu2009}, \cite{Germain2014}, as well as \cite{Ionescu2018}
 for a recent overview
of these problems. Let us single out
the works \cite{Wu2010}, \cite{Germain2012}, in which the authors independently
 proved that in the irrotational case, \eqref{mom}-\eqref{vaccuum}
is globally well-posed for sufficiently small and well-localized initial data.

In the case that $\omega \not = 0$,
Lindblad-Christodoulou \cite{Christodoulou2000}
 used the Taylor sign condition \eqref{tsc} to prove energy estimates
for the system \eqref{mom}-\eqref{vaccuum} in the case that $\D_t$ is
a bounded domain,
and later Lindblad \cite{Lindblad2003} proved that this problem is locally well-posed in Sobolev
spaces using a Nash-Moser iteration. The same result was later shown
by Coutand-Shkoller \cite{Shkoller2007} using a tangential smoothing
operator as well as by \cite{Shatah2010}
who used a more geometric approach which also applies on an unbounded domain.

Relatively little is known about the long-term behavior of solutions
to the problem \eqref{mom}-\eqref{vaccuum} with nonzero vorticity.
We recall that
in the case without free boundary
and without gravity:
\begin{alignat}{2}
 \pa_t + v^k\pa_k v_i + \pa_i p &=0 &&\quad \textrm{ in } \R^3,
 \label{momr3}\\
 \div v &= 0 &&\quad \textrm{ in } \R^3,
 \label{massr3}
\end{alignat}
non-trivial vorticity is the obstacle to obtaining a
global-in-time solution. By
 \cite{Beale1984}, if there are constants $M_0, T_*$ so that
 if $T < T_*$ and $v \in C([0,T] ; H^s(\R^3)) \cap C^1([0,T]; H^{s-1}(\R^3))$
 solves \eqref{momr3}-\eqref{massr3} and the a priori estimate:
\begin{align}
 \int_0^T ||\omega(s)||_{L^\infty(\R^3)}\, ds \leq M_0,
 \label{}
\end{align}
holds,
then the solution can be extended to $v \in C([0, T^*]; H^s(\R^3))
\cap C^1([0,T^*]; H^s(\R^3))$.
It then follows from the fact that:
\begin{align}
 (\pa_t + v^k\pa_k) \omega = \omega \cdot \pa v
 \label{vorteq}
\end{align}
and this result that if $\omega = 0$ at $t = 0$, sufficiently regular
solutions to \eqref{momr3}-\eqref{massr3} can be extended to $T = \infty$.
See also \cite{Ferrari1993} for
an extension to the case of a fixed domain with Neumann boundary condition
and \cite{Ginsberg2018} for an extension to the free-boundary problem on a bounded domain.

In
\cite{Ionescu2018a}, the authors consider the Euler-Maxwell one-fluid system
with nontrivial vorticity but without free boundary
in three dimensions.
They prove that there is
a norm $||\cdot||$ so that if $||\curl v(0, \cdot)|| \leq \delta$ for sufficiently
small $\delta$, then one can continue the solution up to
$T \sim \delta^{-1}$. In particular, this provides a proof of global existence
when $\curl v(0,\cdot) = 0$ for the Euler-Maxwell system.

Returning to the free boundary problem, to the best of our knowledge,
the only papers that address the issue
of the long-time behavior of solutions in the prescence of nontrivial vorticity
are \cite{Ifrim2015},\cite{Bieri2015} and
 \cite{Wang2015}. In \cite{Ifrim2015} Ifrim-Tataru prove that in two space
 dimensions (with one-dimensional boundary),
 solutions
with constant vorticity can be continued up to $T \sim \ve^{-2}$ if the
initial data is of size $\ve$. This is in constrast to the lifespan $T \sim \ve^{-1}$
which is guaranteed by the local well-posedness theory. See also
\cite{Bieri2015} in which Bieri-Miao-Shahshahani-Wu  prove a similar
result for a self-gravitating liquid occupying a bounded region.
In \cite{Wang2015},
the authors
consider the problem in arbitrary dimension and prove that the solution
can be continued so long as the mean curvature of the boundary and
$||\nabla v||_{L^\infty(\D_t)}$ are bounded.

For our result, we
we will measure the regularity of $\omega$ in the norm:
\begin{equation}
 ||\omega(t)||_{H^{r}_w(\D_t)}^2
 = \sum_{k \leq r} \int_{\D_t} (1 + |x|^2 + y^2)^{2} |\pa_{x,y}^k \omega(t,x,y)|^2\,dxdy,
 \label{weighted}
\end{equation}
and we will be considering solutions of Euler's equation
with $\omega \cdot \n|_{\pa \D_t} = 0$.
Our main theorem is an analog of the result in \cite{Ionescu2018a}:
\begin{theorem}
  \label{mainthm}
  Fix $N_1 \geq 6$
  and $N \gg 1$. Define $N_0 = 2N N_1$. There are constants
  $0 < \ve^*_1 \ll \ve_0^* \ll 1$ satisfying the following property.
  Suppose that $v_0, h_0$ satisfy:
  \begin{equation}
    ||v_0||_{H^{N_0}(\D_0)}
   + ||h_0||_{H^{N_0}(\R^2)} \leq \ve_0 \leq \ve_0^*,
   \label{}
  \end{equation}
  and that the Taylor sign condition \eqref{tsc} holds at $t = 0$. Suppose in addition that
   $\omega_0 = \curl v_0$ satisfies the bound:
 \begin{equation}
   ||\omega_0||_{H^{N_1}_w(\D_0)}
  \leq \ve_1 \leq \ve_1^*.
  \label{}
 \end{equation}
Let $(v, h)$ be the solution to \eqref{mom}-\eqref{vaccuum}
with initial data $v_0, h_0$. Let
$T_\omega$ be the largest time so that $(\omega \cdot \n)|_{\pa \D_t} = 0$
for $0 \leq t \leq T_\omega$.
Then the problem \eqref{mom}-\eqref{freebdy} has a unique solution
$(v, h)$ with initial data $(v_0, h_0)$
with $v(t) \in H^{N_0}(\D_t)$, $h(t) \in H^{N_0}(\R^2)$ for
$0 \leq t \leq T_{\ve_0, \ve_1}'$, where:
 \begin{equation}
  T_{\ve_0,\ve_1}' = C_N \min \bigg( \frac{\ve_0}{\ve_1^{1/3}},
  \frac{1}{\ve_0^{N}}, T_\omega \bigg),
  \label{tdef}
 \end{equation}
 for a constant $C_N$ depending only on $N$ and
 $||(-\nabla_\n p_0)^{-1}||_{L^\infty(\pa \D_0)}$.
\end{theorem}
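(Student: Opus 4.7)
The plan is to run a bootstrap/continuity argument that simultaneously controls the rotational and irrotational components of $v$. Since $\div v = 0$ and, by hypothesis, $\omega\cdot\n|_{\pa\D_t} = 0$ for $0 \leq t \leq T_\omega$, I would decompose $v = \nabla\psi + v_\omega$, where $v_\omega$ is divergence-free, tangent to $\pa\D_t$, and satisfies $\curl v_\omega = \omega$, and $\psi$ is harmonic in $\D_t$ with trace $\vp = \psi|_{\pa\D_t}$. Such a decomposition is available at each time by solving a div-curl system plus a Dirichlet problem on $\D_t$, and it reduces \eqref{mom}-\eqref{vaccuum} to an evolution system for $(h,\vp)$ that coincides with the Zakharov/Craig-Sulem system in the irrotational limit but carries a vorticity-generated forcing proportional to $v_\omega$ and to the Neumann trace of $\psi$ corrected by $v_\omega$.

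The first main step is a vorticity transport estimate. Using \eqref{vorteq} and the fact that the flow map of $v$ is close to the identity under the bootstrap, together with the assumption that the weight $(1+|x|^2+y^2)$ is convected by a slowly-growing Lagrangian flow, one obtains
\begin{equation}
\|\omega(t)\|_{H^{N_1}_w(\D_t)} \leq \ve_1 \exp\Bigl( C \int_0^t \|\nabla v(s)\|_{L^\infty(\D_s)}\, ds \Bigr),
\label{}
\end{equation}
valid for $t \leq T_\omega$. Elliptic regularity for the div-curl-Dirichlet system then upgrades this into a bound on $v_\omega$ in $H^{N_1}(\D_t)$ with an $|x|^{-1}$-type decay at infinity inherited from the weight, which by Sobolev embedding and interpolation yields a pointwise bound roughly of the form $\|v_\omega\|_{L^\infty} \lesssim \ve_1^{1/3}$ after balancing with the energy. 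This is where the exponent $1/3$ in $\ve_0/\ve_1^{1/3}$ appears.

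The second main step is to run the dispersive analysis for the perturbed Zakharov/Craig-Sulem system in the spirit of \cite{Germain2012}, \cite{Wu2010}, \cite{Ionescu2014}. Treating the $v_\omega$-contributions as a forcing, I would propagate an energy norm of size $\ve_0$ for $(h,\vp)$ together with a $Z$-type norm controlling pointwise decay $\|\nabla v\|_{L^\infty} \lesssim \ve_0 (1+t)^{-1}$. A normal-form or space-time resonance analysis on the quadratic irrotational nonlinearity closes these bounds up to $T \sim \ve_0^{-N}$, provided the vorticity forcing contributes at most $O(\ve_0\cdot\ve_1^{1/3}\cdot t)$ to the energy, which is ensured exactly when $t \lesssim \ve_0/\ve_1^{1/3}$. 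Combining the two constraints with $T_\omega$ yields the lifespan in \eqref{tdef}. Uniqueness and persistence of regularity at the $H^{N_0}$ level follow from the local theory of \cite{Shatah2010} combined with these a priori bounds.

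The main obstacle, as I see it, is the two-way coupling between the free boundary and the vorticity through $v_\omega$: the reduced system for $(h,\vp)$ depends on $\omega$ via a Dirichlet-to-Neumann-type operator associated with $\pa\D_t$, and $v_\omega$ in turn is pushed forward by the full velocity $v$, so the weighted norm must be controlled along a moving, curved boundary. The condition $\omega\cdot\n|_{\pa\D_t}=0$ is essential here because it prevents creation of boundary vorticity and keeps the decomposition $v=\nabla\psi+v_\omega$ well-behaved under paralinearization; any loss in this identity would force a boundary-layer correction that would likely destroy the $\ve_0^{-N}$ lifespan. Making the paralinearized system symmetric enough to combine dispersive estimates with the weighted transport bound for $\omega$ is the technical crux.
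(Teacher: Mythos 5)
Your overall architecture — the decomposition $v=\nabla\psi+v_\omega$, a bootstrap coupling an energy estimate, a dispersive estimate, and a weighted vorticity transport estimate — matches the paper's. But two of the central mechanisms are misidentified, and one of them is a genuine gap.

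First, your explanation of the exponent $1/3$ is wrong. You claim it comes from "a pointwise bound roughly of the form $\|v_\omega\|_{L^\infty}\lesssim\ve_1^{1/3}$ after balancing with the energy." No such balancing occurs; the elliptic estimates (Proposition~\ref{vomegabd}) give $\|v_\omega\|_{L^\infty}\lesssim\|\omega\|_{H^{N_1}_w}\lesssim\ve_1$, with no power loss. The cube root in the paper arises instead from closing the bootstrap on the Sobolev norms: the $H^{N_0}$ energy inequality (Proposition~\ref{intenprop}) reads $\|v\|_{H^{N_0}}^2+\|h\|_{H^{N_0}}^2\le\tfrac{\ve_0^2}{4}+C(\ve_0+\ve_1(1+t)^{2+\delta})\ve_0^2(1+t)^{2\delta}$, because the vorticity enters the dispersive equation \emph{linearly} (via $L(V_\omega)=-R\cdot V_\omega$) and the Duhamel integral of a non-decaying source of size $\ve_1$ grows like $\ve_1 t$; one further loss of $t$ from the weighted norm pushes the effective source to $\ve_1 t^{2}$, and requiring this to remain below $O(\ve_0)$ gives $t\lesssim(\ve_0/\ve_1)^{1/(2+\delta)}$, which the paper records as $\ve_0/\ve_1^{1/3}$. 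The paper even remarks this is suboptimal, since heuristically the vorticity obeys a Riccati equation $W'=W^2$ with lifespan $\sim1/W_0$; the $1/3$ is an artifact of the linear coupling, not a feature of $v_\omega$ itself.

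Second, you gesture at the hypothesis $\omega\cdot\n|_{\pa\D_t}=0$ without identifying what it actually buys. The paper's key algebraic observation (Theorem~\ref{formulationthm}) is that $U_\omega:=V_\omega+\nabla h\, B_\omega$ satisfies $\nabla_1 U_\omega^2-\nabla_2 U_\omega^1=\omega\cdot\n|_{\pa\D_t}$, so the vanishing of $\omega\cdot\n$ forces $U_\omega=\nabla a_\omega$ to be a gradient. Only then can one absorb the rotational trace into a scalar good unknown $\varphi_\omega=\varphi+a_\omega$ and write a closed scalar evolution $\pa_t(\nabla\varphi+\nabla a_\omega)=\nabla(\cdots)$, recovering a Zakharov/Craig--Sulem--type system for $u=h+i\Lambda^{1/2}\varphi_\omega$. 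Your phrasing ("prevents creation of boundary vorticity", "keeps the decomposition well-behaved under paralinearization") never reaches this; without it, the right-hand side of \eqref{lesssimple} is not a gradient and there is no scalar dispersive unknown to run the stationary-phase analysis on. Relatedly, your transport estimate $\|\omega(t)\|_{H^{N_1}_w}\le\ve_1\exp(C\int\|\nabla v\|_{L^\infty})$ undersells the derivative loss: the paper's bound \eqref{wenest} requires $\|u\|_{W^{N_1+2,\infty}}$, not $\|\nabla v\|_{L^\infty}$, and that high norm only decays like $(1+t)^{-1+1/N}$ by interpolation — this is precisely what introduces the competing constraint $t\lesssim\ve_0^{-N}$. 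As written, your transport step would not produce that constraint and would leave the exponential uncontrolled on the relevant timescale.

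As a smaller point, the paper's energy estimates are proved via the Christodoulou--Lindblad geometric energy \eqref{intenergy} directly on the free-boundary Euler system, rather than by symmetrizing a paralinearized boundary system; the latter would in principle also work (cf.\ \cite{Alazard2014}), but it is a different technical route from the one the paper takes.
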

Here, $p_0$ is determined from $v_0, h_0$ by solving:
\begin{alignat}{2}
 \Delta p_0 = -(\pa_i v_0^j)(\pa_j v_0^i),
 && \quad &\text{ in } \D_0,\\
 p_0 = 0, && \quad &\text{ on } \pa \D_0.
 \label{}
\end{alignat}

One simple way to ensure that the condition $(\omega \cdot \n)|_{\pa \D_t} = 0$
holds
for all time is to assume that $\omega_0|_{\pa \D_0} = 0$, since by the transport
equation \eqref{vorteq} it then follows that $\omega|_{\pa \D_t} = 0$ for
$t > 0$ as well
(see Lemma \ref{dtomegabd}).
We therefore have the following corollary:
\begin{cor}
  \label{mainthm2}
 With the same hypotheses as Theorem \ref{mainthm}, suppose in addition that
 $\omega_0|_{\pa \D_0} =0 $. Then the solution $(v, h)$ can be continued
 until:
 \begin{equation}
  T_{\ve_0, \ve_1} = C_N \min \bigg( \frac{1}{\ve_1^{1/3}},
  \frac{1}{\ve_0^N}\bigg).
  \label{}
 \end{equation}
\end{cor}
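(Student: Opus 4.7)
The plan is to reduce the corollary to a direct application of Theorem \ref{mainthm}. Since the upper bound in \eqref{tdef} already contains the quantities $\ve_1^{-1/3}$ and $\ve_0^{-N}$ (up to the factor $\ve_0$), all that is really needed is to show that under the additional hypothesis $\omega_0|_{\pa \D_0} = 0$, the exit time $T_\omega$ appearing in the minimum is not restrictive. Concretely, I will show that as long as the local solution furnished by Theorem \ref{mainthm} exists, one has the stronger property $\omega|_{\pa \D_t} \equiv 0$; in particular $(\omega \cdot \n)|_{\pa \D_t}$ vanishes identically, so $T_\omega$ coincides with the full lifespan of the solution, and the minimum in \eqref{tdef} is attained by one of the other two quantities.

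To prove that $\omega$ stays zero on $\pa \D_t$, I would work along the flow of the fluid velocity. Let $X(t, y)$ denote the particle trajectory with $X(0, y) = y$, i.e. $\pa_t X = v(t, X)$. The kinematic boundary condition \eqref{freebdy2} (equivalently \eqref{freebdy}) says that $\pa \D_t$ is transported by $v$, so if $y \in \pa \D_0$ then $X(t, y) \in \pa \D_t$ for all $t$ in the lifespan. Along such a trajectory, the vorticity equation \eqref{vorteq} becomes the linear ODE
\begin{equation*}
 \frac{d}{dt}\bigl(\omega(t, X(t,y))\bigr) = \bigl(\omega \cdot \pa v\bigr)(t, X(t,y)),
\end{equation*}
which has the unique solution $\omega(t, X(t,y)) \equiv 0$ whenever $\omega_0(y) = 0$. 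This is precisely the content of Lemma \ref{dtomegabd} referenced in the paragraph preceding the corollary, and it uses only the regularity that Theorem \ref{mainthm} already guarantees (so that $\pa v$ is bounded on the support of the trajectory and the Grönwall-type argument goes through).

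Combining the two steps, under the extra hypothesis $\omega_0|_{\pa \D_0} = 0$ one has $T_\omega \geq T'_{\ve_0,\ve_1}$ automatically, so Theorem \ref{mainthm} produces a solution up to the time
\begin{equation*}
 C_N \min\!\Bigl( \tfrac{\ve_0}{\ve_1^{1/3}}, \tfrac{1}{\ve_0^{N}} \Bigr),
\end{equation*}
which, since $\ve_0 \leq \ve_0^* \leq 1$, implies the bound stated in the corollary (up to adjusting $C_N$). The only part that requires any actual work is the propagation of the boundary vanishing of $\omega$, and even there the argument is a textbook ODE along characteristics; the substance of the result sits entirely inside Theorem \ref{mainthm}. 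The main conceptual point — and the only place I would be careful — is verifying that the particle map $X(t,\cdot)$ genuinely maps $\pa \D_0$ into $\pa \D_t$ in the chosen parameterization of the free boundary by a graph $y = h(t, x_1, x_2)$; this follows from \eqref{freebdy2} by differentiating $h(t, X^1, X^2) - X^3$ along the flow and observing that it satisfies a homogeneous linear ODE that vanishes at $t=0$.
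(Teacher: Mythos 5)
Your overall strategy is exactly the one the paper has in mind: show that $\omega_0|_{\pa \D_0} = 0$ propagates to $\omega|_{\pa \D_t} = 0$ for as long as the solution exists, conclude $T_\omega$ is not the binding constraint, and fall back on Theorem~\ref{mainthm}. The paper's Lemma~\ref{dtomegabd} establishes the propagation by an $L^2$ energy/Gr\"onwall argument in Lagrangian coordinates rather than your pointwise ODE-along-characteristics argument, but both use only boundedness of $\pa v$ on $\pa \D_t$ and both are fine; your version is if anything more elementary and makes explicit the point (which the Lagrangian formulation handles silently) that the particle map carries $\pa \D_0$ onto $\pa \D_t$.

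There is, however, a genuine gap in your last step. Theorem~\ref{mainthm} as stated gives lifespan $T'_{\ve_0,\ve_1} = C_N \min\big(\ve_0/\ve_1^{1/3},\ 1/\ve_0^N,\ T_\omega\big)$, and once $T_\omega$ is removed this is $C_N\min\big(\ve_0/\ve_1^{1/3},\ 1/\ve_0^N\big)$. Since $\ve_0\leq\ve_0^*<1$, this is \emph{smaller} than the corollary's claimed $C_N\min\big(1/\ve_1^{1/3},\ 1/\ve_0^N\big)$, and you cannot absorb the discrepancy into $C_N$: the constant $C_N$ is required to depend only on $N$ and $\|(-\nabla_\n p_0)^{-1}\|_{L^\infty(\pa\D_0)}$, not on $\ve_0$, and $\ve_0$ is not bounded below. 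So treating Theorem~\ref{mainthm} as a black box cannot yield the corollary as stated. The fix is to appeal not to the theorem's statement but to its proof: the bootstrap in Section~\ref{pfsec} in fact runs for $0\le t\le \min(T_1,T_2)$ where $T_1 \sim \min(\ve_1^{-1/3},\ve_0^{-N})$ comes from Proposition~\ref{intvortprop} and $T_2\sim\ve_1^{-1/3}$ from Proposition~\ref{intenprop}, so the actual lifespan obtained is $\min(T_1,T_2)=C_N\min\big(1/\ve_1^{1/3},1/\ve_0^N\big)$ with no stray factor of $\ve_0$; the theorem's displayed bound is simply a (strictly) weaker restatement. Citing the bootstrap directly, combined with your propagation of $\omega|_{\pa\D_t}=0$, closes the argument.
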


In particular, if $\omega_0 = 0$, this gives a proof that
the solution can be continued until $T \sim \ve_0^{-N}$. See
also \cite{Berti2017} for a similar lifespan bound for
irrotational water waves on a periodic domain.
Let us make a few remarks.
The assumption that $(\omega\cdot \n)|_{\pa \D_t}$ for $t \geq 0$
is crucial here; as we will see in Section \ref{bdyeqssec}, this allows us to derive
an equation for the evolution of the variables on the boundary which we will
need in order to prove dispersive estimates.

Next, by the results \cite{Germain2012},
\cite{Wu2010}, comparing
to the result in \cite{Ionescu2018a},
one would expect to be able to take $T_{\ve_0,\ve_1}
\sim \frac{\ve_0}{\ve_1}$ which would in turn give a new proof
of global existence in the irrotational case.
The difference between that work and this one is
that solutions to the linearization of the system
\eqref{mom}-\eqref{mass} with zero vorticity decay
at a rate $1/t$, while in \cite{Ionescu2018a}, solutions to the
linearized system decay
at a rate $1/t^{1+\beta}$ for small $\beta$.

We also remark that at the heuristic level the vorticity satisfies an equation of the
form $W' = W^2$ which has lifespan $\sim 1/W_0$ and not $\sim 1/W_0^{1/3}$.
We hope to address both of these issues in future work.

\subsection{Outline of the proof}

As in other works on the global behavior of solutions to
dispersive equations, the result follows from a bootstrap
argument, consisting of energy estimates to control the $L^2$-based norms
and dispersive estimates to control the $L^\infty$-based norms.
We start with the energy estimates.

The system \eqref{mom}-\eqref{freebdy} has the following conserved quantity:
\begin{equation}
 E_0(t) = \int_{\D_t} |v(t)|^2  dxdy+ \int_{\R^2} |h(t)|^2\, dS.
 \label{conservedint}
\end{equation}
Here, we are writing $\D_t = \{(x, y) | x \in \R^2, y \leq h(t,x))\}$.
In the case $\omega = 0$, one can use that the system
\eqref{mom}-\eqref{freebdy} reduces
to a Hamiltonian system on the boundary
(see \eqref{hamil1}-\eqref{hamil2})
and this leads to higher-order energy estimates.
Since we are considering the case $\omega \not=0$, we prove energy estimates
for the system \eqref{mom}-\eqref{freebdy} directly. These energy
estimates are based on the estimates in
\cite{Christodoulou2000}, and we extend their approach to the case
of an unbounded domain.
(See also \cite{Luo2017} where similar
estimates were proved for the compressible Euler equations
with
free boundary in an unbounded domain)

 The energies are of the form:
\begin{align}
 \E^r(t) = \int_{\D_t} Q(\na^r v, \na^r v)\, dxdy + \int_{\pa \D_t}
 |\tn^{r-2}\theta|^2 (-\nabla_\n p)^{-1}\, dS
 + \int_{\D_t} |\na^{r-1} \omega|^2\, dxdy
 \label{intenergy}
\end{align}
where $\na$ is the covariant derivative in $\D_t$, $\tn$ is the covariant
derivative on $\pa \D_t$ and
$\theta$ is the second fundamental form of $\pa \D_t$; writing $\n$ for the
unit normal to $\pa \D_t$ and $\Pi_i^j = \delta_i^j - \n_i \n^j$ for the projection
to the tangent space at the boundary, it is given by:
\begin{align}
 \theta_{ij} = \Pi_{i}^k\Pi_{j}^\ell \na_k \n_\ell.
 \label{}
\end{align}
Here $Q$ is a quadratic form which is the usual norm
$Q(\beta, \beta) = |\beta|^2$ away from the boundary and which
is the norm of the projection to the tangent space at the boundary
when restricted to the boundary, $Q(\beta, \beta) = |\Pi \beta|^2$.
See Section \ref{energyestimates} for a precise
definition. These energies appear to lose control over normal derivatives
of $v$ near the boundary, but it follows from the elliptic estimates in section
\ref{ellsec} (see, in
particular
Lemma \ref{coerlem}) that $\E^r$ controls $||v||_{H^r(\D_t)}^2$.
In Theorem \ref{enestthm}, we prove that:
\begin{equation}
 \frac{d}{dt} \E^r(t) \lesssim \A(t) \Big(\E^r(t) + \A(t) P(\E^{r-1}(t),...,
 \E^0(t))\Big),
 \label{}
\end{equation}
where $P$ is a homogeneous polynomial with positive coefficients
and $\A$ is given by:
\begin{equation}
 \A(t) = ||\na v(t)||_{L^{\infty}(\D_t)} + ||\theta(t)||_{W^{2,\infty}(\pa \D_t)}
 + ||\na p(t)||_{L^\infty(\D_t)} + ||\na^2 p(t)||_{L^\infty(\pa\D_t)}
 + ||\na D_t p||_{L^\infty(\pa \D_t)}.
 \label{}
\end{equation}

We now turn to the more difficult task of proving dispersive estimates,
and for this we will need to change variables. In $\D_t$, we write:
\begin{equation}
 v = \nabla_{x,y} \psi + v_\omega, \quad \Delta_{x,y} \psi = 0,
 \label{}
\end{equation}
with $\nabla_\n \psi = v\cdot \n$ on $\pa \D_t$ and
where $\curl v_\omega = \omega$. We also write $\varphi = \psi|_{\pa \D_t}$.
It will be important that
the energies $\E^r$ control norms of $\varphi, h$ and $v_\omega$. To see why
this is the case, note
that $\theta_{ij} = (1 + |\nabla h|^2)^{-1/2} \nabla_i \nabla_j h$ for $i,j = 1,2$
and that
$||h||_{L^2(\R^2)}^2$ is bounded by the conserved energy, from which it
follows that $||h||_{H^{r}(\R^2)}^2 \lesssim \E^r$. To control $\varphi$,
we start with the observation that:
\begin{equation}
 \int_{\pa \D_t} \varphi \N \varphi\, dS = ||\nabla_{x,y} \psi||_{L^2(\D_t)}^2
 \leq ||v||_{L^2(\D_t)}^2,
 \label{}
\end{equation}
where $\N$ is the Dirichlet-to-Neumann map. The left hand side controls
$||\Lambda^{1/2} \varphi||_{L^2(\pa \D_t)}$ where $\Lambda = |\nabla|$.
To control higher derivatives, we could repeat this argument with $\varphi$
replaced by $\nabla^{r}\varphi$ but this would require controlling the
commutator $[\N, \nabla^r]$ which is nontrivial. Instead it will
suffice for our purposes to use a slightly weaker version of the
trace inequality \eqref{trace} and estimate:
\begin{equation}
 ||\nabla_x \psi||_{H^{r-1}(\pa \D_t)}^2 \lesssim ||\nabla_{x,y} \psi||_{H^{r}(\D_t)}^2
 \lesssim ||v||_{H^{r}(\D_t)}^2 + ||v_\omega||_{H^r(\D_t)}^2
 \lesssim \E^r,
 \label{}
\end{equation}
where the estimate for $||v_\omega||_{H^r(\D_t)}$ follows from the
elliptic estimates in Section \ref{ellsec}, since
$\curl v_\omega = \omega$ and $v_\omega \cdot \n|_{\pa \D_t} = 0$. See
Proposition \ref{uenests}.  To highest order, the left-hand side
here controls $||\nabla_x \varphi||_{H^{r-1}(\R^2)}$.

With the $L^2$ estimates out of the way, we now want to prove $L^\infty$
estimates for $\varphi, h$. In section
\ref{bdyeqssec}, we derive a system satisfied by $\varphi$ and $h$.
 This system is well-known in the case that
$\omega = 0$ (see e.g. \cite{Craig1993}) but the formulation that we use appears
to be new in the case $\omega \not = 0$. To motivate this formulation,
we recall the basic idea behind the ``good unknown''
introduced in \cite{Alazard2014}. We write $V^i = v^i\big|_{\pa \D_t}$, $i = 1,2$ and
$B = v^3\big|_{\pa \D_t}$ as well as $U = V + \nabla h B$.
\footnote{The good unknown used in \cite{Alazard2014} is actually given by $U = V + T_{\nabla h} B$
where $T$ is Bony's paraproduct but it will suffice to use this simpler
definition for our purposes.}
After restricting Euler's equation
\eqref{mom} to $\pa \D_t$ and using the boundary condition
\eqref{vaccuum},
$V$ and $B$ satisfy the following equations:
\begin{align}
 \Dtb V &= -a \nabla h,\\
 \Dtb B &= a - 1,
 \label{}
\end{align}
where $a = (\pa_y p)|_{\pa \D_t}$ and $\Dtb = \pa_t + V^1 \pa_1  + V^2 \pa_2$.
In particular, we have:
\begin{align}
 \Dtb (V + \nabla h B) = - \nabla h - \Dtb \nabla h.
 \label{simple}
\end{align}

In the case $\omega = 0$, $V = \pa_x \psi\big|_{\pa \D_t}$
and $B = \pa_y \psi\big|_{\pa \D_t}$, so by the chain rule, we have:
\begin{align}
 \nabla_x \varphi (x) =
 (\nabla_x \psi)(x, h(x)) + \nabla_x h(x) (\nabla_y \psi)(x, h(x))
 = V + \nabla h B,
 \label{}
\end{align}
with $\varphi(x) = \psi(x, h(x))$. Plugging this into \eqref{simple} gives
an evolution equation for $\nabla \varphi$. It turns out that in the
irrotational case, after making this substitution
\eqref{simple}, is of the form:
\begin{align}
 \pa_t \nabla \varphi = \nabla F( \varphi, h),
 \label{}
\end{align}
for a nonlinearity $F(\varphi, h)$ which also depends on the derivatives
of $\varphi, h$. This leads to an
equation for $\pa_t \varphi$.

When $\omega \not = 0$, we write $v = \nabla_{x,y} \psi + v_\omega$ in
$\D_t$ and let $V_\omega^i = v_\omega^i|_{\pa \D_t}$ for $i = 1,2$
and $B_\omega = v_\omega^3|_{\pa\D_t}$. Repeating the above calculation
leads to an equation of the form:
\begin{align}
 \pa_t \nabla \varphi + \pa_t (V_\omega + \nabla h B_\omega) = \nabla F(\varphi, h)
 + G(\varphi, h, V_\omega, B_\omega),
 \label{lesssimple}
\end{align}
with the same $F$ as above.
Writing $U_\omega = V_\omega + \nabla h B_\omega$, the crucial observation is that:
\begin{align}
 \curl_2 U_\omega = \pa_1 U_\omega^2 - \pa_1 U_\omega^2 =
 \omega \cdot \n && \textrm{ on } \R^2.
 \label{}
\end{align}
See Theorem \ref{formulationthm}.
In particular, if $\omega \cdot \n = 0$ it follows that $U_\omega = \nabla
a_\omega$ for a function $a_\omega$. Making this substitution in
\eqref{lesssimple}, it turns out that $G$
is a gradient, $G
= \nabla H(\varphi, h, V_\omega, B_\omega)$ for some other nonlinearity $H$,
and the system becomes:
\begin{align}
 \pa_t (\nabla \varphi + \nabla a_\omega) =
 \nabla \big( F(\varphi, h) + H(\varphi, h, V_\omega, B_\omega)\big),
 \label{naphieq}
\end{align}
which gives an evolution
equation for $\varphi_\omega = \varphi + a_\omega$. Setting $u =
h + i \Lambda^{1/2} \varphi_\omega$ and writing $w = (V_\omega, B_\omega)$
\eqref{naphieq} and \eqref{freebdy2} lead to an equation of the form:
\begin{align}
 (\pa_t  + i \Lambda) u = N(u)
 + L(w) + N_1(u,w) + N_2(w,w),
 \label{ueq}
\end{align}
where $N, N_1, N_2$ are a nonlinear operators and $L$ is linear.
See Proposition \ref{formulation} for the precise form of the right-hand side.

The nonlinearity $N$ is the same one
that occurs in \cite{Germain2012}, and can be handed using simple modifications of the
arguments there. Specifically, we start with the
Duhamel representation of system \eqref{ueq}:
\begin{align}
 e^{it\Lambda^{1/2}} u(t) &= u_0 + \int_0^t e^{is\Lambda^{1/2}}
 N(u)\, ds
 + \int_0^t e^{is\Lambda^{1/2}} \bigg( L(w) + N_1(u, w) + N_2(w,w)\bigg)\, ds
 \\
 &\equiv u_0 + f_1(u) + f_2(u, w).
 \label{duhamel}
\end{align}
We follow \cite{Germain2012} and define:
\begin{equation}
 ||u||_X = (1+t) ||u||_{W^{4,\infty}(\R^2)}
 + (1+t)^{-\delta} \big( ||u||_{H^{N_0}(\R^2)}
 + ||\Lambda^{\iota} x e^{it\Lambda^{1/2}} u||_{L^2(\R^2)}\big),
 \label{}
\end{equation}
where here $\iota$, $\delta$ are sufficiently small constants.

Minor changes to the arguments in \cite{Germain2012}
(which we outline in Section \ref{realdispsec}) show that:
\begin{align}
 (1+t)|| e^{-it\Lambda^{1/2}} f_1(u)||_{W^{4,\infty}(\R^2)}
 &\lesssim ||u(t)||_{X}^2
 + (1 + t)^{2+\delta} ||\omega(t)||_{H^{N_1}_w(\D_t)},\\
 (1+t)^{-\delta}||\Lambda^{\iota} xf_1(u)||_{L^2(\R^2)}
 &\lesssim
 ||u(t)||_{X}^2
 + (1 +t)||\omega(t)||_{H^{N_1}_w(\D_t)}.
 \label{}
\end{align}

Next,
in Section \ref{vortests}, we establish bounds of the above form for $f_2$:
\begin{align}
 (1+t)|| e^{-it\Lambda^{1/2}} f_2(u,w)||_{W^{4,\infty}(\R^2)}
 &\lesssim ||u(t)||_{X}^2
 + (1 + t)^{2+\delta} ||\omega(t)||_{H^{N_1}_w(\D_t)},\label{uw1}\\
 (1+t)^{-\delta}||\Lambda^{\iota} xf_2(u,w)||_{L^2(\R^2)}
 &\lesssim
 ||u(t)||_{X}^2
 + (1 +t)||\omega(t)||_{H^{N_1}_w(\D_t)}.
 \label{uw2}
\end{align}
The proof of \eqref{uw1}-\eqref{uw2} requires bounding norms
of $w = (V_\omega, B_\omega)$ on the boundary in terms of $\omega$ in the
 interior, for which we use the elliptic estimates in Section
 \ref{ellsec}.
 These estimates combined with the above
the above energy estimates and a continuity
argument show that the solution can be continued until $T \sim
T_{\ve_0,\ve_1}$.

\section{Proof of the main theorem}
\label{pfsec}

We begin by decomposing our initial velocity $v_0$ into its irrotational
and rotational parts.
Given $h_0 : \R^2 \to \R$, set
$\D_0 = \{(x_1, x_2, y) | y \leq h_0(x_1, x_2)\}$. We now write
 $v_0 = \nabla_{x,y} \psi_0 + v_{\omega_0}$, where
$\Delta\psi_0 = 0$ in $\D_0$, $\nabla_\n \psi_0 = v_0 \cdot \n$ on $\pa\D_0$,
and where $\curl v_{\omega_0} = \omega_0 \equiv \curl v_0$. We also write $V_{\omega_0}^i =
v_{\omega_0}^i|_{\pa \D_0},
 i = 1,2$ and $B_{\omega_0} = v_{\omega_0}^3|_{\pa \D_0}$. In Section \ref{bdyeqssec},
  we prove
 that if $\omega_0|_{\pa \D_0} = 0$, then $V_{\omega_0} + \nabla h_0 B_{\omega_0}
 = \nabla a_{\omega_0}$ for a function $a_{\omega_0}$. We then write $\varphi_0 =
 \psi_0|_{\pa \D_0}$ as well as $\varphi_{\omega_0} = \varphi_0 + a_{\omega_0}$ and
 $u_0 = h_0 + i \Lambda^{1/2} \varphi_{\omega_0}$, where $\Lambda = |\nabla|$.

We now fix $N_1 \geq 6$, $N \gg 1$ and set $N_0 = 2NN_1$.
 With the above notation and with $||\cdot||_{H^{N_1}_w}$ defined by \eqref{weighted},
  we suppose that $v_0, \omega_0, h_0$ satisfy:
\begin{align}
  \omega_0\cdot n_0|_{\pa \D_0} &= 0, \\
 ||v_0||_{L^\infty(\D_t)} + ||v_0||_{H^{N_0}(\D_0)} + ||h_0||_{H^{N_0}(\D_0)}
 + ||u_0||_{W^{4,\infty}(\R^2)}
 + ||\Lambda^\iota x u_0||_{L^2(\R^2)} &\leq \frac{1}{2} \ve_0,
 \label{enbds0}\\
 ||\omega_0||_{H^{N_1}_w(\D_0)} &\leq \frac{1}{2} \ve_1 \ll \ve_0,
 \label{need4tsc}\\
\end{align}
for sufficiently small $\ve_0$ and $\iota$, where $n_0$ is the unit
normal to $\pa \D_0$.

We now
define $p_0 : \D_0 \to \R$ by:
\begin{alignat}{2}
 \Delta p_0 &= -(\pa_i v_0^j)(\pa_j v_0^i)
 &&\quad\textrm{ in } \D_0,\\
 p_0 &= 0 &&\quad \textrm{ on } \pa \D_0.
 \label{p0def}
\end{alignat}
In order for the initial value problem
\eqref{mom}-\eqref{initial} to be well-posed, we need
to ensure that $(-\nabla_{\n_0} p_0) \geq \delta_0 > 0$ for some $\delta_0$.
In the irrotational case, this condition holds automatically, essentially
because then $\Delta p_0 = -(\pa v)^2 \leq 0$ (see \cite{Wu1999}). When $\curl v_0 \not =0$
we instead have the following result:
\begin{lemma}
  \label{inttscprop}
  Suppose that $||\omega_0||_{L^\infty(\D_0)} \leq
  \frac{1}{2} ||v_0||_{L^\infty(\D_0)}$.
  Then, with $p_0$ defined
 by \eqref{p0def}, there is a constant $c_0 > 0$ so that:
 \begin{equation}
  (-\nabla_{\n_0} p_0) \geq 2c_0 > 0 \textrm{ on } \pa \D_0.
  \label{}
 \end{equation}
\end{lemma}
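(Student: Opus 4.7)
The plan is to split $p_0$ into a hydrostatic piece coming from gravity and the surface shape, plus a velocity-dependent correction, and argue perturbatively that the hydrostatic piece dominates at the boundary.

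Let $\tilde{h}_0$ denote the bounded harmonic extension of $h_0$ to $\D_0$ (so $\Delta \tilde{h}_0 = 0$ in $\D_0$, $\tilde{h}_0|_{\pa \D_0} = h_0$, and $\tilde{h}_0 \to 0$ at infinity), and set $p_H := \tilde{h}_0 - y$, $p_V := p_0 - p_H$. Then $p_H$ is harmonic in $\D_0$, vanishes on $\pa \D_0$, and satisfies $\nabla p_H \to -e_3$ as $y \to -\infty$, while $p_V$ solves
\begin{equation*}
 \Delta p_V = -(\pa_i v_0^j)(\pa_j v_0^i) \text{ in } \D_0, \qquad p_V|_{\pa \D_0} = 0, \qquad p_V \to 0 \text{ at } -\infty.
\end{equation*}
The consistency of this splitting relies on the implicit hydrostatic asymptotic $p_0 \sim -y$ as $y \to -\infty$ in the physical setup, which picks out the unique solution of \eqref{p0def}.

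On $\pa \D_0$, with $n_0 = (-\nabla h_0, 1)/\sqrt{1+|\nabla h_0|^2}$, one computes
\begin{equation*}
 -\nabla_{n_0} p_H = \frac{1}{\sqrt{1+|\nabla h_0|^2}} - n_0 \cdot \nabla \tilde{h}_0.
\end{equation*}
The first term is uniformly close to $1$; the second is essentially the Dirichlet-to-Neumann operator applied to $h_0$, controlled in $L^\infty(\pa \D_0)$ by a Sobolev norm of $h_0$ via standard elliptic theory on the unbounded domain and the trace inequality. Under \eqref{enbds0}, this yields $-\nabla_{n_0} p_H \geq 3/4$ on $\pa \D_0$. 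For $p_V$, I would use the pointwise identity $(\pa_i v_0^j)(\pa_j v_0^i) = |\nabla v_0|^2 - |\omega_0|^2$ together with the Helmholtz splitting $v_0 = \nabla \psi_0 + v_{\omega_0}$: the elliptic estimates of Section \ref{ellsec} control $v_{\omega_0}$ in terms of $\omega_0$, and the hypothesis $\|\omega_0\|_{L^\infty} \leq \frac{1}{2}\|v_0\|_{L^\infty}$ keeps the rotational contribution from overwhelming the irrotational main term. Standard elliptic regularity for the Dirichlet problem on $\D_0$, combined with the trace inequality, then gives
\begin{equation*}
 \|\nabla p_V\|_{L^\infty(\pa \D_0)} \leq C \|(\pa v_0)^2\|_{H^s(\D_0)} \leq C \ve_0^2,
\end{equation*}
for some $s$ depending on $N_0$. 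Combining, $-\nabla_{n_0} p_0 = -\nabla_{n_0} p_H - \nabla_{n_0} p_V \geq 3/4 - C\ve_0^2 \geq 1/2$ for $\ve_0^*$ sufficiently small, so one may take $c_0 = 1/4$.

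The main obstacle will be carrying out the elliptic theory rigorously on the unbounded domain $\D_0$. Constructing $\tilde{h}_0$ with adequate decay at $-\infty$ and obtaining a pointwise gradient estimate for $p_V$ at the boundary both demand decay of the data at infinity; the weighted norm $\|\cdot\|_{H^{N_1}_w}$ controlling $\omega_0$ and the Sobolev regularity $v_0 \in H^{N_0}$ are precisely what supply this decay. A secondary subtlety is reconciling the implicit hydrostatic condition $p_0 \sim -y$ at $-\infty$ with the PDE formulation \eqref{p0def} used in the paper, so that the decomposition $p_0 = p_H + p_V$ is unambiguous.
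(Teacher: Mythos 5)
Your argument is correct in spirit but takes a genuinely different route from the paper, and it is worth being precise about the trade-off. The paper does not split $p_0$ into a hydrostatic piece plus a correction. Instead it works with $p_0 + y$ directly and uses only the sign of $\Delta p_0$: writing $(\pa_i v_0^j)(\pa_j v_0^i) = |\pa v_0|^2 - \pa_i v_{0}^j\,\omega_{0,ij}$, the hypothesis that $\omega_0$ is small relative to $v_0$ forces $\Delta p_0 < 0$ pointwise, and then Green's identity against a harmonic test function together with Lemma 4.1 of \cite{Wu1999} gives the Taylor sign condition. That argument is structural: it does not use the small-data assumption \eqref{enbds0} at all, and the vorticity hypothesis is exactly what drives it.

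Your perturbative decomposition $p_0 = (\tilde h_0 - y) + p_V$ is sound, and you correctly flagged both genuine subtleties (the implicit normalization $p_0 \sim -y$ at depth, and the need to carry out elliptic theory on the unbounded domain, which the paper glosses over). But notice that your route consumes the global smallness $\ve_0 \ll 1$, which is not among the lemma's stated hypotheses, and in return it makes the vorticity hypothesis $\|\omega_0\|_{L^\infty(\D_0)} \le \tfrac12\|v_0\|_{L^\infty(\D_0)}$ redundant: once $\|v_0\|_{H^{N_0}} + \|h_0\|_{H^{N_0}} \le \ve_0$, you get $\|\nabla p_V\|_{L^\infty(\pa\D_0)} = O(\ve_0^2)$ from elliptic regularity alone, regardless of how $\omega_0$ compares to $v_0$, so the Helmholtz split and the identity $(\pa_i v_0^j)(\pa_j v_0^i) = |\nabla v_0|^2 - |\omega_0|^2$ are not doing real work in your proof. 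The paper's sign argument is the one that actually explains why the vorticity bound appears in the statement; your argument proves the same conclusion, but in a regime where the vorticity bound is automatic and for a different reason.
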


\begin{proof}
 We follow the argument in \cite{Wu1999}. We fix a function $f: \pa \D_0 \to \R$
 and let $F$ denote its harmonic extension to $\D_0$. By Green's identity:
 \begin{equation}
  \int_{\pa \D_0} f \nabla_{\n_0} (p_0 + y) - (p_0 + y) \nabla_{\n_0} f
  = \int_{\D_0} \Delta (p_0 + y) F.
  \label{greenident1}
 \end{equation}
 We now note that $\Delta p_0 = -(\pa_i v^j_0)(\pa_j v_0^i)
 = -(\pa_i v^j_0)(\pa_i v_0^j)
 + (\pa_i v^j_0) \delta^{i\ell}(\curl v_0)_{j\ell}$. By assumption we have that
 $||\pa v_0 - \curl \omega||_{L^\infty(\D_t)} \geq \frac{1}{2}
 ||\pa v_0||_{L^\infty(\D_t)}$ and
 so in particular we have that $\Delta p_0
 < 0.$ Therefore by \eqref{greenident1} and the
 fact that $p_0 = 0$ on $\pa \D_0$,  we have:
 \begin{equation}
  \int_{\pa \D_0} f \na_\n (p_0 + y) - y \na_\n f
  > 0.
  \label{}
 \end{equation}
 The rest of the proof of Lemma 4.1 from \cite{Wu1999} now goes through without change.
\end{proof}

We will use the following local well-posedness result, which
follows from Theorem B in \cite{Shatah2010} and the above lemma:
\begin{prop}
  \label{lwp}
  Let
   $h_0 \in H^{N_0}(\R^2)$, $\D_0 = \{ (x_1, x_2, y) | y \leq h_0(x_1, x_2)\}$
  $v_0 \in H^{N_0}(\D_0)$. Suppose that
  $||\omega_0||_{L^\infty(\D_0)} \leq \frac{1}{2} ||v_0||_{L^\infty(\D_0)}$.
  Then there is a $T = T(v_0, h_0) > 0$, a function $h:[0,T] \times \R^2 \to \R$
   and
  a vector field $v = v(t)$ defined on  $\D_t \equiv \{(x,y) | y \leq h(t,x)\}$
  for $0 \leq t \leq T$,
  so that $v\big|_{t = 0} = v_0,
  \D_t \big|_{t = 0}  = \D_0$, $(v,\D_t)$ satisfy
  \eqref{mom}-\eqref{vaccuum} and $v(t,\cdot) \in H^{N_0}(\D_t)$,
  $h(t,\cdot) \in H^{N_0}(\R^2)$ for $t \leq T$.
\end{prop}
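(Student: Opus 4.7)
The plan is to reduce the statement to an application of the cited local well-posedness result of Shatah--Zeng (Theorem B in \cite{Shatah2010}), with Lemma \ref{inttscprop} providing the missing hypothesis. The Shatah--Zeng result gives local-in-time existence, uniqueness, and persistence of $H^s$ regularity for the free-boundary Euler system \eqref{mom}-\eqref{vaccuum} provided that the initial data $(v_0, h_0)$ lie in a sufficiently high Sobolev space and that the Taylor sign condition \eqref{tsc} holds at $t=0$. Our smoothness assumption $h_0 \in H^{N_0}(\R^2)$, $v_0 \in H^{N_0}(\D_0)$ with $N_0 = 2NN_1 \geq 12$ is more than enough to invoke their result, so the only nontrivial verification is the Taylor sign condition.

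First I would verify the Taylor sign condition at $t=0$. Define $p_0$ by \eqref{p0def}; the assumption $\|\omega_0\|_{L^\infty(\D_0)} \leq \tfrac12 \|v_0\|_{L^\infty(\D_0)}$ (together with the implicit corresponding bound on derivatives, which is how the hypothesis is used in the proof of Lemma \ref{inttscprop}) places us in the setting of Lemma \ref{inttscprop}, yielding a constant $c_0 > 0$ such that
\begin{equation}
 -\nabla_{\n_0} p_0 \geq 2 c_0 > 0 \qquad \text{on } \pa \D_0.
 \label{}
\end{equation}
This is precisely the condition \eqref{tsc} required by the local theory at the initial time.

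With the Taylor sign condition in hand, I would apply Theorem B of \cite{Shatah2010}. That theorem produces $T = T(v_0, h_0) > 0$, a height function $h \in C([0,T]; H^{N_0}(\R^2))$, and a velocity field $v(t,\cdot) \in H^{N_0}(\D_t)$ on the moving domain $\D_t = \{(x,y) : y \leq h(t,x)\}$, satisfying \eqref{mom}-\eqref{vaccuum} together with the initial conditions $v|_{t=0} = v_0$ and $\D_0 = \{(x,y): y \leq h_0(x)\}$. The existence time $T$ depends on the initial data through $\|v_0\|_{H^{N_0}(\D_0)}$, $\|h_0\|_{H^{N_0}(\R^2)}$, and $c_0^{-1}$. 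Uniqueness within this regularity class is also part of the cited theorem.

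There is essentially no real obstacle here: the proposition is a repackaging of the Shatah--Zeng theorem, and the only content we need to supply is the verification of \eqref{tsc} at $t=0$, which the preceding lemma provides. The one point worth flagging is that the smallness assumption on $\omega_0$ is used only qualitatively here (to conclude the Taylor sign condition); the quantitative time of existence $T$ coming from this proposition should not be confused with the much longer lifespan $T_{\ve_0,\ve_1}'$ of Theorem \ref{mainthm}, which will be obtained by continuing the solution via the bootstrap argument using the energy and dispersive estimates developed in the later sections.
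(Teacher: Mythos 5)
Your proof matches the paper's own reasoning: the paper states the proposition "follows from Theorem B in \cite{Shatah2010} and the above lemma," which is precisely your argument — use Lemma \ref{inttscprop} to verify the Taylor sign condition at $t=0$, then invoke the Shatah--Zeng local well-posedness theorem. Your aside flagging that Lemma \ref{inttscprop}'s proof actually uses a bound on $\nabla v_0$ versus $\omega_0$ rather than the stated $L^\infty$ bound on $v_0$ versus $\omega_0$ is a fair observation about the lemma's phrasing, but it does not affect the correctness of the reduction.
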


We now want to extend the time $T$ in this theorem to $T'_{\ve_0, \ve_1}$
defined in \eqref{tdef}, provided that the vorticity $\omega$ vanishes on
$\pa\D_t$.
We suppose
that $u, v$ satisfy the following bootstrap assumptions
for $t \geq 0$:
\begin{align}
 ||u(t)||_{W^{4,\infty}(\R^2)} &\leq \frac{\ve_0}{1 + t}
\label{bootstrap1}\\
 ||v(t)||_{H^{N_0}(\D_t)} +
 ||h(t)||_{H^{N_0}(\R^2)}
 + ||\Lambda^\iota xe^{it\Lambda^{1/2}} u(t)||_{L^2(\R^2)}
  &\leq \ve_0(1 + t)^\delta,\label{bootstrap2}\\
 ||\omega(t)||_{H^{N_1}_w(\D_t)}
 &\leq \ve_1 (1+t)^\delta,
 \label{bootstrap3}
\end{align}
and that $\omega\cdot \n|_{\pa \D_t} = 0$,
where here $\iota > 0$ is a small constant.
In Section \ref{coerensec} we show that:
\begin{equation}
 ||\Lambda^{1/2} \varphi_\omega||_{H^{N_0-1}(\R^2)} \lesssim
 ||v(t)||_{H^{N_0}(\D_t)} + ||h(t)||_{H^{N_0}(\pa\D_t)}
 + O(\ve_0^2),
 \label{}
\end{equation}
if \eqref{bootstrap1}-\eqref{bootstrap2} hold.
In particular, the assumption \eqref{bootstrap2} implies an estimate for
$||u||_{H^{N_0-1}(\R^2)}$,
 a fact
which is used several times in the proofs of the following theorems.

Recalling the definitions in \eqref{duhamel}, and that we are writing
$w = (V_\omega, B_\omega)$, we have:
\begin{prop}
  \label{dispprop1}
 If the bootstrap assumptions \eqref{bootstrap1}-\eqref{bootstrap3} hold
 for sufficiently small $\ve_0,\ve_1$ and $\omega\cdot \n|_{\pa \D_t} = 0$ for
 $0 \leq t \leq T$, then:
 \begin{align}
  ||e^{-it\Lambda^{1/2}} f_1(u)||_{W^{4,\infty}(\R^2)}
  + ||e^{-it\Lambda^{1/2}} f_2(u,w)||_{W^{4,\infty}(\R^2)}
  \lesssim \frac{\ve_0^{2}}{1 + t} + \ve_1 (1+t)^{1+2\delta},\label{fbdsA}\\
  ||\Lambda^\iota (x f_1(u))||_{L^2(\R^2)} +
  ||\Lambda^\iota (x f_2(u,w))||_{L^2(\R^2)} \lesssim \ve_0^{2} (1 + t)^{\delta} +
  \ve_1(1+t)^{2+2\delta},
  \label{fbdsB}
 \end{align}
 for $0 \leq t \leq T$.
\end{prop}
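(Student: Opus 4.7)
The plan is to combine the bootstrap assumptions \eqref{bootstrap1}--\eqref{bootstrap3} with the already-announced estimates for $f_1$ and $f_2$. First I would show that under the bootstrap hypotheses one has the $X$-norm bound $||u(t)||_X \lesssim \ve_0$. The $W^{4,\infty}$ contribution is just \eqref{bootstrap1}; the weighted $L^2$ contribution $(1+t)^{-\delta}||\Lambda^\iota x e^{it\Lambda^{1/2}}u||_{L^2}$ is read off directly from \eqref{bootstrap2}; and the $H^{N_0-1}$ contribution (which is the actual regularity level needed) follows from the coercive estimate of Section \ref{coerensec}, namely $||\Lambda^{1/2}\varphi_\omega||_{H^{N_0-1}(\R^2)} \lesssim ||v||_{H^{N_0}(\D_t)} + ||h||_{H^{N_0}(\R^2)} + O(\ve_0^2)$, again combined with \eqref{bootstrap2}.

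Next I would insert this $X$-norm bound into the four estimates quoted in the outline --- the two for $f_1$ coming from the modifications of \cite{Germain2012} carried out in Section \ref{realdispsec}, and the two for $f_2$ from \eqref{uw1}--\eqref{uw2} proved in Section \ref{vortests}. For the $W^{4,\infty}$ component this yields
\[
(1+t)||e^{-it\Lambda^{1/2}} f_j||_{W^{4,\infty}(\R^2)} \lesssim ||u||_X^2 + (1+t)^{2+\delta}||\omega(t)||_{H^{N_1}_w(\D_t)} \lesssim \ve_0^2 + \ve_1 (1+t)^{2+2\delta},
\]
after invoking \eqref{bootstrap3}, and dividing by $(1+t)$ gives \eqref{fbdsA}. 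For the weighted $L^2$ component the same substitution gives
\[
(1+t)^{-\delta}||\Lambda^\iota x f_j||_{L^2(\R^2)} \lesssim ||u||_X^2 + (1+t) ||\omega(t)||_{H^{N_1}_w(\D_t)} \lesssim \ve_0^2 + \ve_1 (1+t)^{1+\delta},
\]
which after multiplication by $(1+t)^\delta$ is dominated by the right-hand side of \eqref{fbdsB}.

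The main obstacle here is not in this proposition itself --- which is essentially a bookkeeping exercise --- but in the inputs it relies on. For $f_1$, the modifications of the Germain--Masmoudi--Shatah arguments in Section \ref{realdispsec} must absorb the fact that $\varphi_\omega$ (rather than $\varphi$) appears in $u$, thereby accounting for the extra $(1+t)^{2+\delta}||\omega||_{H^{N_1}_w}$ source term in the estimate for $f_1$. For $f_2$, the estimates of Section \ref{vortests} require controlling the boundary traces of $v_\omega$, together with $\Lambda^{1/2}a_\omega$, purely in terms of $||\omega(t)||_{H^{N_1}_w(\D_t)}$; this ultimately forces one to bound $V_\omega$ in $L^p(\R^2)$ for $p$ slightly smaller than $2$, and rests on the elliptic theory of Section \ref{ellsec} combined with the boundary condition $\omega \cdot \n|_{\pa \D_t}=0$, which is precisely what provides the extra decay of $v_\omega$ required for such low-$L^p$ bounds.
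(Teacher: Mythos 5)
Your proposal is correct and matches the paper's own approach: Proposition \ref{dispprop1} is in both cases a bookkeeping consequence of the $f_1$-estimates from Section \ref{realdispsec} and the $f_2$-estimates from Sections \ref{vortest1}--\ref{g4ests}, combined with the bootstrap assumptions \eqref{bootstrap1}--\eqref{bootstrap3} and the coercive bound of Section \ref{coerensec}. The only cosmetic difference is that you phrase the combination through the $X$-norm bound $\|u\|_X\lesssim\ve_0$ and the schematic estimates \eqref{uw1}--\eqref{uw2} from the introduction, whereas the paper plugs the bootstrap assumptions directly into the time-integral estimates \eqref{inftyvort}--\eqref{weightedvort} (and their $g_1$ analogue) and then integrates in $s$; the arithmetic on the powers of $(1+t)$, $\ve_0$, $\ve_1$ comes out the same either way.
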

The term $f_1$ can be estimated by simple modifications of the estimates
in and we outline this approach in Section \ref{realdispsec}.
The estimates for $f_2$ can be found in Sections \ref{vortest1}-\ref{g4ests}.

We now need some estimates to control the size of $\omega$, which
we prove in Section \ref{vortensec}:
\begin{prop}
  \label{intvortprop}
  If the assumptions \eqref{bootstrap1}-\eqref{bootstrap3} hold, then
  there is a constant $C_N$ so that:
 \begin{align}
   ||\omega(t)||_{H^{N_1}_w(\D_t)}^2
   \leq ||\omega(0)||_{H^{N_1}_w(\D_0)}^2
   + C_N\big( \ve_0 (1 + t)^{1/N}\big) \big( \ve_1 (1+t)^{1+\delta}\big)
   \ve_1^2 (1 + t)^{2\delta}.
  \label{}
 \end{align}
\end{prop}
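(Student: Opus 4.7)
The plan is to run a weighted energy estimate directly on the vorticity equation, exploiting that the vorticity is transported and that $\omega\cdot\n|_{\pa\D_t}=0$ prevents boundary contributions during integration by parts.

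The starting point is the transport equation $D_t \omega = \omega\cdot\nabla v$, where $D_t = \pa_t + v^k\pa_k$. Since the boundary $\pa\D_t$ moves with the velocity field, Reynolds' transport theorem gives
\begin{equation*}
\tfrac{d}{dt}\int_{\D_t} F\, dxdy = \int_{\D_t} D_t F\, dxdy
\end{equation*}
for any $F$, with no boundary term. Writing $\langle r\rangle^2 = 1+|x|^2+y^2$ and applying $\pa^k$ for $k\leq N_1$, one gets $D_t(\pa^k\omega) = \pa^k(\omega\cdot\nabla v) + [D_t,\pa^k]\omega$, where the commutator is a sum of terms of the form $(\pa^\alpha v)(\pa^\beta\nabla\omega)$ with $|\alpha|+|\beta|=k$, $|\alpha|\geq 1$. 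Differentiating $\int_{\D_t}\langle r\rangle^4|\pa^k\omega|^2\,dxdy$ in time therefore produces three classes of terms: (i) a weight-derivative term $\int 4\langle r\rangle^2(D_t\langle r\rangle^2)|\pa^k\omega|^2$, (ii) the main source $\int 2\langle r\rangle^4\pa^k\omega\cdot\pa^k(\omega\cdot\nabla v)$, and (iii) commutator terms.

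Class (i) is bounded by $\|v\|_{L^\infty(\D_t)}\|\omega\|_{H^{N_1}_w}^2$ since $D_t\langle r\rangle^2\lesssim \langle r\rangle|v|$. Decomposing $v = \nabla\psi + v_\omega$, the harmonic part $\nabla\psi$ inherits the dispersive decay from $\|u\|_{W^{4,\infty}}\leq \ve_0/(1+t)$ via trace/elliptic estimates (Proposition \ref{uenests}), while $v_\omega$ is controlled in $L^\infty$ by $\|\omega\|_{H^{N_1}_w}\lesssim\ve_1(1+t)^\delta$. The commutator terms (iii) and all but the top-derivative piece of (ii) are handled by placing the lower-order factor of $v$ in $L^\infty$ via Sobolev (this is safe since $N_0=2NN_1\gg N_1$, so lower-order derivatives of $v$ are controlled from $\|v\|_{H^{N_0}}$) and placing the top derivative of $\omega$ in the weighted $L^2$. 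The top-order piece $\omega\cdot\pa^{N_1}\nabla v$ is the delicate one: one uses the Biot--Savart relationship together with $\div v=0$ and the elliptic estimates of Section \ref{ellsec} to convert $\|\pa^{N_1+1}v\|_{L^2}$ into $\|\pa^{N_1}\omega\|_{L^2}$ plus controlled boundary data, then applies Cauchy--Schwarz splitting the weight $\langle r\rangle^4$ as $\langle r\rangle^2\cdot\langle r\rangle^2$ between $\pa^{N_1}\omega$ and $\omega$; the weighted Sobolev embedding $\|\langle r\rangle^2\omega\|_{L^\infty(\D_t)}\lesssim \|\omega\|_{H^{N_1}_w}$ closes this bound provided $N_1\geq 6$, which is exactly our hypothesis.

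Combining these estimates one arrives at a differential inequality of the schematic form
\begin{equation*}
\tfrac{d}{dt}\|\omega\|_{H^{N_1}_w}^2 \leq \tfrac{C\ve_0}{1+t}\|\omega\|_{H^{N_1}_w}^2 + C\,\ve_1(1+t)^{\delta}\,\|\omega\|_{H^{N_1}_w}^2,
\end{equation*}
where the first term collects the dispersively decaying contributions $\|\nabla v\|_{L^\infty}+\|\nabla\psi\|_{L^\infty}\lesssim \ve_0/(1+t)$ and the second collects the $v_\omega$ contributions (which do not decay on their own but enter with a factor of $\ve_1$). Integrating via Gronwall, the first term produces the multiplicative factor $(1+t)^{C\ve_0}\leq(1+t)^{1/N}$ for $\ve_0$ chosen small depending on $N$, while the second contributes $\ve_1(1+t)^{1+\delta}$ upon integration in time; the remaining $\ve_1^2(1+t)^{2\delta}$ is the size of the running energy $\|\omega\|_{H^{N_1}_w}^2$ from the bootstrap \eqref{bootstrap3}. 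Putting everything together yields precisely the bound in the statement.

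The main obstacle is handling the growing weight $\langle r\rangle^4$ simultaneously with the top-order term: derivatives landing on $\langle r\rangle^4$ give an extra $\langle r\rangle$ factor, and it is critical that $v$ actually decay at spatial infinity (so that $D_t\langle r\rangle^2$ does not overwhelm the weight), which is where the boundedness of $v_0$ in $L^\infty$ assumed in \eqref{enbds0} and the corresponding control on $v_\omega$ via the interior-weighted norm on $\omega$ come in. A secondary subtlety is keeping track of why the $v_\omega$ piece of $v$, despite not decaying dispersively, appears multiplied by $\ve_1$ rather than $\ve_0$ in the final estimate — this is exactly because it enters only through $\omega$ itself.
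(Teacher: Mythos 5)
Your overall architecture matches the paper's: derive a weighted energy inequality for $\omega$ via the transport equation and Reynolds' theorem, split $v = \nabla\psi + v_\omega$, control $\nabla\psi$ via the maximum principle and boundary trace estimates, and control $v_\omega$ via the elliptic estimates for the div/curl system. However, there is a genuine gap at the decisive step. You claim a differential inequality with coefficient $\tfrac{C\ve_0}{1+t}$ "collecting the dispersively decaying contributions $\|\nabla v\|_{L^\infty} + \|\nabla\psi\|_{L^\infty}\lesssim\ve_0/(1+t)$", but those are one-derivative quantities; the top-order term in $\pa^{N_1}(\omega\cdot\nabla v)$ requires $\nabla^{N_1+2}\psi$ in $L^\infty$, hence roughly $\|u\|_{W^{N_1+3,\infty}}$. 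The bootstrap \eqref{bootstrap1} only controls $\|u\|_{W^{4,\infty}}$ with the $1/t$ decay, and since $N_1\geq 6$ this is nowhere near enough. The paper bridges this by interpolating between $\|u\|_{W^{4,\infty}}\lesssim\ve_0/(1+t)$ and $\|u\|_{H^{N_0}}\lesssim\ve_0(1+t)^\delta$ (Lemma \ref{interpolationgms}), which yields $\|u\|_{W^{N_1+2,\infty}}\lesssim\ve_0(1+t)^{-1+1/N}$ because $N_0=2NN_1$. That interpolation loss is the actual origin of the factor $(1+t)^{1/N}$ in the statement, whereas you attribute it to a Gr\"onwall exponential $(1+t)^{C\ve_0}$ — a different and, here, unjustified mechanism.

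Your proposed workaround for the top-order term — "use Biot--Savart to convert $\|\pa^{N_1+1}v\|_{L^2}$ into $\|\pa^{N_1}\omega\|_{L^2}$" — does not work for the harmonic piece. The field $v$ is not determined by $\omega$ alone; $\nabla\psi$ carries the boundary data and is entirely independent of $\omega$, so $\|\pa^{N_1+2}\psi\|_{L^2(\D_t)}$ cannot be traded for $\|\pa^{N_1}\omega\|_{L^2}$. You could of course bound $\|\pa^{N_1+1}v\|_{L^2(\D_t)}$ directly by $\|v\|_{H^{N_0}(\D_t)}\lesssim\ve_0(1+t)^\delta$, but that quantity grows slowly rather than decaying, and feeding it into your Cauchy--Schwarz + weighted Sobolev chain gives a coefficient $\ve_0(1+t)^\delta$, not $\ve_0/(1+t)$ — a qualitatively different differential inequality than the one you wrote down and integrated. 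To repair the argument you should explicitly state and use the interpolation lemma to convert the low-derivative $L^\infty$ bound on $u$ into a slightly-worse high-derivative $L^\infty$ bound, exactly as in the paper's two-step proof (first establish the energy inequality \eqref{wenest}, then substitute the interpolated estimate).
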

In particular, if:
\begin{align}
 2C_N t \leq T_1 \equiv \min \bigg( \frac{1}{ \ve_1^{1/3}},
 \frac{1}{\ve_0^{N}}\bigg),
 \label{t1def}
\end{align}
this implies that:
\begin{equation}
 ||\omega(t)||_{H^N_w(\D_t)}^2 \leq \frac{3}{4} \ve_1^2 (1 +t)^{2\delta}.
 \label{}
\end{equation}

The last ingredient we need is an energy estimate for the
entire system, which we prove in Section \ref{energyestimates}:
\begin{prop}
  \label{intenprop}
If $||v_0||_{H^{N_0}(\D_0)}
 + ||\omega_0||_{H^{N_0-1}(\D_0)} \leq \ve_0/2$ and
 the bootstrap assumptions \eqref{bootstrap1}-\eqref{bootstrap3}
 hold, then with $c_0$ as in Lemma \ref{inttscprop}, there is
 a constant $C_{N_0}^E = C_{N_0}^E(c_0)$ so that:
 \begin{equation}
   ||v(t)||_{H^{N_0}(\D_t)}^2
   + ||h(t)||_{H^{N_0}(\R^2)}^2
   \leq \frac{\ve_0^2}{4} + C_{N_0}^E \big( \ve_0
   + \ve_1 (1+t)^{2 +\delta}\big) \ve_0^2 ( 1 + t)^{2\delta}.
  \label{mainenergy}
 \end{equation}
\end{prop}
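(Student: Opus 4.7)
The plan is to apply the higher-order energy estimate (Theorem \ref{enestthm}) to the energy $\E^{N_0}(t)$ defined in \eqref{intenergy}, then convert back to Sobolev norms via the coercivity lemma (Lemma \ref{coerlem}) and close via Gr\"onwall. The three main things to keep track of are: (i) bounding the initial energy $\E^{N_0}(0)$ by a small multiple of $\ve_0^2$, (ii) controlling the amplitude $\A(t)$ in terms of the dispersive bootstrap and the vorticity bootstrap, and (iii) treating the vorticity contribution $\int_{\D_t}|\na^{N_0-1}\omega|^2$ appearing in $\E^{N_0}$, which is not directly controlled by the small $H^{N_1}_w$ bound on $\omega$ but is controlled by the bootstrap on $||v||_{H^{N_0}}$.

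First, at $t=0$, I would use Lemma \ref{coerlem} to get $\E^{N_0}(0)\lesssim ||v_0||_{H^{N_0}(\D_0)}^2+||h_0||_{H^{N_0}(\R^2)}^2+||\omega_0||_{H^{N_0-1}(\D_0)}^2\leq \ve_0^2/8$ (after absorbing constants into the smallness of $\ve_0$ and using the second fundamental form is controlled by $||h_0||_{H^{N_0}}$ when $\ve_0$ is small). Next, I would bound $\A(t)$ from the bootstrap assumptions. Writing $h=\Re u$, the bootstrap \eqref{bootstrap1} gives $||h||_{W^{4,\infty}(\R^2)}\lesssim \ve_0/(1+t)$, which controls $||\theta||_{W^{2,\infty}(\pa\D_t)}$ since $\theta\sim \na^2 h/(1+|\na h|^2)^{1/2}$. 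For $||\na v||_{L^\infty(\D_t)}$, I would split $v=\na\psi+v_\omega$ and bound the harmonic part by its boundary data (and hence by $||u||_{W^{4,\infty}}$) through standard Poisson-kernel / trace considerations, while $v_\omega$ is controlled by the weighted vorticity bound \eqref{bootstrap3} and the elliptic estimates of Section \ref{ellsec}; this yields $||\na v||_{L^\infty}\lesssim \ve_0/(1+t)+\ve_1(1+t)^\delta$. The pressure terms $||\na p||_{L^\infty(\D_t)}$, $||\na^2 p||_{L^\infty(\pa\D_t)}$, $||\na D_t p||_{L^\infty(\pa\D_t)}$ are controlled similarly, since $p$ solves an elliptic problem with forcing quadratic in $\na v$ and boundary data zero, with time derivative expressible in terms of the same quantities. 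Altogether I expect $\A(t)\lesssim \ve_0/(1+t)+\ve_1(1+t)^{1+\delta}$, where the $(1+t)^{1+\delta}$ factor on the vorticity piece accommodates derivative-loss / extra derivatives needed in the pressure-like terms.

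With these in hand, Theorem \ref{enestthm} gives
\begin{equation*}
\frac{d}{dt}\E^{N_0}(t)\lesssim \A(t)\E^{N_0}(t)+\A(t)^2 P(\E^{N_0-1}(t),\dots,\E^0(t)).
\end{equation*}
Using the coercivity lemma and the bootstrap assumption \eqref{bootstrap2} together with the bound $||\omega||_{H^{N_0-1}}\lesssim ||v||_{H^{N_0}}$, the polynomial $P$ is bounded by $\ve_0^{2}(1+t)^{2\delta}$ times a harmless constant. Integrating in time, the contribution of $\A(t)\E^{N_0}(t)$ is handled by a Gr\"onwall argument using $\int_0^t \ve_0/(1+s)\,ds=\ve_0\log(1+t)$, which for $\ve_0$ small and $t\leq \ve_0^{-N}$ produces only a $(1+t)^{C\ve_0}\leq (1+t)^\delta$ factor. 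The contribution of the vorticity piece of $\A(t)$ gives $\int_0^t \ve_1(1+s)^{1+\delta}\cdot\ve_0^2(1+s)^{2\delta}\,ds\lesssim \ve_1\ve_0^{2}(1+t)^{2+3\delta}$, and the $\A(t)^2 P$ term similarly gives at most $\big(\ve_0+\ve_1(1+t)^{2+\delta}\big)\ve_0^{2}(1+t)^{2\delta}$ after integration. Combining with the initial bound yields \eqref{mainenergy} after one more application of Lemma \ref{coerlem} to convert $\E^{N_0}$ back to $||v||_{H^{N_0}}^2+||h||_{H^{N_0}}^2$.

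The main obstacle will be the estimate of $\A(t)$, specifically passing from the dispersive bootstrap on $u=h+i\Lambda^{1/2}\varphi_\omega$ (which controls only a scalar on $\R^2$) to the $L^\infty$ norms of $\na v$, $\na p$, $\na D_t p$ on the full fluid domain. The velocity splitting $v=\na\psi+v_\omega$ together with the elliptic estimates of Section \ref{ellsec} and harmonic-extension bounds reduces everything to $||u||_{W^{4,\infty}}$ and $||\omega||_{H^{N_1}_w}$, but some care is needed to avoid derivative loss, which is why the hypothesis $N_0=2NN_1$ with $N_1\geq 6$ gives enough room. Once $\A(t)$ is pinned down as claimed, the rest of the argument is a standard Gr\"onwall closure.
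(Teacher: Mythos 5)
Your proposal follows the same route as the paper: apply Proposition \ref{enestthm} to $\E^{N_0}$, bound the amplitude $\A(t)$ via the bootstrap assumptions, run Gr\"onwall, and convert back to Sobolev norms with the coercivity estimates. Two small remarks: the bound on $\A(t)$ you sketch via "Poisson-kernel / trace considerations" is precisely the content of Lemma \ref{A1lem} (which already gives the sharper $\A(t)\lesssim\ve_0/(1+t)+\ve_1(1+t)^\delta$, not the $(1+t)^{1+\delta}$ you hedge with), so the "main obstacle" you flag is already resolved in the paper; and the coercivity step you attribute to Lemma \ref{coerlem} is actually Lemma \ref{maincoer} (in particular \eqref{coerv} and \eqref{coertheta} together with $\theta\sim\nabla^2 h$).
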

In particular, if $t$ is such that:
\begin{equation}
 2C_{N_0}^E t\leq T_2 \equiv \frac{1}{\ve_1^{1/3}},
 \label{t2def}
\end{equation}
this implies that:
\begin{equation}
 ||v(t)||_{H^{N_0}(\D_t)}^2 +
 ||h(t)||_{H^{N_0}(\D_t)}^2
 \leq \frac{\ve_0^2}{4}
 + \ve_0^{3}(1 + t)^{2\delta}.
 \label{}
\end{equation}

Setting $T_{\ve_0, \ve_1} = \min(T_1, T_2)$, a standard continuity
argument then gives Theorem \ref{mainthm}.

\section{Derivation of the equations on the boundary}
\label{bdyeqssec}
We will use the equations \eqref{mom}-\eqref{vaccuum} directly to
prove energy estimates.
However, to prove the dispersive estimates in Proposition
\ref{dispprop1}, we will need to use equations
for $h$ and $v\big|_{\pa \D_t}$.
In the irrotational case, $v_i = \pa_i \psi$ for a harmonic function
$\psi$ satisfying $\nabla_\n \psi = v\cdot \n$ on $\pa \D_t$. Letting $\varphi =
\psi|_{\pa \D_t}$, one can show that $h, \varphi$ satisfy the system:
\begin{align}
 \pa_t h &= G(h) \varphi,\label{hamil1}\\
 \pa_t \varphi &= -h - \frac{1}{2} |\nabla \varphi|^2 + \frac{(G(h) \varphi +\nabla h\cdot
 \nabla \varphi)^2}{2(1+ |\nabla h|^2)},
 \label{hamil2}
\end{align}
where $G(h)$ is the rescaled Dirichlet-to-Neumann map (see \eqref{dndef})
and we are writing $\nabla = (\pa_1, \pa_2)$.
 This system is derived
from the fact that
when $\omega = 0$, Euler's equations become:
\begin{align}
 \pa_i \big( \pa_t \psi + |\pa \psi|^2 + p + y\big) = 0, &&
 \textrm{ in } \D_t.
 \label{}
\end{align}
See e.g. \cite{Craig1993} or \cite{Lannes2013} for a derivation.

This no longer works when $\omega \not = 0$ and so another approach
is needed.
Our derivation of the equations on the boundary is partially based on the ideas
in \cite{Alazard2014} (see in particular Section4.1 there).
We define:
\begin{align}
 V^i = v^i|_{\pa \D_t} \textrm{ for } i = 1,2, && B = v^3|_{\pa \D_t}.
\end{align}
In what follows we will write $\pa_i = \frac{\pa}{\pa x^i}$ for $i  = 0,1,2,3$
with the convention that $x^0 = t$. We will also occasionally write $\pa_y = \pa_3$.
We will also write $\nabla$ for the derivative of quantites
defined on $\R^2 \sim \pa \D_t$
and $\pa$ when differentiating quantities defined on $\D_t$.
We now collect a few well-known and elementary identities.
Given $f:\D_t \to \R$, write $F(x) = f(x,h(x)) = f|_{\pa \D_t}(x)$. Then,
by the chain rule:
\begin{align}
 \pa_i F = (\pa_i f)|_{\pa \D_t} + \nabla_i h (\pa_y f)|_{\pa \D_t},
 \quad i = 0,1,2
 \label{cr}
\end{align}
If $f$ is harmonic on $\D_t$ then additionally:
\begin{align}
 (\pa_y f)|_{\pa \D_t} = \frac{1}{1 + |\nabla h|^2}
 \Big( G(h) F + \nabla h \cdot \nabla F\Big),
 \label{neum}
\end{align}
where $G(h)$ is the rescaled Dirichlet-to-Neumann operator:
\begin{align}
 G(h) F = \sqrt{1 + |\nabla h|^2} \nabla_\n f|_{\pa \D_t}.
 \label{dndef}
\end{align}
We also recall that the boundary condition \eqref{freebdy} can
be written:
\begin{equation}
 \pa_t h + V^1\nabla_1 h + V^2 \nabla_2 h = B.
 \label{dth}
\end{equation}
As a consequence, writing $\Dtb = \pa_t + V^1\pa_1
+ V^2\pa_2$, we have:
\begin{align}
 \Dtb F = (D_t f)|_{\pa \D_t}.
 \label{dtb}
\end{align}

Next, in $\D_t$ we define $\psi \in L^6(\D_t)\cap \dot{H}^1(\D_t)$ to be the harmonic extension of
$v\cdot \n$ to $\D_t$, that is, $\psi$ satisfies:
\begin{align}
 \Delta \psi  = 0 \quad \text{ in } \D_t,
 \quad
 \nabla_\n \psi = \n \cdot v \quad \text{ on } \pa \D_t.
 \label{}
\end{align}
The function $\psi$ is unique since $\div v = 0$ in $\D_t$.

It then follows that $v_\omega \equiv v - \pa \psi$ satisfies:
\begin{align}
 \curl v_\omega = \omega, \quad \div v_\omega =0, \text{ on } \D_t,
 \quad \n\cdot v_\omega = 0 \text{ on } \pa \D_t.
 \label{}
\end{align}
We will write:
\begin{align}
 \varphi  = \psi|_{\pa \D_t},
 \quad
 V_\omega^i = v_\omega^i|_{\pa \D_t}, \,i = 1,2,
 \quad
 B_\omega = v_\omega^3|_{\pa \D_t}.\label{vomegadef}
\end{align}

The following
derivation is inspired by the approach of \cite{Alazard2014} and
\cite{Wang2015}.
The main result of this section is the following:
\begin{theorem}
  \label{formulationthm}
  With the above notation:
  \leavevmode
  \begin{enumerate}
   \item Writing $U_\omega = V_\omega + \nabla h B_\omega$,
   we have $\nabla_1 U_\omega^2 - \nabla_2 U_\omega^1 = \omega|_{\pa \D_t} \cdot \n,$
   In particular, $U_\omega = \nabla a_\omega$ for a function
   $a_\omega : \R^2 \to \R$.
   \item The variables $\varphi, h,
   V_\omega$ and  $B_\omega$ satisfy the system:
   \begin{align}
    \pa_t h &= G(h)\varphi,\label{whamil1}\\
    \pa_t (\varphi + a_\omega) &= - h - \frac{1}{2} |\nabla \varphi|^2
    + \frac{1}{2} \frac{( G(h) \varphi + \nabla h \cdot \nabla \varphi)^2}{1 + |\nabla h|^2}
    + R_\omega,
    \label{whamil2}
   \end{align}
  where:
  \begin{equation}
   R_\omega = -\frac{1}{2} |V_\omega|^2
   -\nabla \varphi \cdot V_\omega - \frac{1}{2} \big(V_\omega \cdot
   \nabla h\big)^2
   + (G(h) \varphi) V_\omega \cdot \nabla h
   \label{rwdef}
  \end{equation}
  \end{enumerate}
\end{theorem}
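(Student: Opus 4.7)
The proof splits cleanly into two parts, unified by the decomposition $v = \nabla_{x,y}\psi + v_\omega$ and the chain-rule identity \eqref{cr}. For part (1), I compute the two-dimensional scalar curl of $U_\omega$ directly. Since mixed partials of $h$ commute,
\[
\nabla_1 U_\omega^2 - \nabla_2 U_\omega^1 = (\nabla_1 V_\omega^2 - \nabla_2 V_\omega^1) + \nabla_2 h\,\nabla_1 B_\omega - \nabla_1 h\,\nabla_2 B_\omega.
\]
Inserting the chain-rule expressions for $\nabla_i V_\omega^j$ and $\nabla_i B_\omega$, the terms $\pm\nabla_1 h\,\nabla_2 h\,(\pa_y v_\omega^3)|_{\pa\D_t}$ cancel, and the remainder regroups into $(\omega^3 - \omega^1\nabla_1 h - \omega^2\nabla_2 h)|_{\pa\D_t}$, namely $(\omega \cdot N)|_{\pa\D_t}$ with the unnormalized upward normal $N = (-\nabla h, 1)$. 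This is a positive multiple of $\omega\cdot\n$, which vanishes by hypothesis, so the $1$-form $U_\omega^1\,dx^1 + U_\omega^2\,dx^2$ is closed; the Poincar\'e lemma on the simply connected domain $\R^2$ provides a potential $a_\omega$ with $U_\omega = \nabla a_\omega$.

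For \eqref{whamil1}, I substitute $V = (\nabla_x\psi)|_{\pa\D_t} + V_\omega$ and $B = (\pa_y\psi)|_{\pa\D_t} + B_\omega$ into the kinematic boundary condition \eqref{dth} to obtain
\[
\pa_t h = (\pa_y\psi - \nabla h\cdot\nabla_x\psi)|_{\pa\D_t} + (B_\omega - V_\omega\cdot\nabla h).
\]
The first term equals $G(h)\varphi$ by \eqref{neum}, and the second vanishes identically: by construction $v_\omega\cdot\n = 0$ on $\pa\D_t$, which in the parametrization $(x, h(x))$ is exactly $B_\omega = V_\omega\cdot\nabla h$.

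For \eqref{whamil2}, I restrict Euler's equation to $\pa\D_t$. Using $p|_{\pa\D_t} = 0$ together with \eqref{cr} and \eqref{dtb}, one obtains $\Dtb V_i = -a\nabla_i h$ and $\Dtb B = a - 1$, where $a = -(\pa_y p)|_{\pa\D_t}$, so that the pressure drops out of $\Dtb(V + \nabla h\,B)$ as in the motivation. By part (1), $V + \nabla h\,B = \nabla(\varphi + a_\omega)$, so the resulting identity is a prospective evolution equation for $\nabla(\varphi + a_\omega)$. I then expand its right-hand side using $V = (\nabla_x\psi)|_{\pa\D_t} + V_\omega$ and \eqref{neum} to eliminate $(\pa_y\psi)|_{\pa\D_t}$ in favor of $\varphi, h$. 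Isolating the irrotational piece recovers, by the classical Craig--Sulem computation, the gradient of $-h - \tfrac12|\nabla\varphi|^2 + \tfrac12 (G(h)\varphi + \nabla h\cdot\nabla\varphi)^2/(1+|\nabla h|^2)$. The $v_\omega$-dependent remainder must then be shown to be $\nabla R_\omega$ for $R_\omega$ as in \eqref{rwdef}, after which inverting $\nabla$ on $\R^2$ (and absorbing the resulting additive function of $t$ by a gauge choice on $\psi$) yields \eqref{whamil2}.

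The main obstacle is this last book-keeping step: the bilinear cross-terms between $\nabla\psi$ and $v_\omega$ generated by $v\cdot\nabla v$ must reassemble into an exact $1$-form on $\R^2$. This is precisely where the hypothesis $\omega\cdot\n = 0$ does the work, for it is the same integrability condition identified in part (1) that makes the $\omega$-dependent residual $G$ in the motivational identity \eqref{lesssimple} a pure gradient, and hence integrable into the explicit scalar $R_\omega$.
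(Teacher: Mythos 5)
Your approach is essentially the same as the paper's: use the chain rule \eqref{cr} to verify the curl identity for part (1), substitute the decomposition $v = \nabla_{x,y}\psi + v_\omega$ into the kinematic condition \eqref{dth} for \eqref{whamil1}, and restrict Euler's equation to $\pa\D_t$ to derive $\Dtb(\nabla\varphi + U_\omega)$ and then peel off the gradient. Part (1) and \eqref{whamil1} are carried out correctly and completely, and you rightly take $a = -(\pa_y p)|_{\pa\D_t}$ (the paper writes $a = (\pa_y p)|_{\pa\D_t}$, which is a sign slip); you also correctly identify $B_\omega = V_\omega\cdot\nabla h$ and $\curl U_\omega = 0$ as the structural facts that drive the rest.

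However, for \eqref{whamil2} you stop precisely at the step that constitutes the bulk of the theorem. You write that the $v_\omega$-dependent remainder ``must then be shown to be $\nabla R_\omega$ for $R_\omega$ as in \eqref{rwdef}'' and then assert that the hypothesis $\omega\cdot\n = 0$ makes it work. That is an accurate description of what has to happen, but it is not a proof: the theorem claims the explicit closed-form expression \eqref{rwdef}, and producing it requires carrying the computation through. Concretely, one has to expand $\Dtb(\nabla\varphi + U_\omega)$, substitute $V = (\nabla_x\psi)_b + V_\omega$, $B = (\pa_y\psi)_b + B_\omega$, use $(\pa_k\psi)_b = \nabla_k\varphi - \nabla_k h(\pa_y\psi)_b$ and \eqref{neum}, and then regroup a dozen or so cross-terms; the identities $\curl U_\omega = 0$ and $B_\omega = V_\omega\cdot\nabla h$ are each invoked more than once and in specific combinations (for instance $\nabla V_\omega^k\nabla_k h\,B_\omega + V_\omega^k\nabla_k U_\omega = \nabla(V_\omega\cdot U_\omega) - \tfrac12\nabla|V_\omega|^2$, and the entire last block of terms collapses to $\nabla\big(|\nabla h|^2(\pa_y\psi)_b B_\omega\big)$). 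Without verifying these regroupings you have no way to know the remainder is a gradient, let alone that its potential is exactly \eqref{rwdef} rather than some other quadratic expression. Identifying the right hypotheses is not the same as proving the identity they enable; you should carry out the algebra or at least display the key cancellations.
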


\begin{proof}

  By \eqref{cr}:
  \begin{align}
   \nabla_1 U_\omega^2 - \nabla_2 U_\omega^1 &=
   \nabla_1 V_\omega^2 - \nabla_2 V_\omega^1
   + (\nabla_2 h) \nabla_1 B_\omega - (\nabla_1 h) \nabla_2 B_\omega\\
   &= (\pa_1 v_\omega^2)|_{\pa \D_t} - (\pa_2 v_\omega^1)|_{\pa \D_t}
   + (\nabla_1 h)  (\pa_3 v_\omega^2)|_{\pa \D_t}
   - (\nabla_2 h) (\pa_3 v_\omega^1)|_{\pa \D_t}\\
   &+ (\nabla_2 h) (\pa_1 v_\omega^3)|_{\pa \D_t} -
   (\nabla_1 h)(\pa_2 v_\omega^3)|_{\pa \D_t}
   + (\nabla_2 h) (\nabla_1 h) (\pa_3 v_\omega^3)|_{\pa \D_t}
   - (\nabla_1 h) (\nabla_2 h)(\pa_3 v_\omega^3)|_{\pa \D_t}\\
   &= (\pa_1 v_\omega^2 - \pa_2 v_\omega^1)|_{\pa \D_t}
   -\bigg( (\nabla_1 h) (\pa_2 v_\omega^3 - \pa_3 v_\omega^2)|_{\pa \D_t}
   + (\nabla_2 h) (\pa_3 v_\omega^1 - \pa_1 v_\omega^3)|_{\pa \D_t}\bigg)\\
   &= \omega^3|_{\pa \D_t} - (\nabla_i h) \, \omega^i|_{\pa \D_t},
   \label{cancellation}
  \end{align}
  which gives the first result.

  We now derive \eqref{whamil1} -\eqref{whamil2}.
Differentiating \eqref{dth} and using the fact that $[\Dtb, \nabla]
= -\nabla V^k\nabla_k$ gives:
\begin{align}
 \Dtb \pa_i h &= \nabla_i B -
 \nabla_i V^k\nabla_k h
 \label{dtpah}
\end{align}
Writing $a = (\pa_y p)\big|_{\pa \D_t}$, restricting Euler's equations
\eqref{mom}
to the boundary and using that $p = 0$ on $\pa \D_t$ gives:
\begin{align}
 \Dtb V_i &= -a\nabla_i h, \quad i = 1,2\\
 \Dtb B &= a-1.
 \label{}
\end{align}
Therefore:
\begin{align}
  \Dtb(\nabla \varphi + U_\omega) =
 \Dtb (V + \nabla h B) &= -\nabla h + (\Dtb \nabla h) B\\
 &= -\nabla h + (\nabla B  - \nabla V^k \nabla_k h)B\\
 &= -\nabla h + \frac{1}{2} \nabla |B|^2
 - \nabla V^k\nabla_k h B.
 \label{evol}
\end{align}

We will write $f_b = f\big|_{\pa \D_t}$ for the restriction
to the boundary.
Expanding out the definition of $\Dtb$ and recalling by convention, sums over
repeated upper and lower indices run over only the first two indices gives that:
\begin{align}
 \Dtb (\nabla \varphi + U_\omega)
 &=\pa_t(\nabla \varphi + U_\omega)
 + (\pa_k\psi)_b \nabla^k \nabla \varphi
 + (\pa_k \psi)_b\nabla^k U_\omega
 + V_\omega^k \nabla_k \nabla \varphi
 + V_\omega^k \nabla_k U_\omega
 \label{}
\end{align}
Combining this with \eqref{evol} and expanding $(\pa_k \psi)_b =
\nabla_k \varphi - \nabla^k h(\pa_y \psi)_b$, we have:
\begin{align}
 \pa_t( \nabla \varphi + U_\omega)
 &= -\nabla h + \frac{1}{2} \nabla  |B|^2 - \nabla V^k\nabla_k h B
- \big(\nabla^k\varphi
-(\pa_y \psi)_b \nabla^kh \big)\nabla_k \nabla \varphi\\
&- \nabla^k \varphi \nabla_k U_\omega
+ (\pa_y \psi)_b \nabla^k h\nabla_kU_\omega
-V_\omega^k\nabla_k \nabla \varphi
- V_\omega^k \nabla_k U_\omega.
 \label{}
\end{align}

Expanding $V, B$ in terms of $\psi$ and $v_\omega$ and using \eqref{cr}:
\begin{align}
 \nabla V^k\nabla_k h B
&= \nabla (\pa_k \psi)_b\nabla^k h (\pa_y \psi)_b
+ \nabla V_\omega^k \nabla_k h B_\omega
+ \nabla (\pa_k\psi)_b \nabla^k h B_\omega
+ \nabla V_\omega^k \nabla_k h (\nabla_y\psi)_b\\
&= (\nabla \nabla_k\varphi) \nabla^k  h (\pa_y \psi)_b
+ \nabla V_\omega^k \nabla_k h B_\omega
+ \nabla(\nabla^k\varphi)\nabla_k h B_\omega\\
&- \nabla (\nabla^k h (\pa_y\psi)_b) \nabla_k h B_\omega
- \nabla(\nabla_k h (\pa_y \psi)_b)\nabla^k h(\pa_y\psi)_b
 + \nabla V_\omega^k \nabla_k h (\pa_y\psi)_b
 \label{}
\end{align}

We insert this expression into the previous one to get:
\begin{align}
 \pa_t (\nabla \varphi + U_\omega)
 &= A(\varphi, h)
 + \frac{1}{2} \nabla |B_\omega|^2
 + \nabla ( (\pa_y\psi)_b B_\omega)\\
 &- \nabla V_\omega^k \nabla_k h B_\omega
 - V_\omega^k \nabla_k U_\omega\\
 & -\nabla \nabla^k \varphi (\nabla_k h B_\omega)
 -\nabla^k \varphi \nabla_k U_\omega - V_\omega^k \nabla_k \nabla \varphi\\
 & +(\pa_y \psi)_b \nabla^kh \nabla_k U_\omega + \nabla( \nabla^k h (\pa_y \psi)_b)
 \nabla_k h B_\omega
 - \nabla V_\omega^k \nabla_k h (\pa_y \psi)_b,
 \label{annoying}
\end{align}
where $A$ is given by:
\begin{align}
 A &= -\nabla h + \frac{1}{2} \nabla (\pa_y\psi)_b^2
 -\frac{1}{2} \nabla |\nabla \varphi|^2
 + (\pa_y\psi)_b\nabla^kh \nabla_k \nabla \varphi\\
 &- \bigg(\nabla (\nabla^k \varphi )\nabla_k h
 + \nabla (\nabla^k h (\pa_y\psi)_b) \bigg)(\pa_y\psi)_b\\
 &= -\nabla h + \frac{1}{2} \nabla (\pa_y \psi)_b^2
 - \frac{1}{2} \nabla |\nabla \varphi|^2-\nabla\big(\nabla^k h (\pa_y\psi)_b
 \big) (\pa_y\psi)_b,
 \label{}
\end{align}
using \eqref{cr} in the last step.
Applying \eqref{neum} shows that:
\begin{equation}
 A = \nabla \bigg(-h - \frac{1}{2}|\nabla \varphi|^2
 + \frac{(G(h)\varphi + \nabla h\cdot \nabla \varphi)^2}{2(1 + |\nabla h|^2)}
 \bigg)
 \label{}
\end{equation}
We now want to show that all of the other terms in \eqref{annoying} are
also gradients.

To handle the terms on the second row of \eqref{annoying}, we note that
by the definition of $U_\omega$:
\begin{multline}
  \nabla V_\omega^k \nabla_k h B_\omega + V_\omega^k \nabla_k U_\omega
  = \nabla V_\omega^k (\delta_{k\ell} U_\omega^\ell) + V_\omega^k \nabla_k U_\omega
  - \nabla V_\omega^k (\delta_{k\ell} V_\omega^\ell)\\
  = \nabla (\delta_{k\ell} V_\omega^k U_\omega^\ell) - \frac{1}{2} \nabla |V_\omega|^2
 \label{annoying1}
\end{multline}
where we used the fact that $\curl U_\omega = 0$ in the last step.

To deal with the terms on the third row of \eqref{annoying} we note that:
\begin{align}
 \nabla_k \varphi \nabla^k U_\omega
 + V_\omega^k \nabla_k \nabla \varphi
 +\nabla \nabla_k\varphi \nabla^k h B_\omega
 = \nabla_k \varphi \nabla^k U_\omega
 + \nabla_k \nabla \varphi (V_\omega^k + \nabla^k h B_\omega)
 = \nabla (\nabla_k\varphi U_\omega^k).
 \label{annoying2}
\end{align}

Finally, to handle the terms on the last line of \eqref{annoying}, we
again use that $\curl U_\omega = 0$ and expand out $U_\omega = V_\omega
+ \nabla h B_\omega$ and write the result as:
\begin{align}
 (\pa_y\psi)_b
 &\big(\nabla_k h \nabla_i  U_\omega^k
 +\nabla_i \nabla^kh \nabla_kh B_\omega
 - \nabla_i V_\omega^k \nabla_k h\big)
 + |\nabla h|^2 B_\omega \nabla_i (\pa_y \psi)_b\\
 &= (\pa_y \psi)_b
 \big(\nabla_k h \nabla_i V_\omega^k
 + \nabla_k h \nabla_i (\nabla_kh B_\omega) +
 \nabla_i \nabla_k h \nabla^k  h B_\omega - \nabla_i V_\omega^k \nabla_k h
 \big)
 + |\nabla h|^2 B_\omega \nabla_i(\pa_y \psi)_b\\
 &=\nabla_i \big( |\nabla h|^2 (\pa_y \psi)_b B_\omega\big).
 \label{annoying3}
\end{align}

Combining the results of \eqref{annoying1}-\eqref{annoying3}, we see
that \eqref{annoying} becomes:
\begin{multline}
 \pa_t (\nabla \varphi + \nabla a_\omega)
 = A(\varphi, h)\\
 + \frac{1}{2} \nabla |B_\omega|^2 + \frac{1}{2} \nabla |V_\omega|^2
 - \nabla \big(V_\omega \cdot U_\omega)
 + \nabla ((1 + |\nabla h|^2)(\pa_y\psi)_b B_\omega)
 -\nabla (\nabla\varphi \cdot U_\omega)
 \label{lessannoying}
\end{multline}

Now we note that since $v_\omega \cdot \n = 0$, we have $B_\omega =
V_\omega^k\nabla_k h$ which further implies $U_\omega = V_\omega +
\nabla h  (V_\omega \cdot \nabla h)$. The second line of
\eqref{lessannoying} then becomes the gradient of:
\begin{multline}
  -\frac{1}{2} (\nabla h\cdot V_\omega)^2
  - \frac{1}{2} (V_\omega)^2
  + \bigg( (1 + |\nabla h|^2) (\pa_y\psi)_b - \nabla\varphi\cdot
  \nabla h\bigg) (\nabla h \cdot V_\omega)
  - \nabla \varphi \cdot V_\omega
  \\
  =
  -\frac{1}{2} (\nabla h\cdot V_\omega)^2
  - \frac{1}{2} (V_\omega)^2 +
  (G(h)\varphi) (V_\omega \cdot \nabla h)
  -\nabla \varphi \cdot V_\omega
 \label{}
\end{multline}

Next, we note that the vorticity does not enter into $h$ \eqref{dth} when
we write $v$ in terms of $\psi, v_\omega$.
Indeed, recalling that $v_\omega \cdot \n = 0$ on $\pa \D_t$
and using \eqref{dth} gives:
\begin{align}
 \pa_t h &= -(\pa \psi)_b\cdot \nabla h - V_\omega\cdot \nabla h
 + (\pa_y \psi)_b + B_\omega\\
 &= \sqrt{ 1 + |\nabla h|^2} \big((\pa \psi)_b\cdot \n + (v_\omega)_b \cdot \n
 \big)\\
 &= G(h) \varphi,
 \label{realdth}
\end{align}
where in the last step we used that $\Delta \psi = 0$ in $\D_t$.

Combining the result of the above calculation with \eqref{realdth}
completes the proof.
\end{proof}

It is a little awkward to work in terms of $a_\omega$, since it depends on the
vorticity in the interior in a complicated way,
and moreover we only control
$\nabla a_\omega$, not $a_\omega$ itself. For this reason we set:
\begin{equation}
 \varphi_\omega = \varphi + a_\omega.
 \label{varphiwdef}
\end{equation}
The above system becomes:
\begin{align}
 \pa_t h &= G(h) \varphi_\omega - G(h) a_\omega,\\
 \pa_t \varphi_\omega &= -h - |\nabla \varphi_\omega|^2
 + \frac{(G(h) \varphi_\omega + \nabla h\cdot \nabla \varphi_\omega)^2}{1 + |\nabla h|^2}
 + \widetilde{R_\omega},
 \label{}
\end{align}
where:
\begin{multline}
 \widetilde{R_\omega} =
 -\frac{1}{2} |\nabla a_\omega|^2 + \nabla \varphi_\omega \cdot \nabla a_\omega
 + (1 + |\nabla h|^2)^{-1} \Big( (G(h)a_\omega + \nabla h\cdot \nabla a_\omega)^2
 - 2 (G(h)a_\omega + \nabla h\cdot \nabla a_\omega)(G(h)\varphi_\omega
 + \nabla h\cdot \nabla \varphi_\omega)\Big)\\
 -\frac{1}{2} |V_\omega|^2
   -\nabla \varphi_\omega \cdot V_\omega - \frac{1}{2} \big(V_\omega \cdot
   \nabla h\big)^2
   + (G(h) \varphi_\omega) V_\omega \cdot \nabla h
   +\nabla a_\omega \cdot V_\omega
   - (G(h)a_\omega )V_\omega \cdot \nabla h)
 \label{}
\end{multline}
We now recall that $\nabla a_\omega = V_\omega + \nabla h B_\omega$ and
that $B_\omega = \nabla h \cdot V_\omega$. Writing $G(h)a_\omega
= G(h)\Lambda^{-1} R\cdot \nabla a_\omega$,
we note that $V_\omega$ enters \emph{linearly} into these
equations, since:
\begin{equation}
 \pa_t h = G(h)\varphi_\omega - G(h)(\Lambda^{-1} R\cdot V_\omega)
 - G(h) (\Lambda^{-1} R\cdot (\nabla h  \cdot V_\omega) ).
 \label{}
\end{equation}
We also note that $V_\omega, B_\omega$ enter no more than quadratically into
the remaining terms.

Using these identities, can further re-write:
\begin{multline}
 \widetilde{R_\omega} = -|V_\omega \cdot \nabla h|^2 +
  (\nabla \varphi_\omega\cdot \nabla h)(V_\omega \cdot \nabla h)
  +G(h)\Big[\varphi_\omega - \Lambda^{-1} R\cdot V_\omega - \Lambda^{-1} R\cdot (\nabla hB_\omega) \Big]
  (V_\omega \cdot \nabla h)\\
  +(1 + |\nabla h|^2)^{-1} \Bigg(\Big( G(h) [\Lambda^{-1} R\cdot V_\omega] + \nabla h\cdot V_\omega\Big)^2
  - (G(h)\Lambda^{-1} R\cdot V_\omega + \nabla h\cdot V_\omega)(G(h)\varphi_\omega
  + \nabla h \cdot \nabla \varphi_\omega)\Bigg)\\
  + \text{ more nonlinear terms }
 \label{rearrange}
\end{multline}

We now expand out $G(h)$ in powers of $h$.
 We recall the following
expansion of $G(h)$ in powers of $h$:
\begin{align}
 G(h) = \Lambda + G_2(h) + G_3(h) + G_4(h),
 \label{formal1}
\end{align}
with:
\begin{align}
 G_2(h) &= -\nabla\cdot(h \nabla) + \Lambda(h \Lambda),\\
 G_3(h) &= \Lambda(h^2\Lambda^2) + \Lambda^2(h^2 \Lambda)
 - 2( h \Lambda (h \Lambda)),
 \label{formal2}
\end{align}
and where $G_4(h) \equiv G(h) - \Lambda - G_1(h) - G_2(h)$ vanishes to order
3 when $h = 0$. See \cite{Craig1994} for a formal derivation
of this expansion, and e.g. Appendix F of \cite{Germain2012} for
rigorous estimates for $G_4$. Here, we are using the notation:
\begin{align}
  \Lambda^s f = \F^{-1} (|\xi|^{s} \F f), \quad s \in \R,
 \label{}
\end{align}
where $\F$ is the Fourier transform on $\R^2$.

In particular, keeping track of just the terms which are
linear or quadratic, the above
equations become:
\begin{align}
 \pa_t h &= \Lambda \varphi_\omega - R\cdot V_\omega - \nabla\cdot (h \nabla \varphi_\omega)
 - \Lambda (h \Lambda \varphi_\omega) +
 \Lambda(h R\cdot V_\omega) + \nabla \cdot (h V_\omega) + ...\\
 \pa_t \varphi_\omega &= -h - |\nabla \varphi_\omega|^2 +
 (\Lambda \varphi_\omega)^2 + |R\cdot V_\omega|^2
 + (R\cdot V_\omega)\Lambda \varphi_\omega + ...
 \label{}
\end{align}

We now set:
\begin{equation}
 u = h + i \Lambda^{1/2}\varphi_\omega
 \label{realudef}
\end{equation}
With this definition, we can recover $h, \varphi_\omega$
from $u$:
\begin{align}
 h = \Re u,
 &&
 \varphi_\omega = \Lambda^{-1/2} \Im u.
 \label{}
\end{align}

In what follows, we will write
$u_R = Re u$ and $u_I = \Im u$.  We will also write $R_i$ for the
Riesz transform:
\begin{align}
 \F (R_i f)(\xi) = \frac{\xi_i}{|\xi|} (\F f)(\xi), &&i = 1,2.
 \label{}
\end{align}

\begin{prop}
  \label{formulation}
 With the above definitions, we have:
 \begin{align}
  (\pa_t + i \Lambda^{1/2})u = N(u) + L(V_\omega) + N_1(u,V_\omega) + N_2(u, V_\omega)
  + N_3(u, V_\omega),
  \label{}
 \end{align}
 where $N(u) = B(u) + T(u) + R(u)$ and:
 \begin{align}
  B(u) &=
  \Lambda u_R (\Lambda^{1/2} u_I)
  + \nabla \cdot ( u_R
  (\Lambda^{-1/2} \nabla u_I))
   + i\Lambda^{1/2}
   \Big( |\Lambda^{-1/2} \nabla u_I|^2
   + |\Lambda^{1/2} u_I|^2\Big)
   \\
  T(u) &= -\frac{1}{2} \Lambda( u_R^2 \Lambda^{3/2}u_I) + \Lambda^2 (u_R^2
  \Lambda^{1/2} u_I) - 2\Lambda(u_R\Lambda( u_R\Lambda^{1/2} u_I))
  +i\Lambda^{1/2}\Big(\Lambda^{1/2} u_I \big(u_R\Lambda^{3/2}u_I -
  \Lambda(u_R\Lambda^{1/2} u_I) \big)\Big)
  \label{}\\
  L(V_\omega) &= -R \cdot V_\omega,\\
  N_1(u,V_\omega) &= \Lambda^{1/2} (R\cdot V_\omega \Lambda^{1/2} u_I)
  - \nabla\cdot( u_R V_\omega)
  + \Lambda (  u_R R\cdot V_\omega)\\
  N_2(V_\omega, V_\omega) &= \Lambda^{1/2} (R \cdot V_\omega)^2,
  \label{}
 \end{align}
 and where $R(u)$ (resp. $N_3(u,V_\omega)$) vanish to order 4 (resp. 3) when $h = 0$,
 and where $N_3(u,V_\omega)$ is quadratic in $V_\omega$ and its derivatives.
\end{prop}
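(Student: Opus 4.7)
The strategy is to apply $(\pa_t + i\Lambda^{1/2})$ directly to $u = h + i\Lambda^{1/2}\varphi_\omega$, substitute the system for $\pa_t h$ and $\pa_t \varphi_\omega$ displayed just after \eqref{varphiwdef}, and then use the formal expansion $G(h) = \Lambda + G_2(h) + G_3(h) + G_4(h)$ from \eqref{formal1}-\eqref{formal2} to group the result by order in $u$ and in $V_\omega$. The linear part simplifies immediately: the $\Lambda \varphi_\omega$ from $\pa_t h$ cancels against $-\Lambda\varphi_\omega$ coming from $i\Lambda^{1/2}(i\Lambda^{1/2}\varphi_\omega)$, the $i\Lambda^{1/2}(-h)$ from $\pa_t \varphi_\omega$ cancels $i\Lambda^{1/2} h$, and the only surviving linear contribution is $-\Lambda a_\omega$ from $-G(h) a_\omega$, which reduces at leading order to $-R\cdot V_\omega = L(V_\omega)$ after using $\nabla a_\omega = V_\omega + \nabla h B_\omega$ from Theorem \ref{formulationthm}(1).

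For the pure-$u$ nonlinearity $N(u) = B(u) + T(u) + R(u)$, the quadratic contribution $G_2(h)\varphi_\omega$ from $\pa_t h$ produces the real terms of $B(u)$ after translating $h = u_R$ and $\varphi_\omega = \Lambda^{-1/2} u_I$, while $i\Lambda^{1/2}$ applied to the quadratic pieces $-|\nabla\varphi_\omega|^2 + (\Lambda\varphi_\omega)^2$ of $\pa_t\varphi_\omega$ produces the imaginary part of $B(u)$. The cubic-in-$u$ terms come from $G_3(h)\varphi_\omega$ together with the order-three expansion of $(G(h)\varphi_\omega + \nabla h\cdot\nabla\varphi_\omega)^2/(1 + |\nabla h|^2)$, and assemble into $T(u)$; everything of order $\geq 4$ in $h$ (from $G_4$, from further expansion of $(1+|\nabla h|^2)^{-1}$, and from higher order pieces of $G_3$) is absorbed into $R(u)$.

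The mixed and pure $V_\omega$ terms are handled with the help of the identities $B_\omega = \nabla h\cdot V_\omega$ (valid because $v_\omega\cdot \n = 0$ on $\pa\D_t$) and $a_\omega = \Lambda^{-1} R\cdot (V_\omega + \nabla h B_\omega)$; I would expand $G(h) a_\omega$ and the rearranged remainder $\widetilde{R_\omega}$ from \eqref{rearrange} in powers of $h$. The linear-in-$V_\omega$ quadratic correction to $\pa_t h$ together with $i\Lambda^{1/2}$ applied to $(R\cdot V_\omega)\Lambda\varphi_\omega$ in $\pa_t\varphi_\omega$ produces $N_1(u,V_\omega)$, while $i\Lambda^{1/2} |R\cdot V_\omega|^2$ yields $N_2(V_\omega, V_\omega) = \Lambda^{1/2}(R\cdot V_\omega)^2$. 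All remaining terms have at least one factor of $h$ multiplying a quadratic expression in $V_\omega$, and are collected into $N_3(u, V_\omega)$; this automatically makes $N_3$ quadratic in $V_\omega$ and of vanishing order $\geq 3$ when $h = 0$.

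The main obstacle is purely algebraic bookkeeping: one has to carefully expand the nonlocal operator $G(h)$ acting on both $\varphi_\omega$ and $a_\omega$, the factors of $(1+|\nabla h|^2)^{-1}$, and the many terms of $\widetilde{R_\omega}$ in powers of $h$, repeatedly invoking the curl-free identity $U_\omega = \nabla a_\omega$ from Theorem \ref{formulationthm}(1) in order to convert combinations of $V_\omega$ and $\nabla h B_\omega$ into gradients before extracting $\Lambda^{-1}$. No new analytic input is required; it is a systematic check that matching coefficients to the stated forms of $B$, $T$, $L$, $N_1$, and $N_2$ exhausts all terms of order $\leq 3$ (respectively $\leq 2$ in $V_\omega$), leaving only quantities of the claimed higher vanishing order for $R$ and $N_3$.
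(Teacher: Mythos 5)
Your proposal matches the paper's own (informal) derivation: the paper itself proceeds by rewriting the system from Theorem \ref{formulationthm} in terms of $\varphi_\omega = \varphi + a_\omega$, using $\nabla a_\omega = V_\omega + \nabla h B_\omega$ and $B_\omega = \nabla h\cdot V_\omega$ to make $V_\omega$ appear explicitly, expanding $G(h)$ via \eqref{formal1}--\eqref{formal2}, and then applying $(\pa_t + i\Lambda^{1/2})$ to $u = h + i\Lambda^{1/2}\varphi_\omega$ and sorting the resulting terms by homogeneity in $u$ and $V_\omega$. Your cancellation of the linear-in-$u$ pieces and the identification of $-R\cdot V_\omega$ as the surviving linear term are exactly the paper's. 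One small inaccuracy: you claim every term of $N_3$ has the form $h\cdot(\text{quadratic in }V_\omega)$, but $N_3$ also absorbs cubic terms that are quadratic in $h$ and only \emph{linear} in $V_\omega$ --- for instance those coming from $G_3(h)a_\omega$, since $a_\omega$ is itself linear in $V_\omega$ to leading order and $G_3$ is quadratic in $h$. The paper's own characterization (``vorticity enters at most quadratically'' in the discussion of $g_5$) is the accurate version; this does not affect the structure of the argument.
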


For later use, we record the
 Duhamel form of these equations:
\begin{align}
 e^{it\Lambda^{1/2}}  u(t) - u_0 &=
 g_1(t) + g_2(t) + g_3(t) + g_4(t) + g_5(t),
 \label{duha2}
\end{align}
where:
\begin{equation}
 g_1(t) = \int_0^t e^{is\Lambda^{1/2}} N(u)\, ds,
 \label{}
\end{equation}
\begin{align}
 g_2(t)&= \int_0^t e^{is\Lambda^{1/2}} L(w)\, ds,
 &&
 g_3(t)= \int_0^t e^{is\Lambda^{1/2}} N_1(u,w)\, ds,\\
 g_4(t)&= \int_0^t e^{is\Lambda^{1/2}} N_2(w,w)\, ds, &&
 g_5(t)= \int_0^t e^{is\Lambda^{1/2}} N_3(u,w)\, ds,
 \label{}
\end{align}

\section{Elliptic estimates and the regularity of the free boundary}
\label{ellsec}

Much of the material in the following sections is based heavily on the
estimates and ideas in \cite{Christodoulou2000}.
In \cite{Christodoulou2000}, the authors
consider the free boundary problem for a bounded fluid region, but extending
their approach to the case of an unbounded domain is straightforward.

It is convenient to work in terms of
Lagrangian coordinates, which we now define.
We let $\Omega$ denote the lower half-plane in $\R^3$.
In this section, we will use the convention that points in
$\D_t$ are denoted by $x$ and points in $\Omega$ are denoted by
$y$.
The Lagrangian coordinates
$x(t) : \Omega \to \D_t$ are then defined by:
\begin{align}
  \frac{d}{dt} x_i(t, y) &= v_i(t, x(t,y))\, && y \in \Omega,\\
  x(0, y) &= y.
 \label{}
\end{align}
In these coordinates, the material derivative $D_t =
\pa_t + v^k\pa_k$ becomes
the usual time derivative:
\begin{align}
 D_t = \frac{\pa}{\pa t} \bigg|_{y = const.} = \frac{\pa}{\pa t}\bigg|_{x = const.}
  + v^k \frac{\pa}{\pa x_k}.
 \label{}
\end{align}

The Lagrangian coordinates $x$ induce a time dependent (co)metric $g$ on $\Omega$:
\begin{align}
 g_{ab} &= \delta_{ij} \frac{dx^i}{dy^a} \frac{dx^j}{dy^b},
 \quad g^{ab} = \delta^{ij} \frac{dy^a}{dx^i} \frac{dy^b}{dx^j}
 \label{metric}
\end{align}
We use the convention
that indices $a,b,c...$ denote quantities expressed in Lagrangian
coordinates and indices $i,j,k,..$ denote quantities expressed in
the $x$ coordinates.
We let $\na$ denote the covariant derivative
on $\Omega$ with respect to the metric $g$. We write
$\Gamma^a_{bc}$ for the Christoffel symbols:
\begin{equation}
 \Gamma^c_{ab} = \frac{1}{2} g^{cd}\bigg(
 \frac{\pa}{\pa y^a} g_{bd} + \frac{\pa}{\pa y^b} g_{ad}
 - \frac{\pa}{\pa y^d} g_{ab}\bigg),
 \label{}
\end{equation}
and the covariant derivative of a $(0,r)$ tensor $\beta$ is then:
\begin{align}
 \na_a \beta_{a_1\cdots a_r}
 = \pa_{y^a} \beta_{a_1\cdots a_r}
 - \Gamma_{aa_1}^d \beta_{d a_2 \cdots a_r}
 -\cdots \Gamma_{a a_r} \beta_{a_1\cdots a_{r-1} d}.
 \label{}
\end{align}

We let $d = d(t, p) = \dist_g(p, \pa \Omega)$ denote the geodesic
distance with respect to the metric $g$ from $p \in \Omega$ to $\pa \Omega$,
and we define the unit normal to $\pa \Omega$ by:
\begin{align}
 \n_a = \pa_a d, && \n^a = g^{ab}\n_b.
 \label{}
\end{align}
We will also write $\n_i$ for the normal expressed in Eulerian coordinates:
\begin{align}
 \n_i = \frac{\pa y^a}{ \pa x^i} \n_a, &&
 \n^i = \delta^{ij} \n_j
 \label{}
\end{align}

We let $\iota_0 = \iota_0(t)$ denote the injectivity radius
of $\pa \D_t$
By definition, this is the largest number $\iota_0$ so
that the map:
\begin{align}
 (x,\iota) \to x + \iota \n(x), \quad x \in \pa \D_t
 \label{}
\end{align}
is injective from $\pa \D_t \times (-\iota_0, \iota_0) \to \{ x \in \D_t :
d(t,p) < \iota_0\}.$

The (co)metric on $\pa \Omega$ is given by:
\begin{align}
 \gamma_{ab} =g_{ab} - \n_a \n_b,
 && \gamma_a^b = \delta_a^b - \n_a \n^b,
 \label{}
\end{align}
and
the second fundamental form of $\pa \Omega$ is:
\begin{align}
 \theta_{ab} = \gamma_a^c \gamma_b^d \nabla_c \n_d.
 \label{}
\end{align}
We note that on $\pa \Omega$, if $\tn$ denotes the
covariant derivative on $\pa \Omega$ with respect to the
metric $\gamma$, then:
\begin{equation}
 \tn_a \beta_{a_1\cdots a_r} = \gamma_a^b \gamma_{a_1}^{b_1}\cdots
 \gamma_{a_r}^{b_r} \na_b \beta_{b_1\cdots b_r}.
 \label{}
\end{equation}
In particular this implies that if $q$ is a function on $\Omega$ with
$q = 0$ on $\pa \Omega$ then $\gamma_a^b\na_b q = 0$ on $\pa \Omega$.

\subsection{The extension of the normal to the interior}

Since $d$ is the geodesic distance, we have
$\na_{\na d} \na d = 0$ and so $\na \na d = \tilde{\theta}$,
where $\tilde{\theta}$ is the second fundamental form for the surfaces
$\{d = \textrm{const}\}$. We will also write $\theta$ for the
second fundamental form of $\pa \Omega$; if $n_a$ is the unit normal
vector to $\pa \Omega$, then:
\begin{equation}
 \theta_{ab} = (\delta^c_a - n_a n^c)(\delta^d_b - n_bn^d) \na_c n_d.
 \label{}
\end{equation}

We now define an extension of the normal to a neighborhood
of the boundary.
We fix $d_0$ with $\iota_0/16 \leq d_0 \leq \iota_0/2$ and let
$\eta \in C^\infty(\R)$ be a function with $\eta(s) = 1$ when
$|s| \leq 1/2, \eta(s) = 0$ when $|s| \geq 3/4$, $0 \leq \eta(s) \leq 1$
and $|\eta'| \leq 4$. We then define:
\begin{align}
 \tilde{n}_a(p) = \eta\bigg( \frac{d(p)}{d_0}\bigg) \na_a d (x,y).
 \label{tilden}
\end{align}
Close to the boundary, we have $\tilde{n}_a = \na_a d$ and away from the
boundary, $\tilde{n}_a = 0$.
We will not need the following lemma explicitly but it is useful to note
that we can control
the regularity of $\widetilde{n}$. See Lemma 3.10 in \cite{Christodoulou2000}
for the proof.
\begin{lemma}
  \label{geomlem}
 With the above definitions, for each $y \in \pa \Omega$,
 if $d \leq \iota_0/2$:
 \begin{align}
  |\na \tilde{n}(q, d)| \leq 2 |\theta(q)|, &&
  |D_t \tilde{n}(q,d)| \leq 6 ||h||_{L^\infty(\Omega)},
  \label{}
 \end{align}
 where $h_{ab} = \frac{1}{2} D_t g_{ab}$.
\end{lemma}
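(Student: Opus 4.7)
The statement is essentially Lemma 3.10 of \cite{Christodoulou2000}; since both inequalities are pointwise at $q \in \pa\Omega$ and depend only on the geometry in a normal neighborhood of width at most $\iota_0/2$, the argument there adapts verbatim to our unbounded setting. I sketch the strategy.

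For the first bound, differentiate \eqref{tilden}:
\begin{equation*}
  \na_b \tilde{n}_a = \frac{1}{d_0}\,\eta'\!\Big(\frac{d}{d_0}\Big)\,\na_b d\,\na_a d + \eta\!\Big(\frac{d}{d_0}\Big)\,\na_b \na_a d.
\end{equation*}
Since the Lagrangian map $y \mapsto x(t,y)$ is an isometry of $(\Omega,g)$ onto a subset of flat $\R^3$, the Riemann tensor of $g$ vanishes. The eikonal relation $g^{ab}\na_a d\,\na_b d = 1$ gives $\na_{\na d}\na d = 0$, so $\na \na d$ coincides with the second fundamental form $\tilde{\theta}$ of the level surface $\{d = \mathrm{const}\}$. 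Along the unit-speed normal geodesic $\gamma(s)$ with $\gamma(0) = q \in \pa\Omega$, flatness of the ambient metric reduces the Riccati equation to $\tfrac{D}{ds}\tilde{\theta} = -\tilde{\theta}\circ\tilde{\theta}$, with initial condition $\tilde{\theta}|_{s=0} = \theta(q)$. For $s \leq \iota_0/2$ the solution cannot blow up, and a direct Gronwall-type estimate yields $|\tilde{\theta}(q,s)| \leq 2|\theta(q)|$. On the annular region $d_0/2 \leq d \leq 3d_0/4$ where $\eta'$ is nonzero, the additional term has magnitude $\leq 4/d_0$, which is controlled by $|\theta(q)|$ thanks to the choice $d_0 \leq \iota_0/2$ and the comparability of $\iota_0^{-1}$ with $|\theta|$ built into the selection of $d_0$.

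For the second bound, since $y$ is fixed under $D_t$ we have $[D_t, \pa_{y^a}] = 0$, so applied to the scalar $d$,
\begin{equation*}
  D_t \tilde{n}_a = \frac{1}{d_0}\,\eta'\!\Big(\frac{d}{d_0}\Big)\,\pa_a d \cdot D_t d + \eta\!\Big(\frac{d}{d_0}\Big)\,\pa_a D_t d.
\end{equation*}
The first variation of Riemannian distance under the metric perturbation $D_t g_{ab} = 2h_{ab}$ gives, for any $y$ with $d(y) \leq \iota_0/2$ where the minimizing geodesic $\gamma$ from $\pa\Omega$ is unique and varies smoothly in $t$,
\begin{equation*}
  D_t d(y) = \int_0^{d(y)} h_{ab}(\gamma(\sigma))\,T^a T^b\,d\sigma,
\end{equation*}
with $T$ the unit tangent to $\gamma$. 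This immediately yields $|D_t d| \leq d\,||h||_{L^\infty}$ and, after differentiating in $y$ and using the smooth dependence of $\gamma$ on its endpoint, a pointwise bound on $|\pa_a D_t d|$ of the same order. Combining with $|\eta'| \leq 4$ and $d/d_0 \leq 3/4$ produces the stated constant $6$.

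The main technical point is the first variation formula for $D_t d$: boundary contributions in the integration by parts vanish only because the minimizing geodesic stays within the injectivity radius and meets $\pa\Omega$ orthogonally, which is precisely where the hypothesis $d \leq \iota_0/2$ (and hence $d_0 \leq \iota_0/2$) enters. The remainder of the argument is bookkeeping with the cutoff $\eta$.
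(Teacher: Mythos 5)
The paper itself does not prove this lemma; it cites Lemma 3.10 of \cite{Christodoulou2000}, so I can only assess whether your sketch plausibly reconstructs that argument. Your overall strategy is the right one: the Riccati equation $\tfrac{D}{ds}\tilde\theta = -\tilde\theta\circ\tilde\theta$ (valid because the Lagrangian pullback metric $g$ is flat) together with $d\leq\iota_0/2$ does give $|\tilde\theta(q,s)|\leq 2|\theta(q)|$ on the normal geodesic, and the first-variation formula $D_t d(y)=\int_0^{d(y)}h(\gamma',\gamma')\,d\sigma$ is indeed the main tool for the second bound.

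However, your treatment of the cutoff term in the first inequality does not work as written. You claim the contribution $\tfrac{1}{d_0}\eta'\,\na d\otimes\na d$, of size at most $4/d_0\leq 64/\iota_0$, is ``controlled by $|\theta(q)|$ thanks to \ldots the comparability of $\iota_0^{-1}$ with $|\theta|$.'' But the only a priori relation goes the other way: the normal injectivity radius satisfies $\iota_0\leq 1/\max_i|\kappa_i(q')|\leq 1/|\theta(q')|$ for every $q'$, so $\iota_0^{-1}$ \emph{dominates} the pointwise curvature; it is never bounded above by $|\theta(q)|$ at an individual point $q$ (take any $q$ where $\theta(q)=0$ while the surface is curved elsewhere). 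So that step does not close. If you track the argument honestly you get a bound of the form $|\na\tilde n|\leq 2|\theta(q)| + C/\iota_0$ on the annulus where $\eta'\neq 0$, which is the form that appears in Lemma \ref{geomlem2} for $\gamma$; a pure $2|\theta(q)|$ bound holds only on $\{d\leq d_0/2\}$, where $\eta'\equiv 0$.

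Second, you dismiss the estimate of $\pa_a D_t d$ as ``bookkeeping.'' It is not: the normal component $\n^a\pa_a D_t d = h(\n,\n)$ is immediate, but the tangential components require differentiating $\int_0^d h(\gamma',\gamma')\,d\sigma$ with respect to the endpoint, which brings in Jacobi fields $J$ along the normal geodesic with $J''=0$ (again using flatness) and initial data $J'(0)=\theta(J(0),\cdot)^\sharp$. One then needs $d\,|\theta|\leq 1/2$ to keep $|J(d)|\gtrsim|J(0)|$ and conclude $|\gamma^{ab}\pa_b D_t d|\lesssim\|h\|_{L^\infty}$. This is where the hypothesis $d\leq\iota_0/2$ enters for the second inequality — not, as you say, through ``boundary contributions in the integration by parts.'' You should spell out this Jacobi-field computation if you intend this to be a self-contained proof rather than a pointer to \cite{Christodoulou2000}.
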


We now extend $\gamma$ to the interior $\Omega$. Abusing
notation, we will write:
\begin{align}
 \gamma_{ab} = g_{ab} - \tilde{n}_a \tilde{n}_b,
 &&
 \gamma_a^b = \delta^{ac} \gamma_{bc}
 && \gamma^{ab} = g^{ac}g^{bd} \gamma_{cd}.
 \label{gammadef}
\end{align}
On $\pa \Omega$, $\gamma_{ab}$ (resp. $\gamma^{ab})$ is just the metric
(resp. cometric) on $\pa \Omega$
induced by $g$,
and $\gamma^a_b$ is the projection to $T(\pa \Omega)$. Away from
$\pa \D_t$, $\gamma_{ab} = g_{ab}$ and $\gamma^a_b$ is the identity
map.
The estimates in Lemma \ref{geomlem} then imply (see Lemma 3.11 in
\cite{Christodoulou2000}):
\begin{lemma}
 \label{geomlem2}
 With the above definitions, we have:
 \begin{align}
  |\na \gamma| \leq  C \bigg( ||\theta||_{L^\infty(\Omega)}
  + \frac{1}{\iota_0} \bigg),
  &&
  |D_t \gamma| \leq C ||h||_{L^\infty(\pa \Omega)}
  \label{}
 \end{align}
\end{lemma}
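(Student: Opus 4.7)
Both estimates follow from applying the product rule to the definition $\gamma_{ab} = g_{ab} - \tilde n_a \tilde n_b$ in \eqref{gammadef} and invoking the pointwise bounds on $\tilde n$ provided by Lemma \ref{geomlem}, together with two facts about the ambient metric $g$: metric compatibility $\na g = 0$ (for the spatial bound) and the definition $D_t g_{ab} = 2 h_{ab}$ (for the time bound). No new ingredients are needed beyond what has already been proved, so the argument is essentially bookkeeping.

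For the first estimate, the Levi--Civita property $\na g = 0$ gives
\begin{equation}
\na_c \gamma_{ab} = -(\na_c \tilde n_a)\, \tilde n_b - \tilde n_a \,(\na_c \tilde n_b),
\end{equation}
so it suffices to bound $|\tilde n|$ and $|\na \tilde n|$. The first is trivially at most $1$, since $\tilde n$ is a smooth cutoff of $\na d$ and $|\na d|_g = 1$ wherever $d$ is smooth. For the second, I would differentiate the explicit formula \eqref{tilden} by the product rule. The piece in which the derivative falls on $\na d$ equals $\eta(d/d_0)\, \na\na d$ and is controlled by the bound $|\na \tilde n| \leq 2|\theta|$ of Lemma \ref{geomlem} (reflecting the identification $\na\na d = \tilde\theta$ for the parallel surfaces inside the normal collar, and the comparison between $\tilde\theta$ and the boundary second fundamental form $\theta$). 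The piece in which the derivative falls on the cutoff contributes $\eta'(d/d_0)\, \na d / d_0$, which is bounded by $C/d_0 \leq 16\,C/\iota_0$ in light of the choice $d_0 \geq \iota_0/16$. Adding these two contributions gives $|\na \gamma| \leq C(\|\theta\|_{L^\infty(\Omega)} + 1/\iota_0)$.

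For the second estimate I would compute $D_t \gamma_{ab} = 2h_{ab} - (D_t \tilde n_a)\tilde n_b - \tilde n_a (D_t \tilde n_b)$, using $D_t g_{ab} = 2h_{ab}$ from the definition of $h$ in Lemma \ref{geomlem}. The last two terms are directly controlled using $|\tilde n| \leq 1$ and the bound $|D_t \tilde n| \leq 6\,\|h\|_{L^\infty(\Omega)}$; since $\tilde n$ is supported in $\{d \leq 3d_0/4\} \subset \{d < \iota_0/2\}$, the norm of $h$ that appears can be taken over this collar, which is where the boundary control of $h$ is used. The leading $2h_{ab}$ contribution is handled the same way. This yields the claimed bound $|D_t\gamma| \leq C\|h\|_{L^\infty(\pa\Omega)}$.

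Since the two bounds are entirely routine once Lemma \ref{geomlem} is in hand, there is no real obstacle; the only point requiring attention is the correct accounting of the cutoff factor $\eta(d/d_0)$, which is precisely the source of the $1/\iota_0$ term in the first estimate.
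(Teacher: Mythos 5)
Your approach — product rule applied to $\gamma_{ab} = g_{ab} - \tilde n_a \tilde n_b$, combined with metric compatibility $\na g = 0$, the identity $D_t g_{ab} = 2h_{ab}$, and the pointwise bounds on $\tilde n$ from Lemma \ref{geomlem} — is the natural one, and it is indeed how Lemma 3.11 of Christodoulou--Lindblad (which the paper simply cites in place of a proof) is established.

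Two points in your write-up are worth tightening. First, your appeal to Lemma \ref{geomlem} for the "$\eta(d/d_0)\,\na\na d$ piece" is circular as written: the lemma as stated bounds the \emph{entire} $|\na\tilde n|$ by $2|\theta|$ with no $1/\iota_0$ contribution, so if taken at face value it would make the $1/\iota_0$ term in Lemma \ref{geomlem2} superfluous. Your actual calculation is the right one — the cutoff derivative $\eta'(d/d_0)/d_0$ does contribute $O(1/d_0) = O(1/\iota_0)$, and the stated Lemma \ref{geomlem} appears to have dropped that term — but then you should either justify the bound on $\na\na d = \tilde\theta$ directly (using $\tilde\theta(q,d) = \theta(q)(I - d\,\theta(q))^{-1}$ and $d < \iota_0/2 \leq 1/(2\|\theta\|_\infty)$), or cite Christodoulou--Lindblad Lemma 3.10 rather than the paper's abbreviated restatement.

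Second, your derivation of $D_t\gamma$ yields $|D_t\gamma| \leq C\|h\|_{L^\infty(\Omega)}$, not $\|h\|_{L^\infty(\pa\Omega)}$. The remark that $\tilde n$ is supported in a collar does not bridge this gap: the term $2h_{ab}$ from $D_t g$ has no cutoff and is present throughout $\Omega$, and in any case a collar near $\pa\Omega$ is not $\pa\Omega$ itself. The estimate with the boundary norm is only meaningful as a bound on $D_t\gamma\big|_{\pa\Omega}$ (which is all that is used downstream, since $\gamma$ appears in boundary integrals); there $h_{ab}$ is evaluated at boundary points, and one needs the corresponding boundary-trace version of the $D_t\tilde n$ bound from Christodoulou--Lindblad. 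As written, you prove the interior statement, which is weaker than the one claimed. This is largely an imprecision inherited from the paper's own statements, but your attempted fix does not close it and should be replaced by the restriction to $\pa\Omega$.
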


\subsection{Elliptic estimates}
\label{ellipticsec}

For notational convenience, in this section we write $x_3 = y$. We will use
multi-index notation and write $I = (i_1,\dots, i_r)$.
We will write $\na^r$ for the operator which has components:
\begin{equation}
 \na^r_{I} = \na_{i_1}\cdots \na_{i_r},
 \label{}
\end{equation}
If $i_j = 1,2$ for each $j = 1,..., r$, we will also write $\nabla^r$ for the operator:
\begin{equation}
 \nabla^r_I = \nabla_{i_1} \cdots \nabla_{i_r}.
 \label{}
\end{equation}
We will also write:
\begin{equation}
 \gamma^I_J = \gamma^{i_1}_{j_1}\cdots \gamma^{i_r}_{j_r},
 \label{}
\end{equation}
Let $\beta$ be a $(0,r+1)$ tensor with
$\beta_{i_1\cdots i_r i} = \na^r_{i_1 \cdots i_r} \alpha_i$ for some $(0,1)$-tensor $\alpha$. We write:
\begin{align}
  (\div \beta)_{I} &= \delta^{ij} \na _j\beta_{I} =
  \na^r_{I}( \delta^{ij}\na_j \alpha_i),\\
  (\curl \beta)_{ij} &= \na_i \beta_{I j} -
  \na_j \beta_{I i}
  = \na_{I}^r (\na_i \alpha_j - \na_j \alpha_i).
 \label{}
\end{align}
We will also write:
\begin{align}
 (\Pi \beta)_I = \gamma^{J}_I \beta_{J},\\
  (\n \cdot \beta)_I = \n^i \beta_{Ii}
 \label{}
\end{align}

We will rely heavily on the following pointwise estimate
in $\D_t$, which is originally from \cite{Christodoulou2000}:
\begin{lemma}
  If $\beta$ is as above, then:
 \begin{align}
  |\na \beta|^2
  \leq C \big( \delta^{ij}\gamma^{k\ell}\gamma^{IJ}(\na_k \beta_{Ii})
  (\na_\ell \beta_{Jj})
  + |\div \beta|^2 + |\curl \beta|^2\big), && \textrm{ in } \D_t,
  \label{pw}
 \end{align}
\end{lemma}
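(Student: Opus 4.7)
The strategy is to reduce \eqref{pw} to the near-boundary region and then systematically decompose every derivative slot using the orthogonal splitting $\delta^{ab} = \gamma^{ab} + \tilde n^a \tilde n^b$. At points where $\tilde n = 0$ we have $\gamma = g$, so that the first term on the right-hand side of \eqref{pw} already equals $|\na \beta|^2$ and the inequality is immediate. It thus suffices to work in the neighborhood of $\pa \Omega$ where $|\tilde n|_g = \eta(d/d_0) \leq 1$ and the splitting $g^{ab} = \gamma^{ab} + \tilde n^a \tilde n^b$ holds pointwise.

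Starting from $|\na \beta|^2 = \delta^{IJ}\delta^{ij}\delta^{k\ell}(\na_k\beta_{Ii})(\na_\ell\beta_{Jj})$, I would insert $\delta^{k\ell} = \gamma^{k\ell} + \tilde n^k \tilde n^\ell$ in the outer $k$-slot and $\delta^{i_s j_s} = \gamma^{i_s j_s} + \tilde n^{i_s}\tilde n^{j_s}$ in each of the $r$ index-pairs of $I,J$. Expanding, the unique all-tangential term $\delta^{ij}\gamma^{k\ell}\gamma^{IJ}(\na_k\beta_{Ii})(\na_\ell\beta_{Jj})$ is exactly the good term on the right-hand side of \eqref{pw}. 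Every remaining term contains at least one factor $\tilde n^a \tilde n^b$ contracted against one of the $r+1$ derivative slots, and hence represents a genuine normal derivative acting somewhere inside $\beta_{Ii} = \na^r_I\alpha_i$.

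To control a term carrying a normal component $\tilde n^k \tilde n^\ell$ in the outer $k$-slot, I would split the last $i$-index as $\delta^{ij} = \gamma^{ij} + \tilde n^i \tilde n^j$. The normal-normal piece rewrites as $\tilde n^i \tilde n^k \na_k \beta_{Ii} = (\div \beta)_I - \gamma^{ik}\na_k \beta_{Ii}$, whose square is controlled by $|\div \beta|^2$ plus the all-tangential quantity. For the tangential-normal piece $\gamma_i^{j} \tilde n^k \na_k \beta_{Ij}$, I would use the curl identity $\na_k \beta_{Ij} = \na_j \beta_{Ik} + (\curl\beta)_{Ikj}$: the curl term absorbs into $|\curl\beta|^2$, while the remaining term $\gamma_i^{j} \tilde n^k \na_j \beta_{Ik}$ is a tangential $j$-derivative contracted normally on the other slot, to which the pointwise Cauchy--Schwarz inequality
\[
\gamma^{jj'}\tilde n^k \tilde n^{k'} X_{jk} X_{j'k'} \leq \gamma^{jj'} \delta^{kk'} X_{jk} X_{j'k'}
\]
(a consequence of the positivity of $\gamma^{kk'} = \delta^{kk'} - \tilde n^k \tilde n^{k'}$) can be applied to produce a contraction of the form $\delta^{kk'}\gamma^{jj'}$, i.e., precisely the structure of the first term on the right-hand side of \eqref{pw}.

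For terms carrying a normal component in one of the inner $I$-slots, the crucial observation is that the Lagrangian metric $g$ is the pullback of the flat Euclidean metric under $y \mapsto x(t,y)$ and therefore has vanishing Riemann tensor, so $[\na_a,\na_b]=0$ on tensors. This allows me to freely permute the $r+1$ covariant derivatives and move any chosen normal derivative to the outermost slot, reducing to the case already handled. Iterating through the $r+1$ derivative slots terminates after finitely many steps, leaving only the pure-tangential term together with the $|\div\beta|^2$ and $|\curl\beta|^2$ contributions. The main obstacle is the combinatorial bookkeeping: one must verify that every normal contraction is eventually traded either for a divergence, a curl, or an additional tangential contraction via the Cauchy--Schwarz inequality above, without leaving behind residual terms involving $|\beta|^2$ or the extrinsic curvature $\theta$.
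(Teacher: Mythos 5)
The paper does not prove this lemma but cites \cite{Christodoulou2000}, where it appears as Lemma~5.5 and is established by essentially the argument you outline: the orthogonal split $\delta = \gamma + \tilde n\otimes\tilde n$, the flatness of the pulled-back Lagrangian metric (so that $\na^{r+1}\alpha$ is symmetric in its first $r+1$ slots), and the two exact identities $\na_k\beta_{Ij}-\na_j\beta_{Ik}=(\curl\beta)_{Ikj}$ and $\delta^{ki}\na_k\beta_{Ii}=(\div\beta)_I$. Your treatment of the single-normal case, including the final pointwise Cauchy--Schwarz step, is correct.

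Where your sketch stops short of a proof is the claim that a normal contraction in an $I$-slot ``reduces to the case already handled.'' When two or more derivative slots are $\tilde n$-contracted, moving one of them to the outer slot and performing a curl swap leaves the $\alpha$-slot occupied by a normal index, so the same move cannot immediately be repeated. The correct iteration alternates the two tools: if $\alpha$ is $\gamma$-contracted, move a normal to the outer slot and curl-swap; if $\alpha$ is $\tilde n$-contracted, move a normal to the outer slot and apply the divergence identity $\tilde n^k\tilde n^i\na_k\beta_{Ii}=(\div\beta)_I-\gamma^{ki}\na_k\beta_{Ii}$, whose remainder $\gamma^{ki}\na_k\beta_{Ii}$ is bounded by Cauchy--Schwarz in the two-dimensional tangent plane and leaves both the outer slot and the $\alpha$-slot tangentially contracted. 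Each such step strictly lowers the number of $\tilde n$-contracted derivative slots, so the procedure terminates after at most $r+1$ steps with only the all-tangential term and $|\div\beta|^2,\,|\curl\beta|^2$ remainders. Your worry about leftover contributions involving $|\beta|^2$ or the second fundamental form $\theta$ can be dismissed: every manipulation used---a permutation of symmetric slots, an exact curl or divergence identity, or a pointwise Cauchy--Schwarz estimate---is purely algebraic at a point and never differentiates $\gamma$ or $\tilde n$, so no curvature or zeroth-order terms can enter.
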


We will also use the following $L^2$ estimates:
\begin{lemma}
  \label{bigellprop}
 With the above notation, if
 $|\theta| + \frac{1}{\iota_0} \leq K$ then:
 \begin{align}
  ||\beta||_{L^p(\pa \Omega)}^p &\leq C \big( ||\na \beta||_{L^p(\Omega)}
  + K ||\beta||_{L^p(\Omega)}\big), \quad 1 < p < \infty,
  \label{trace}\\
  ||\beta||_{L^2(\pa \Omega)}^2 &\leq C ||\Pi \beta||_{L^2(\pa \Omega)}^2
  + C\big( ||\div \beta||_{L^2(\Omega)} +  ||\curl \beta||_{L^2(\Omega)}
  + K||\beta||_{L^2(\Omega)}\big) ||\beta||_{L^2(\Omega)},\label{traceproj}
  \\
  ||\beta||_{L^2(\pa \Omega)}^2 &\leq C ||\n\cdot\beta||_{L^2(\pa \Omega)}^2
  + C\big( ||\div \beta||_{L^2(\Omega)} +  ||\curl \beta||_{L^2(\Omega)}
  + K||\beta||_{L^2(\Omega)}\big) ||\beta||_{L^2(\Omega)},\label{traceproj2}
\end{align}
and
\begin{align}
  ||\na \beta||_{L^2(\Omega)}^2
  &\leq C ||\na \beta||_{L^2(\pa\Omega)} ||\beta||_{L^2(\pa \Omega)}
  + C \big(||\div \beta||_{L^2(\Omega)} + ||\curl\beta||_{L^2(\Omega)}\big)^2,
  \label{ezbdy}\\
  ||\na \beta||_{L^2(\Omega)}^2
  &\leq C ||\Pi \na \beta||_{L^2(\pa \Omega)} ||\Pi \n\cdot \beta||_{L^2(\pa \Omega)}
  + C \big(||\div \beta||_{L^2(\Omega)} + ||\curl \beta||_{L^2(\Omega)} +  K
  ||\beta||_{L^2(\Omega)}\big)^2,\label{bdyproj1}\\
  ||\na \beta||_{L^2(\Omega)}^2
  &\leq C ||\Pi \n \cdot \nabla \beta||_{L^2(\pa \Omega)} ||\Pi \beta||_{L^2(\pa \Omega)}
  + C \big(||\div \beta||_{L^2(\Omega)} + ||\curl \beta||_{L^2(\Omega)} +  K
  ||\beta||_{L^2(\Omega)}\big)^2.
  \label{bdyproj2}
 \end{align}
\end{lemma}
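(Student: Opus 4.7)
The estimates all follow from the pointwise inequality \eqref{pw} combined with the divergence theorem, using the identity $\tilde{n}^a\gamma_a^b = 0$ on $\pa\Omega$ to track the appearance of the tangential projection $\Pi$ in boundary terms. I would carry out the proof in three stages.

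For the trace inequality \eqref{trace}, I would apply the divergence theorem to (a regularization of) the vector field $\tilde{n}|\beta|^{p-2}\beta$, yielding
\begin{equation*}
 \int_{\pa\Omega}|\beta|^p\,dS \leq C\int_\Omega |\na \beta||\beta|^{p-1}\,dV + C\int_\Omega |\na \tilde{n}||\beta|^p\,dV,
\end{equation*}
after which Lemma \ref{geomlem2} gives $|\na\tilde{n}| \leq CK$ and H\"older/Young yield \eqref{trace}. Estimates \eqref{traceproj} and \eqref{traceproj2} then follow from the orthogonal decomposition $|\beta|^2 = |\Pi\beta|^2 + |\n\cdot\beta|^2$ at the boundary: apply \eqref{trace} with $p=2$ to $|\n\cdot\beta|^2$ (respectively $|\Pi\beta|^2$), and expand $\na(\tilde{n}^i\beta_i)$ by the product rule so that the resulting interior derivative of $\beta$ rearranges into $\div\beta$, $\curl\beta$, and $|\na\tilde{n}||\beta|\leq CK|\beta|$ contributions.

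The heart of the argument consists of \eqref{ezbdy}--\eqref{bdyproj2}. For \eqref{ezbdy} I would integrate $|\na\beta|^2$ by parts directly to produce $-\int_\Omega \beta\cdot\Delta\beta$; the Hodge-type identity $\Delta\beta \sim \na\div\beta - \na\curl\beta$ (with vanishing curvature contributions, since the Lagrangian metric $g$ is the pullback of the flat Euclidean metric) reduces the interior term to $||\div\beta||_{L^2}^2 + ||\curl\beta||_{L^2}^2$, while the boundary contribution is the full $\int_{\pa\Omega}\tilde{n}^k(\na_k\beta)\cdot\beta \leq ||\na\beta||_{L^2(\pa\Omega)}||\beta||_{L^2(\pa\Omega)}$. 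For the refined estimates \eqref{bdyproj1} and \eqref{bdyproj2} I would instead start from the sharper pointwise bound \eqref{pw}, commute derivatives via $\na_k\beta_{Ii} = \na_i\beta_{Ik} + (\curl\beta)_{Iki}$, and then integrate by parts in $\na_i$ (or $\na_j$) rather than in $\na_k$. Because $\na_k$ is contracted with $\gamma^{k\ell}$ while $\na_i$ is contracted with $\delta^{ij}$, this integration by parts picks up the nontrivial boundary contribution
\begin{equation*}
 \int_{\pa\Omega}\tilde{n}_j\delta^{ij}\gamma^{k\ell}\gamma^{IJ}(\na_i\beta_{Ik})\beta_{J\ell}\, dS,
\end{equation*}
and since $\gamma^{k\ell}\gamma^{IJ}\beta_{Ik}\beta_{J\ell}$ projects all free indices onto $T(\pa\Omega)$, this reduces to $\int_{\pa\Omega}\Pi(\n\cdot\na\beta)\cdot\Pi\beta$, yielding \eqref{bdyproj2}. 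Swapping the roles of $\na_i$ and $\na_j$ in the integration by parts puts the normal derivative on the other factor and gives \eqref{bdyproj1}.

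The main technical obstacle is the bookkeeping of the error terms generated when derivatives fall on the projection tensor $\gamma$ rather than on $\beta$ during integration by parts. Lemma \ref{geomlem2} supplies $|\na \gamma|\leq CK$, so each such error is pointwise of the form $K|\beta||\na \beta|$. Integrating and applying Cauchy--Schwarz gives $CK||\beta||_{L^2(\Omega)}||\na\beta||_{L^2(\Omega)}$, which by Young's inequality splits into a small multiple of $||\na\beta||_{L^2(\Omega)}^2$ (absorbed on the left-hand side) plus $CK^2||\beta||_{L^2(\Omega)}^2$ (which combines with the $K||\beta||_{L^2(\Omega)}$ factors already present on the right-hand side of each estimate). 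A parallel accounting handles the commutator of $\na$ with the cutoff $\eta(d/d_0)$ in the definition \eqref{tilden} of $\tilde{n}$.
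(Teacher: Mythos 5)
The paper proves \eqref{trace} for $p \neq 2$ directly and simply cites Lemma~5.6 of \cite{Christodoulou2000} for all the remaining inequalities. Your argument for \eqref{trace} — apply the divergence theorem to $\tilde{n}|\beta|^p$, bound $\na\tilde{n}$ by $K$ via Lemma~\ref{geomlem}, finish with H\"older and Young — is exactly the paper's argument. For the other five estimates you reconstruct a proof rather than invoking the reference, and the toolkit you describe (Stokes' theorem with the extended normal $\tilde{n}$, commutation of derivatives via $\curl$, using $\tilde{n}_a\gamma^{ab}=0$ on $\pa\Omega$ to isolate the projected boundary term, and Lemma~\ref{geomlem2} to absorb errors from $\na\gamma$, $\na\tilde{n}$ into $K\|\beta\|_{L^2}$) is indeed the mechanism behind the cited lemma. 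Two steps in your sketch are compressed enough to be flagged. First, for \eqref{traceproj}--\eqref{traceproj2}: after expanding $\tilde{n}^a\na_a(\tilde{n}^i\beta_i)$ by the product rule, the interior derivative does not rearrange directly into $\div\beta$ and $\curl\beta$; one gets $\div\beta$ minus the tangential trace $\gamma^{al}\na_a\beta_l$, and eliminating that last piece requires a further integration by parts whose boundary contribution vanishes because $\tilde{n}_a\gamma^{al}=0$ on $\pa\Omega$. Second, for \eqref{ezbdy}, ``the Hodge identity reduces the interior term to $\|\div\beta\|^2+\|\curl\beta\|^2$'' conceals a second integration by parts whose boundary contribution must be absorbed into $\|\na\beta\|_{L^2(\pa\Omega)}\|\beta\|_{L^2(\pa\Omega)}$. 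Neither is a real gap — they are exactly the manipulations carried out in the cited reference — but in a self-contained write-up they would need to be made explicit.
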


\begin{proof}
 Other than \eqref{trace} for $p \not =2$, all of the above inequalities
 are in Lemma 5.6 in \cite{Christodoulou2000}. To prove \eqref{trace} for
 $p \not= 2$ we can argue in essentially the same way as the $p = 2$ case;
 by Stokes' theorem:
 \begin{align}
  ||\beta||_{L^p(\pa \Omega)}^p
  = \int_{\pa \Omega} \tilde{n}_i \tilde{n}^i |\beta|^p\, dS
  &= \int_{\Omega} (\na_i \tilde{n}^i)|\beta|^p
  + p\nabla \beta \cdot \beta |\beta|^{p-2}.
  \label{}
 \end{align}
 By Lemma \ref{geomlem}, the first term is bounded by $K ||\beta||_{L^p(\Omega)}^p$. To
 bound the second term, we just note that by Holder's inequality
 and Young's inequality, it is bounded by $||\nabla \beta||_{L^p(\Omega)}
 ||\beta||_{L^p(\Omega)}^{p-1} \lesssim ||\nabla \beta||_{L^p(\Omega)}^p
 + ||\beta||_{L^p(\Omega)}^p$.

\end{proof}

The estimates \eqref{pw} will be used to show that the energy
(defined in \eqref{endef}) controls all derivatives
of $v$. The estimates in \eqref{bigellprop} will be used to show that
the energies control $v$ on the boundary, and we will also use them
with $\alpha = \nabla q$ for a function $q$ to
control solutions of the Dirichlet problem.
We will assume in many of the following estimates that $\K \leq 1$. This is only
for notational convenience and is not essential to the arguments;
many of the estimates will
involve constants which can be bounded in terms of $1 + \K$ and so this assumption
allows us to ignore the unimportant dependence on $\K$. We will make it clear when
this assumption is used. Versions of these estimates with more explicit
dependence on $\K$ can be found in \cite{Christodoulou2000}.

 First, we show that derivatives
of $q$ can be controlled by projected derivatives of $q$ on the boundary
and derivatives of $\Delta q$:
\begin{prop}
  If $\K \leq 1$ then for $r \geq 1$:
 \begin{equation}
  ||\na^r q||_{L^2(\pa \Omega)} + ||\na^r q||_{L^2(\Omega)}
  \leq C \bigg( ||\Pi \na^r q||_{L^2(\pa \Omega)} +
   \sum_{s \leq r-1}
  ||\na^s \Delta q||_{L^2(\Omega)} + ||\na q||_{L^2(\Omega)}\bigg),
  \label{mainell}
 \end{equation}
 and for any $\delta > 0$:
 \begin{equation}
  ||\na^r q||_{L^2( \Omega)} + ||\na^{r-1} q||_{L^2(\pa \Omega)}
  \leq \delta ||\Pi \na^r q||_{L^2(\pa \Omega)} + C(1/\delta)
  \sum_{s \leq r-2}
  ||\na^s \Delta q||_{L^2(\Omega)} + ||\na q||_{L^2(\Omega)}.
  \label{mainell2}
\end{equation}
\end{prop}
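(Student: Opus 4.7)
The proof proceeds by induction on $r$, applying the $L^2$ elliptic and trace estimates of Lemma \ref{bigellprop} to the tensor $\beta = \na^{r-1}q$, which fits the hypothesis of the lemma when viewed as $\na^{r-2}$ applied to the $(0,1)$-tensor $\alpha = \na q$. Two algebraic observations drive everything. First, since $\alpha$ is the gradient of a scalar, $(\curl \alpha)_{ij} = 0$ identically, so $\curl \beta$ reduces to commutators arising from reordering covariant derivatives; by Lemma \ref{geomlem2} these are pointwise controlled by $K$ times $|\na^{r-2}\alpha|$. Second, $(\div \beta)_{I'} = \na^{r-2}_{I'}\Delta q$ plus analogous commutator errors, so the divergence never brings in more than $r-2$ derivatives of $\Delta q$.

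For \eqref{mainell} I would induct on $r$, with the base case $r = 1$ following directly from \eqref{traceproj} applied to $\beta = \na q$. For the inductive step, apply \eqref{bdyproj1} to $\beta = \na^{r-1}q$. The boundary factor $\|\Pi\n\cdot\beta\|_{L^2(\pa\Omega)}$ is dominated by $\|\na^{r-1}q\|_{L^2(\pa\Omega)}$ plus a curvature-commutator error, and the inductive hypothesis at level $r-1$ controls this by the intended RHS. A standard Young's inequality on the boundary product then closes the interior bound on $\|\na^r q\|_{L^2(\Omega)}^2$. Finally, to recover $\|\na^r q\|_{L^2(\pa\Omega)}$, apply \eqref{traceproj} to $\beta = \na^r q$ and insert the interior bound just established.

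For \eqref{mainell2} the idea is to exploit observation~(ii) above. Apply \eqref{bdyproj1} to $\beta = \na^{r-1}q$ once more, but split the boundary product via Young's inequality as $|ab|\leq \delta^2 a^2 + C_\delta\, b^2$, placing the small coefficient $\delta^2$ in front of $\|\Pi\na^r q\|_{L^2(\pa\Omega)}^2$. The companion factor $\|\Pi\n\cdot\na^{r-1}q\|_{L^2(\pa\Omega)}$ is bounded by $C\|\na^{r-1}q\|_{L^2(\pa\Omega)}$ up to curvature lower-order terms, which is exactly the quantity sitting on the LHS of \eqref{mainell2}. The divergence contribution truncates the sum at $s \leq r-2$, which is the distinguishing feature of \eqref{mainell2}. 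To pick up the $\|\na^{r-1}q\|_{L^2(\pa\Omega)}$ piece of the LHS, apply \eqref{mainell} at level $r-1$; the resulting $\sum_{s\leq r-2}\|\na^s\Delta q\|_{L^2(\Omega)}$ fits on the right, and the leftover tangential term $\|\Pi\na^{r-1}q\|_{L^2(\pa\Omega)}$ is absorbed via the tangential interpolation $\|\tn^{r-1} f\|_{L^2(\pa\Omega)}^2 \leq C\|\tn^r f\|_{L^2(\pa\Omega)}\|\tn^{r-2}f\|_{L^2(\pa\Omega)}$, iterated down to the base case.

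The hard part, and really the only subtle point, is bookkeeping for commutator and curvature errors. Each time two covariant derivatives are permuted, or $\n^i$ or $\gamma$ is pushed past a $\na$, one generates terms involving $\na\tilde n$, $\na\gamma$, or the Riemann tensor of $g$; under the standing assumption $K = \|\theta\|_{L^\infty(\pa\Omega)} + 1/\iota_0 \leq 1$, Lemma \ref{geomlem2} guarantees these are uniformly bounded and their constants disappear into the hidden $C$. The resulting lower-order $\na^s q$ terms are either handled by the inductive hypothesis or reduced further via \eqref{mainell} at smaller~$r$, eventually bottoming out in $\|\na q\|_{L^2(\Omega)}$ and $\|\na^s\Delta q\|_{L^2(\Omega)}$ for $s \leq r-1$ (resp.\ $s \leq r-2$ for \eqref{mainell2}).
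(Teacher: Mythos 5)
Your outline follows essentially the same route as the paper: the paper's proof consists precisely of applying \eqref{traceproj} with $\beta = \na^r q$ and \eqref{bdyproj1} with $\beta = \na^{r-1} q$, then ``combining these inequalities and using induction,'' and you use the same two lemmas with the same choices of tensor, the same observation that $\curl \beta$ and $\div \beta$ reduce to $\na^{r-2}\Delta q$ plus commutator terms controlled by $K$, and the same Young's-inequality splits (with the $\delta$-weighted split for \eqref{mainell2}). The extra detail you supply about curl/div of $\na^{r-1}q$ is correct and useful.

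One step in your writeup is looser than the others and deserves a second look: when you invoke the inductive hypothesis \eqref{mainell} at level $r-1$, the right-hand side produces a term $\|\Pi\na^{r-1}q\|_{L^2(\pa\Omega)}$ which does not appear on the right of either \eqref{mainell} or \eqref{mainell2} at level $r$. You propose to absorb it by the tangential interpolation $\|\tn^{r-1}f\|^2 \lesssim \|\tn^r f\|\,\|\tn^{r-2}f\|$ ``iterated down to the base case,'' but iterating that inequality terminates at $\|q\|_{L^2(\pa\Omega)}$ (or $\|\tn q\|_{L^2(\pa\Omega)}$), and \eqref{mainell2} only grants $\|\na q\|_{L^2(\Omega)}$ in the interior, not a boundary $L^2$ control of $q$ itself; you would need an extra trace-type step (and some care identifying $\Pi\na^{s}q$ with $\tn^s q$ up to $\tn^{s-2}\theta$-terms, which under $K\leq 1$ are controllable but not free). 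The paper is equally silent on this point, so your reconstruction is at least as explicit as the source, but this is the step where a careful write-up should slow down.
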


\begin{proof}
  By \eqref{traceproj} with $\beta = \na^r q$:
 \begin{equation}
  ||\na^r q||_{L^2(\pa \Omega)}^2
  \leq ||\Pi \na^r q||_{L^2(\pa \Omega)}^2
  + C\big( ||\na^{r-1}\Delta q||_{L^2(\Omega)} + \K||\na^r q||_{L^2(\Omega)}
  \big) ||\na^r q||_{L^2(\Omega)},
  \label{}
 \end{equation}
 and by \eqref{bdyproj1} with $\beta = \na^{r-1}q$:
 \begin{equation}
  ||\na^r q||_{L^2(\Omega)}^2
  \leq C ||\Pi \na^r q||_{L^2(\pa \Omega)}
  ||\na^{r-1} q||_{L^2(\pa \Omega)}
  + C \big(||\na^{r-1} \Delta q||_{L^2(\Omega)}
  + \K ||\na^{r-1} q||_{L^2(\pa \Omega)}\big)^2.
  \label{}
 \end{equation}
 Combining these inequalities and using induction
 gives \eqref{mainell} and \eqref{mainell2}.
\end{proof}

We will use this proposition in two ways. First, in our energy estimates
we will directly control $||\Pi \na^r p||_{L^2(\Omega)}$ if the Taylor
sign condition \eqref{tsc} holds and since $\Delta p = -(\pa_i v^j)(\pa_j v^i)$,
we control this as well. We will also use this estimate to control
derivatives $D_t p$ on
$\pa \D_t$, and we will rely on the observation that
$\Pi \na^r q$ is lower order
if $q = 0$ on $\pa \Omega$. This is clear when $r = 0,1$, and for $r = 2$
we have:
\begin{equation}
 \Pi_i^j\Pi_k^\ell \na_{j}\na_\ell q =
 \Pi_i^j \na_j \big( \Pi_k^\ell \na_\ell q\big) - \Pi_i^j \na_j(\Pi_k^\ell)
 \na_\ell q,
 \label{basicproj}
\end{equation}
and when $q = 0$ on $\pa \Omega$, the first term is zero and the second term is
$-(\Pi_i^j \na_j \n_k)\n^\ell\nabla_\ell q$, so that $\Pi\nabla^2 q = \theta \na_\n q$.
We also record the $r = 3$ case for later use:
\begin{equation}
 \Pi \na^3 q = \tn^3 q - 2 \theta \otimes (\theta \cdot \tn q)
 + (\tn \theta) \na_N q + 3 \theta \otimes (\tn \na_N q).
 \label{3rdorder}
\end{equation}
It will not be important in our argument exactly which indices appear where.

One can
use the following heuristic argument from
\cite{Christodoulou2000} to see what the higher-order version of the
formula is. If $d(x) = \dist(x, \pa \Omega)$ then $q/d$ is smooth up to the boundary,
and:
\begin{equation}
 \Pi \na^r q = \Pi \na^r \bigg(   d\frac{q}{d}\bigg)
 = \sum_{s = 0}^{r} \Pi (\na^s d)\otimes \na^{r-s}\bigg(\frac{q}{d}\bigg).
 \label{}
\end{equation}
Restricting this formula to the boundary, we see that the $s = 0, 1$ terms
drop out and that $q / d \sim \nabla_\n q$. Further, if we knew that all the
derivatives falling on $d$ were purely tangential, then arguing
as above we could replace
$\na^s d$ with $\na^{s-2} \theta$. We therefore write $\na_i = (\Pi_i^j+ \n_i \n^j)\na_j$
and further note that $\n^i \n^j \na_j \na_k d = 0$ because $d$ is the geodesic
distance.
Each time we make this subsitution, some derivatives will fall onto the
factors of $N$ we have introduced and this generates more factors
of $\theta$, but at the same time less derivatives land on the function
$q$.
This suggests that we should expect:
\begin{equation}
 \Pi \na^r q
 \sim \sum_{s = 0}^{r-2} \tn^s \theta \otimes \na^{r-s} \na_\n q
 \label{heuristic}
\end{equation}
Also note that the
$s = r-2$ term of the expansion \eqref{heuristic} is $(\tn^{r-2} \theta) \na_\n q$
and so if the lower order terms and $|\na q|^{-1}$ are bounded,
this gives an estimate for $\theta$ in terms of
$q$.

The rigorous version of these observations is:
\begin{prop}
  \label{projlem}
  Let $q : \D_t \to \R$ be a function.
  If $||\theta||_{L^\infty(\pa \Omega)} \leq 1$, then for $m = 0, 1$:
\begin{align}
 ||\Pi \na^r q||_{L^2(\pa \Omega)}^2 & \leq ||\tn^r q||_{L^2(\pa\Omega)} + 2 ||\tn^{r-2} \theta||_{L^2(\pa \Omega)} \,||\na_\n q||_{L^\infty(\pa \Omega)}\\
 & +C\bigg( ||\theta||_{L^\infty(\pa \Omega)}
 + \sum_{k \leq r-2-m} ||\tn^k \theta||_{L^2(\pa \Omega)}\bigg)
 \sum_{k \leq r-2+m} ||\na^k q||_{L^2(\pa \Omega)}
 +
  C \sum_{k = 1}^{r-1} ||\na^{r-k}
 q||_{L^2(\pa \Omega)}\label{projell1}
\end{align}
and if $|\na_\n q| > \delta_0 > 0$:
\begin{multline}
 ||\tn^{r-2} \theta||_{L^2(\pa \Omega)} \leq
 C\delta_0^{-1} \bigg(
||\Pi \na^r q||_{L^2(\pa \Omega)} + \sum_{k = 1}^{r-1}
||\na^{r-k} q||_{L^2(\pa \Omega)}
  \bigg)\\
+ C\delta_0^{-1}\bigg( ||\theta||_{L^\infty(\pa \Omega)}
+ \sum_{k \leq r-3} ||\tn^{r-3} \theta||_{L^2(\pa \Omega)}
\bigg) \sum_{k \leq r-1} ||\na^k q||_{L^2(\pa \Omega)}
 \label{projell2}
\end{multline}
\end{prop}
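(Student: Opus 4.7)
The plan is to establish, by induction on $r$, a pointwise identity on $\pa \Omega$ of the form
\begin{equation*}
 \Pi \na^r q = \tn^r q + c_r\, (\tn^{r-2} \theta)\, \na_\n q + E_r,
\end{equation*}
where $E_r$ collects terms involving at most $r - 3$ tangential derivatives of $\theta$ paired with at most $r - 1$ covariant derivatives of $q$, together with products of $L^\infty$-bounded powers of $\theta$ with strictly lower covariant derivatives of $q$. Both claimed estimates will then follow by taking $L^2(\pa \Omega)$ norms and applying H\"older's inequality.

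The base case $r = 2$ comes from \eqref{basicproj}: expanding $\na_j \Pi_k^\ell = -(\na_j \n_k)\n^\ell - \n_k (\na_j \n^\ell)$ and restricting to $\pa \Omega$ gives $\Pi \na^2 q = \tn^2 q + \theta\, \na_\n q$ up to commutator terms which cost only $\|\theta\|_{L^\infty}$ times $|\na q|$, while the case $r = 3$ is the explicit formula \eqref{3rdorder}. For the inductive step I write
\begin{equation*}
 \Pi \na^r q = \Pi \na \big( \Pi \na^{r-1} q \big) - \Pi (\na \Pi) \cdot \na^{r-1} q,
\end{equation*}
with indices contracted appropriately. Restricted to $\pa \Omega$, the first term equals $\tn (\Pi \na^{r-1} q)$ plus commutators (since $\Pi \na$ acting on a tangential tensor coincides with $\tn$), and applying the inductive hypothesis expresses it in terms of $\tn^r q$, $(\tn^{r-2} \theta)\cdot \na_\n q$, and admissible pieces of $E_r$. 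The commutator $\Pi (\na \Pi) \cdot \na^{r-1} q$ generates a factor of $\theta$ by Lemma \ref{geomlem2}, paired with fewer tangential derivatives of $\theta$ and a normal component of $\na^{r-1} q$ isolated using the decomposition $\na_j = \Pi_j^k \na_k + \n_j \na_\n$.

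Once the identity is established, \eqref{projell1} follows: the top-order terms contribute exactly $\|\tn^r q\|_{L^2}$ and $\|\tn^{r-2} \theta\|_{L^2}\|\na_\n q\|_{L^\infty}$, while the remaining pieces are bounded using $\|\theta\|_{L^\infty} \leq 1$, producing the prefactor $\|\theta\|_{L^\infty} + \sum_{k \leq r - 2 - m} \|\tn^k \theta\|_{L^2}$ times $\sum_{k \leq r - 2 + m} \|\na^k q\|_{L^2}$. The parameter $m \in \{0, 1\}$ records whether one places $\theta$ or $q$ in $L^\infty$ in the intermediate H\"older products. For \eqref{projell2}, I rearrange the identity to solve for $(\tn^{r-2} \theta)\,\na_\n q$, divide pointwise by $\na_\n q$ using $|\na_\n q| \geq \delta_0 > 0$, and estimate the remainder $E_r$ by the same bookkeeping, noting that every summand there carries at most $r - 3$ derivatives of $\theta$ and so is absorbed in the lower-order terms.

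The main obstacle is the combinatorial bookkeeping in the inductive commutator expansion: I must guarantee that every contribution to $E_r$ has strictly fewer than $r - 2$ tangential derivatives of $\theta$, so that in \eqref{projell2} the leading term $(\tn^{r-2} \theta)\,\na_\n q$ can be cleanly isolated and solved for. This is what forces the calculations to be performed on $\pa \Omega$, where $\na \Pi$ can be replaced pointwise by $\theta$ and $\tn (\na \Pi)$ by $\tn \theta$, rather than working with the interior extension $\tilde n$ directly.
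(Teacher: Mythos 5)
The paper supplies only the heuristic expansion $\Pi\na^r q \sim \sum_s \tn^s\theta \otimes \na^{r-s}\na_\n q$ (attributed to \cite{Christodoulou2000}) before stating this proposition, and gives no written proof; the rigorous bookkeeping is implicitly deferred to \cite{Christodoulou2000}. Your plan mirrors that heuristic: establish a pointwise identity on $\pa\Omega$ of the form $\Pi\na^r q = \tn^r q + c_r(\tn^{r-2}\theta)\na_\n q + E_r$, take $L^2$ norms to get \eqref{projell1}, and solve for the leading $\theta$-term and divide by $|\na_\n q| \geq \delta_0$ to get \eqref{projell2}. The structure is correct, the base cases match the paper's \eqref{basicproj} and \eqref{3rdorder}, and the decomposition $\Pi\na^r q = \Pi\na(\Pi\na^{r-1}q) - \Pi(\na\Pi)\cdot\na^{r-1}q$ (all indices projected by $\gamma$) is a valid starting point; note that $\Pi\na$ of a tangential tensor along $\pa\Omega$ is exactly $\tn$, not merely ``$\tn$ plus commutators.''

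The genuine gap is that the inductive step is not actually closed. After applying the inductive hypothesis inside $\tn(\Pi\na^{r-1}q)$ you must also expand the commutator $\Pi(\na\Pi)\cdot\na^{r-1}q$: the derivative of the projection produces a factor of $\theta$ contracted with $\na^{r-1}q$, and $\na^{r-1}q$ itself still contains normal components that must be split off and fed back through the recursion. Showing that this tree of terms never generates more than $r-3$ tangential derivatives of $\theta$ paired with $\na^{r-1}q$ — or $r-2$ derivatives of $\theta$ paired with at most $\na^{r-2}q$, to produce the two $m$-indexed variants in \eqref{projell1} — is exactly where the content of the proof lies, and it is the part you flag as the ``main obstacle'' but leave unaddressed. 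In particular, the $m$-dependent form of the error term in \eqref{projell1} is not simply a post-hoc choice of which factor to place in $L^\infty$: it reflects a structural constraint on which pairings of derivative counts actually arise in $E_r$, and that constraint is precisely what the uncompleted inductive bookkeeping would need to establish.
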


Combining these two propositions, we have:
\begin{cor}
 If $\K \leq 1$ and $q: \D_t \to \R$ is a function with $q = 0$ on $\pa \Omega$,
 then for $r \geq 3$:
 \begin{multline}
  ||\na^{r-1} q||_{L^2(\pa \Omega)} \leq C
  \Big( ||\tn^{r-3}\theta||_{L^2(\pa \Omega)} ||\na_\n q||_{L^\infty(\pa \Omega)}
  + ||\na^{r-2} \Delta q||_{L^2(\Omega)}\\
  + C(||\theta||_{L^2(\pa \Omega)}, ...,||\tn^{r-4} \theta||_{L^2(\pa \Omega)})
  \Big( ||\na_\n q||_{L^\infty(\pa \Omega)} +
  \sum_{s \leq r-3} ||\na^s \Delta q||_{L^2(\Omega)} + ||\na q||_{L^2(\Omega)}
  \Big),
  \label{}
 \end{multline}
 and for $r > 3$:
 \begin{multline}
  ||\na^{r-1} q||_{L^2(\pa \Omega)} + ||\na q||_{L^\infty(\pa \Omega)}
  \leq C ||\na^{r-2}\Delta q||_{L^2(\Omega)}
  + C(||\theta||_{L^2(\pa \Omega)},..., ||\tn^{r-3} \theta||_{L^2(\pa \Omega)})
  \sum_{s \leq r-3} ||\na^s \Delta q||_{L^2(\Omega)}.
  \label{}
 \end{multline}
\end{cor}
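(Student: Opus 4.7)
Both estimates will follow by combining the elliptic estimate \eqref{mainell} with the projection identity \eqref{projell1} of Proposition \ref{projlem}, exploiting the hypothesis $q|_{\pa\Omega}=0$, which forces every purely tangential derivative of $q$ to vanish on the boundary. The first step is to apply \eqref{mainell} with $r$ replaced by $r-1$, reducing matters to bounding $||\Pi\na^{r-1} q||_{L^2(\pa\Omega)}$ modulo the admissible quantities $\sum_{s\leq r-2}||\na^s\Delta q||_{L^2(\Omega)}+||\na q||_{L^2(\Omega)}$; the interior term $||\na q||_{L^2(\Omega)}$ is in turn controlled by $||\Delta q||_{L^2(\Omega)}$ by integrating by parts and using $q|_{\pa\Omega}=0$.

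Next I would invoke \eqref{projell1} with $m=1$ and $r$ replaced by $r-1$. The leading tangential term $||\tn^{r-1} q||_{L^2(\pa\Omega)}$ is identically zero since $q|_{\pa\Omega}=0$, leaving exactly the advertised main term $||\tn^{r-3}\theta||_{L^2(\pa\Omega)}\,||\na_\n q||_{L^\infty(\pa\Omega)}$, together with factors of the form $(||\theta||_{L^\infty(\pa\Omega)}+\sum_{k\leq r-4}||\tn^k\theta||_{L^2(\pa\Omega)})\sum_{k\leq r-2}||\na^k q||_{L^2(\pa\Omega)}$. The lower-order boundary norms $||\na^k q||_{L^2(\pa\Omega)}$ for $k\leq r-2$ are then treated by descending induction on $k$: at each stage, the same pair \eqref{mainell}+\eqref{projell1} produces only $\tn^j\theta$ with $j\leq r-4$, so the entire $\theta$-dependence is absorbed into a polynomial constant $C(||\theta||_{L^2(\pa\Omega)},\ldots,||\tn^{r-4}\theta||_{L^2(\pa\Omega)})$. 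This yields the first estimate.

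For the second estimate, the remaining work is to eliminate the isolated factor $||\na_\n q||_{L^\infty(\pa\Omega)}$ (and simultaneously gain control of $||\na q||_{L^\infty(\pa\Omega)}$). Since $r>3$, one has $r-2>1$; as $\pa\Omega$ is two-dimensional, the Sobolev embedding $H^{r-2}(\pa\Omega)\hookrightarrow L^\infty(\pa\Omega)$ is available, and interpolation gives
\begin{equation*}
||\na q||_{L^\infty(\pa\Omega)}\lesssim ||\na q||_{L^2(\pa\Omega)}^{\alpha}\,||\na^{r-1} q||_{L^2(\pa\Omega)}^{1-\alpha}
\end{equation*}
for some $\alpha\in(0,1)$. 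Inserting this into the bad term $||\tn^{r-3}\theta||_{L^2(\pa\Omega)}\,||\na_\n q||_{L^\infty(\pa\Omega)}$ and applying Young's inequality splits it as $\varepsilon\,||\na^{r-1} q||_{L^2(\pa\Omega)}+C_\varepsilon\,||\tn^{r-3}\theta||_{L^2(\pa\Omega)}^{1/\alpha}\,||\na q||_{L^2(\pa\Omega)}$. The first piece is absorbed into the left-hand side, while the second contributes to the $\theta$-polynomial constant multiplied by $||\na q||_{L^2(\pa\Omega)}$, which is in turn estimated (via \eqref{mainell} at level $r=1$ and $q|_{\pa\Omega}=0$) by lower-order $||\na^s\Delta q||_{L^2(\Omega)}$. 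Once $||\na^{r-1} q||_{L^2(\pa\Omega)}$ is controlled in this form, the Sobolev embedding returns the bound on $||\na q||_{L^\infty(\pa\Omega)}$ for free.

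The main obstacle is the bootstrap in the second estimate: the Young-inequality absorption works only because the interpolation gain $1-\alpha>0$ is genuine, which in turn requires $r>3$ so that $H^{r-2}$ is strictly above the Sobolev threshold on the two-dimensional boundary. One also has to track the $\theta$-dependence carefully through the induction to confirm that no derivative of $\theta$ of order higher than $r-3$ ever appears in the final polynomial constant.
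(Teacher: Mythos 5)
Your strategy is the same as the paper's (the paper's ``proof'' is simply the sentence ``Combining these two propositions, we have...''): apply \eqref{mainell}/\eqref{mainell2} to reduce to $\|\Pi\na^{r-1}q\|_{L^2(\pa\Omega)}$, feed in \eqref{projell1} with $m=1$, exploit $q|_{\pa\Omega}=0$ to kill $\tn^{r-1}q$, and descend through the lower-order boundary norms. For the second estimate, a Gagliardo--Nirenberg interpolation for $\na q$ on the two-dimensional boundary followed by Young's inequality and absorption of $\varepsilon\|\na^{r-1}q\|_{L^2(\pa\Omega)}$ is indeed the mechanism that lets the $\|\na_\n q\|_{L^\infty}$ factor be promoted to the left-hand side when $r>3$ (the scaling computation confirms $\alpha=1/(r-2)$ works only for $r-2\geq 2$, which matches the hypothesis). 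That part of your reasoning is sound.

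There is, however, a genuine gap in the step you state twice and ultimately rely on: the claim that $\|\na q\|_{L^2(\Omega)}$ is ``controlled by $\|\Delta q\|_{L^2(\Omega)}$ by integrating by parts and using $q|_{\pa\Omega}=0$.'' Integration by parts gives only $\|\na q\|_{L^2(\Omega)}^2=-\int_\Omega q\,\Delta q$, and to close this you need a Poincar\'e-type inequality $\|q\|_{L^2(\Omega)}\lesssim\|\na q\|_{L^2(\Omega)}$. On the unbounded domain $\Omega$ (the lower half-space in $\R^3$) this is false for Dirichlet-zero functions: there is no spectral gap, so no such Poincar\'e inequality holds. The Sobolev inequality \eqref{sob1} gives only $\|q\|_{L^6(\Omega)}\lesssim\|\na q\|_{L^2(\Omega)}$, which leads to $\|\na q\|_{L^2(\Omega)}\lesssim\|\Delta q\|_{L^{6/5}(\Omega)}$, not $\|\Delta q\|_{L^2(\Omega)}$. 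Notice that the paper is in fact careful about exactly this point everywhere else: the term $\|\na q\|_{L^2(\Omega)}$ is retained on the right-hand side of the first estimate of the Corollary, and in the places where it is eventually dispatched (\eqref{pell} for $p$, \eqref{dtpxbd} for $D_tp$), the argument is not a Poincar\'e inequality but the special divergence structure $\Delta q=\na_i X^i$ of the particular $q$ at hand, i.e.\ $\int_\Omega|\na q|^2=-\int_\Omega q\,\na_iX^i=\int_\Omega(\na_i q)X^i\leq\|\na q\|_{L^2}\|X\|_{L^2}$. For the second estimate of the Corollary, the $\|\na q\|_{L^2(\Omega)}$ term reappears after \eqref{mainell} and after Young's inequality (via $\|\na q\|_{L^2(\pa\Omega)}\lesssim\|\na q\|_{L^2(\Omega)}$), so you cannot dispose of it by your proposed mechanism; either the stated Corollary implicitly carries $\|\na q\|_{L^2(\Omega)}$ on its right-hand side (as in the first estimate), or it is to be read under a divergence-form hypothesis on $\Delta q$ as in the paper's applications. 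You should flag that dependence rather than claim to eliminate it by integration by parts.
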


\subsection{Estimates for $v_\omega$ }

Unlike the previous section, in this section we will work
on $\D_t$. We will therefore write:
\begin{align}
 \gamma_{ij} = \frac{\pa y^a}{\pa x^i} \frac{\pa y^b}{\pa x^j}\gamma_{ab},
 \label{}
\end{align}
and similarly for $\gamma_i^j, \na_i,$ etc.
In Section \ref{ellsyssec}, we use some of the ideas from
\cite{Cheng2017} to show that $v_\omega = \curl \beta$, where $\beta$
satisfies:
\begin{alignat}{2}
 \Delta \beta & = \omega && \quad\textrm{ in } \D_t,\label{alpha1}\\
 \gamma_i^j \beta_j &= 0, &&\quad j = 1,2,3,\textrm{ on } \pa \D_t,\label{alpha2}\\
 \na_\n (\beta\cdot \n) + H \beta\cdot \n &=  0 &&\quad \textrm{ on } \pa\D_t.
 \label{alpha3}
\end{alignat}
Taking the divergence of \eqref{alpha1} and noting that
$\na \cdot \beta|_{\pa \D_t} = \gamma\na \cdot \na(\gamma \cdot \beta)
+ \na_\n(\beta\cdot \n) + H \beta\cdot \n = 0$, it follows that
$\div \beta = 0$ in $\D_t$ if $\beta$.
We have the basic elliptic estimate:
\begin{lemma}
  With $\beta$ as defined above:
\begin{align}
 ||\beta||_{L^6(\D_t)} + ||\na \beta||_{L^2(\D_t)}
 + ||\curl \beta||_{L^2(\D_t)}
 \lesssim ||\omega||_{L^{6/5}(\D_t)}.
 \label{alphabd}
\end{align}
\end{lemma}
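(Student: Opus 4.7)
The plan is to run the standard $L^2$ energy method for an elliptic system with mixed boundary conditions and then promote it to $L^6$ via Sobolev embedding. Specifically, I pair \eqref{alpha1} against $\beta$ and integrate by parts to obtain
\begin{equation*}
 \int_{\D_t} |\na \beta|^2\, dV \;=\; -\int_{\D_t} \omega \cdot \beta \, dV \;+\; \int_{\pa \D_t} (\na_\n \beta_i)\, \beta^i\, dS.
\end{equation*}
The whole argument reduces to controlling that boundary term using \eqref{alpha2} and \eqref{alpha3}.

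For the boundary term, first note that \eqref{alpha2} means $\gamma_i^j \beta_j = 0$ on $\pa \D_t$, so $\beta = (\beta\cdot \n)\n$ pointwise on $\pa \D_t$. Write $n^i \na_\n \beta_i = \na_\n(\beta\cdot \n) - \beta_i \na_\n \n^i$; since $|\n|^2 \equiv 1$ we have $\n \cdot \na_\n \n = 0$, and since $\beta$ is purely normal on $\pa \D_t$ the second piece vanishes. Thus $(\na_\n \beta)\cdot \beta = (\beta\cdot \n)\na_\n(\beta\cdot \n)$ on $\pa \D_t$, and the Robin condition \eqref{alpha3} converts this into $-H(\beta\cdot \n)^2$. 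So the identity becomes
\begin{equation*}
 ||\na \beta||_{L^2(\D_t)}^2 \;+\; \int_{\pa \D_t} H(\beta\cdot \n)^2\, dS \;=\; -\int_{\D_t}\omega \cdot \beta\, dV.
\end{equation*}
For the right-hand side I use H\"older with the dual pair $(L^{6/5}, L^6)$ together with the Sobolev embedding $\dot H^1(\D_t) \hookrightarrow L^6(\D_t)$ (valid on the graph domain $\{y\leq h(x)\}$), giving $|\int \omega \cdot \beta| \lesssim ||\omega||_{L^{6/5}(\D_t)} ||\na \beta||_{L^2(\D_t)}$. For the boundary integral I apply the trace inequality \eqref{trace} at $p=2$ and Young, which yields $||\beta||_{L^2(\pa \D_t)}^2 \lesssim \epsilon ||\na \beta||_{L^2(\D_t)}^2 + C_\epsilon ||\beta||_{L^2(\D_t)}^2$; since we are in the regime $K\lesssim 1$, and $||\beta||_{L^2(\D_t)}$ is in turn controlled by $||\na \beta||_{L^2(\D_t)}$ via a weighted Hardy/Sobolev bound (or by interpolating $L^2 \subset L^6$ at infinity after splitting off a compactly supported piece), the Robin contribution can be absorbed into the main $||\na \beta||_{L^2}^2$ term.

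Putting these steps together and hiding the absorbed terms on the left gives $||\na \beta||_{L^2(\D_t)} \lesssim ||\omega||_{L^{6/5}(\D_t)}$; the $L^6$ bound on $\beta$ then follows from Sobolev, and $||\curl \beta||_{L^2} \leq \sqrt{2}\, ||\na \beta||_{L^2}$ is immediate. The main obstacle, and the only nontrivial step, is controlling the Robin boundary term: because $H$ need not have a favorable sign, one cannot simply drop it, and one must verify coercivity of the bilinear form on the closed subspace $\{\gamma \beta = 0\}\subset H^1(\D_t;\R^3)$. This is where the smallness assumption on the geometric quantity $K = ||\theta||_{L^\infty(\pa \D_t)} + 1/\iota_0$ (built into our bootstrap setup) is essential; without it one would need to replace the argument by a Fredholm/Lax--Milgram analysis at fixed size of $H$.
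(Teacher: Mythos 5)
Your route is a direct energy estimate (pairing $\Delta\beta=\omega$ against $\beta$), which is a genuinely different decomposition from the paper's. The paper instead proves the chain $\|\na\beta\|_{L^2}\lesssim\|\curl\beta\|_{L^2}$ via the div--curl identity (integrating by parts the transpose term $\delta^{ij}\delta^{k\ell}\na_i\beta_k\na_\ell\beta_j$, where $\div\beta=0$ kills the interior contribution), and separately bounds $\|\curl\beta\|_{L^2}^2$ by testing against $\curl^2\beta=-\omega$, where the boundary term $\int_{\pa\D_t}(\n\times\beta)\cdot\curl\beta$ vanishes outright since $\gamma\cdot\beta=0$. Your computation of the Robin boundary term itself is correct: on $\pa\D_t$ one has $\beta=(\beta\cdot\n)\n$, so $(\na_\n\beta)\cdot\beta=(\beta\cdot\n)\na_\n(\beta\cdot\n)=-H(\beta\cdot\n)^2$ by \eqref{alpha3}.

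The gap is in how you absorb that boundary term. You invoke the trace inequality at $p=2$ and then claim $\|\beta\|_{L^2(\D_t)}\lesssim\|\na\beta\|_{L^2(\D_t)}$ ``via a weighted Hardy/Sobolev bound'' or by ``interpolating $L^2\subset L^6$ at infinity.'' Neither works. The domain $\D_t$ has infinite volume, so $\dot H^1(\D_t)\not\hookrightarrow L^2(\D_t)$: a function behaving like $|z|^{-1}$ at infinity lies in $L^6\cap\dot H^1$ but not in $L^2$. There is no inclusion $L^2\subset L^6$ at spatial infinity in infinite measure, and the function space $Y$ in which $\beta$ is constructed (Section \ref{ellsyssec}) only records $\|\beta\|_{L^6}+\|\na\beta\|_{L^2}$; the quantity $\|\beta\|_{L^2(\D_t)}$ is not finite a priori. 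Hence the trace-plus-Young step, which produces a $C_\epsilon\|\beta\|_{L^2(\D_t)}^2$ term, cannot be closed. You also point to $K\lesssim 1$ as the source of smallness, but the relevant smallness for the Robin term is of $\|H\|_{L^3(\pa\D_t)}+\|\na H\|_{L^{3/2}(\pa\D_t)}$, as in \eqref{smallcurv}, which is a different quantity.

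The correct way to control $\int_{\pa\D_t}H(\beta\cdot\n)^2$ within the admissible function space is the Stokes'-theorem argument of Proposition \ref{divcurllem}: writing the boundary integral as an interior integral of $\na_k\big(\n^k\tilde H|\beta|^2\big)$ yields
\begin{equation*}
\Big|\int_{\pa\D_t}H(\beta\cdot\n)^2\Big|
\lesssim \|\na H\|_{L^{3/2}(\pa\D_t)}\,\|\beta\|_{L^6(\D_t)}^2
+\|H\|_{L^3(\pa\D_t)}\,\|\na\beta\|_{L^2(\D_t)}\,\|\beta\|_{L^6(\D_t)},
\end{equation*}
which involves only the $Y$-norm of $\beta$ and can be absorbed by coercivity when \eqref{smallcurv} holds. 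If you replace your trace-plus-$L^2$ step with this estimate, your direct energy argument goes through and is a valid (and arguably more transparent) alternative to the paper's div--curl route.
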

\begin{proof}
  First, by the Sobolev inequality \eqref{sob1},
   $||\beta||_{L^6(\D_t)}
  \lesssim ||\na \beta||_{L^2(\D_t)}$. We next show that
  $||\na \beta||_{L^2(\D_t)} \lesssim ||\curl \beta||_{L^2(\D_t)}$.
  Note that:
 \begin{equation}
  \int_{\D_t} \delta^{ij}\delta^{k\ell}
  \na_i \beta_k \na_j \beta_\ell
  = \int_{\D_t} \delta^{ij} \delta^{k\ell}
  \na_i \beta_k \na_\ell \beta_j +
  \int_{\D_t} \delta^{ij} \delta^{k\ell}
  \na_i\beta_k \curl \beta_{j\ell }.
  \label{step1}
 \end{equation}
 Integrating by parts, the first term is:
 \begin{equation}
  \int_{\pa\D_t} \delta^{ij}
  \n^k \na_i \beta_k \beta_j
  - \int_{\D_t} \delta^{ij}\delta^{k\ell}
  (\na_\ell \na_i \beta_k) \beta_j
  \label{}
 \end{equation}
 The interior term vanishes since $\div \beta = 0$.
 To handle the boundary term, we note that since $\gamma\cdot \beta = 0$
 on $\pa \D_t$:
 \begin{equation}
  \n^k\beta^i \na_i \beta_k
  = \n^k \n^i (\beta^\ell \n_\ell)\na_i \beta_k
  = \n^i (\beta^\ell \n_\ell) \na_\n (\n^k\beta_k)
  -\n^i (\beta^\ell \n_\ell) H \n^k\beta_k
  = \big(\div \beta - \gamma^i_j \na_i (\gamma^k_\ell \beta^\ell)\big) (\beta^\ell \n_\ell)
  = 0,
  \label{}
 \end{equation}
where we have used that $\div \beta = 0$.
Returning to \eqref{step1}, we have:
\begin{equation}
 ||\na \beta||_{L^2(\D_t)}^2 \lesssim ||\na \beta||_{L^2(\D_t)}
 ||\curl \beta||_{L^2(\D_t)},
 \label{}
\end{equation}
which implies the bound for $||\na \beta||_{L^2(\D_t)}$.

Finally we show that $||\curl \beta||_{L^2(\D_t)} \lesssim
||\omega||_{L^{6/5}(\D_t)}$. Integrating by parts:
\begin{equation}
 \int_{\D_t} |\curl \beta|^2
 = \int_{\pa \D_t} (\n \times \beta) \curl \beta
 - \int_{\D_t} \beta \curl^2 \beta.
 \label{}
\end{equation}
Since the tangential components of $\beta$ vanish on $\pa \D_t$, it follows
that $\n\times \beta = 0$. The
interior term is bounded by $||\beta||_{L^6(\D_t)} ||\omega||_{L^{6/5}(\D_t)}$,
which completes the proof.
\end{proof}

The above estimates combined with the elliptic estimates in the previous
section will allow us to bound $||v_\omega||_{H^{r}(\D_t)}$. In the proof
of the dispersive estimates, we will also need to bound
$||V_\omega||_{L^p(\pa \D_t)}$
for $1 < p < 2$. Recall that in the interior, we have $V_\omega = \curl \beta$
with $\Delta \beta = \omega$. In the flat case ($h = 0$), a simple calculation
using the Newtonian potential shows that for any $z \in \{ (z_1, z_2, z_3)| z_3 \leq 0\}$,
we have
$|v_\omega(z)| = |\curl \beta(z)| \lesssim \frac{1}{1+|z|^2}||(1 + |z|^2)\omega||_{L^1(\D_t)}$.
Restricting
this to $z = (x, 0) \in \pa \D_t$ gives that $V_\omega \in L^p(\pa \D_t)$ for
$p > 1$. To handle the case with $h \not=0$, in
Proposition \ref{divcurllem}, we follow the approach of
\cite{Hofmann2007} \cite{Gruter1982} to
construct a Green's function for $\D_t$ which satisfies
the same estimates as the Newton potential, and this can be used to prove estimates
for $||V_\omega||_{L^p(\pa \D_t)}$ for $1 < p$.

\begin{prop}
  \label{vomegabd}
  If $||h||_{W^{4,\infty}(\R^2)} + ||h||_{H^{N_1}(\R^2)} \leq 1$,
  then for $2 \leq p < \infty$, $0 \leq r \leq N_1 - 2$:
 \begin{equation}
  ||\nabla^r V_\omega||_{L^p(\R^2)}
  + ||\na^{r} v_\omega||_{L^2(\D_t)}
  \lesssim  ||\omega||_{H^{N_1}_w(\D_t)},
  \label{mainomegabd}
 \end{equation}
 and for $1 < p \leq 2$:
 \begin{equation}
   ||V_\omega||_{L^p(\R^2)} \lesssim ||\omega||_{H^{N_1}_w(\D_t)}
   \label{lowp}
 \end{equation}
\end{prop}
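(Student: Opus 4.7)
The plan is to handle the two estimates in Proposition \ref{vomegabd} separately. The first estimate (boundary $L^p$ for $p \geq 2$ plus the interior $L^2$ bound) will follow from the elliptic machinery already developed in Section \ref{ellsec}, while the second estimate ($1 < p \leq 2$) requires a more careful pointwise argument based on the Green's function representation afforded by Proposition \ref{divcurllem}.

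For the first estimate I would proceed inductively. Starting from the base bound \eqref{alphabd}, I apply the coercive estimate \eqref{bdyproj2} to the tensors $\na^k v_\omega$, using that $\curl v_\omega = \omega$, $\div v_\omega = 0$, and $\n \cdot v_\omega = 0$ on $\pa \D_t$. At each step the commutators between $\na^k$ and the projection $\gamma$ generate error terms involving $\theta$, which by Lemma \ref{geomlem2} are controlled in $L^\infty$ by $||h||_{W^{4,\infty}(\R^2)}$ and absorbed by the smallness assumption. The net result is
\[
 ||\na^r v_\omega||_{L^2(\D_t)} \lesssim \sum_{k \leq r-1} ||\na^k \omega||_{L^2(\D_t)} + ||\omega||_{L^{6/5}(\D_t)} \lesssim ||\omega||_{H^{N_1}_w(\D_t)},
\]
the last inequality using H\"older's inequality against the weight $(1 + |x|^2 + y^2)^{-1}$ to dominate the $L^{6/5}$ norm. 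To pass to the boundary $L^p$ bound with $p \geq 2$, I apply the trace inequality \eqref{trace} to $\na^r v_\omega$ and then Sobolev embedding $H^k(\D_t) \hookrightarrow L^p(\D_t)$ for $k$ large enough; this is permitted since $r \leq N_1 - 2$ and $N_1 \geq 6$ leave ample derivatives to spare.

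The delicate part is \eqref{lowp}. The plan is to exploit the Green's function representation $\beta(z) = \int_{\D_t} G(z, z')\omega(z')\, dz'$, where by Proposition \ref{divcurllem}, $G$ satisfies the Newton-type pointwise bound $|\na_z G(z, z')| \lesssim |z - z'|^{-2}$. Restricting $v_\omega = \curl \beta$ to $\pa \D_t$ gives
\[
 |V_\omega(x)| \lesssim \int_{\D_t} \frac{|\omega(z')|}{|(x, h(x)) - z'|^{2}}\, dz'.
\]
I split the integration at $|z' - (x, h(x))| = 1$. In the near region, Sobolev embedding applied in a unit ball (using $N_1 \geq 6 > 3/2$ in three dimensions) combined with the weight yields the pointwise estimate $|\omega(z')| \lesssim (1 + |z'|^2)^{-1}\, ||\omega||_{H^{N_1}_w(\D_t)}$; together with the finite integral $\int_{|w| \leq 1} |w|^{-2}\, dw < \infty$ in $\R^3$, this contributes $(1 + |x|^2)^{-1}\, ||\omega||_{H^{N_1}_w(\D_t)}$. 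In the far region I apply Cauchy--Schwarz against the weight to obtain
\[
 \bigg(\int_{|z' - z| \geq 1} \frac{dz'}{|z - z'|^4\, (1 + |z'|^2)^{2}}\bigg)^{1/2} ||(1 + |\cdot|^2)\, \omega||_{L^2(\D_t)},
\]
and a direct computation, splitting further at $|z'| = |z|/2$, shows the integral is bounded by $C(1 + |z|)^{-4}$. Combining these contributions gives the pointwise estimate $|V_\omega(x)| \lesssim (1 + |x|)^{-2}\, ||\omega||_{H^{N_1}_w(\D_t)}$, and since $(1 + |x|)^{-2} \in L^p(\R^2)$ precisely when $p > 1$, the bound \eqref{lowp} follows.

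The main obstacle is the pointwise bound $|\na G(z, z')| \lesssim |z - z'|^{-2}$ in the non-flat geometry of $\D_t$ subject to the mixed boundary conditions \eqref{alpha1}--\eqref{alpha3}, which is the content of the preceding Proposition \ref{divcurllem} (built on the constructions of \cite{Hofmann2007, Gruter1982}). Once that Newton-type estimate is in hand the splitting argument above is essentially elementary; the heart of the matter is that the weight in $H^{N_1}_w(\D_t)$ is strong enough, after combining pointwise Sobolev control on the near singularity with Cauchy--Schwarz on the far tails, to produce quadratic decay of $V_\omega$ at infinity on $\R^2$.
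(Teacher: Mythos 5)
Your proposal is correct and follows essentially the same route as the paper: the $p\geq 2$ boundary bound and the interior $L^2$ bound come from the div--curl elliptic machinery of Section \ref{ellsec} applied to $v_\omega$ (with base case \eqref{alphabd} and the boundary condition $\n\cdot v_\omega=0$), while the $1<p\leq 2$ case comes from the pointwise decay of $v_\omega$ implied by the Green's function estimate of Proposition \ref{divcurllem}. For the $p<2$ case the paper simply invokes the already-stated decay bound \eqref{decay}, $|v_\omega(z)|\lesssim(1+|z|)^{-2}\int(1+|z'|)|\omega(z')|\,dz'$, and controls the $L^1$ integral by Cauchy--Schwarz against the strong weight in $H^{N_1}_w$; your near/far split at $|z'-z|=1$ with pointwise Sobolev on the near ball and weighted Cauchy--Schwarz on the tails re-derives the same pointwise conclusion $|V_\omega(x)|\lesssim(1+|x|)^{-2}\|\omega\|_{H^{N_1}_w}$ in slightly more elementary steps, but this is a presentation difference rather than a different argument. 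One small inaccuracy: to dominate $\|\omega\|_{L^{6/5}}$ you should pair against the weight $(1+|x|^2+y^2)^{-2}$ (the full weight appearing in \eqref{weighted}), not $(1+|x|^2+y^2)^{-1}$; with that correction the H\"older step closes.
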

The assumption on the size of $h$ is for notational convenience and can be
avoided. We remark that by the interpolation inequality \eqref{interpolationgms},
if the bootstrap assumptions \eqref{bootstrap1}-\eqref{bootstrap3} hold
then we have:
\begin{equation}
 ||h||_{H^{N_1}(\R^2)} \lesssim \ve_0 (1 + t)^{\sigma} + \ve_1(1+t)^{\delta},
 \label{}
\end{equation}
where $\sigma = \frac{N_1 + 1}{N_0 - 1}(1 + \delta)$ and so this quantity
is less than one until $t \sim T_{\ve_0, \ve_1}$. We will be forced to take
$t \lesssim T_{\ve_0, \ve_1}$ at other points anyways, so
this is not a serious restriction.

\begin{proof}
  First, by \eqref{cr}, $\nabla^r V_\omega = (\na^r v_\omega)|_{\pa \D_t}
  - \nabla^r h (\na_y v_\omega)|_{\pa \D_t}
  + (\nabla h)^r (\na_y^r v_\omega)|_{\pa \D_t} + ... $ , up to similar
  terms. We show how to prove the estimates for the first term, as
  the other terms can be handled similarly.
  We consider the cases $1 < p < 2$ and $p \geq 2$ separately.

  When $p \geq 2$, by Holder's, Young's and Sobolev's inequalities,
  it suffices to control $||\na^k V_\omega||_{L^2(\D_t)}$ for
  $0 \leq k \leq r+2$. Since $v_\omega \cdot \n = 0$ on $\pa \D_t$,
  repeatedly applying
  the trace inequality \eqref{traceproj} gives:
  \begin{equation}
   ||\na^k v_\omega||_{L^2(\pa \D_t)}^2 \leq
   C \Big( ||\na^{k} \omega ||_{L^2(\D_t)}^2 + (1 + K) ||v_\omega||_{L^2(\D_t)}^2\Big).
   \label{}
  \end{equation}
The constant here depends on bounds for $||\theta||_{L^\infty(\pa \D_t)}$
as well as $||\theta||_{H^{k-2}(\pa \D_t)}$ and by assumption these are both
bounded.
  By the estimate \eqref{alphabd}, we have $||v_\omega||_{L^2(\D_t)} \lesssim
  ||\omega||_{L^{6/5}(\D_t)}$ and by Holder's inequality, we have
  $||\omega||_{L^{6/5}(\D_t)} \lesssim ||\omega||_{H^{N_1}_w(\D_t)}$.
  Since $k \leq r+ 2 \leq N_1$ we bound the first term here as well.

The estimate \eqref{lowp} follows from \eqref{decay}.

\end{proof}

\section{Energy Estimates}
\label{energyestimates}
The system \eqref{mom}-\eqref{freebdy} has a conserved energy:
\begin{equation}
 E_0(t) = \frac{1}{2}\int_{\D_t} |v(t,x,y)|^2 dxdy +
 \frac{1}{2}\int_{\R^2} |h(t,x)|^2 \, dx
 = \frac{1}{2} \int_{\R^2} \int_{-\infty}^{h(t,x)} |v(t,x,y)|^2\, dx dy
 + \frac{1}{2} \int_{\R^2} |h(t,x)|^2\, dx.
 \label{conserved}
\end{equation}
We have:
\begin{align}
 \frac{d}{dt} E_0(t) &= \int_{\D_t}  v^i \pa_t v_i \,dxdy + \frac{1}{2}\int_{\R^2}
 \pa_t h |v|^2 \,dx +  \int_{\R^2} h \pa_t h\, dx\\
 &= -\int_{\D_t} v^i (v^k\pa_k v_i + \pa_i (p +y) )\,dxdy
 + \frac{1}{2}\int_{\R^2} \pa_t h |v|^2 \,dx+
 \int_{\R^2} h \pa_t h\, dx\\
 &= -\frac{1}{2} \int_{\pa\D_t} n_k v^k |v|^2\, dS - \int_{\pa \D_t} n_i v^i h
 \,dS + \frac{1}{2}\int_{\R^2}  \pa_t h |v|^2\, dx + \int_{\R^2}
 h \pa_t h\, dx,
 \label{}
\end{align}
where we used that $\div v = 0$ in $\D_t$ and that $p = 0$ on $\pa \D_t$. Using
\eqref{freebdy2} the first and third, and second and fourth terms here cancel.

To get higher-order energies, in the irrotational case ($\omega = 0$) one
can use the system \eqref{hamil1}-\eqref{hamil2} directly to prove energy estimates.
See \cite{Germain2012} or \cite{Alazard2014} for this approach.
In the case $\omega \not =0$, the corresponding system
\eqref{whamil1}-\eqref{whamil2} is more complicated to work with and we instead choose to model our approach on
\cite{Christodoulou2000} and prove energy estimates for Euler's equation
\eqref{mom}
-\eqref{freebdy} directly. The advantage is that the estimates can be proved
using
elementary techniques, relying only on integration by parts
and simple geometric facts
(such as \eqref{heuristic}, \eqref{traceproj}).

We define the projection $\gamma$ as in \eqref{gammadef}. We will write:
\begin{equation}
 \gamma^{ij} = \frac{\pa x^i}{\pa y^a} \frac{\pa x^j}{\pa y^b}
 \gamma^{ab},
 \label{}
\end{equation}
for $\gamma$ expressed in the $x$-coordinates. We also write:
\begin{align}
 \gamma^{i_1\cdots i_r j_1 \cdots j_r} =
 \gamma^{i_1j_1} \cdots \gamma^{i_r j_r}
 \label{}
\end{align}
For $(0,r)-$tensors $\alpha, \beta$, we define:
\begin{equation}
 Q(\alpha, \beta) = \gamma^{i_1 \cdots i_r j_1 \cdots j_r}
 \alpha_{i_1\cdots i_r} \beta_{j_1 \cdots j_r}.
 \label{}
\end{equation}
The energies are:
\begin{equation}
 \E^r(t) = \int_{\D_t} \delta^{ij} Q(\na^r v_i, \na^r v_j)\, dV
 + \int_{\pa \D_t} Q(\na^r p, \na^r p)|\na p| \, dS
 + \int_{\D_t} |\na^{r-1} \omega|^2 \, dV.
 \label{endef}
\end{equation}

We will see that since $p = 0$ on $\pa \D_t$, $Q(\na^r p, \na^r p)
= Q(\tn^{r-2}\theta, \tn^{r-2} \theta)|\na_\n p|^2$ to highest order
(see the discussion after \eqref{basicproj} and the estimate \eqref{coertheta}).
In particular since $\theta \sim \nabla^2 h$, bounds for $\E^r$ imply bounds for
$h$. Moreover, in Theorem \ref{uenests}, we will see that $||u||_{H^{N_0-1}}^2
\lesssim \E^{N_0}$ to highest order (recall that $u$ is defined in
\ref{realudef}).

We will prove the energy estimates in the following sections assuming
the following a priori bounds:
\begin{alignat}{2}
 |\theta(t)| + \frac{1}{\iota_0(t)} &\leq K && \quad \text{ on } \pa \D_t,
 \label{Kbd}\\
 -\nabla_\n p(t) \geq \delta_0 &> 0  && \quad \text{ on } \pa \D_t,\label{tscbd}\\
 |\na^2 p(t)| + |\na_\n D_t p(t)| &\leq L  && \quad \text{ on } \pa \D_t,
 \label{d2pbd}\\
  |\na v(t)| + |\na^2 p(t)| &\leq M && \quad \text{ on } \D_t.
  \label{vbd}
\end{alignat}
Recall that we are writing $\iota_0(t)$ for the injectivity radius of
$\pa \D_t$.
We will assume in the estimates that $\K \leq 1$. This is only
for notational convenience and is not essential to the arguments; many
of the estimates will involve coefficients that can be bounded in terms of
$1 + K$ and this allows us to ignore the unimportant dependence on $K$.
We also remark that $\frac{1}{\iota_0} \leq
||\theta||_{L^\infty(\pa \D_t)}$ and so the definition of $\K$ is somewhat
overcomplicated. We choose to keep track of both terms because
it turns out that if one is interested in proving energy estimates
which depend on as few derivatives of $v$ as possible in $L^\infty$, it is
difficult to control the time evolution of $\iota_0$. For this reason,
in \cite{Christodoulou2000}, the authors introduce another radius which
they denote $\iota_1$ (see Definition 3.5 there) which can
be used to control $\iota_0$. For our purposes this distinction
will not be important, because we will eventually need to assume bounds
for more derivatives of $v$ in any case, but if one is interested
in studying this problem with less regular data it is useful to keep track
of both terms.

The main result of this section is the following energy estimate:
\begin{prop}
  \label{enestthm}
 Suppose that the a priori assumptions \eqref{Kbd}-\eqref{vbd} hold.
 There are continuous functions $C_r = C_r(\delta_0^{-1})$ and
 homogeneous polynomials $P_r$ with positive coefficients so that for
 $r \geq 0$:
\begin{equation}
 \Big|\frac{d}{dt}\E_r(t)\Big|
 \leq C_r(\delta_0^{-1}) (K + L + M) \Big( \E_r(t) +
 (K + L + M)P_r(\E_{r-1}^*(t), K, L, M)\Big),
 \label{dten}
\end{equation}
with $\E_{r-1}^* = \sum_{s \leq r-1} \E_s$.
\end{prop}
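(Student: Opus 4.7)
The plan is to differentiate each of the three pieces of $\E^r$ in time and show that after appropriate integration by parts, the potentially dangerous top-order terms cancel, leaving only terms controlled by $A = K+L+M$ times the energy (plus lower-order contributions polynomial in $\E^{r-1}_*$). I would work in Lagrangian coordinates, where $D_t$ becomes the ordinary time derivative, $D_t \,dV = 0$ by incompressibility, and $[D_t, \na_a] = 0$. The relevant commutators are then the ones between $D_t$ and the transformations back to Eulerian coordinates, i.e.\ $[D_t, \na_i]$ involves $\na v$ only, and $D_t Q$ involves $D_t \gamma \sim \na v$ together with the extension constructed in Lemma \ref{geomlem}, which is bounded by $K + M$.

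For the interior $v$-term, I would compute
\begin{equation*}
\frac{d}{dt}\int_{\D_t} Q(\na^r v, \na^r v)\,dV = 2\int_{\D_t} Q(\na^r D_t v, \na^r v)\,dV + \int_{\D_t}(D_t Q)(\na^r v, \na^r v)\,dV + \int_{\D_t} Q([D_t,\na^r]v, \na^r v)\,dV,
\end{equation*}
and substitute Euler's equation $D_t v_i = -\na_i p - \delta_{i3}$. The main term is $-2\int_{\D_t} Q(\na^r\na p, \na^r v)\,dV$; integrating by parts in the non-tangential direction (using that $Q$ reduces to $\gamma$ at the boundary and $\n^i \n^j \gamma_{ij} = 0$) produces (i) an interior piece involving $\div v = 0$, which after commutation with $\na^r$ becomes lower-order, and (ii) a boundary term of the form $\int_{\pa\D_t} Q(\Pi\na^r p, \Pi \na^r v)\,\na_\n p\, dS$ after using \eqref{basicproj}–\eqref{heuristic} and the fact that $p = 0$ on the boundary.

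For the boundary $p$-term, I would use $p = 0$ on $\pa\D_t$ to write $\Pi \na^r p \sim (\tn^{r-2}\theta)\,\na_\n p + \text{(lower order)}$, so that this piece is essentially a weighted $H^{r-2}$-norm of $\theta$, weighted by $|\na_\n p|^{-1}$. Differentiating it in time and using $D_t \theta$ and $D_t(\na_\n p)$ expansions together with the equation $\Delta p = -(\pa_i v^j)(\pa_j v^i)$, the top-order contribution is precisely of the form $-2\int_{\pa\D_t} Q(\Pi\na^r p, \Pi\na^r v)\,dS$, which cancels the boundary term produced in the previous step. The remaining pieces are controlled by $L$ (from $\na_\n D_t p$ and $\na^2 p$) and $K$ (from $D_t \theta$) against $\E^r$, using the Taylor sign bound \eqref{tscbd} to invert $|\na_\n p|$.

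For the vorticity term, I would use $D_t \omega = \omega\cdot \na v$ and commute through $\na^{r-1}$ so that
\begin{equation*}
\frac{d}{dt}\int_{\D_t} |\na^{r-1}\omega|^2\,dV \lesssim \int_{\D_t} \na^{r-1}\omega \cdot \na^{r-1}(\omega \cdot \na v)\,dV + \int_{\D_t}|[D_t,\na^{r-1}]\omega|\,|\na^{r-1}\omega|\,dV,
\end{equation*}
which is bounded by $M\,\E^r$ plus a product of lower-order norms bounded via the elliptic estimates of Section \ref{ellsec}. Collecting everything, using Lemma \ref{coerlem} and Proposition \ref{projlem} to absorb top-order quantities like $\|\na^r v\|_{L^2}$ and $\|\tn^{r-2}\theta\|_{L^2}$ into $\E^r$, and using the Sobolev/Schauder estimates to bound the commutator remainders by $A\cdot P_r(\E^{r-1}_*,K,L,M)$, yields the claimed estimate \eqref{dten}. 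The main obstacle is the cancellation between the interior integration-by-parts boundary term and the time derivative of the boundary energy; making this cancellation work at all orders requires the precise choice of the projected quadratic form $Q$ and careful identification of $\Pi \na^r p$ via \eqref{heuristic}, together with the Taylor sign condition to guarantee that the resulting boundary norm is coercive.
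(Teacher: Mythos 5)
Your plan is essentially correct and follows the same route as the paper, with one organizational difference: the paper does not re-derive the integration-by-parts cancellation you describe, but instead invokes Proposition 5.11 of \cite{Christodoulou2000} directly (with $\alpha=-\na^r p$, $\beta=\na^{r-1}v$, $\nu = |\na p|^{-1}$), which is exactly the packaged form of the identity you are sketching, including the cancellation between the bulk boundary term and $\frac{d}{dt}$ of the weighted projected-pressure energy, the role of the projected quadratic form $Q$, and the need for the Taylor sign condition to make the boundary piece coercive. Your description of that cancellation is correct in spirit, though you phrase it more exactly than it actually works: the identity leaves a residual involving $\Pi(D_t\na^r p+(\na_k p)\na^r v^k)$ and $D_t\na^r v+\na^{r+1}p$, rather than a term-by-term match.

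Where your plan is a bit thin is precisely where the paper's proof has its actual content: after citing the abstract energy identity, the real work is to bound the residual commutator term
\begin{equation*}
  \sum_{s=1}^{r-2}\big\|\Pi\big((\na^{1+s}v)\cdot(\na^{r-s}p)\big)\big\|_{L^2(\pa\D_t)}
\end{equation*}
by $(K+L+M)\big(\E_r + (K+L+M)P(\E^*_{r-1},K,L,M)\big)$, which is done by splitting the projections onto tangential and normal components and applying the interpolation inequality \eqref{interp1} on $\pa\D_t$ together with Lemma \ref{coerlem}. You cover this only generically with ``Sobolev/Schauder estimates to bound the commutator remainders''; if you were to carry the plan out, this is the step that would need to be fleshed out, since it is the one place where the specific structure of the $L,M$ constants and the homogeneous polynomial $P_r$ actually enter. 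The treatment of the $\omega$-piece is exactly as you say, via the vorticity transport equation and commutation with $\na^{r-1}$.
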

We prove this in the next two subsections.
Next, we relate the energy $\E_r$ and the a
priori assumptions \eqref{Kbd}-\eqref{vbd} to the dispersive variable
$u$ and the vorticity.
\begin{lemma}
  \label{A1lem}
 If the bootstrap assumptions \eqref{bootstrap1}-\eqref{bootstrap3} hold, then
 with:
 \begin{equation}
  \A(t) =
   ||\theta(t)||_{L^\infty(\pa \D_t)} + \frac{1}{\iota_0(t)}
   + ||\na^2 p||_{L^\infty(\pa \D_t)} + ||\na D_t p||_{L^\infty(\pa \D_t)}
   +
  ||\na v(t)||_{L^\infty(\D_t)} + ||\na p(t)||_{L^\infty(\D_t)},
  \label{Adef}
 \end{equation}
 and
 \begin{equation}
  \B(t) = ||h(t)||_{W^{4,\infty}(\R^2)} +
  ||\varphi(t)||_{W^{4,\infty}(\R^2)}
  + ||\omega(t)||_{H^{N_1}_w(\D_t)},
  \label{}
 \end{equation}
 we have:
 \begin{equation}
  \A(t) \lesssim \B(t)
  \bigg(1 + \B(t)\sqrt{\E^*_3(t)}\bigg),
  \label{a1est}
\end{equation}
 where $\E^*_3 = \sum_{s \leq 3} \E_s$.
 Furthermore, if $0 \leq t \leq T_{\ve_0,\ve_1}$ with
 $T_{\ve_0, \ve_1}$ defined by \eqref{tdef}, then:
 \begin{equation}
  -\nabla_\n p(t) \geq \frac{1}{2} (-\nabla_\n p(0))
  \quad \text{ on } \pa \D_t.
  \label{tsest}
 \end{equation}
\end{lemma}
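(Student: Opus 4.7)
The plan is to bound each term appearing in $\A(t)$ separately, using either pointwise identities and Sobolev embedding on the boundary, or the elliptic machinery of Section~\ref{ellsec} applied with $L^2$ control furnished by $\sqrt{\E^*_3(t)}$. First I would dispatch the purely geometric quantities: since $\pa \D_t$ is the graph $y = h(t,x)$, one has the pointwise identity $\theta_{ij} = (1+|\nabla h|^2)^{-1/2}\nabla_i\nabla_j h$ for $i,j=1,2$, so $||\theta||_{L^\infty(\pa\D_t)} \lesssim ||\nabla^2 h||_{L^\infty(\R^2)} \leq \B(t)$; and smallness of $h$ in $W^{4,\infty}$ (implicit from the bootstrap) forces the graph to be nearly flat, so $1/\iota_0(t) \lesssim 1 + \B(t)$.

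For $||\na v||_{L^\infty(\D_t)}$, I would use the decomposition $v = \pa \psi + v_\omega$ from Section~\ref{bdyeqssec}. The rotational piece is handled by Proposition~\ref{vomegabd} and Sobolev embedding: since $N_1 \geq 6$, one has $||\na v_\omega||_{L^\infty(\D_t)} \lesssim ||v_\omega||_{H^{N_1-1}(\D_t)} \lesssim ||\omega||_{H^{N_1}_w(\D_t)} \leq \B(t)$. For the irrotational part, boundary elliptic regularity for the harmonic function $\psi$ in the graph domain $\D_t$ bounds $||\na\psi||_{L^\infty(\D_t)}$ by $||\varphi||_{W^{4,\infty}(\R^2)}$ up to multiplicative factors depending on $||h||_{W^{4,\infty}}$ and higher Sobolev norms of $h$, producing at most one extra factor of $\B(t)\sqrt{\E^*_3(t)}$ after the geometric quantities are absorbed.

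The pressure terms rely on $\Delta p = -(\pa_i v^j)(\pa_j v^i)$ in $\D_t$ with $p|_{\pa\D_t}=0$. Applying the elliptic estimate \eqref{mainell} together with the projection heuristic \eqref{heuristic}, which gives $\Pi\na^r p \sim (\tn^{r-2}\theta)\na_\n p + (\text{lower order})$, converts $L^2$ control of $\theta$ and of the source $(\pa v)^2$ into $H^r$ control of $p$; Sobolev embedding then yields the required $L^\infty$ bounds on $\na p$, $\na^2 p$, up to the order $r$ that $\E^*_3$ supports. The same strategy applies to $D_t p$, which also vanishes on $\pa \D_t$ (material derivatives of a vanishing trace vanish) and whose forcing $[\Delta, D_t] p$ is quadratic in $\na v$; Lemma~\ref{A1lem}'s dependence $\B(1+\B\sqrt{\E^*_3})$ emerges from the single quadratic-in-$\B$ structure of the source and the at most linear dependence of the elliptic inverse on $\sqrt{\E^*_3}$. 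The expected main obstacle is bookkeeping the derivative counts so that the right-hand side genuinely has the claimed form rather than a higher polynomial in $\sqrt{\E^*_3}$; this is where $N_1 \geq 6$ is used, together with the coercivity of $\E^r$ (Lemma~\ref{coerlem} and the results of Section~\ref{coerensec}) to recover full Sobolev control of $h$, $\varphi$ and $v_\omega$ from $\E^*_3$.

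For \eqref{tsest}, I would integrate along particle trajectories on $\pa\D_t$:
\begin{equation*}
-\na_\n p(t, x(t)) = -\na_\n p(0, x(0)) - \int_0^t \big(D_t \na_\n p\big)(s, x(s))\, ds,
\end{equation*}
and bound the integrand pointwise by an expression controlled by $\A(s)$-type quantities already covered by \eqref{a1est}. Using the bootstrap bounds \eqref{bootstrap1}--\eqref{bootstrap3} to estimate $\B(s)$ and $\sqrt{\E^*_3(s)}$, one obtains $\int_0^t \A(s)\, ds \lesssim \ve_0(1+t)^{2\delta}/(1+t)$, which is $\ll \delta_0$ on the range $t \leq T_{\ve_0,\ve_1}$ provided $C_N$ in \eqref{tdef} is chosen small enough in terms of $||(-\na_\n p_0)^{-1}||_{L^\infty(\pa\D_0)}$. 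Combined with Lemma~\ref{inttscprop} this yields \eqref{tsest}. The hard part here is purely bookkeeping — ensuring the same constant $C_N$ is compatible with the analogous smallness requirements in Propositions~\ref{intvortprop} and \ref{intenprop}.
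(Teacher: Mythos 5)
Your overall strategy lines up with the paper's: treat $\theta$ and $\iota_0$ via the graph formula, split $v$ into $\nabla\psi + v_\omega$, use the elliptic apparatus of Section~\ref{ellsec} for the pressure terms, and integrate $D_t\nabla_\n p$ along trajectories for \eqref{tsest}. Two places need work, one a missing idea and one a genuine error.

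For $\|\nabla v\|_{L^\infty(\D_t)}$, ``boundary elliptic regularity for $\psi$'' is too vague to support a pointwise bound in the unbounded domain. The paper's mechanism is concrete and worth naming: since $\psi$ is harmonic, $|\nabla^2\psi|^2$ is subharmonic ($\Delta|\nabla^2\psi|^2 = 2|\nabla^3\psi|^2 \geq 0$), so the maximum principle gives $\|\nabla^2\psi\|_{L^\infty(\D_t)} \leq \|\nabla^2\psi\|_{L^\infty(\pa\D_t)}$, and the boundary value is then reduced to $\tn^2\varphi$, $\theta$, and $\N\varphi$ by the pointwise inequality \eqref{pw} and the projection formula \eqref{basicproj}. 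Without that reduction to the boundary you have no handle on $\nabla^2\psi$ deep in the interior of an infinite domain. Similarly, the paper handles $\|\nabla^2 p\|_{L^\infty(\pa\D_t)}$ by the same pointwise inequality \eqref{pw} together with $\Delta p = -(\nabla v)\cdot(\nabla v)$, which is more direct than your $H^r$-plus-Sobolev route, though yours could in principle be made to work.

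The real gap is in \eqref{tsest}. You propose to bound $|D_t\nabla_\n p|$ by ``$\A(s)$-type quantities'' and then claim $\int_0^t \A(s)\,ds \lesssim \ve_0(1+t)^{2\delta}/(1+t)$. That estimate is false: with the bootstrap assumptions, $\B(s)\sim \ve_0/(1+s) + \ve_1(1+s)^\delta$, so $\int_0^t\A(s)\,ds$ picks up a non-decaying piece of size $\ve_0\log(1+t)$ plus a term of size $\ve_1(1+t)^{1+\delta}$; neither is $O(\ve_0(1+t)^{2\delta-1})$ and neither is obviously small on the time scale $T_{\ve_0,\ve_1}$. The argument survives only because $\nabla_\n D_t p$ is not merely bounded by $\A$ --- it is \emph{cubic} in the small quantities. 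This is visible from \eqref{dtpeq}, where $\Delta D_t p = 4\tr((\nabla v)\cdot\nabla^2 p) + 2\tr((\nabla v)^3) - (\Delta v)\cdot\nabla p$, every term of which is of third order. The paper exploits this via Sobolev embedding on $\pa\D_t$ and \eqref{dtpests} to get $|\nabla_\n D_t p(s)| \lesssim \ve_0^3(1+s)^{-2+\delta} + \ve_1^3(1+s)^{3\delta}$, and it is precisely the cubic powers that make $\int_0^t |\nabla_\n D_t p|\,ds$ small when $t \leq C(\delta_0^{-1})\ve_1^{-1/3}$. If you only use the linear bound $\A$, the integral does not close. You should restate your estimate with the cubic structure front and center; otherwise the lower bound on $-\nabla_\n p$ fails.
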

We remark that one could replace $||\omega||_{H^{N_1}_w(\D_t)}$
in \eqref{a1est} with
an $L^\infty$-based norm with fewer derivatives by using a Schauder
estimate, but this will suffice
for our purposes. We also note that the fact that $\sqrt{\E}_3$ shows up
on the right-hand side of \eqref{a1est} is because we need to control
$||\na D_t p||_{L^\infty(\pa \D_t)}$. We bound this by Sobolev embedding
and then the elliptic estimates in Section \ref{ellsec}. Since $\Delta D_t p$
is cubic (see \eqref{dtpeq}), this can be bounded by $\B^2\sqrt{\E_3^*}$.

Recall that $\varphi = \psi|_{\pa \D_t}$
where $\nabla_\n \psi = \n \cdot v$ on $\pa \D_t$. Since by
Lemma \ref{coerlem}, the energies control derivatives
of $v$ on $\pa \D_t$ as well as derivatives of $\theta$, we have the
following estimate, which is proved in Section
\ref{coerensec}.
\begin{prop}
  \label{uenests}
  With $\varphi_\omega$ defined by \eqref{varphiwdef},
  if $||h||_{W^{4,\infty}(\R^2)} \ll 1$, then for any $r \geq 1$:
  \begin{equation}
    ||h||_{H^r(\R^2)}^2 +
    ||\Lambda^{1/2} \varphi_\omega||_{L^2(\R^2)}^2
    + ||\nabla \varphi_\omega||_{H^{r-1}(\R^2)}^2
    \leq C  \E^r + \A P(\E^{r-1}_*, \A),
    \label{rhs1}
  \end{equation}
  where $\E^r_* = \sum_{s \leq r} \E^s$ and $\A$ defined by \eqref{Adef}.
\end{prop}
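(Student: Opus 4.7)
The plan is to treat the three terms on the left-hand side separately, using the decomposition $v = \nabla \psi + v_\omega$ together with the key algebraic identity
\[
\nabla \varphi_\omega = V + \nabla h\, B,
\]
which follows by combining the chain rule \eqref{cr} applied to $\varphi = \psi|_{\pa \D_t}$ with the defining relation $\nabla a_\omega = U_\omega = V_\omega + \nabla h\, B_\omega$ from Theorem \ref{formulationthm}. (Explicitly, $\nabla\varphi = (V-V_\omega) + \nabla h\,(B - B_\omega)$ by the chain rule, and adding $\nabla a_\omega$ cancels the $V_\omega$ and $B_\omega$ terms.)

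First I would reduce everything to interior bounds on $\D_t$. By Lemma \ref{coerlem}, the volume integral $\int_{\D_t} \delta^{ij} Q(\na^r v_i, \na^r v_j)\, dV$ coerces $\|v\|_{H^r(\D_t)}^2$ up to a correction of the form $\A\, P(\E_*^{r-1}, \A)$ arising from the fact that $Q$ only registers the tangential part of $\na^r v$ at $\pa \D_t$; the lost normal components are recovered using $\div v = 0$, $\curl v = \omega$ and the divergence-curl type elliptic estimates of Section \ref{ellsec}. Combining with the trace inequality \eqref{trace} yields
\[
\|V\|_{H^{r-1}(\R^2)}^2 + \|B\|_{H^{r-1}(\R^2)}^2 \lesssim \E^r + \A\, P(\E_*^{r-1}, \A).
\]
Inserting this into the identity $\nabla \varphi_\omega = V + \nabla h\, B$ and using the algebra property of $H^{r-1}(\R^2)$ (valid since $\|h\|_{W^{4,\infty}}\ll 1$) gives the bound on $\|\nabla\varphi_\omega\|_{H^{r-1}(\R^2)}^2$.

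For $\|h\|_{H^r(\R^2)}^2$, the $L^2$ norm is controlled by the conserved energy $E_0 \lesssim \E^0$. For the higher-order seminorms I use the formula $\theta_{ij} = (1+|\nabla h|^2)^{-1/2}\nabla_i\nabla_j h$, $i,j = 1,2$, to express $\nabla^r h$ in terms of $\tn^{r-2}\theta$ plus products which are absorbed into the polynomial error. The boundary term in $\E^r$, combined with the identity $\Pi\na^r p \sim (\tn^{r-2}\theta)\na_\n p + \text{l.o.t.}$ from Proposition \ref{projlem} and the Taylor sign condition $-\na_\n p \geq \delta_0$, then controls $\|\tn^{r-2}\theta\|_{L^2(\pa\D_t)}^2$ as needed.

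For $\|\Lambda^{1/2}\varphi_\omega\|_{L^2}$ I split $\varphi_\omega = \varphi + a_\omega$. For $\varphi$, Green's identity yields $\int_{\pa\D_t}\varphi\, G(h)\varphi\, dS = \|\nabla\psi\|_{L^2(\D_t)}^2 \leq \|v\|_{L^2(\D_t)}^2$, and since $G(h) = \Lambda + O(\|h\|_{W^{1,\infty}})$ by the expansion \eqref{formal1}--\eqref{formal2}, this controls $\|\Lambda^{1/2}\varphi\|_{L^2}^2$ by $\E^0$ plus the error term. For $a_\omega$ I write $\Lambda^{1/2} a_\omega = \Lambda^{-1/2} R\cdot \nabla a_\omega = \Lambda^{-1/2} R\cdot U_\omega$ and invoke the Hardy--Littlewood--Sobolev bound $\|\Lambda^{-1/2} f\|_{L^2(\R^2)}\lesssim \|f\|_{L^{4/3}(\R^2)}$ together with the low-exponent estimate \eqref{lowp} of Proposition \ref{vomegabd} applied to $V_\omega$ (and the analogous estimate for $\nabla h B_\omega$) to reduce to $\|\omega\|_{H^{N_1}_w(\D_t)}$, which is absorbed into the polynomial error.

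The hard part will be this last estimate on $\Lambda^{1/2}a_\omega$: since only gradient information on $a_\omega$ is available one cannot interpolate directly between $L^2$ and $\dot H^1$, and the route via the Riesz transform and the $L^{4/3}$ trace bound for $V_\omega$ depends crucially on the refined Green's function construction of Section \ref{ellsec} (which uses the vanishing of $\omega$ on $\pa \D_t$ to cancel the leading-order non-decaying contribution to $v_\omega$).
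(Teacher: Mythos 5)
Your proposal is essentially correct, and for the $\|\nabla\varphi_\omega\|_{H^{r-1}}$ piece it takes a slightly different (and more direct) route than the paper. You combine the chain rule with $\nabla a_\omega = U_\omega$ to get the single identity $\nabla\varphi_\omega = V + \nabla h\, B$, then bound $V, B$ on the boundary through the trace inequality applied directly to $v$ (using \eqref{coerv}, which is the coercive estimate from Lemma~\ref{maincoer}, not Lemma~\ref{coerlem}). The paper instead decomposes $\nabla\varphi_\omega = \nabla\varphi + \nabla a_\omega$ and proves separate bounds for $\|\nabla^r\varphi\|_{L^2}$ (via the Neumann structure of $\psi$, projection formulas \eqref{basicproj}--\eqref{3rdorder} and the trace-type inequalities \eqref{traceproj}, \eqref{bdyproj1}) and for $\|\nabla^{r-1}V_\omega\|_{L^2}$ (via Proposition \ref{vomegabd}). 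The paper's version is more work but produces separate control of the irrotational and rotational pieces, which is needed elsewhere (notably in the dispersive estimates of Sections \ref{vortests}--\ref{realdispsec}); your combined approach is simpler but only yields the weaker statement in the proposition, which is all that is claimed here. For $\|h\|_{H^r}$ and $\|\Lambda^{1/2}\varphi_\omega\|_{L^2}$ you follow the paper exactly: $\theta_{ij} = (1+|\nabla h|^2)^{-1/2}\nabla_i\nabla_j h$ plus the conserved energy for $h$; Green's identity, comparability of $G(h)^{1/2}$ with $\Lambda^{1/2}$, the Riesz transform, Hardy--Littlewood--Sobolev, and the low-exponent bound \eqref{lowp} for $a_\omega$.

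One inaccuracy in your final remark: the Green's function/decay estimate of Proposition~\ref{divcurllem}, which underlies \eqref{lowp}, rests on the smallness hypothesis \eqref{smallcurv} on the mean curvature and on the general elliptic theory of \cite{Hofmann2007}, \cite{Kang2010} and \cite{Choi2013a}; it does \emph{not} use the vanishing of $\omega$ on $\pa\D_t$. The condition $\omega\cdot\n|_{\pa\D_t}=0$ enters the argument elsewhere — it is what allows the derivation of the scalar evolution equation for $\varphi_\omega$ in Theorem~\ref{formulationthm} (by making $U_\omega$ a gradient), and it is used to ensure $v_\omega\cdot\n|_{\pa\D_t}=0$ so that the system \eqref{div1}--\eqref{div3} with the Neumann-type boundary conditions \eqref{betap1}--\eqref{betap3} is the right one to pose, but the kernel decay $|\nabla_z G(z,z')| \lesssim |z-z'|^{-2}$ itself does not require vorticity to vanish on the boundary.
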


We will then see that the energy estimates \eqref{dten} and this
lemma imply:
\begin{prop}
  \label{uenlem}
If the bootstrap assumptions \eqref{bootstrap1}-\eqref{bootstrap3} hold,
then:
\begin{equation}
 ||\omega(t)||_{H^{N_1}_w(\D_t)}^2 \leq ||\omega_0||_{H^{N_1}_w(\D_0)}^2
 + C_{N} \int_0^t \bigg(||u(s)||_{W^{N_1 + 2,\infty}(\R^2)} +
 ||\omega(s)||_{H^{N_1}_w(\D_s)}\bigg)||\omega(s)||_{H^{N_1}_w(\D_s)}^2\, ds.
 \label{wenest}
\end{equation}
\end{prop}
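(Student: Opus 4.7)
The plan is to differentiate $\|\omega(t)\|^2_{H^{N_1}_w(\D_t)}$ in time using a Reynolds transport argument, use the vorticity transport equation \eqref{vorteq} in the form $D_t\omega = \omega\cdot\partial v$, and close with weighted Moser-type estimates combined with the Hodge decomposition $v = \nabla\psi + v_\omega$ and the elliptic bounds of Section \ref{ellsec}.

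Since $\div v = 0$ in $\D_t$ and the free boundary moves with the fluid, for any scalar $f$ on $\D_t$ one has $\frac{d}{dt}\int_{\D_t} f\,dxdy = \int_{\D_t} D_t f\,dxdy$. Applying this with $f = w|\pa^k\omega|^2$, where $w(x,y) = (1+|x|^2+y^2)^2$, and summing over $k\le N_1$ gives
\[
\frac{d}{dt}\|\omega\|_{H^{N_1}_w}^2 = \sum_{k\le N_1}\int_{\D_t}\Big((D_t w)|\pa^k\omega|^2 + 2w\,(\pa^k\omega)\cdot D_t\pa^k\omega\Big)\,dxdy.
\]
A direct computation gives $|D_t w| \le 4(1+|x|^2+y^2)^{3/2}|v| \le 4\|v\|_{L^\infty(\D_t)}\,w$, so the weight-derivative term is bounded by $C\|v\|_{L^\infty(\D_t)}\|\omega\|_{H^{N_1}_w}^2 \le C\|\omega\|_{H^{N_1}_w}^2$, since $\|v\|_{L^\infty}\lesssim \|u\|_{W^{N_1+2,\infty}} + \|\omega\|_{H^{N_1}_w}$ via the decomposition discussed below.

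For the main term I commute $D_t$ past $\pa^k$:
\[
D_t\pa^k\omega = \pa^k(\omega\cdot\pa v) - \sum_{|I|+|J|=k,\,|I|\ge 1}c_{IJ}(\pa^Iv)\cdot\pa^{J+1}\omega.
\]
After Cauchy--Schwarz this reduces to estimating the weighted Moser-type quantity $\sum_{|I|+|J|\le N_1}\big\|(1+|x|^2+y^2)(\pa^{J+1}v)(\pa^I\omega)\big\|_{L^2(\D_t)}$. Splitting into the cases $|I|\le N_1/2$ and $|I|\ge N_1/2+1$, placing $\pa v$ in $L^\infty(\D_t)$ and $\pa^I\omega$ in $L^2_w(\D_t)$ in the first case, and conversely in the second case (using the weighted Sobolev embedding $\|(1+|x|^2+y^2)\pa^I\omega\|_{L^\infty}\lesssim\|\omega\|_{H^{N_1}_w}$, valid since $N_1\ge 6$), this sum is controlled by $\bigl(\|\pa v\|_{W^{N_1/2,\infty}(\D_t)} + \|\pa v\|_{H^{N_1}(\D_t)}\bigr)\|\omega\|_{H^{N_1}_w}$.

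It remains to convert interior norms of $v$ to the right-hand side of \eqref{wenest}. Using the decomposition $v = \nabla\psi + v_\omega$ from Section \ref{bdyeqssec}, Proposition \ref{vomegabd} together with interior Sobolev embedding bounds $\|\pa v_\omega\|_{W^{N_1/2,\infty}(\D_t)}+\|\pa v_\omega\|_{H^{N_1}(\D_t)}\lesssim\|\omega\|_{H^{N_1}_w}$. For the irrotational piece, standard elliptic estimates for the harmonic function $\psi$ on the graph domain $\D_t$ bound $\|\pa\nabla\psi\|_{W^{N_1/2,\infty}(\D_t)}+\|\pa\nabla\psi\|_{H^{N_1}(\D_t)}$ in terms of boundary norms of $\varphi$; since $\varphi = \Lambda^{-1/2}\Im u - a_\omega$ with $\nabla a_\omega = V_\omega + \nabla h\,B_\omega$ controlled by $\|\omega\|_{H^{N_1}_w}$ through Proposition \ref{vomegabd}, the contribution is $\lesssim \|u\|_{W^{N_1+2,\infty}(\R^2)} + \|\omega\|_{H^{N_1}_w}$. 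Combining everything, dividing by $\|\omega\|_{H^{N_1}_w}$ and integrating in time yields \eqref{wenest}. The main technical obstacle is the weighted Moser estimate: one must track which factor carries the weight so that no decay in $|x|$ is ever demanded of $v$ (which does not decay at spatial infinity), while simultaneously ensuring that the derivative-count budget allows all factors of $v$ to ultimately be absorbed into $\|u\|_{W^{N_1+2,\infty}}+\|\omega\|_{H^{N_1}_w}$ rather than into the larger $L^2$-based energies.
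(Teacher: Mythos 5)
Your outline — Reynolds transport, commuting $D_t$ past $\na^k$, weighted Moser split, Hodge decomposition $v=\nabla\psi+v_\omega$ — follows the same overall architecture as the paper. But there is a genuine gap in the step where you close the estimate, and it is precisely the obstacle you name in your last paragraph.

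In your Moser dichotomy the high-derivative case puts $\pa^{J+1}v$ in $L^2(\D_t)$, producing a factor $\|\pa v\|_{H^{N_1}(\D_t)}$. You then claim this is $\lesssim \|u\|_{W^{N_1+2,\infty}(\R^2)}+\|\omega\|_{H^{N_1}_w}$ via elliptic estimates for $\psi$. This does not work: elliptic regularity gives $\|\nabla\psi\|_{H^{N_1+1}(\D_t)}\lesssim\|\Lambda^{1/2}\varphi\|_{H^{N_1+1}(\R^2)}$, an $L^2$-based boundary norm, which on the unbounded domain is controlled only by the energy $\|u\|_{H^{N_0}}\sim\ve_0(1+t)^\delta$ — a growing quantity — and \emph{not} by the decaying $L^\infty$-based norm $\|u\|_{W^{N_1+2,\infty}}$ (the harmonic extension of bounded boundary data is not square-integrable on an unbounded domain). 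If $\|u\|_{H^{N_0}}$ appears in the Gr\"onwall integrand in place of $\|u\|_{W^{N_1+2,\infty}}$, the resulting vorticity growth is too fast and Proposition~\ref{intvortprop} fails, so the statement as written is not obtained. (There is also a minor mislabeling in your case split: it is the case $|J|$ small — not $|I|$ small — in which $\pa^{J+1}v$ goes to $L^\infty$, but this is cosmetic.)

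The missing idea, which is how the paper avoids the issue, is to substitute $v=\nabla\psi+v_\omega$ \emph{before} applying Leibniz, rather than after. Then the irrotational factor always appears as a pure power $\na^{s}\psi$ multiplying derivatives of $\omega$, and because $\Delta|\na^s\psi|^2=2|\na^{s+1}\psi|^2\geq 0$ the maximum principle gives $\|\na^s\psi\|_{L^\infty(\D_t)}\leq\|\na^s\psi\|_{L^\infty(\pa\D_t)}$, which after \eqref{cr}--\eqref{neum} is $\lesssim\|u\|_{W^{s+1,\infty}(\R^2)}$. In other words every factor involving $\psi$ is placed in $L^\infty$, never $L^2$, no matter how many derivatives it carries; only the $v_\omega$ factors go to $L^2$, and these are controlled by $\|\omega\|_{H^{N_1}_w}$ via \eqref{mainomegabd}. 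Without the maximum-principle step your proof cannot reach \eqref{wenest} with the stated decaying coefficient.
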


 We will need to take $N_1 \geq 6$
to prove the dispersive estimates and since we only control
$||u||_{W^{4,\infty}}$, the result is that
$||u||_{W^{N_1 +2, \infty}(\R^2)}$ decays slightly slower than the critical
rate of $1/t$, and this is why we are only able to follow the solution
until $T \sim \ve_0^{-N}$.

Assuming these results for the moment, we can now provide the proofs
of Theorem \ref{intenprop} and \ref{intvortprop}:
\begin{proof}[Proof of Theorem \ref{intenprop}]
  By Gr\"{o}nwall's inequality and induction, if $\delta$ is sufficiently
  small then the estimate \eqref{dten} combined with \eqref{a1est} implies
  that there is a constant $C^E_{N_0}$ with:
  \begin{align}
   \E_{N_0}(t) &\leq \E_{N_0}(0) + C^E_{N_0}\int_0^t
   \big(\frac{\ve_0}{1+s} + \ve_1(1+s)^{\delta}\big) \ve_0^2(1+s)^{2\delta}
   \, ds
   \\ &\leq \E_{N_0}(0) + C^E_{N_0} (\ve_0^3(1+t)^{2\delta} +
   \ve_1\ve_0^2 (1+t)^{1+3\delta}).
   \label{}
  \end{align}
  Using \eqref{coertheta} and the fact that $\theta \sim \nabla^2 h$
  completes the proof.
\end{proof}

\begin{proof}[Proof of Theorem \ref{intvortprop}]

By the interpolation inequality \eqref{interpolationgms}
combined with the estimate \eqref{rhs1} for $||u||_{H^{N_0}}$,
\begin{align}
 ||u||_{W^{N_1 + 2,\infty}(\R^2)} \lesssim \frac{\ve_0}{(1+t)^{1-\sigma'}},
 \label{}
\end{align}
with $\sigma' = \frac{N_1+2}{N_0-1}(1+\delta)$, provided
the assumptions \eqref{bootstrap1}-\eqref{bootstrap3} hold. Recalling that
$N_0 = 2N N_1$, this implies:
\begin{equation}
 ||u||_{W^{N_1,\infty}(\R^2)}
 \lesssim \frac{\ve_0}{(1 + t)^{1-1/N}}.
 \label{}
\end{equation}
Combining this with \eqref{wenest} and using the assumptions
 \eqref{bootstrap1}-\eqref{bootstrap3}, we have:
 \begin{align}
  ||\omega(t)||_{H^{N_1}_w(\D_t)}^2
  &\leq ||\omega(0)||_{H^{N_1}_w(\D_0)}^2
  + C_{N_1}\int_0^t
\bigg( \ve_0 (1 +s)^{-1+\sigma} +
 \ve_1 ( 1+s)^{1+\delta}\bigg)
 \ve_1^2 (1+s)^{2\delta} \, ds\\
 &\leq
\frac{1}{4} \ve_1^2 +
 C_{N_1}
 \big( \ve_0 (1+t)^{\sigma} \big)
 \big(\ve_1 ( 1+ t)^{\delta}\big)
 \ve_1^2 (1 + t)^{2\delta},
  \label{}
 \end{align}
 as required.
\end{proof}

As in \cite{Christodoulou2000}, before proving the energy estimates \eqref{dten},
it is convenient to first prove that $\E^r$ controls norms of $v, p$ and the
second fundamental form $\theta$.

\subsection{Quantities controlled by $\E_r$}

We start with the equations for $\omega$ and $p$.
Taking the curl of \eqref{mom} shows that $\omega$
satisfies:
\begin{equation}
 D_t \omega_{ij} = \omega_{ik}\na^k v_j.
 \label{omegaeq}
\end{equation}
Taking the divergence of \eqref{mom} and using \eqref{mass}
gives that $p$ satisfies:
\begin{equation}
 \Delta p = -(\na_i v^j)(\na_j v^i)
 = -\na_i( v^j\na_j v^i),
 \label{peq}
\end{equation}
where we used that $\div v = 0$.
We will also need to use the equation for $D_tp$. We apply $D_t$ to
both sides of \eqref{peq}, and the right-hand side is:
\begin{equation}
- D_t (\na_i (v^j \na_j v^i))
= -\na_i\big( D_t (v^j \na_j v^i)\big) + \na_k
\big(v^k \na_i (v^j \na_j v^i)\big),
 \label{}
\end{equation}
while:
\begin{equation}
 D_t \Delta p = \na_i D_t \na^i p - \na_i v^k \na_k \na^i p
 = \Delta D_t p - \na_i \big( v^k\na_k \na^i p\big) - \na_k (\na_i v^k \na^i p).
 \label{}
\end{equation}
In particular, rearranging the indices this shows that:
\begin{equation}
 \Delta D_t p = \na_i \Big( v^k \na_k \na^i p - \na_k v^i \na^k p
 - D_t (v^j \na_j v^i) + v^i \na_k(v^j \na_j v^k)\Big).
 \label{dtpdiv}
\end{equation}
We shall need that the right-hand side of \eqref{dtpdiv} is
the divergence of a vector field, but for most of out applications
it is more useful to use \eqref{mass} and re-write this in the following
slightly more attractive way:
\begin{equation}
 \Delta D_t p = 4 \tr \big( (\na v) \cdot \na^2 p\big)
 + 2 \tr \big( (\na v)^3\big) - (\Delta v)\cdot \na p,
 \label{dtpeq}
\end{equation}
where we are writing $((\na v)\cdot \na^2 p)_{ij}
= \na_i v^k\na_{k}\na_j p$ and
$(( \na v)^3)_{ij} = \na_i v^k \na_k v^\ell \na_\ell v_j$.
The next lemma follows from these observations, the
interpolation inequalities \eqref{interp1}-\eqref{interp2},
and the fact that $[D_t, \pa_i] = -(\pa_i v^j)\pa_j$.
\begin{lemma}
  \label{coerlem}
  If $\K \leq 1$ then
  there are constants $C_r > 0$ so that:
 \begin{align}
  || D_t \na^r v + \na^{r+1} p||_{L^2(\D_t)}
  + ||D_t \na^{r-1} \omega||_{L^2(\D_t)} + ||\Delta \na^{r-1} p||_{L^2(\D_t)}
  \leq C_r ||\na v||_{L^\infty(\D_t)} \sum_{k = 0}^r ||\na^k v||_{L^2(\D_t)},
  \label{mainlot}
\end{align}
 \begin{align}
  ||\Pi \big( D_t \na^r p + (\na^r v)\cdot \na p
  - \na^r D_tp\big)||_{L^2(\pa \D_t)}
 \leq C_r \sum_{s = 1}^{r-2}
  || \Pi\big( (\na^{1+s} v) \cdot (\na^{r-s}p)\big)||_{L^2(\pa \D_t)}
  \label{projlot}
 \end{align}
 and
 \begin{multline}
  ||\na^{r-2} \Delta D_t p - (\na^{r-2} \Delta v)\cdot \na p
  ||_{L^2(\D_t)}\\
  \leq C_r \big( ||\na v||_{L^\infty(\D_t)}^2
  + ||\na p||_{L^\infty(\D_t)}\big)
  \bigg(\sum_{s = 1}^{r} ||\na^s v||_{L^2(\D_t)}^2 + ||\na^s p||_{L^2(\D_t)}
  \bigg)\\
  +C_r ||\na v||_{L^\infty(\D_t)}^2 \sum_{s = 1}^{r-1}
  ||\na^s v||_{L^2( \D_t)}
  \label{dtplot}
 \end{multline}
\end{lemma}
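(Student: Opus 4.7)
The three bounds share a common structure: each left-hand side can be written as a sum of commutator and Leibniz terms multilinear in derivatives of $v$ and $p$, and the stated right-hand side arises from controlling those multilinear expressions by the interpolation inequalities \eqref{interp1}--\eqref{interp2}. Two algebraic facts drive everything: the commutator $[D_t,\na_i]f=-(\na_i v^k)\na_k f$ valid for any tensor $f$, and the fact that partial derivatives commute in Eulerian coordinates on $\D_t$ so that $[\Delta,\na^{r-1}]=0$. Combined with the pressure equation \eqref{peq}, the vorticity equation \eqref{omegaeq}, and the identity \eqref{dtpeq} for $\Delta D_t p$, every quantity of interest reduces to sums of schematic products $\na^a v\cdot\na^b v$ or $\na^a v\cdot\na^b p$ which, by Leibniz and interpolation, are bounded by $\|\na v\|_{L^\infty}\sum_k\|\na^k v\|_{L^2}$ (or the obvious variant with $\na p$).

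For \eqref{mainlot}, I would apply $\na^r$ to Euler's equation $D_t v+\na(p+y)=0$, note $\na^r(e_3)=0$ for $r\geq 1$, and rearrange to get $D_t\na^r v+\na^{r+1}p=[D_t,\na^r]v$; induction on $r$ using the commutator identity displays this as a finite sum of schematic terms $\na^{a+1}v\otimes\na^{b+1}v$ with $a+b=r-1$, which interpolate correctly. For $D_t\na^{r-1}\omega$, I would use $D_t\omega=\omega\cdot\na v$ from \eqref{omegaeq} together with the same commutator, and the observation that $\omega$ is itself a first derivative of $v$, so both $\na^{r-1}D_t\omega$ and $[D_t,\na^{r-1}]\omega$ are again bilinear in derivatives of $v$ of total order $r+1$. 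For $\Delta\na^{r-1}p$, I would commute $\na^{r-1}$ past $\Delta$ (permissible in flat coordinates) and insert \eqref{peq} to rewrite it as $-\na^{r-1}((\na_i v^j)(\na_j v^i))$, which is again of the same bilinear form.

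The proof of \eqref{projlot} is the delicate one. Writing $D_t\na^r p-\na^r D_t p=[D_t,\na^r]p$, I would expand this commutator inductively through $[D_t,\na^r]p=\na_{i_1}[D_t,\na^{r-1}]p-(\na_{i_1}v^k)\na_k\na^{r-1}p$ and apply Leibniz at each stage, producing a finite sum of terms of the schematic form $(\na^a v)\cdot(\na^b p)$ with $a+b=r+1$, $a,b\geq 1$. Counting multiplicities, the $a=r$ contribution collects to exactly $-(\na^r v)\cdot\na p$, so it is annihilated by the $+(\na^r v)\cdot\na p$ appearing on the left. The remaining $a=1$ terms have the form $(\na_{i_m}v^k)\na_k\na^{r-1}p$ summed over $m$; since $p=0$ on $\pa\D_t$ forces $\Pi\na p=0$, and the higher-order relation \eqref{heuristic} permits rewriting $\Pi\na^k p$ as $\tn^{k-2}\theta\otimes\na_\n p$ plus lower order, one checks that $\Pi$ applied to each $a=1$ term can be redistributed into the balanced range $2\leq a\leq r-1$ (i.e.\ $1\leq s\leq r-2$) modulo quantities already controlled. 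What survives on the boundary is precisely the claimed sum. Finally, for \eqref{dtplot} I would start from \eqref{dtpeq}, commute $\na^{r-2}$ with $\Delta$, and write
\begin{equation*}
 \na^{r-2}\Delta D_t p-(\na^{r-2}\Delta v)\cdot\na p=4\na^{r-2}\tr\big((\na v)\cdot\na^2 p\big)+2\na^{r-2}\tr\big((\na v)^3\big)-\bigl(\na^{r-2}((\Delta v)\cdot\na p)-(\na^{r-2}\Delta v)\cdot\na p\bigr).
\end{equation*}
The final bracket, expanded by Leibniz, collects precisely the terms where at least one of the $r-2$ outer derivatives falls on $\na p$; together with the first two terms on the right, every resulting summand contains a factor of $\na v$ or $\na p$ with at most one derivative, which can be placed in $L^\infty$, while the other factors carry at most $r$ derivatives and are controlled in $L^2$ after interpolation.

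The main obstacle is the combinatorial bookkeeping for \eqref{projlot}: verifying that the multiplicities of the $a=r$ terms in the expansion of $[D_t,\na^r]p$ assemble to exactly $-(\na^r v)\cdot\na p$, and showing that the $a=1$ terms are disposed of correctly after projection by $\Pi$, using the boundary vanishing of $p$ and the expansion \eqref{heuristic}. The other two estimates are essentially routine once the commutator framework and the interpolation inequalities \eqref{interp1}--\eqref{interp2} are in place.
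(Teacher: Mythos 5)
Your treatment of \eqref{mainlot} is correct, and your reduction of \eqref{dtplot} to \eqref{dtpeq} plus Leibniz is the right idea (though note a sign error in your displayed identity: the last term should be $-(\na^{r-2}\Delta v)\cdot\na p$, not $+(\na^{r-2}\Delta v)\cdot\na p$, so those two copies add rather than cancel; the resulting $\na^r v\cdot\na p$ contribution is still absorbed by the right-hand side). The serious problem is in your argument for \eqref{projlot}. You expand $[D_t,\na^r]p$ using the Eulerian identity $[D_t,\pa_i]=-(\pa_i v^k)\pa_k$, obtain schematic terms $(\na^a v)\cdot(\na^{r+1-a}p)$ with $1\le a\le r$, cancel the $a=r$ piece against $(\na^r v)\cdot\na p$, and then claim the surviving $a=1$ terms $(\na v)\cdot(\na^r p)$ ``can be redistributed into the balanced range $2\le a\le r-1$'' by using $\Pi\na p=0$ and \eqref{heuristic}. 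That redistribution is not justified and in fact cannot be made to produce the right-hand side of \eqref{projlot}. A direct check at $r=2$ shows the problem: the Eulerian computation leaves $-(\pa_j v^k)\pa_i\pa_k p-(\pa_i v^k)\pa_j\pa_k p$, whose projection is a genuine quantity of the order $(\tn v)\otimes\theta\,\na_\n p$ and not zero, whereas the right-hand side of \eqref{projlot} for $r=2$ is an empty sum.

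The resolution is that $D_t\na^r p$ in \eqref{projlot} is the Lagrangian material derivative of the covariant tensor $\na^r p$ (equivalently, the Lie derivative of $\na^r p$ along the material vector field), exactly as in \cite{Christodoulou2000}. On scalars one has $[D_t,\na_a]q=0$ because $D_t$ and $\pa_{y^a}$ commute in Lagrangian coordinates, and the tensor commutator enters only through $D_t\Gamma^c_{ab}=\na_a\na_b v^c$, which carries \emph{two} derivatives of $v$. Consequently $D_t\na^r q-\na^r D_t q$ is a sum of schematic terms $(\na^{1+s}v)\cdot(\na^{r-s}q)$ with $1\le s\le r-1$ only; the extremal $s=r-1$ term is exactly $-(\na^r v)\cdot\na q$ and cancels against the $(\na^r v)\cdot\na p$ on the left, leaving the range $1\le s\le r-2$ as claimed. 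In particular the $a=1$ terms you are trying to dispose of never appear, and for $r=2$ the left-hand side of \eqref{projlot} vanishes identically. The Eulerian commutator $[D_t,\pa_i]=-(\pa_i v^k)\pa_k$ quoted in the paper is the right tool for \eqref{mainlot}, where the object being differentiated is already a tensor whose Euler-equation evolution is taken in the natural frame, but for \eqref{projlot} you need the Lagrangian tensor commutator. I'd recommend deriving $D_t\Gamma^c_{ab}=\na_a\na_b v^c$ from \eqref{metric} and building the commutator expansion from that, rather than attempting to salvage the Eulerian calculation via the projection and \eqref{heuristic}.
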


The elliptic estimates in Section \ref{ellsec}
give us the following coercive estimates.
These are essentially from \cite{Christodoulou2000};
the only difference here is that
these estimates hold when $Vol \D_t = \infty$.
\begin{lemma}
  \label{maincoer}
  Suppose that $\K \leq 1$. Then there are constants $C_r$ with:
 \begin{align}
  ||\na^r v||_{L^2(\D_t)}^2
  &\leq C_r \E_r,\label{coerv}
\end{align}
\begin{align}
  ||\Pi \na^r p||^2_{L^2(\pa \D_t)} &\leq ||\na p||_{L^\infty(\pa \D_t)}
  \E_r.
  \label{pproj}
\end{align}
In addition, for $r \geq 1$:
\begin{equation}
  ||\na^r p||_{L^2(\D_t)}^2 + ||\na^r p||_{L^2(\pa\D_t)}^2
  \leq C_r\big( ||\na p||_{L^\infty(\pa \D_t)} +
  ||\na v||^2_{L^\infty(\D_t)}\big)^2 \E_{r}^*,
  \label{pell}
 \end{equation}
 with $\E_r^* = \sum_{k \leq r} \E_k$, and:
 \begin{multline}
  ||\Pi \na^r D_t p||_{L^2(\pa \D_t)}^2 + ||\na^{r-1} D_tp||_{L^2(\pa \D_t)}^2
  + ||\na^r D_t p||_{L^2(\D_t)}^2\\
   \leq C_r \big(||\na p||_{L^\infty(\D_t)}
  + ||\na v||_{L^\infty(\D_t)}^2 +
  ||\na_n D_t p||_{L^\infty(\pa \D_t)}
  ||\theta||_{L^\infty(\D_t)}\big) \E_r(t)\\
  + P\Big(\E_{r-1}^*, ||\na p||_{L^\infty(\D_t)},
  ||\na v||_{L^\infty(\D_t)}^2, ||\na^2 p||_{L^\infty(\pa \D_t)}\Big).
  \label{dtpests}
 \end{multline}
 Furthermore, if $-\nabla_\n p \geq \delta_0 > 0$, then:
\begin{equation}
 ||\tn^{r-2} \theta||_{L^2(\pa \D_t)}^2
 \leq ||(\na_\n p)^{-1} ||_{L^\infty(\pa \D_t)} \Big( \E_r +
 P(\E_{r-1}^*, ||\na v||_{L^\infty(\D_t)},
 ||\na p||_{L^\infty(\D_t)}, ||\na^2 p||_{L^\infty(\pa \D_t)})\Big)
 \label{coertheta}
\end{equation}
where $P$ is a homogeneous polynomial with positive coefficients.
\end{lemma}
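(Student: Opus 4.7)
The plan is to establish the five estimates in the order stated, each using the previous ones as inputs. The principal tools are the pointwise divergence--curl inequality \eqref{pw}, the integrated trace and elliptic estimates of Lemma \ref{bigellprop}, the projection identities of Proposition \ref{projlem}, and the lower-order commutator identities of Lemma \ref{coerlem}.

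For \eqref{coerv}, apply \eqref{pw} with $\beta = \na^{r-1} v$: the fully projected quadratic form on its right-hand side is exactly $Q(\na^r v, \na^r v)$, so after integration over $\D_t$ it is absorbed into the first term of $\E_r$. Since $\div v = 0$, the commutator $[\na^{r-1}, \div] v$ reduces to terms of the form $(\na v)(\na^{r-1} v)$, which are controlled inductively by $\|\na v\|_{L^\infty}^2 \E_{r-1}^*$. Similarly, $\curl \na^{r-1} v = \na^{r-1}\omega + \text{l.o.t.}$, absorbed into the $\int |\na^{r-1}\omega|^2$ contribution to $\E_r$. Estimate \eqref{pproj} is then immediate from the definition of $\E_r$: by the Taylor sign condition \eqref{tscbd} combined with $p = 0$ on $\pa\D_t$ (so $|\na p| = |\na_\n p|\geq \delta_0$ there), the weight $|\na p|$ in the boundary term of $\E_r$ is bounded below by $\delta_0$, and pulling it out yields $\|\Pi\na^r p\|_{L^2(\pa\D_t)}^2 \leq \delta_0^{-1}\E_r$, absorbed into $C_r(\delta_0^{-1})$.

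For \eqref{pell}, apply the elliptic estimate \eqref{mainell} to $q = p$: the projected top-order boundary term is bounded by \eqref{pproj}; the forcing $\sum_{s\leq r-1}\|\na^s\Delta p\|_{L^2}$ is controlled by expanding $\Delta p = -(\na_i v^j)(\na_j v^i)$ from \eqref{peq} via Leibniz and using \eqref{coerv}, producing at top order $\|\na v\|_{L^\infty}^2\E_r^*$; and $\|\na p\|_{L^2(\D_t)}$ is handled using Euler's equation $\na p = -D_t v - e_3$ combined with \eqref{mainlot}. Estimate \eqref{coertheta} then follows by applying \eqref{projell2} to $q = p$: the hypothesis $|\na_\n p|\geq \delta_0$ supplies the prefactor $\delta_0^{-1}$, $\|\Pi\na^r p\|_{L^2(\pa\D_t)}$ is absorbed by \eqref{pproj}, and the remaining lower-order boundary norms of $p$ come from \eqref{pell}, applied inductively on $r$ starting from the base case $r=3$ where only $\|\theta\|_{L^\infty}$ appears on the right.

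The main difficulty lies in \eqref{dtpests}, because $D_t p$ does not vanish on $\pa\D_t$, so $\Pi\na^r D_t p$ is not automatically lower order. I would use \eqref{projlot} to rewrite $\Pi\na^r D_tp$ as $\Pi(D_t\na^r p) - \Pi((\na^r v)\cdot \na p)$ plus mixed terms $\Pi((\na^{1+s}v)(\na^{r-s}p))$ with $1\leq s\leq r-2$. The first piece is bounded by moving the time derivative inside the energy norm at the cost of a factor $\|\na_\n D_t p\|_{L^\infty}\|\theta\|_{L^\infty}$, arising from $D_t$ differentiating $|\na p|$ and the projection $\Pi$ on the boundary; the second is controlled by H\"older against $\|\na p\|_{L^\infty}$ and \eqref{coerv}; the mixed terms are handled by combining \eqref{pell} and \eqref{coerv} with interpolation. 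The interior norm $\|\na^r D_t p\|_{L^2(\D_t)}$ is then obtained by applying \eqref{mainell} to $D_tp$, whose Laplacian is controlled via \eqref{dtplot} together with the equation \eqref{dtpeq}. The delicate term in \eqref{dtpeq} is $(\Delta v)\cdot\na p$; using $\Delta v = -\curl\omega$ (which follows from $\div v = 0$), this is absorbed into the $|\na^{r-1}\omega|^2$ contribution to $\E_r$ after pulling out $\|\na p\|_{L^\infty}$, while all remaining contributions are polynomial in $\E_{r-1}^*$ with coefficients bounded by the a priori constants $K$, $L$, $M$ from \eqref{Kbd}--\eqref{vbd}.
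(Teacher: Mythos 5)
Your overall strategy matches the paper's — each estimate is derived from the elliptic/trace estimates of Section \ref{ellsec} together with the lower-order identities of Lemma \ref{coerlem} — but there are two concrete gaps, both concerning the lowest-order term $\|\na q\|_{L^2(\Omega)}$ that appears on the right-hand side of \eqref{mainell}.

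For \eqref{pell}, you propose to control $\|\na p\|_{L^2(\D_t)}$ ``using Euler's equation $\na p = -D_tv - e_3$ combined with \eqref{mainlot}.'' This does not close: $e_3$ is a constant vector and $\|e_3\|_{L^2(\D_t)} = \infty$ on the unbounded domain, and \eqref{mainlot} is only valid for $r\geq 1$ (at $r=0$ the identity $D_tv + \na p = -e_3$ is exact and not square-integrable). The paper instead integrates by parts twice, exploiting $p|_{\pa\D_t} = 0$ and the divergence structure $\Delta p = -\na_i(v^j\na_j v^i)$ from \eqref{peq}:
\begin{equation}
 \int_{\D_t}|\na p|^2 = -\int_{\D_t} p\,\Delta p = \int_{\D_t} \na_i p\,(v^j\na_j v^i),
\end{equation}
which is bounded by $\|\na v\|_{L^\infty}\|\na p\|_{L^2}\|v\|_{L^2}$, so one can divide by $\|\na p\|_{L^2}$. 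The analogous issue arises in your treatment of \eqref{dtpests}: applying \eqref{mainell} to $q = D_tp$ produces a term $\|\na D_tp\|_{L^2(\D_t)}$ on the right which you never address, and this quantity is not directly controlled by the energy. The paper again closes by integration by parts using $D_tp|_{\pa\D_t} = 0$ and the divergence form \eqref{dtpdiv}, namely the identity \eqref{dtpxbd}. Both of these are the genuinely nontrivial steps in the lemma and cannot be bypassed.

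A smaller point concerns \eqref{pproj}. You pull a lower bound $|\na p|\geq\delta_0$ out of the boundary integral in \eqref{endef}, getting $\delta_0^{-1}\E_r$. But the stated inequality has constant $\|\na p\|_{L^\infty(\pa\D_t)}$, and these are not the same. What is going on is that the weight in the boundary term of the energy should be $(-\na_\n p)^{-1} = |\na p|^{-1}$, consistent with \eqref{intenergy} in the introduction and with the choice $\nu = |\na p|^{-1}$ made when invoking Proposition 5.11 of \cite{Christodoulou2000} in the next subsection; \eqref{endef} appears to contain a typo. With the correct weight, \eqref{pproj} is indeed immediate from the definition by inserting $|\na p|\cdot|\na p|^{-1}$ and pulling out an \emph{upper} bound on $|\na p|$, and \eqref{coertheta} then follows from \eqref{projell2} with the stated constant $\|(\na_\n p)^{-1}\|_{L^\infty}$ rather than the composite $\delta_0^{-2}\|\na p\|_{L^\infty}$ that your route through \eqref{pproj} would give.
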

\begin{proof}

The estimate \eqref{coerv} follows from \eqref{pw}
and \eqref{pproj} follows from the defintion of the
boundary term in the energy.
To prove \eqref{pell}, we apply \eqref{mainell}, \eqref{mainlot} and
\eqref{pproj}, which gives \eqref{pell} with an extra term $||\na p||_{L^2(\D_t)}$
on the right-hand side. To control this, we integrate by parts twice and use
\eqref{peq}:
\begin{equation}
 \int_{\D_t} |\na p|^2 = -\int_{\D_t} p \Delta p
 = \int_{\D_t} p \na_i (v^j\na_j v^i)
 = \int_{\D_t} \na_i p (v^j \na_j v^i).
 \label{}
\end{equation}
Bounding the right hand side by $||\na v||_{L^\infty(\D_t)} ||\na p||_{L^2(\D_t)} ||v||_{L^2(\D_t)}$
and dividing both sides by $||\na p||_{L^2(\D_t)}$ gives the result.

Similarly, applying
\eqref{dtplot},
\eqref{mainell} and \eqref{projell1} gives \eqref{dtpests} with
an extra term $||\na D_t p||_{L^2(\D_t)}$ on the right-hand side. This
can be handled by using the fact that $D_t p = 0$ on $\pa \D_t$,
the equation \eqref{dtpdiv} and integrating by parts twice:
\begin{equation}
 \int_{\D_t} |\na D_t p|^2
 = -\int_{\D_t} D_t p (\na_i X^i) = \int_{\D_t} (\na_i D_t p)  X^i,
 \label{dtpxbd}
\end{equation}
where $X_i = v^k \na_k \na_i p - \na_k v_i \na^k p
- D_t (v^j \na_j v_i) + v_i \na_k(v^j \na_j v^k)$. The result now follows after
using
\eqref{pell} and \eqref{coerv} to control $||X||_{L^2(\D_t)}$.

The estimate \eqref{coertheta} follows from \eqref{projell2} and the estimates
we have just proved.
\end{proof}

\subsection{Proof of Theorem \ref{enestthm}}

We start by applying Proposition 5.11 from \cite{Christodoulou2000} with
$\alpha = -\na^r p, \beta = \na^{r-1}v$ and
$\nu = |\na p|^{-1}$, which gives:
\begin{align}
 \frac{d}{dt}\E_r &\leq
 C \sqrt{\E_r} \big( ||\Pi (D_t \na^r p + (\na_k p) \na^r u^k)||_{L^2(\pa \D_t)}
 + ||D_t \na^r u + \na^{r+1} p||_{L^2(\D_t)}\big)\\
 &+ C \K \E_r
 + C \big(||\curl \na^{r-1} v||_{L^2(\D_t)}
 + ||\Delta \na^{r-1} p||_{L^2(\D_t)}\\
 &+ \K( ||\na^{r-1} v||_{L^2(\D_t)}
 + ||\na^r p||_{L^2(\D_t)}\big)^2.
 \label{}
\end{align}

By Lemma \ref{maincoer}, every term except the first one above
is bounded by the right-hand side of \eqref{dten}.
By \eqref{projlot} and \eqref{dtpests}, it suffices to prove the following
bound:
\begin{equation}
 \sum_{s = 1}^{r-2} ||\Pi ( (\na^{1+s} u) \cdot (\na^{r-s} p) )||_{L^2(\pa\D_t)}^2
 \leq  C (K + L + M)\Big( \E_r + (K + L + M) P(\E_0,.., \E_{r-1}, K,L,M),
 \label{projgoal}
\end{equation}
for a polynomial $P$.
We write $(\Pi^{r-s}\na^{r-s}p)_J = \gamma^I_J \na_I^{r-s}$ and
$(\Pi^{s+1}\na^s v)_{J i} = \gamma^I_J \gamma^i_j \na_J v_j$. Then:
\begin{multline}
 ||\Pi \big( (\na^{s+1}v) \cdot (\na^{r-s} p)\big)||_{L^2(\pa \D_t)}
  \leq ||\, |\Pi^{s+1} \na^s v| \, |\Pi^{r-s} \na^{r-s} p| \, ||_{L^2(\pa \D_t)}
 \\+ ||\, |\Pi^s N^k \na^s v_k|\, |\Pi^{r-s} N^k\na^{r-s-1}\na_k p||_{L^2(\pa \D_t)}
 \label{}
\end{multline}
We now apply the interplation
inequality \eqref{interp1} which shows that
see that each of these terms is bounded by a constant depending on $\K$ times
(writing $L^p = L^p(\pa \D_t))$:
\begin{align}
   \bigg( &||\na^2 v||_{L^\infty} + \sum_{\ell = 2}^{r-3} ||\na^\ell v||_{L^2}\bigg)
   ||\nabla^{r-1} p||_{L^2}
   + \bigg( ||\na^3 p||_{L^\infty} + \sum_{\ell = 3}^{r-2} ||\na^\ell p||_{L^2}\bigg)
   ||\nabla^{r-2} v||_{L^2}\\
   +& (1 + ||\theta||_{L^\infty})^{r-4}( ||\theta||_{L^\infty} + ||\tn^{r-3} \theta||_{L^2})
\bigg( ||\na^2 v||_{L^\infty} + \sum_{\ell = 2}^{r-3} ||\na^\ell v||_{L^2}\bigg)
\bigg( ||\na^3 v||_{L^\infty} + \sum_{\ell = 3}^{r-2} ||\na^\ell p||_{L^2}\bigg),
\end{align}
and using Lemma \ref{coerlem}, this can be bounded by the right-hand side of \eqref{projgoal}.

\subsection{Proof of Lemma \ref{A1lem}}

  To control $||\theta||_{L^\infty(\pa \D_t)} + \frac{1}{\iota_0}$
   we start by noting that $\frac{1}{\iota_0} \leq C ||\theta||_{L^\infty(\pa \D_t)}$
  and that by the elementary formula $\theta_{ij} = (1 + |\nabla h|^2)^{-1/2}
  \nabla_i\nabla_j h$, we have $||\theta||_{L^\infty(\pa \D_t)}
   \leq C ||h||_{C^2(\R^2)}$.
  We note that $\Delta |\na^2 \psi|^2 = |\na^3\psi|^2 \geq 0$,
  so writing $v = \na \psi + v_\omega$, applying the maximum
  principle to control $||\na^2 \psi||_{L^\infty(\D_t)} \leq
  ||\na^2\psi||_{L^\infty(\pa \D_t)}$ and
  the estimate \eqref{mainomegabd}, we have:
  \begin{align}
   ||\na v||_{L^\infty(\D_t)} \leq ||\na^2 \psi||_{L^\infty(\D_t)}
   + ||\na v_\omega||_{L^\infty(\D_t)}
   \lesssim ||\na^2 \psi||_{L^\infty(\pa \D_t)} +
   ||\omega||_{H_w^{N_1}(\D_t)}.
   \label{linftyreplace}
  \end{align}
  To control $\na^2 \psi$ on $\pa \D_t$, we can either use \eqref{cr} and
  \eqref{neum} or just use the pointwise inequality \eqref{pw} on
  $\pa \D_t$
  which shows that $|\na^2 \psi| \lesssim |\Delta \psi| + |\Pi\na^2 \psi|$.
   By the projection
  formula \eqref{basicproj} we have $|\Pi \na^2 \psi| \leq |\tn^2 \psi| + |\theta|
  (|\na_N \psi| + |\tn \psi|) \lesssim |\tn^2\varphi| + |\theta|
  (|\N \varphi| + |\tn \varphi|)$ where $\tn$ denotes the covariant derivative
  on $\pa \D_t$.
  By the estimate for the Dirichlet-to-Neumann map \eqref{dnmap}, this
  proves the bound for $||\na v||_{L^\infty(\pa \D_t)}$.

  The estimates for $||\na^2 p||_{L^\infty(\pa \D_t)}$ follow
  from the pointwise estimate \eqref{pw}, the fact that
  $\Delta p = -(\na v)\cdot (\na v)$ and the bounds we just proved.
  To bound $||\na D_t p||_{L^\infty(\pa \D_t)}$, we apply Sobolev
  embedding \eqref{appsobbdy} on $\pa \D_t$ and the elliptic estimate \eqref{mainell2}
  it suffices to bound:
  \begin{equation}
   ||\Pi \na^3 D_t p||_{L^2(\pa \D_t)}
   + ||\Pi \na^2 D_t p||_{L^2(\pa \D_t)}
   + \sum_{s \leq 2} ||\na^s \Delta D_t p||_{L^2(\D_t)}
   +|| \na D_t p||_{L^2(\D_t)}.
 \end{equation}
  Using the identity \eqref{dtpxbd} gives:
  \begin{equation}
   ||\na D_t p||_{L^2(\D_t)}
   \leq C \Big( ||\na^2 p||_{L^\infty(\D_t)} ||v||_{L^2(\D_t)}
   + ||\na v||_{L^\infty(\D_t)}||\na p||_{L^2(\D_t)}
   + ||\na v||_{L^\infty(\D_t)}^2 ||\na v||_{L^2(\D_t)}\Big),
   \label{}
  \end{equation}
  and using the estimates we have just proved and Lemma \ref{coerlem}
  gives that $||\na D_t p||_{L^2(\D_t)}$
  is bounded by the right-hand side of \eqref{a1est}. To control
  $||\Pi \na^3 D_t p||_{L^2(\pa \D_t)} +
  ||\Pi \na^2 D_t p||_{L^2(\pa \D_t)}$, we use the formulas
  \eqref{basicproj},
  \eqref{3rdorder} and the estimates we have just proved.

  To get a lower bound for $\nabla_N p$ on $\pa \D_t$, we
  start by noting that since $p = 0$ on $\pa \D_t$ and
  $(D_t N^i) N_i = 0$, so that $D_t \na_N p = \na_N D_t p$ on $\pa \D_t$.
Since $p = 0$ on $\pa \D_t$ and $(D_t N^i)N_i = 0$ it follows that
$D_t \pa_N p = (D_t N^i) \pa_i p + \pa_N D_tp = \pa_N D_t p$. Applying
Sobolev embedding on $\pa \D_t$, the estimate \eqref{dtpests}, and
the bootstrap assumptions \eqref{bootstrap1}-\eqref{bootstrap3}, we have:
\begin{equation}
 |\nabla_N p(t)|  \geq |\nabla_N p(0)| -
 \int_0^t |\nabla_N D_t p(s)|\, ds
 \geq |\nabla_N p(0)|
 - C \int_0^t \frac{\ve_0^3}{(1+ s)^2}(1+s)^\delta
 +\ve_1^3 (1+s)^{3\delta}\, ds.
 \label{}
\end{equation}
The second term is bounded by $\frac{1}{2}|\nabla_N p(0)|$ so long as
$t \leq C(|\nabla_N p(0)|^{-1}) \ve_1^{-1/3}$ and $\ve_0$ is
taken sufficiently small.

\subsection{Proof of Proposition \ref{uenests}}
\label{coerensec}

We now show how the energies in the previous section control Sobolev
norms of $\varphi, h$. Recall that $u =  h + i \Lambda^{1/2}
\varphi_\omega$, where $\varphi_\omega = \varphi + a_\omega$ and $\nabla a_\omega = V_\omega
+ \nabla h B_\omega$.

We begin by noting that
by the definition of $\varphi_\omega = \varphi + a_\omega$,
 the fact that $\nabla a_\omega =
V_\omega + \nabla h B_\omega$, and the fact that $B_\omega = -\nabla h\cdot V_\omega$
(since $v_\omega \cdot N = 0$)
it suffices to prove the following estimate:
\begin{align}
  ||h||_{H^r(\R^2)}^2 +
||\Lambda^{1/2} \varphi||_{L^2(\R^2)}^2 + ||\nabla \varphi||_{H^{r-1}(\R^2)}^2
+ ||\Lambda^{1/2} a_\omega||_{L^2(\R^2)}^2
+ ||V_\omega||_{H^{N-1}(\R^2)}^2
\lesssim \E^N + \A P(\E^{N-1}).
 \label{}
\end{align}

We start with bounds for $h$.
By the elementary formula:
\begin{align}
 \theta_{ij} = \frac{1}{\sqrt{1 + |\nabla h|^2}} \nabla_i \nabla_j h
 \label{thetah}
\end{align}
we have $\nabla^r h \sim \nabla^{r-2}\theta + O(\nabla^{r-1}h,...,
\nabla h)$. We can therefore bound $||h||_{H^{N}(\R^2)}$ by the right-hand
side of \eqref{rhs1} provided we also control
$||\nabla h||_{L^2(\R^2)} + ||h||_{L^2(\R^2)}$.
Note that $||h||_{L^2(\R^2)} \leq E_0$ where $E_0$ is the
conserved energy (defined in \eqref{conserved}), and a bound for $||\nabla h||_{L^2(\R^2)}$
follows from this and the bound for $||\nabla^2 h||_{L^2(\R^2)}$.

We now bound $\varphi$.
First, we have:
\begin{equation}
 -\int_{\pa \D_t} \varphi \N \varphi = -\int_{\pa \D_t}
 \psi \na_N \psi = -\int_{\D_t} \psi \Delta \psi +
 \int_{\D_t} |\nabla \psi|^2 \leq ||v||_{L^2(\D_t)}^2 + ||v_\omega||_{L^2(\D_t)}^2.
 \label{}
\end{equation}
The left-hand side is:
\begin{equation}
 ||\N^{1/2} \varphi||_{L^2(\R^2)}^2 \sim ||\Lambda^{1/2} \varphi||_{L^2(\R^2)}^2,
 \label{}
\end{equation}
which follows from the remarks after Proposition 2.2 in \cite{Germain2012}.

To control $\Lambda^{1/2} a_\omega$, we note that by the fractional
integration estimate \eqref{fracint}, $||\Lambda^{1/2} a_\omega||_{L^2(\R^2)}
= ||\Lambda^{-1/2}\Lambda a_\omega||_{L^2(\R^2)}
\lesssim ||\Lambda a_\omega||_{L^{4/3}(\R^2)}$ and by the
fact that the Riesz transform is bounded on $L^{4/3}$ it follows that
$||\Lambda^{1/2} a_\omega||_{L^2(\R^2)} \lesssim ||\nabla a_\omega||_{L^{4/3}(\R^2)}$.
Since $\nabla a_\omega = V_\omega + \nabla h B_\omega$, we have
\begin{equation}
 ||\Lambda^{1/2} a_\omega||_{L^2(\R^2)}
 \lesssim
 ||(1 + |x|^2)^{1 /2} V_\omega||_{L^{2}(\R^2)}
 + ||\nabla h||_{L^\infty(\R^2)} ||(1 + |x|^2)^{1/2} B_\omega||_{L^2(\R^2)}
 \label{}
\end{equation}
and by \eqref{mainomegabd}, this is bounded by the right-hand
side of \eqref{rhs1}.

To control the higher norms of $\varphi$ and $V_\omega$, we
use the following:
\begin{lemma}
  Under the hypotheses of Proposition \ref{uenests}, we have:
  \begin{equation}
   ||\nabla^r \varphi||_{L^2(\R^2)}^2 + ||\nabla^{r-1} V_\omega||_{L^2(\R^2)}^2
   \lesssim \E^r + \A P(\E_*^{r-1}),
   \label{bdynormcontrol}
  \end{equation}
  where $\E_*^{r-1} = \sum_{s \leq r-1} \E^{r-1}$ and $\A$
  is defined by \eqref{Adef}.
\end{lemma}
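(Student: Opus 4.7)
The plan is to reduce both bounds to interior estimates on $\nabla_{x,y}\psi$ and $v_\omega$, using a trace inequality together with the Helmholtz decomposition $v=\nabla\psi+v_\omega$. Applying the trace inequality \eqref{trace} with $p=2$ (followed by Young's inequality) to the $(0,r)$-tensors $\nabla^{r-1}(\nabla_{x,y}\psi)$ and $\nabla^{r-1}v_\omega$ yields
\begin{equation*}
 \|\nabla_{x,y}\psi\|_{H^{r-1}(\pa\D_t)}^2+\|v_\omega\|_{H^{r-1}(\pa\D_t)}^2
 \lesssim \|\nabla_{x,y}\psi\|_{H^r(\D_t)}^2+\|v_\omega\|_{H^r(\D_t)}^2
 +\A\,P(\E^{r-1}_*),
\end{equation*}
where the last term collects the lower-order pieces produced by the trace inequality (using $K\lesssim \A$).

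To bound the interior norms, write $\nabla\psi=v-v_\omega$, so that $\|\nabla\psi\|_{H^r(\D_t)}^2\lesssim\|v\|_{H^r(\D_t)}^2+\|v_\omega\|_{H^r(\D_t)}^2$. The first summand is bounded by $\E^r$ via the coercive estimate \eqref{coerv}. For $v_\omega$, the div-curl system $\div v_\omega=0$, $\curl v_\omega=\omega$, $v_\omega\cdot n=0$, combined with iterated application of the elliptic estimate \eqref{ezbdy}, yields $\|v_\omega\|_{H^r(\D_t)}^2\lesssim\|\omega\|_{H^{r-1}(\D_t)}^2$ plus lower-order interior norms; the top-order summand is itself contained in $\E^r$ by \eqref{endef}, and the lower-order remainder is absorbed into $\A\,P(\E^{r-1}_*)$. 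Applying the trace inequality to $v_\omega$ then gives the required bound on $\|\nabla^{r-1}V_\omega\|_{L^2(\R^2)}^2$ directly.

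It remains to pass from $\nabla_x\psi|_{\pa\D_t}$ to $\nabla\varphi$. By the chain rule \eqref{cr}, for $i=1,2$,
\begin{equation*}
 \nabla_i\varphi=(\pa_i\psi)|_{\pa\D_t}+\nabla_ih\,(\pa_y\psi)|_{\pa\D_t},
\end{equation*}
and harmonicity together with \eqref{neum} expresses $(\pa_y\psi)|_{\pa\D_t}$ as a rational function of $\nabla\varphi,h,\nabla h$ with denominator $1+|\nabla h|^2$ (safely invertible since $\|h\|_{W^{4,\infty}}\ll 1$). Differentiating $r-1$ more times and expanding via Leibniz produces one principal term of schematic form $\nabla^{r-1}(\nabla_{x,y}\psi)|_{\pa\D_t}$, already handled by the interior bound above, together with cross terms each carrying at least one factor of $\nabla^j h$ with $j\ge 1$. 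By Sobolev embedding on $\R^2$ and the interpolation/product inequalities of Lemma \ref{coerlem}, each such cross term is bounded by $\A\,P(\E^{r-1}_*)$, using $\theta\sim (1+|\nabla h|^2)^{-1/2}\nabla^2 h$ to trade higher derivatives of $h$ for factors of $\tn^s\theta$ that are controlled by $\E^{r-1}_*$.

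The main technical obstacle is the combinatorial bookkeeping in this last step: one has to confirm that every Leibniz-generated term either reduces to the principal trace of $\nabla_{x,y}\psi$, or carries a factor of $\nabla h$, $\nabla^2 h$, or $\theta$ that can be absorbed into the $\A$ prefactor, while the remaining derivatives of $\psi$ and $h$ are of sufficiently low order to lie in $\E^{r-1}_*$. This is precisely the bookkeeping that is avoided by performing the analysis on $\nabla_{x,y}\psi$ in the interior rather than attempting a direct commutator estimate for $[\N,\nabla^r]$, as emphasized in the discussion preceding the lemma.
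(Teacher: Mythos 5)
Your proposal is correct, but it follows the route sketched in the paper's introduction rather than the route used in the detailed proof of this lemma, and it is worth noting the difference. In the paper's proof the author never passes to $r+1$ interior derivatives of $\psi$: instead the estimates \eqref{traceproj}, \eqref{traceproj2} are used to trade $\|\na^r\psi\|_{L^2(\pa\D_t)}$ for $\|\n\cdot\na^r\psi\|_{L^2(\pa\D_t)}$ (the div and curl of $\na^r\psi$ vanish), and then $\n\cdot\na^r\psi$ is replaced via the commutation identity \eqref{anexpansion} by $\Pi\na^{r-1}\na_\n\psi = \Pi\na^{r-1}(\n\cdot v)$ plus commutator terms involving factors of $\na^{s-1}\theta$. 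The point is that the boundary Neumann data $\na_\n\psi = \n\cdot v$ is substituted at the boundary and controlled by $\E^r$ through the boundary trace of $v$, with the commutator terms producing exactly the $\tn^{s}\theta$ factors that the energy controls. Your approach instead applies the flat trace inequality \eqref{trace} to $\na^{r-1}\na_{x,y}\psi$, which does go up one interior derivative to $\na^{r+1}\psi = \na^r(v - v_\omega)$, but since this is only $r$ derivatives of $v$ and $v_\omega$, the count still closes: $\|\na^r v\|_{L^2(\D_t)}^2\lesssim\E^r$ by \eqref{coerv}, and $\|\na^r v_\omega\|_{L^2(\D_t)}^2$ is handled by the div-curl estimates. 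This is cleaner and avoids the $\Pi$/$\n$-decomposition bookkeeping entirely, at the cost of relying more heavily on the Helmholtz decomposition. One small imprecision to flag: the worst chain-rule cross term $\na^r h\,(\pa_y\psi)|_{\pa\D_t}$ carries $\|\na^{r-2}\theta\|_{L^2(\pa\D_t)}$, which is controlled by $\E^r$ (not $\E^{r-1}_*$), so its contribution lands in the $\E^r$ slot rather than the $\A P(\E^{r-1}_*)$ slot; the claimed right-hand side is still attained, but the sorting of terms in your last paragraph should be stated so as to allow this.
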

\begin{proof}
The estimates for $V_\omega$ follow from \eqref{mainomegabd}.
To bound $\nabla^r \varphi$,
  we start with the fact that:
  \begin{equation}
   ||\na \psi||_{L^2(\D_t)} \lesssim ||v||_{L^2(\D_t)} + ||v_\omega||_{L^2(\D_t)},
   \label{napsibd}
  \end{equation}
By the chain rule, we have:
\begin{equation}
 ||\nabla \varphi||_{L^2(\R^2)} \leq||\na \psi||_{L^2(\pa\D_t)}
 + ||\nabla h \na_y\psi||_{L^2(\pa\D_t)}.
 \label{}
\end{equation}
Bounds for the second term will follow in a similar way to the
bounds for the first term so we just show how to bound the first term.
By the inequality \eqref{traceproj}:
\begin{align}
 ||\na \psi||_{L^2(\pa\D_t)}^2
 &\leq
 ||\na_N \psi||_{L^2(\D_t)}^2 + ||\Delta \psi||_{L^2(\D_t)}^2
 + K||\na \psi||_{L^2(\D_t)}^2\\
 &\leq
 ||v\cdot N||_{L^2(\pa\D_t)} + K||\na \psi||_{L^2(\D_t)},
\end{align}
and so the trace inequality \eqref{trace} and the estimate \eqref{napsibd}
imply:
\begin{equation}
 ||\na \varphi||_{L^2(\R^2)}
 \lesssim ||\na v||_{L^2(\D_t)} + ||v||_{L^2(\D_t)} + ||v_\omega||_{L^2(\D_t)},
 \label{}
\end{equation}
where the implicit constant depends only on $K$. The first two terms
are bounded by $\E^1 + \E^0$ and the last term can be bounded by
$||\omega||_{H^{N_1}_w(\D_t)}$ by \eqref{vomegabd}.
To explain the strategy for higher-order derivatives we first consider
what happens when $r = 2$. Using \eqref{traceproj} again:
\begin{equation}
 ||\na^2 \psi||_{L^2(\pa \D_t)}
 \lesssim ||\na_N \na \psi||_{L^2(\pa \D_t)}
 + K||\na \psi||_{L^2(\D_t)}.
 \label{}
\end{equation}
By the estimate \eqref{traceproj}:
\begin{equation}
 ||\na_N \na \psi||_{L^2(\pa \D_t)}
 \lesssim || \Pi \na_N \na \psi||_{L^2(\pa \D_t)}
 + ||\div \na_N \na \psi||_{L^2(\D_t)}
 + ||\curl \na_N \na \psi||_{L^2(\D_t)}
 + K||\na_N \psi||_{L^2(\D_t)}.
 \label{}
\end{equation}

Note that:
\begin{equation}
 \Pi^i_j \na_N \na_i \psi
 = \Pi^i_j \na_i \na_N \psi
 - (\Pi^i_j\na_i N^k) \na_k \psi.
 \label{}
\end{equation}
The first term is $\tn (v\cdot N)$ and the second term is
$-\theta_i^k\na_k \psi$.
Also both $\div \na_N \na \psi$ and $\curl \na_N \psi$ are lower order.
The first is because to highest order it is $\na_N \Delta \psi = 0$ and the
second because $\curl \na \psi = 0$.

 Therefore we have:
\begin{equation}
 ||\na^2 \psi||_{L^2(\pa \D_t)}
 \lesssim ||\tn(v\cdot N)||_{L^2(\pa \D_t)}
 + K ||\na \psi||_{L^2(\pa \D_t)}
 + ||\na \psi||_{L^2(\D_t)}.
 \label{}
\end{equation}
Using the trace inequality to bound the first term and the above argument
to bound the lower-order norms of $\psi$ gives that:
\begin{equation}
 ||\na^2 \psi||_{L^2(\pa \D_t)}
 \lesssim ||\na^2 v||_{L^2(\D_t)} ||\na v||_{L^2(\D_t)} +
 ||v||_{L^2(\D_t)} + ||v_\omega||_{L^2(\D_t)},
 \label{}
\end{equation}
where the implicit constant depends only on $K$.

We now prove a higher-order version of this.
Repeatedly applying the chain rule \eqref{cr}, to highest order we have:
\begin{equation}
 \nabla^r \varphi \sim \nabla^r \psi + \nabla^r h (\na_y \psi)
 + ...
 \label{}
\end{equation}
where the missing terms are all bounded pointwise by
$\sum_{k \leq r-1} |\na_{x,y}^k \psi|$ times a
polynomial in
$\sum_{k \leq r-1} |\nabla^k h|$.
We now want to replace $\nabla^r\psi$ with $\nabla^{r-1} \nabla_N \psi
\sim \tn^{r-1} (v\cdot N)$ and lower order terms. By the inequality
\eqref{traceproj}:
\begin{equation}
 ||\nabla^r \psi||_{L^2(\pa \D_t)}
 \lesssim
 ||\nabla_N \nabla^{r-1} \psi||_{L^2(\pa \D_t)}
 + ||\nabla^{r-1} \psi||_{L^2(\D_t)},
 \label{}
\end{equation}
with implicit constant depending on $K$. Next, with $\beta = \nabla^{r-1}\psi$,
we apply the estimate \eqref{traceproj} and have:
\begin{multline}
 ||\nabla_\n \nabla^{r-1} \psi||_{L^2(\pa \D_t)}\\
 \lesssim ||\Pi \nabla_\n \nabla^{r-1} \psi||_{L^2(\pa \D_t)}
 + ||\div \nabla_{\n} \nabla^{r-2}\psi||_{L^2(\D_t)} +
 ||\curl \nabla_{\n} \nabla^{r-2}\psi||_{L^2(\D_t)}
 + ||\nabla_{\n} \nabla^{r-2}\psi||_{L^2(\D_t)}.
 \label{}
\end{multline}
The interior terms are all lower order by the same observation as above,
and so we just need to deal with the boundary term.
We note that:

\begin{equation}
 \Pi^I_J \nabla_\n \nabla_I^{r-1} \psi
 = \Pi^{I}_J \nabla_I^{r-1} \nabla_\n \psi
 - \sum_{K,L} \Pi^I_J (\nabla_K \n^k) (\nabla_L^{r-s} \nabla \psi)
 \label{anexpansion}
\end{equation}
where the sum is over all multi-indices $K, L$ with
$K+L = I$ and $|K| \leq |I|-1$.

Since $\nabla_\n \psi = \n\cdot v$ on $\pa \D_t$, using
\eqref{projlem} to replace $\Pi^I_J \nabla_I^{r-1} \nabla_\n \psi$
with $\overline{\na}^{r-1} \nabla_\n \psi$ and applying
Lemma \ref{coerlem} to control $\overline{\na}^{r-1} (\n \cdot v)$ by
the energy shows that the first term in \eqref{anexpansion} is controlled
by the energy. The worst term appearing in the sum in \eqref{anexpansion}
from the point of view of the regularity of $\theta$ is
the case $K = I$. This involves $r-1$ projected derivatives of $\n$
and by Proposition 4.11 of \cite{Christodoulou2000}
and the definition $\theta = \Pi \nabla N$, this can
be bounded by $||\overline{\na}^{r-2} \theta||_{L^2(\pa \D_t)}$ to
highest order. We can now use induction and interpolation
\eqref{interp1} to deal with the lower-order terms.

Having now bounded $\varphi$, let us see how to control $V_\omega$
and $B_\omega$. First, since $v_\omega \cdot \n = 0$ on $\pa \D_t$,
we have $B_\omega = V_\omega \cdot \nabla h$ and so it is enough
to bound $V_\omega$. Since $V_\omega = v_\omega|_{\pa \D_t}
= (v - \nabla \psi)|_{\pa \D_t}$, estimates for $V_\omega$ follow
from the above estimates for $\psi$ and the estimates in Lemma
\ref{maincoer}.

\end{proof}

\subsection{Proof of Proposition \ref{uenlem}}
\label{vortensec}
A short calculation using the fact that $[D_t, \na] = - \na v^k\na_k$,
$D_t (1 + |z|^2)^{2} = 4 |z|^2 z\cdot v$  and the
equation for the vorticity \eqref{omegaeq} shows that:
\begin{equation}
 D_t \na^m ((1 + |z|^2)^{2} \omega)
 = (1 + |z|^2)^{2}\bigg( \na^{m+1} v \cdot \omega
 + \na v \cdot \na^m \omega\bigg) + R,
 \label{}
\end{equation}
where $R$ is a sum of terms which can be bounded pointwise by
$(1 + |z|^2)^{2}\sum_{a = 1}^{m} | \na^{a}v(z)| |\na^{m-a} \omega(z)|$.
We next write $v = \na \psi + v_\omega$ and the result as:
\begin{align}
 D_t ( (1 + |z|^2)^{2} \na^m\omega)
 = (1 + |z|^2)^{2} \bigg( \na^{m+2}\psi \cdot \omega
 + \na^2 \psi \cdot \na^m \omega
 + \na^{m+1} v_\omega \cdot \omega
 + \na v_\omega \cdot \na^m \omega\bigg)
 + R.
 \label{}
\end{align}
Taking $m \leq N_0$
By the Reynolds transport theorem, the above calculation
and Sobolev embedding, we have:
 \begin{align}
  \frac{d}{dt} &||\na^m \omega(t)||_{L^2_w}^2\\
   &\lesssim
   \int_{\D_t} (1 + |z|^2)^{2} \big(|\na^{m+2}\psi|  |\omega|
   + |\na^2 \psi| |D^m \omega|
   + |\na^{m+1}v_\omega| |\omega| + |\na v_\omega| |\na^m\omega|
   + R \big)
   |\na^m \omega| \, dz\\
   &\lesssim\Big(
   ||\na^2 \psi||_{W^{m,\infty}(\D_t)}
   + ||\na^{m+1}v_\omega||_{L^2(\D_t)}
   +
   ||v_\omega||_{L^\infty(\D_t)} \Big)||\omega||_{H^{N_1}_w(\D_t)}^2
  \label{rhsvort}
\end{align}
To control the first term, we use the maximum principle as in the
proof of Lemma \ref{A1lem}, which gives that
$||\na^2 \psi||_{W^{m,\infty}(\D_t)}
\leq ||\na^{2}\psi||_{W^{m,\infty}(\pa \D_t)}$.
Using \eqref{cr} and
\eqref{neum} repeatedly shows that $||\na^s \psi||_{L^\infty(\pa \D_t)}
\lesssim || \na \varphi||_{W^{s-1,\infty}(\R^2)} + ||h||_{W^{s,\infty}(\R^2)}
\lesssim ||u||_{W^{s+1,\infty}(\R^2)}$, up to lower order terms.

To control the other two terms from \eqref{rhsvort}, we use
\eqref{mainomegabd}:
\begin{align}
  ||v_\omega||_{L^\infty(\D_t)}
  + ||\na^{m+1} v_\omega||_{L^2(\D_t)} &\lesssim ||\omega||_{H^{N_1}_w(\D_t)},
 \label{}
\end{align}
which proves \eqref{wenest}. \qedhere

We also note the following, which is used in the proof of Corollary \ref{mainthm2}:
\begin{lemma}
 If $\omega_0|_{\pa \D_0} = 0$ and $\int_0^T ||\pa v||_{L^\infty(\pa \D_s)} < \infty$,
 for some $T > 0$, then $\omega|_{\pa \D_t} = 0$ for $t \leq T$.
\end{lemma}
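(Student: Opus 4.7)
The plan is to transport the identity $\omega = 0$ along particle trajectories using Lagrangian coordinates. Recall from Section \ref{ellsec} that the flow map $x(t,y)$ defined by $\frac{d}{dt}x(t,y) = v(t,x(t,y))$, $x(0,y) = y$, satisfies $x(t,\cdot) : \Omega \to \D_t$, and because $v\cdot \n = \kappa$ is the normal velocity of the free boundary, particles starting on $\pa \Omega$ remain on $\pa \D_t$. In particular $x(t,\cdot) : \pa \Omega \to \pa \D_t$ is a bijection for each $t \leq T$, so to conclude $\omega|_{\pa \D_t} = 0$ it suffices to show that $\omega(t, x(t,y)) = 0$ for every $y \in \pa \Omega$ and every $t \leq T$.

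The key input is the vorticity equation \eqref{omegaeq}, namely $D_t \omega_{ij} = \omega_{ik} \na^k v_j - \omega_{jk}\na^k v_i$ (antisymmetrizing appropriately). In Lagrangian coordinates the material derivative $D_t$ reduces to the ordinary time derivative along trajectories, so fixing $y \in \pa \Omega$ and writing $\widetilde{\omega}(t) = \omega(t, x(t,y))$, this equation becomes a linear ODE
\begin{equation*}
 \frac{d}{dt} \widetilde{\omega}(t) = A(t)\, \widetilde{\omega}(t), \qquad \widetilde{\omega}(0) = \omega_0(y),
\end{equation*}
where $A(t)$ is a bounded linear operator whose entries are components of $\nabla v(t, x(t,y))$, and hence satisfies $|A(t)| \lesssim \|\na v(t)\|_{L^\infty(\pa \D_t)}$ (since $x(t,y) \in \pa \D_t$).

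Grönwall's inequality then gives
\begin{equation*}
 |\widetilde{\omega}(t)| \leq |\omega_0(y)| \exp\!\left( C \int_0^t \|\na v(s)\|_{L^\infty(\pa \D_s)}\, ds \right), \qquad 0 \leq t \leq T.
\end{equation*}
By hypothesis $\omega_0|_{\pa \D_0} = 0$, so $\omega_0(y) = 0$ for every $y \in \pa \Omega$, and the integral in the exponent is finite by assumption, so the right-hand side is identically zero. Therefore $\widetilde{\omega}(t) = 0$ for all $t \leq T$ and all $y \in \pa \Omega$, which by the bijectivity noted above yields $\omega(t, \cdot)|_{\pa \D_t} = 0$ as required.

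The main (minor) obstacle is justifying that the ODE along trajectories is well-posed and that Grönwall applies, which requires only that $\na v$ be integrable in time on the boundary; this is exactly the assumption $\int_0^T \|\pa v\|_{L^\infty(\pa \D_s)}\,ds < \infty$. Nothing more delicate is needed because the vorticity stretching term enters linearly in $\omega$.
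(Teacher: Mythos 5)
Your proposal is correct, but it takes a genuinely different route from the paper. The paper differentiates the $L^2(\pa \D_t)$ norm of $\omega$ on the moving boundary: it pulls back to $\pa\Omega$ in Lagrangian coordinates, invokes a formula (Lemma~3.9 of \cite{Christodoulou2000}) for $D_t\,d\mu_\gamma$ in terms of $h = \tfrac12 D_t g$ to control the time variation of the surface measure, combines this with \eqref{omegaeq}, and runs a differential Grönwall argument on $\|\omega(t)\|_{L^2(\pa\D_t)}^2$. You instead argue pointwise along each particle trajectory $t\mapsto x(t,y)$ with $y\in\pa\Omega$: boundary particles stay on the boundary, $\widetilde\omega(t)=\omega(t,x(t,y))$ solves a linear ODE with coefficient matrix controlled by $\|\nabla v\|_{L^\infty(\pa\D_s)}$, and Grönwall kills the solution from zero data. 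Your approach is more elementary in that it never needs to differentiate a moving-boundary integral nor appeal to the $D_t\,d\mu_\gamma$ identity, and it yields the slightly stronger pointwise conclusion rather than vanishing in $L^2(\pa\D_t)$. The $L^2$ approach in the paper is more uniform with the machinery used elsewhere in the energy estimates, and would be preferable if one wanted a version of the statement at lower regularity where particle trajectories on the boundary are harder to make sense of; at the regularity level assumed here both arguments are fully rigorous.

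One small notational remark: the paper's \eqref{omegaeq} reads $D_t\omega_{ij} = \omega_{ik}\na^k v_j$, so your parenthetical ``antisymmetrizing appropriately'' is right that one should check the antisymmetric structure, but the only thing your argument actually uses is that $D_t\omega$ is linear in $\omega$ with coefficients bounded by $|\nabla v|$, which holds regardless of how the stretching term is written.
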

\begin{proof}
  Changing to Lagrangian coordinates and letting $\mu_\gamma$
  denote the volume element on $\pa \Omega$ with respect to the
  metric $\gamma$, we have:
 \begin{equation}
  \frac{d}{dt} \int_{\pa \D_t} |\omega(t)|^2\, dS
  = \frac{d}{dt} \int_\Omega |\omega(t)|^2 d \mu_\gamma
  = \int_{\Omega} D_t \omega(t) \cdot \omega (t)
  + |\omega(t)|^2 D_t d\mu_\gamma.
  \label{dtomegabd}
 \end{equation}
 By Lemma 3.9 in \cite{Christodoulou2000}, we have
 $D_t d \mu_\gamma = (\tr h = h_{\n\n}) d\mu_\gamma$ where
 $h = \frac{1}{2} D_t g$ with $g$ the metric in Lagrangian
 coordinates (defined in \eqref{metric}) and $h_{\n\n} =
 h(\n, \n)|$. A simple calculation using
 \eqref{metric} and the fact that
 $D_t \frac{d}{dy} x^i = \frac{d}{dy} V^i$
 gives that $|D_t d \mu_\gamma| \leq C||\pa v||_{L^\infty(\pa \Omega)} d\mu_\gamma$,
 so by \eqref{omegaeq}, \eqref{dtomegabd} gives:
 \begin{equation}
  \frac{d}{dt} ||\omega(t)||_{L^2(\pa \D_t)}^2
  \leq C ||\pa v||_{L^\infty(\pa \D_t)} ||\omega(t)||_{L^2(\pa \D_t)}^2.
  \label{}
 \end{equation}
 Multiplying both sides by the integrating factor $e^{-C \int_0^t||\pa v(s)||_{L^\infty(\pa \D_s)}\, ds}$
 and integrating gives that:
 \begin{equation}
  ||\omega(t)||_{L^2(\pa \D_t)}^2
  \leq C \exp\Big( \int_0^t ||\pa v(s)||_{L^\infty(\pa \D_s)}\Big)
  ||\omega(0)||_{L^2\pa \D_0)}^2,
  \label{}
 \end{equation}
 from which the result follows.
\end{proof}

\section{Dispersive estimates for terms involving the vorticity}
\label{vortests}

We now prove the estimates for the terms $g_2,..., g_5$
from \eqref{duha2}. We recall
that $R_j$ denotes the Riesz transform and $\Lambda^s$ denotes fractional
differentiation on $\R^2$. We will also ignore the difference between $Re u,
Im u$ and just write $u$.
Then the terms we want to estimate are:
\begin{align}
 g_2(t) &= \int_{0}^t e^{is\Lambda^{1/2}}  R\cdot V_\omega(s)\,ds,\\
 g_3(t) &= \int_0^t e^{is\Lambda^{1/2}} \Lambda^{1/2}
 \big( (R\cdot  V_\omega)(\Lambda^{1/2} u)\big)
 - \nabla \cdot \big(  u  V_\omega\big)
 + \Lambda( u R\cdot V_\omega),\\
 g_4(t) &= \int_0^t e^{is\Lambda^{1/2}} \Lambda^{1/2}
 (R\cdot V_\omega)^2\, ds.
 \label{}
\end{align}

In the next three sections, we prove:
\begin{prop}
  If \eqref{bootstrap1} holds with $\ve_0 \ll 1$, then:
  \begin{multline}
   ||\nabla^k e^{-it\Lambda^{1/2}} g_I(t)||_{L^\infty(\R^2)}
   \lesssim \int_0^t \Big( 1 + ||\omega(s)||_{H^{N_1}_w(\D_s)} +
   ||u(s)||_{W^{k+3,\infty}(\R^2)} \Big)
   ||\omega(s)||_{H^{N_1}_w(\D_s)}
   \, ds,
   \label{inftyvort}
  \end{multline}
  for $k \leq N_1 + 4$,
  and
 \begin{multline}
  ||\Lambda^{\iota} x g_I(t)||_{L^2(\R^2)}\\
  \lesssim \int_0^t (1+s) \bigg(1 + ||\omega(s)||_{H^{N_1}_w(\D_s)}
  + ||u(s)||_{W^{4, \infty}(\R^2)} + ||u(s)||_{H^{N_0}(\R^2)}\bigg)
  ||\omega(s)||_{H^{N_1}_w(\D_s)}\, ds,
  \label{weightedvort}
 \end{multline}
 for $I = 2,3,4$.
\end{prop}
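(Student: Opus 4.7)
The plan is to estimate each $g_I$ for $I=2,3,4$ using Duhamel's formula combined with the unitarity of $e^{is\Lambda^{1/2}}$ on $L^2$-based spaces. The two main ingredients are the elliptic estimates of Proposition \ref{vomegabd} — which bound Sobolev and $L^p$ norms of $V_\omega$ by $||\omega||_{H^{N_1}_w}$ — and the commutator identity
\[
[x,\,e^{is\Lambda^{1/2}}] = -\tfrac{s}{2}\,\Lambda^{-1/2} R\, e^{is\Lambda^{1/2}},
\]
obtained by direct computation on the Fourier side (using $\pa_{\xi_j} e^{is|\xi|^{1/2}} = \tfrac{is}{2}|\xi|^{-1/2}(\xi_j/|\xi|) e^{is|\xi|^{1/2}}$).

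For the $L^\infty$ bound \eqref{inftyvort}, I bring $e^{-it\Lambda^{1/2}}$ inside the time integral, yielding $\int_0^t e^{-i(t-s)\Lambda^{1/2}} F(s)\,ds$, where $F$ denotes the integrand of the respective $g_I$. Since $e^{-i\tau\Lambda^{1/2}}$ is bounded on every $H^m$ but not on $L^\infty$, I apply the Sobolev embedding $H^2(\R^2)\hookrightarrow L^\infty(\R^2)$ to reduce matters to estimating $||F(s)||_{H^{k+2}}$. For $g_2$ this is $||V_\omega(s)||_{H^{k+2}}$, directly controlled by Proposition \ref{vomegabd}. For $g_3$ and $g_4$, a Kato-Ponce product estimate (applied after absorbing the outer $\Lambda^{1/2}$ into a half-derivative) splits the norms of products like $(R\cdot V_\omega)(\Lambda^{1/2} u)$ into $||u||_{W^{k+3,\infty}}$ times a Sobolev norm of $V_\omega$, plus a Sobolev norm of $u$ times an $L^\infty$-type norm of $V_\omega$; each factor on $V_\omega$ is controlled by $||\omega||_{H^{N_1}_w}$ via Proposition \ref{vomegabd} and Sobolev embedding, producing the bilinear and quadratic terms on the right-hand side of \eqref{inftyvort}.

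For the weighted estimate \eqref{weightedvort}, the commutator identity yields
\[
\Lambda^\iota x g_I(t) = \int_0^t \Lambda^\iota e^{is\Lambda^{1/2}}\bigl(x F(s)\bigr)\,ds - \tfrac{1}{2}\int_0^t s\,\Lambda^{\iota-1/2} R\, e^{is\Lambda^{1/2}} F(s)\,ds,
\]
so by $L^2$ unitarity,
\[
||\Lambda^\iota x g_I(t)||_{L^2} \leq \int_0^t ||\Lambda^\iota x F(s)||_{L^2}\,ds + \int_0^t s\,||\Lambda^{\iota-1/2} F(s)||_{L^2}\,ds.
\]
The explicit factor of $s$ accounts for the $(1+s)$ growth in the statement. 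The negative-power piece is handled by Hardy-Littlewood-Sobolev: $||\Lambda^{\iota-1/2} V_\omega||_{L^2} \lesssim ||V_\omega||_{L^{4/(3-2\iota)}}$, which by \eqref{lowp} is bounded by $||\omega||_{H^{N_1}_w}$. For the weighted piece $||\Lambda^\iota x F||_{L^2}$, I commute $x$ past $R$ and fractional-derivative operators (the commutators being smoothing of order $-1$) and appeal to a weighted trace combined with the elliptic estimates of Section \ref{ellsec} to control $||xV_\omega||_{L^2(\R^2)}$ by $||\omega||_{H^{N_1}_w}$; the weight $(1+|x|^2+y^2)^2$ built into $H^{N_1}_w$ is precisely what makes this work.

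The hardest step is the weighted estimate for $g_3$, which is bilinear in $(u,V_\omega)$. Distributing the weight $x$ across the product $(R\cdot V_\omega)(\Lambda^{1/2} u)$ (and its variants $u V_\omega$ and $u\, R\cdot V_\omega$) forces me to handle either $||x V_\omega||_{L^2}$ paired with $||u||_{W^{4,\infty}}$ (handled as above) or $||xu||$-type quantities paired with a Sobolev norm of $V_\omega$. For the latter I relate $||\Lambda^\iota xu||_{L^2}$ to the bootstrap quantity $||\Lambda^\iota x e^{it\Lambda^{1/2}} u||_{L^2}$ using the same commutator identity applied to $u$; the correction term involves $s\,||\Lambda^{\iota-1/2} u||_{L^2}$, which is absorbed into $||u||_{H^{N_0}}$ by Hardy-Littlewood-Sobolev and interpolation — this is the origin of the $||u||_{H^{N_0}}$ term on the right-hand side of \eqref{weightedvort}.
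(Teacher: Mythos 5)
Your overall scheme — $H^2\hookrightarrow L^\infty$ for the unweighted bounds, the commutator $[x,e^{is\Lambda^{1/2}}]=-\tfrac{s}{2}\Lambda^{-1/2}Re^{is\Lambda^{1/2}}$ for the weighted ones, with Proposition~\ref{vomegabd} closing everything — is essentially the paper's Fourier-side $\pa_\xi$ computation recast in commutator language, and is the right skeleton. However, there is a genuine gap in your treatment of the weighted estimate for $g_3$, in the step where you put the weight on the $u$ factor.

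You write that distributing the weight across $(R\cdot V_\omega)(\Lambda^{1/2}u)$ sometimes forces you to control $\|xu\|$-type quantities, and that after commuting you are left with a correction term $s\,\|\Lambda^{\iota-1/2}u\|_{L^2}$ which you claim to absorb into $\|u\|_{H^{N_0}}$ by Hardy--Littlewood--Sobolev and interpolation. This step fails. Since $\iota-1/2<0$, the operator $\Lambda^{\iota-1/2}$ is not bounded on $L^2(\R^2)$: HLS gives $\|\Lambda^{\iota-1/2}u\|_{L^2}\lesssim\|u\|_{L^p}$ with $p=\tfrac{4}{3-2\iota}<2$, and $H^{N_0}(\R^2)$ does \emph{not} embed into $L^p$ for any $p<2$ (take $u(x)=\langle x\rangle^{-1-\epsilon}$ with $\epsilon$ small: it lies in $H^{N_0}$ but not in $L^p$ for $p<2/(1+\epsilon)$). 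Controlling such a quantity requires spatial decay of $u$, i.e.\ a weighted bound on $u$ itself — but that is precisely the sort of quantity you are trying to produce, so the argument is either circular or relies on a false embedding. The way out, which the paper takes, is to observe that in the convolution form $\int e^{is|\xi|^{1/2}}|\xi|^{1/2}\tfrac{\xi-\eta}{|\xi-\eta|}|\eta|^{1/2}\hat{V}_\omega(\xi-\eta)\hat{u}(\eta)\,d\eta$ the derivative $\pa_\xi$ only ever lands on the exponential, the $\xi$-dependent symbols, and $\hat{V}_\omega(\xi-\eta)$ — never on $\hat{u}(\eta)$. So the weight \emph{always} falls on $V_\omega$, where the built-in weight $(1+|x|^2+y^2)^2$ in $\|\cdot\|_{H^{N_1}_w}$ can absorb it via Proposition~\ref{vomegabd}. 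The $\|u\|_{H^{N_0}}$ contribution in \eqref{weightedvort} then arises harmlessly from bounding $\|\Lambda^{\iota+1}u\|_{L^{p_2}}$ for $p_2>2$ by Sobolev embedding and Gagliardo--Nirenberg interpolation between $H^{N_0}$ and $W^{4,\infty}$ — not from any weighted estimate on $u$. Replacing your ``put $x$ on $u$'' branch by ``$\pa_\xi$ never touches $\hat{u}$'' repairs the argument.
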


Assuming this holds for the moment, we show how it implies the estimates
for $f_2$ in \eqref{fbdsA}-\eqref{fbdsB}. If the assumptions \eqref{bootstrap1}-
\eqref{bootstrap3} hold, then using Lemma \ref{interpolationgms},
\eqref{inftyvort} implies:
\begin{equation}
 ||\nabla^k e^{-it\Lambda^{1/2}} g_I(t)||_{L^\infty(\R^2)}
 \lesssim (1 + t)^{1+\delta}\ve_1 +
 (1+t)^{-1+\sigma} \ve_0 \ve_1 +  (1+t)^{2\delta}\ve_1^2
 \label{}
\end{equation}
where $\sigma \leq \frac{1}{N}$. Since $\ve_1 \ll \ve_0$, this implies
the second inequality in \eqref{fbdsA}.

Similarly, we have:
\begin{equation}
 ||\Lambda^\iota (x g_I(t))||_{L^2(\R^2)} \lesssim
 (1 +t)^{2+\delta} \ve_1
 + (1+t)^{2+2\delta} \ve_0 \ve_1
 + (1+t)^{2+2\delta} \ve_1^2,
 \label{}
\end{equation}
which implies the second inequality in \eqref{fbdsB}.

\subsection{Estimates for $g_2$}
\label{vortest1}

\begin{lemma}
  \label{g2estlem}
 If $v$ satisfies \eqref{bootstrap1},\eqref{bootstrap2} with
 $\ve_0 \ll 1$, then:
 \begin{align}
  ||\nabla^k e^{-it\Lambda^{1/2}} g_2(t)||_{L^\infty(\R^2)}
  &\lesssim
  \int_0^t ||\omega(s)||_{H^{N_1}_w(\D_s)}\, ds,
  && k \leq N_1 - 2\label{g2dec}\\
  ||\Lambda^\iota x g_2(t)||_{L^2(\R^2)}
  &\lesssim \int_0^t (1 + s) ||\omega(s)||_{H^{N_1}_w(\D_s)}\, ds.
  \label{g2wt}
 \end{align}
\end{lemma}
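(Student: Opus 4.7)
The plan is to push the time integration outside the spatial norm by Minkowski's inequality, and then reduce both estimates to controlling $V_\omega(s)$ in appropriate norms via Proposition \ref{vomegabd}.

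For the $L^\infty$ bound \eqref{g2dec}, I first rewrite $e^{-it\Lambda^{1/2}} g_2(t) = \int_0^t e^{-i(t-s)\Lambda^{1/2}} R\cdot V_\omega(s)\,ds$ and apply Minkowski:
\begin{equation}
 \|\nabla^k e^{-it\Lambda^{1/2}} g_2(t)\|_{L^\infty(\R^2)} \leq \int_0^t \|\nabla^k e^{-i(t-s)\Lambda^{1/2}} R\cdot V_\omega(s)\|_{L^\infty(\R^2)}\, ds.
\end{equation}
Since $e^{-i(t-s)\Lambda^{1/2}}$ is an isometry on all $L^2$-based Sobolev spaces and $R$ is a bounded Fourier multiplier there, I apply the Sobolev embedding $H^{k+2}(\R^2) \hookrightarrow W^{k,\infty}(\R^2)$ to dominate the integrand by $\|V_\omega(s)\|_{H^{k+2}(\R^2)}$. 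Provided $k+2 \leq N_1$, Proposition \ref{vomegabd} then yields $\|V_\omega(s)\|_{H^{k+2}(\R^2)} \lesssim \|\omega(s)\|_{H^{N_1}_w(\D_s)}$, which gives \eqref{g2dec}.

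For the weighted $L^2$ bound \eqref{g2wt}, again by Minkowski it suffices to control $\|\Lambda^\iota x\, e^{is\Lambda^{1/2}} R\cdot V_\omega(s)\|_{L^2}$ pointwise in $s$. The key is the commutator identity $x_k e^{is\Lambda^{1/2}} = e^{is\Lambda^{1/2}} x_k - \tfrac{is}{2} e^{is\Lambda^{1/2}} \Lambda^{-1/2} R_k$, obtained by differentiating the Fourier symbol $e^{is|\xi|^{1/2}}$ in $\xi$. Applying this and using that $e^{is\Lambda^{1/2}}$ is an $L^2$-isometry commuting with $\Lambda^\iota$,
\begin{equation}
 \|\Lambda^\iota x\, e^{is\Lambda^{1/2}} R\cdot V_\omega\|_{L^2} \lesssim \|\Lambda^\iota (x\, R\cdot V_\omega)\|_{L^2} + s\,\|\Lambda^{\iota-1/2} R\cdot R\cdot V_\omega\|_{L^2}.
\end{equation}
The second summand is the dangerous one. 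Here the negative fractional differentiation $\Lambda^{\iota - 1/2}$ is absorbed via the Hardy–Littlewood–Sobolev inequality $\|\Lambda^{\iota-1/2} f\|_{L^2(\R^2)} \lesssim \|f\|_{L^p(\R^2)}$ with $1/p = 3/4 - \iota/2$, i.e.\ $p$ slightly larger than $4/3$. Since the Riesz transform is bounded on $L^p$ for $1 < p < \infty$, this is $\lesssim \|V_\omega(s)\|_{L^p(\R^2)}$, and the crucial low-$p$ boundary estimate \eqref{lowp} from Proposition \ref{vomegabd} then bounds it by $\|\omega(s)\|_{H^{N_1}_w(\D_s)}$.

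The first summand is comparatively mild: I split $\Lambda^\iota(xR\cdot V_\omega) = \Lambda^\iota R(xV_\omega) + \Lambda^\iota[x,R]V_\omega$, where $[x,R]$ is a Fourier multiplier of order $-1$. The first piece is controlled by $\|xV_\omega\|_{H^\iota(\R^2)}$, which is bounded by $\|\omega(s)\|_{H^{N_1}_w(\D_s)}$ using the trace inequality and the $(1+|x|^2+y^2)^2$ weight built into $H^{N_1}_w$ (the proof of \eqref{mainomegabd} adapts directly with this extra weight). The commutator piece is handled by the same fractional integration argument as above since $\Lambda^\iota[x,R]$ is essentially $\Lambda^{\iota-1}$ composed with bounded multipliers. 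Combining these, and keeping track of the factor of $s$, yields \eqref{g2wt}. The main technical obstacle is the $\Lambda^{\iota-1/2}$ step, where we must exit the $L^2$ scale; this is exactly the situation in which the special low-$L^p$ estimate of Proposition \ref{vomegabd} (proved via the Green's function construction in Section \ref{ellsec}) is indispensable.
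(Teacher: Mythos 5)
Your proof is correct and takes essentially the same route as the paper. The paper works on the Plancherel side, writing $\||\xi|^\iota \pa_\xi(e^{it|\xi|^{1/2}}\hat g_2)\|_{L^2}$ and differentiating the Duhamel integrand in $\xi$; this produces exactly the same three terms (an $s\Lambda^{-1/2}R\cdot V_\omega$ term from the phase, a $[x,R]V_\omega$-type term of symbol order $-1$, and an $e^{is\Lambda^{1/2}}R\cdot(xV_\omega)$ term) that you extract via Minkowski plus the commutator $[x,e^{is\Lambda^{1/2}}]$. Each is then closed with fractional integration \eqref{fracint} and the low-$L^p$ boundary estimate \eqref{lowp} for $V_\omega$, precisely as you do. The $L^\infty$ bound via Minkowski, Sobolev embedding, and \eqref{mainomegabd} is also the same. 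The only small deviation is in the term with $xV_\omega$: you dominate it by $\|xV_\omega\|_{H^\iota}$ directly (invoking the weighted interior-to-boundary bound), whereas the paper first writes $\Lambda^\iota=-\Lambda^{\iota-1}R\cdot\nabla$ and uses fractional integration to reduce to $\|\nabla(xV_\omega)\|_{L^{p_2}}$; both are legitimate, but the paper's variant keeps the argument entirely within the framework of Proposition~\ref{vomegabd} without needing a separately-proved weighted $H^\iota$ estimate. (Also, a purely cosmetic remark: the commutator factor is $-\tfrac{s}{2}e^{is\Lambda^{1/2}}\Lambda^{-1/2}R_k$, not $-\tfrac{is}{2}$, since the $i$ from $\pa_\xi$ cancels the $i$ in $\widehat{x_kf}=i\pa_{\xi_k}\hat f$; this does not affect the estimate.)
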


\begin{proof}
By Sobolev embedding, we have:
\begin{equation}
 ||\nabla^k e^{i(t-s)\Lambda^{1/2}} \Lambda^{\iota} R \cdot V_\omega(s)||_{L^\infty(\R^2)}
 \lesssim ||R \cdot V_\omega(s)||_{H^{3/2 + k}(\R^2)}
 \lesssim ||V_\omega(s)||_{H^{2 + k}(\R^2)}.
 \label{}
\end{equation}
By  \eqref{vomegabd},  $||V_\omega(s)||_{H^{2+k}(\R^2)} \lesssim
||\omega(t)||_{H^{N_1}_w(\D_s)}$, which implies \eqref{g2dec}. Note
that this estimate loses more than half a derivative, but we are avoiding
the use of fractional derivatives in $\D_t$.

By Plancherel's theorem, $||\Lambda^\iota x e^{it\Lambda^{1/2}}g_2||_{L^2}
= |||\xi|^\iota \pa_\xi (e^{it|\xi|^{1/2}} \hat{g}_2)||_{L^2}$.
We have:
\begin{align}
 \pa_\xi e^{it|\xi|^{1/2}}\hat{g}_2 &=
 \pa_\xi \bigg( \int_0^t  e^{is|\xi|^{1/2}}
 \frac{\xi}{|\xi|}\cdot \hat{V}_\omega(s,\xi)\, ds\bigg)\\
 &= \int_0^t s \frac{\xi}{|\xi|^{3/2}}
 e^{is |\xi|^{1/2}}
 \hat{V}_\omega(s,\xi)\, ds
 + \int_0^t
 e^{is |\xi|^{1/2}} \pa_\xi \bigg(\frac{\xi}{|\xi|}
 \bigg)
 \hat{V}_\omega(s,\xi)\, ds
 + \int_0^t
 e^{is |\xi|^{1/2}}
 \pa_\xi \hat{V}_\omega(s,\xi)\, ds
 \\
 &\equiv \int_0^t s\hat{g}_2^1(s,\xi)\, ds +
 \int_0^t \hat{g}_2^2(s,\xi) \,ds
  + \int_0^t \hat{g}_2^3(s,\xi)\, ds.
 \label{}
\end{align}

By the fractional integration lemma \eqref{fracint}, we have:
\begin{align}
 ||\Lambda^\iota g_2^1||_{L^2}
 \lesssim ||\Lambda^{-1/2+\iota} V_\omega||_{L^2}
 \lesssim ||V_\omega||_{L^{p_1}},
 \label{}
\end{align}
where $p_1 = 2(2-\iota)/3$.

Similarly, bounding $||\xi|^{\iota} \pa_\xi (|\xi|^{-1} \xi)|
\lesssim |\xi|^{-1+\iota}$ and taking $p_2 = 2/(2-\iota) > 1$,
we have that:
\begin{align}
 ||\Lambda^\iota g_2^2||_{L^2}
 \lesssim \int_0^t ||\Lambda^{-1+\iota} V_\omega(s)||_{L^2}
 \lesssim \int_0^t ||V_\omega(s)||_{L^{p_2}}\, ds
 \lesssim \int_0^t ||\omega(s)||_{H^{N_1}_w(\D_s)}\,ds,
 \label{bad}
\end{align}
by \eqref{lowp}.

To control $\Lambda^\iota g_2^3$,
we write $\Lambda^\iota = \Lambda^{\iota-1} \Lambda = -\Lambda^{\iota-1}
R\cdot \nabla$. Using fractional integration again, we have:
\begin{align}
 ||\Lambda^\iota(x V_\omega)||_{L^2} \lesssim
 ||\Lambda^{-1+\iota} \nabla(x V_\omega)||_{L^2}
 \lesssim ||\nabla(x V_\omega)||_{L^{p_2}},
 \label{}
\end{align}
Combining the above estimates and using Proposition \ref{vomegabd} gives
\eqref{g2wt}.
\end{proof}

\subsection{Estimates for $g_3$}
\label{vortest2}
We now bound the term involving both $u$ and $v_\omega$. This is:
\begin{align}
 g_3(t) = \int_0^t e^{is\Lambda^{1/2}} N_1 (u, w)\, ds
 = \sum_{I = 1,2,3} \int_0^t e^{is\Lambda^{1/2}}
 g_3^I(s)\, ds,
 \label{}
\end{align}
with:
\begin{align}
 g_3^1 &= \Lambda^{1/2} \big( (R\cdot V_\omega)
 (\Lambda^{1/2} u)\big),\\
 g_3^2 &= -\nabla\cdot (u V_\omega),\\
 g_3^3 &=
 \Lambda( u R\cdot V_\omega)
 \label{}
\end{align}

\begin{lemma}
  \label{g3estlem}
 If \eqref{bootstrap1}-\eqref{bootstrap3} hold with $\ve_0 \ll 1$, then:
 \begin{align}
  ||\nabla^k e^{-it\Lambda^{1/2}} g_3(t)||_{L^\infty(\R^2)}
  &\lesssim \int_0^t ||\omega(s)||_{H^{N_1}_w(\D_s)}
  ||u(s)||_{W^{k+3,\infty}(\R^2)}\,
  ds\label{g3infty}\\
  ||\Lambda^\iota (x g_3(t))||_{L^2} &\lesssim \int_0^t (1+s) ||\omega(s)||_{H_w^{N_1}(\D_s)}
  \Big(||u(s)||_{W^{4,\infty}} + ||u(s)||_{H^{N_0}}
  \Big)\, ds
  \label{g3wt}
 \end{align}
\end{lemma}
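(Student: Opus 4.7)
I will follow the Fourier-analytic strategy of Lemma~\ref{g2estlem}, with the extra bilinearity $u \cdot V_\omega$ handled by placing the dispersive variable $u$ in $L^\infty$-based norms and the rotational boundary variable $V_\omega$ in Sobolev norms controlled by Proposition~\ref{vomegabd}. Each $g_3^I$ has the schematic form $\Lambda^\alpha(u \cdot V_\omega)$ with $\alpha \in \{1/2,1\}$ (replaced by $\nabla$ for $g_3^2$), so the two estimates \eqref{g3infty} and \eqref{g3wt} reduce to product estimates with only a modest derivative loss from the outer operator.

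\textbf{Pointwise bound.} For \eqref{g3infty}, I would first apply Minkowski's inequality in time, use that $e^{-is\Lambda^{1/2}}$ is an isometry on every $H^s(\R^2)$, and reduce via Sobolev embedding $H^{3/2+\epsilon}\hookrightarrow L^\infty$ to controlling $||g_3^I(s)||_{H^{k+3/2+\epsilon}}$. Pushing the outer $\Lambda^\alpha$ through and applying the tame product law
\[
||fg||_{H^s}\lesssim ||f||_{H^s}||g||_{L^\infty}+||f||_{L^\infty}||g||_{H^s}
\]
splits each estimate into two pieces: one where all derivatives fall on $V_\omega$, bounded by $||V_\omega||_{H^{k+3}}||u||_{L^\infty}$, and one where they fall on $u$, bounded by $||V_\omega||_{L^\infty}||u||_{W^{k+3,\infty}}$. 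Proposition~\ref{vomegabd} (combined with Sobolev embedding for the $L^\infty$ piece) controls both $V_\omega$ factors by $||\omega(s)||_{H^{N_1}_w(\D_s)}$.

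\textbf{Weighted bound.} For \eqref{g3wt}, I will work in Fourier variables using $||\Lambda^\iota(x g_3^I)||_{L^2}=|||\xi|^\iota\pa_\xi\widehat{g_3^I}||_{L^2}$, so that $\pa_\xi$ falls on one of three factors in the integrand $e^{is|\xi|^{1/2}}m(\xi)\widehat{uV_\omega}(s,\xi)$: (i) on the phase, producing a factor of $s$ and yielding the $(1+s)$ weight in \eqref{g3wt}; (ii) on the multiplier $m(\xi)$, losing one power of $|\xi|$, which I would recover via the fractional integration inequality \eqref{fracint} at the price of placing one factor in $L^p$ with $p\in(1,2)$; or (iii) on $\widehat{uV_\omega}$ itself, corresponding in physical space to $(xu)V_\omega$ or $u(xV_\omega)$. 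The $xu$ factor will be handled by the bootstrap assumption $||\Lambda^\iota x e^{it\Lambda^{1/2}}u||_{L^2}\lesssim \ve_0(1+s)^\delta$ combined with Sobolev embedding (which introduces $||u||_{H^{N_0}}$), while $x V_\omega$ is controlled directly by the weight built into $||\omega||_{H^{N_1}_w(\D_s)}$ together with the trace inequality \eqref{trace}.

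\textbf{The hard part.} Case (ii) is the main obstacle: after losing a derivative from the symbol one needs an $L^p$ bound, $1<p<2$, on a product containing $V_\omega$. For the purely $V_\omega$ factor this is exactly \eqref{lowp}, whose proof relied on the Green's function expansion for $\D_t$ and the cancellation $\omega\cdot\n|_{\pa\D_t}=0$; for the genuinely bilinear term I will apply H\"older's inequality, placing $u$ in some $L^q$ with $q>2$ (dominated by $||u||_{W^{4,\infty}}$ and $||u||_{H^{N_0}}$ via Sobolev embedding) and $V_\omega$ in $L^p$ via \eqref{lowp}. The remaining work is careful bookkeeping of the time weights to verify that everything assembles into the claimed $(1+s)$ factor with the stated combination of $L^\infty$ and $H^{N_0}$ norms of $u$.
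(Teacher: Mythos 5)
Your proposal follows essentially the same route as the paper: Sobolev embedding plus a product estimate for the $L^\infty$ bound, and for the weighted bound a Fourier-side decomposition of $\partial_\xi$ hitting the phase (yielding the factor of $s$), the symbol (recovered by fractional integration and the $L^p$, $1<p<2$, control on $V_\omega$ from \eqref{lowp}), and the profile (handled via the weight in $\|\omega\|_{H^{N_1}_w}$ and the bootstrap bound on $\Lambda^\iota x e^{it\Lambda^{1/2}}u$). The only cosmetic deviation is that for \eqref{g3infty} you invoke the tame product law, which produces a second piece $\|V_\omega\|_{L^\infty}\|u\|_{W^{k+3,\infty}}$, whereas the paper simply places the entire $u$ factor in $W^{k+3,\infty}$ directly; both variants close identically via Proposition~\ref{vomegabd}.
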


\begin{proof}
  The estimates for each of the terms $g_3^I, I = 1,2,3$ are similar, so we just show how
  to bound $g_3^1$.
  Applying Sobolev embedding and using the fact that the Riesz transform
  maps $L^2 \to L^2$, we have:
  \begin{equation}
   ||\nabla^k \Lambda^{1/2} (R\cdot V_\omega \Lambda^{1/2} u)||_{L^\infty}
   \lesssim ||(R\cdot V_\omega) (\Lambda^{1/2} u)||_{H^{k+2}}
   \lesssim ||V_\omega||_{H^{k+2}} ||u||_{W^{k+3,\infty}},
   \label{}
  \end{equation}
  say. By \eqref{vomegabd} this is bounded by the right-hand side of \eqref{g3infty}.

  To prove the bound for $x g_3$, we write:
  \begin{align}
   \pa_\xi \hat{g}_3^1
   &= \int_{\R^2} \pa_\xi \bigg(e^{is|\xi|^{1/2}}
   |\xi|^{1/2 }  \frac{(\xi-\eta)_\ell}{|\xi-\eta|}
   |\eta|^{1/2} \hat{V}^\ell_\omega(\xi-\eta) \hat{u}(\eta)\bigg)\, d\eta\\
   &= \int_{\R^2}
   e^{is|\xi|^{1/2}}|\eta|^{1/2}
   \bigg( is \frac{(\xi-\eta)_\ell}{|\xi-\eta|}
   + |\xi|^{1/2} m_0(\xi-\eta)
   + m_1(\xi)\frac{(\xi-\eta)_\ell}{|\xi-\eta|} \bigg)
   \hat{V}^\ell_\omega(\xi-\eta) \hat{u}(\eta)\bigg)\,d\eta
   \label{symbolderivs}
   \\
   &+ \int_{\R^2}
   e^{is|\xi|^{1/2}}
   |\xi|^{1/2} \frac{(\xi-\eta)_\ell}{|\xi-\eta|}|\eta|^{1/2}
  \pa_\xi \hat{V}^\ell_\omega(\xi-\eta)
   \hat{u}(\eta)\, d\eta,
   \label{weightterm}
  \end{align}
  with $m_0(\xi,\eta) = \pa_\xi \bigg( \frac{\xi-\eta}{|\xi-\eta|}\bigg)$
  and $m_1(\xi) = \pa_\xi |\xi|^{1/2}$.

  In physical space, after applying $\Lambda^\iota$
  the first term in \eqref{symbolderivs} is:
  \begin{equation}
   is e^{is\Lambda^{1/2}}  \Lambda^\iota \bigg ( (R \cdot V_\omega)
   (\Lambda^{1/2}u)\bigg),
   \label{}
  \end{equation}
  and using the fractional product rule \eqref{fracprod}, this
  is bounded by the right-hand side of \eqref{g3wt}.

  The second term in \eqref{symbolderivs} contributes:
  \begin{align}
    e^{is\Lambda^{1/2}} \Lambda^{1/2+\iota} \bigg(
    (m_0(\nabla) V_\omega) (\Lambda^{1/2} u)\bigg).
   \label{}
  \end{align}
  Since $|m_0(\xi-\eta)| \lesssim |\xi-\eta|$, we bound the result in
  $L^2$ by:

  \begin{equation}
   ||\Lambda^{-1} V_\omega||_{L^{p_1}} ||\Lambda^{\iota+1} e^{is\Lambda^{1/2}} u||_{L^{p_2}}
   + || \Lambda^{-1/2+\iota} V_\omega||_{L^4}
   ||\Lambda^{1/2}e^{is\Lambda^{1/2}} u||_{L^{4}}
   \label{bs2}
  \end{equation}
  where $1/p_1 + 1/p_2 = 1/2$. We then take $p_1$ so large that
  $||\Lambda^{-1} V_\omega||_{L^{p_1}}
  \lesssim ||V_\omega||_{L^{4/3}}$ (see \eqref{fracint}), say, and since $p_2 > 2$
  we bound $||\Lambda^{\iota+1} u||_{L^{p_2}} \lesssim
  ||u||_{H^{N_0}} + ||u||_{W^{4,\infty}}$. Again using
  \eqref{fracint}, the second term can be
  bounded by $||V_\omega||_{L^2} (||u||_{W^{4,\infty}} + ||u||_{H^{N_0}})$.
  Using \eqref{vomegabd} to control the factors of $V_\omega$, the result
  is bounded by the right-hand side of \eqref{g3wt}.

  The third term in \eqref{symbolderivs} is:
  \begin{align}
   e^{is\Lambda^{1/2}}
   m_1(\nabla) \bigg(  (R \cdot V_\omega) (\Lambda^{1/2} u)\bigg),
   \label{}
  \end{align}
  and recall $|m_1(\xi)| \lesssim |\xi|^{-1/2}$. With $1 < p < 2$ so that
  $||\Lambda^{-1/2+\iota} F||_{L^2} \lesssim ||F||_{L^p}$, we therefore have:
  \begin{equation}
   ||\Lambda^{-1/2 + \iota} \Big( (R\cdot V_\omega)(\Lambda^{1/2} u) \Big)||_{L^2}
   \lesssim ||(R\cdot V_\omega )(\Lambda^{1/2} u)||_{L^p}
   \lesssim ||V_\omega||_{L^p} ||u||_{W^{4,\infty}}.
   \label{}
  \end{equation}
The estimate for \eqref{weightterm} can be performed similarly to
how we controlled $g_2^3$ in the previous section.
\end{proof}

\subsection{Estimates for $g_4$}
\label{g4ests}
We now bound the term which is quadratic in the vorticity:
\begin{equation}
 g_4(t) = \int_0^t e^{is\Lambda^{1/2}} N(w,w)\, ds
 = \int_0^t e^{is\Lambda^{1/2}} \Lambda^{1/2 + \iota} (R\cdot V_\omega(s))^2\,ds
 \label{}
\end{equation}

We prove:
\begin{lemma}
 If $v$ satisfies \eqref{bootstrap1}-\eqref{bootstrap3} with $\ve_0 \ll 1$,
 then:
 \begin{align}
  ||\nabla^k e^{-it\Lambda^{1/2}} g_4(t)||_{L^\infty(\R^2)}
  &\lesssim \int_0^t ||\omega(s)||_{H_w^{N_1}(\D_s)}^2\, ds,\label{g4est1}\\
  ||\Lambda^\iota x g_4(t)||_{L^2(\R^2)}
  &\lesssim
  \int_0^t (1 + s) ||\omega(s)||_{H^{N_1}_w(\D_s)}^2\, ds
  \label{}
 \end{align}
\end{lemma}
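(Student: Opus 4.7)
The plan is to mirror the treatment of $g_2$ and $g_3$ given in Lemmas~\ref{g2estlem}--\ref{g3estlem}, exploiting that $V_\omega$ enters quadratically here so no factor of $u$ appears. The two estimates will follow once we control $\Lambda^{1/2}((R\cdot V_\omega)^2)$ and $\Lambda^\iota(x \Lambda^{1/2}((R\cdot V_\omega)^2))$ in the appropriate norms, together with Proposition~\ref{vomegabd}.

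For the $L^\infty$ bound \eqref{g4est1}, I would apply Sobolev embedding and the fact that the Riesz transform is bounded on $L^2$ to get
\begin{equation}
\|\nabla^k e^{-it\Lambda^{1/2}} g_4(t)\|_{L^\infty(\R^2)} \lesssim \int_0^t \|(R\cdot V_\omega(s))^2\|_{H^{k+3}(\R^2)}\, ds.
\end{equation}
The fractional product rule then gives $\|(R\cdot V_\omega)^2\|_{H^{k+3}} \lesssim \|V_\omega\|_{H^{k+3}}\|V_\omega\|_{W^{k+3,\infty}}$, and both factors are controlled by $\|\omega\|_{H^{N_1}_w(\D_s)}$ via Proposition~\ref{vomegabd} (using Sobolev embedding on $\R^2$ for the $L^\infty$ factor) provided $k+3 \le N_1-2$, which is consistent with $N_1\ge 6$ and $k\le N_1+4$ after a trivial interpolation with the highest-order term.

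For the weighted $L^2$ bound, I would write $\|\Lambda^\iota x g_4\|_{L^2} = \| |\xi|^\iota \partial_\xi \widehat{g_4}\|_{L^2}$ and distribute $\partial_\xi$ over the Fourier representation
\begin{equation}
\widehat{g_4}(\xi) = \int_0^t e^{is|\xi|^{1/2}} |\xi|^{1/2} \int_{\R^2} m(\xi-\eta,\eta)\, \widehat{V_\omega}(\xi-\eta)\cdot \widehat{V_\omega}(\eta)\, d\eta\, ds,
\end{equation}
with $m$ of order zero coming from the Riesz symbols. The derivative $\partial_\xi$ either produces a factor $s \cdot \xi/|\xi|^{3/2}$ from the phase, or $|\xi|^{-1/2}$ from the multiplier, or moves onto one of the Riesz symbols (giving order $-1$), or lands on $\widehat{V_\omega}(\xi-\eta)$ producing $\widehat{xV_\omega}$. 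In each case, after applying $\Lambda^\iota$ in physical space I obtain a bilinear expression in $V_\omega$ (and possibly $xV_\omega$) with a Fourier multiplier of order at most $1/2+\iota$. Using the fractional product rule, fractional integration \eqref{fracint} to trade a small amount of differentiability for a slightly-low $L^p$ norm, and the estimates \eqref{mainomegabd}, \eqref{lowp} from Proposition~\ref{vomegabd} for $\|V_\omega\|_{L^p}$ with $p>1$, every resulting term is bounded by $\|\omega(s)\|_{H^{N_1}_w(\D_s)}^2$ times at most a factor of $s$ (from the phase term). Summing gives the claimed bound.

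The main obstacle, as in the previous two lemmas, is the low-frequency behavior: the factor $|\xi|^{-1/2}$ from $\partial_\xi |\xi|^{1/2}$ forces us off of $L^2$ and onto $L^p$ with $1<p<2$ via fractional integration. This is exactly where the weighted bound \eqref{lowp} for $V_\omega$ is needed, and it is available because $\omega \cdot \n = 0$ on $\pa\D_t$. All other terms are handled by the same scheme used for $g_2$ and $g_3$, so no essentially new ingredient is required.
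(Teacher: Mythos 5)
Your proposal mirrors the paper's argument: Sobolev embedding plus $L^2$-boundedness of the Riesz transform for \eqref{g4est1}, then the Plancherel/$\partial_\xi$ decomposition into the phase, multiplier-derivative, Riesz-symbol, and weight terms, each controlled by the fractional product rule, fractional integration, and Proposition \ref{vomegabd}. One inaccuracy worth noting: because $g_4$ is quadratic in $V_\omega$, the term $\Lambda^{-1/2+\iota}(R\cdot V_\omega)^2$ lands the product in $L^p$ with $1<p<2$, but H\"older places each single factor of $V_\omega$ in $L^{2p}$ with $2p>2$, so the sub-$L^2$ estimate \eqref{lowp} is not actually required here (it is essential only for $g_2$, where $V_\omega$ enters linearly).
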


\begin{proof}
  The argument is nearly identical to the proof of the estimates for $g_3$.
  We start with:
  \begin{equation}
   ||\nabla^k \Lambda^{1/2} (R\cdot V_\omega)^2||_{H^2(\R^2)}
   \lesssim ||V_\omega||_{H^{3+k}(\R^2)}^2,
   \label{}
  \end{equation}
  which implies \eqref{g4est1}.

  A calculation similar to the one in the proof of \eqref{g3wt}
  shows that in order to bound $x g_2$, we need to control
  the time integral of:
    \begin{equation}
   s ||\Lambda^{\iota} (R\cdot V_\omega)^2||_{L^2}
   + ||\Lambda^{-1/2+\iota} (R\cdot V_\omega)^2||_{L^2}
   + ||\Lambda^{1/2+\iota} \big((\Lambda^{-1} V_\omega) R\cdot V_\omega\big)||_{L^2}
   + ||\Lambda^{1/2 + \iota}\big( (R\cdot (xV_\omega) (R\cdot V_\omega)\big)||_{L^2}
   \label{}
  \end{equation}
  Using \eqref{fracprod} and \eqref{fracint}
  as in the proof of the previous lemma, it is straightforward to bound
  each of these terms by $(1+s) ||(1 + |x|^2)^{1/2} V_\omega||_{H^2(\R^2)}$.
\end{proof}

\subsection{Estimates for $g_5$}

Recall that $g_5$ contains all terms of order three or higher which
involve $V_\omega$.
There are two such types of terms: the terms coming from the first line of
\eqref{rearrange},
and the terms of degree 2 and higher
from expanding the rescaled Dirichlet-to-Neumann map $G(h)$ in powers of
$h$ and inserting this into \eqref{rearrange}.
In either case, the vorticity enters at most quadratically.
We illustrate how to handle the term corresponding to the first term on
the right-hand side of \eqref{rearrange}, which is:
\begin{equation}
 R_1 = -\int_0^t e^{i(s-t)\Lambda^{1/2}}
 |V_\omega \cdot \Lambda^{-1/2} \nabla u|^2.
 \label{}
\end{equation}
Using Sobolev embedding and e.g. the Hormander-Mikhlin multiplier theorem,
it is straightforward
to estimate:
\begin{equation}
 ||\nabla^k R_1||_{L^\infty(\R^2)}
 \lesssim ||R_1||_{H^{k+2}(\R^2)}
 \lesssim \int_0^t ||V_\omega(s)||^2_{W^{1+\epsilon,k+2}(\R^2)}
 ||u(s)||_{W^{(1+\epsilon)', k+3}(\R^2)}^2,
 \label{}
\end{equation}
for arbitrary $\epsilon > 0$. Using the interpolation inequality
\eqref{interpolationgms} and Young's inequality
$|ab| \lesssim |a|^p + |b|^q$ for $1/p + 1/q = 1$, this shows that:
\begin{equation}
 ||\nabla^k R_1||_{L^\infty(\R^2)}
 \lesssim \Big(\frac{\ve_0^2}{(1 + t)^{\sigma'}}\Big)
 \Big( \ve_1^2 (1+t)^{2+2\delta}\Big)
 \lesssim \ve_0^2 \frac{1}{1+t} + \ve_1^2 (1+t)^{3+3\delta}
 \label{}
\end{equation}
where $\sigma'  \ll 1$.
To estimate the terms coming
from the expansion of $G(h)$, one can argue as above, but using
additionally the estimates from
Appendix F of \cite{Germain2012}.

The estimates for $\Lambda^{\iota} x g_5$ are similar to the above
and the estimates we have already proved.
\section{Estimates for the dispersive terms}
\label{realdispsec}

In this section we bound the term $g_1$ defined in \eqref{duha2}.
We can actually proceed nearly exactly as in \cite{Germain2012} to
handle these terms. The only differences here are that (1)
after performing the normal
forms transformation (integration by parts in time), there are
additional terms involving the vorticity that need to be bounded
and (2)
we
want to control $||\Lambda^{\iota} xg_1||_{L^2(\R^2)}$ instead
of $||x g_1||_{L^2(\R^2)}$.

Recall the definitions of the bilinear, trilinear
and higher-order terms $B(u), T(u), R(u)$ from Proposition \ref{formulation}.
As in \cite{Germain2012}, after integrating by parts in time,
$B(u)$ can be written as a sum of terms whose Fourier transforms
are given by:
\begin{multline}
  \int_{\R^2} \mu(\xi, \eta) e^{it\varphi_{\alpha\beta}(\xi,\eta)}
  \hat{f}_{-\alpha}(t,\xi-\eta) \hat{f}_{-\beta}(t, \eta)\, d\eta
  - \int_0^t \int_{\R^2}\mu(\xi, \eta) e^{is\varphi_{\alpha\beta}(\xi,\eta)}
  \pa_s \Big(\hat{f}_{-\alpha}(s,\xi-\eta) \hat{f}_{-\beta}(s, \eta)\Big)\, d\eta
  \\ = A_1 + A_2
 \label{}
\end{multline}
where $\alpha, \beta \in \{+, -\}$,
$\varphi_{\pm\pm} = |\xi|^{1/2} \pm |\xi-\eta|^{1/2} \pm |\eta|^{1/2}$,
$f_+ = f, f_- = \overline{f}$ with $f = e^{it\Lambda^{1/2}} u$,
and $\mu$ is a bilinear multiplier which
is in the class $\B_1$, defined in Appendix C of \cite{Germain2012}.
The first term here can be estimated exactly as in Section 5 of
\cite{Germain2012} which gives:
\begin{equation}
 ||\nabla^k e^{-it\Lambda^{1/2}} \F^{-1}A_1||_{L^\infty(\R^2)}
 \lesssim \frac{\ve_0^2}{1+ t} \label{}
\end{equation}
To control $A_2$, it is enough to consider the case
that $\pa_s$ falls on the second factor. Using the equation
\eqref{duha2}, this generates two types of
terms: those involving just $\hat{f}$ and those involving $V_\omega$. The
first type of term can be dealt with just as in \cite{Germain2012}.
There are a large number of terms involving $V_\omega$ however they
can all be dealt with similarly to the estimates from the previous section.
This is because none of the above estimates involve any special
cancellations are are just performed by applying Sobolev embedding,
Holder's inequality and various simple facts from Harmonic analysis.
We just need to use Theorem C.1 from \cite{Germain2012} in place of
Holder's inequality.
For example, the term coming from
$g_2$ in \eqref{duha2} is:
\begin{equation}
 A_3 = \int_{0}^t \int_{\R^2} e^{is\varphi_{\alpha\beta}(\xi,\eta)}
 \mu(\xi,\eta) \frac{\eta}{|\eta|}
 \hat{f}_{-\alpha}(s,\xi-\eta)
 e^{-\beta is |\eta|^{1/2}}\widehat{V_\omega}(s,\eta)\, d\eta  ds,
 \label{}
\end{equation}
Applying Sobolev embedding $|| q ||_{L^\infty(\R^2)} \lesssim
|| q ||_{W^{1,p}(\R^2)}$ for $p > 10$, say,
Theorem C.1 from \cite{Germain2012} gives:
\begin{multline}
 ||\nabla^k e^{-it\Lambda^{1/2}} \F^{-1}A_3||_{L^\infty(\R^2)}
 \lesssim \int_0^t ||\nabla^k e^{-it\Lambda^{1/2}} \F^{-1} B_\mu(
 u, V_\omega)||_{W^{1,p}(\R^2)}\\
 \lesssim \int_0^t ||u(s)||_{W^{k+1, 2p}(\R^2)}
  ||V_\omega(s)||_{W^{k+1, 2p}(\R^2)}.
 \label{}
\end{multline}
Applying the bootstrap assumptions \eqref{bootstrap1}-\eqref{bootstrap2}
and the interpolation inequality \eqref{interpolationgms},
we get:
\begin{equation}
 ||\nabla^k e^{-it\Lambda^{1/2}} \F^{-1} A_3||_{L^\infty(\R^2)}
 \lesssim \int_0^t \frac{\ve_0}{(1 + s)^{1-\sigma}} \ve_1 (1 +s)^\delta\, ds
 \\
 \lesssim \ve_0 \ve_1(1+t)^{1+ \delta}.
 \label{}
\end{equation}

The estimates for $||\Lambda^{\iota} x g_1||_{L^2(\R^2)}$ can be proven
in a similar manner as above by following the outline in \cite{Germain2012}.
The only difference is that one needs to use the assumption
$||\Lambda^\iota (x e^{it\Lambda^{1/2}} u)||_{L^2} \leq (1 +t)^{\delta}$
in place of the assumption
$|| e^{it\Lambda^{1/2}} u)||_{L^2} \leq (1 +t)^{\delta}$ in
\cite{Germain2012}.
Summing up and noting that $V_\omega$ enters no more than quadratically
into any of the above terms, we get:
\begin{prop}
 If \eqref{bootstrap1}-\eqref{bootstrap3} hold for $\ve_0 \ll 1$
 and $\ve_1 \ll \ve_0$, then:
 \begin{equation}
  ||\nabla^k e^{-it\Lambda^{1/2}} g_1||_{L^\infty(\R^2)}
  \lesssim  \frac{\ve_0^2}{1+ t} + \ve_1 (1 + t)^{1+ 2\delta}
  \label{}
 \end{equation}
 for $k \leq N_1 + 4$, and
 \begin{equation}
  ||\Lambda^\iota (x g_1)||_{L^2(\R^2)}
  \lesssim \ve_0^2 (1 + t)^{\delta} +
  \ve_0\ve_1(1+t)^{1+2\delta}
  \label{}
 \end{equation}
\end{prop}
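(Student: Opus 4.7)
The plan is to follow the strategy of \cite{Germain2012} with two modifications, namely handling the new vorticity contributions that appear after a normal form transformation and using the weighted variable $\Lambda^\iota x$ in place of $x$. I would begin by expanding $N(u) = B(u) + T(u) + R(u)$ and treat the quadratic piece $B(u)$ first, since it is the only term that does not decay at the desired rate without a normal form. The cubic and higher-order pieces $T(u), R(u)$ I expect to handle directly: combining Sobolev embedding, Theorem C.1 from \cite{Germain2012}, and the interpolation inequality \eqref{interpolationgms} between the dispersive decay of $\|u\|_{W^{4,\infty}}$ and the Sobolev growth of $\|u\|_{H^{N_0}}$ gives an integrable contribution bounded by $\ve_0^2/(1+t)$ in $L^\infty$ and $\ve_0^2(1+t)^\delta$ in the weighted $L^2$ norm.

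For the bilinear piece $B(u)$ I would perform the usual integration by parts in time. Writing $f_\pm = e^{\pm it\Lambda^{1/2}} u_{\pm}$, the relevant phases $\varphi_{\alpha\beta}(\xi,\eta) = |\xi|^{1/2} \pm |\xi-\eta|^{1/2} \pm |\eta|^{1/2}$ are nonresonant in the sense that they are bounded below (see \cite{Germain2012}). Dividing by $\varphi_{\alpha\beta}$ produces a symbol in the class $\mathcal{B}_1$ of \cite{Germain2012}, and one obtains the representation $\mathcal{F}^{-1}B(u) = A_1 + A_2$ with $A_1$ a boundary term and $A_2$ a time integral in which $\pa_s$ lands on one of the $\hat f_{\pm\alpha}$ factors. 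The boundary term $A_1$ is handled exactly as in Section~5 of \cite{Germain2012} to produce the $\ve_0^2/(1+t)$ bound on $L^\infty$ and $\ve_0^2(1+t)^\delta$ bound on $\Lambda^\iota x A_1$ in $L^2$; the minor point is that the assumption $\|\Lambda^\iota x e^{it\Lambda^{1/2}} u\|_{L^2} \lesssim \ve_0(1+t)^\delta$ is substituted for the corresponding $L^2$ weight bound in \cite{Germain2012}, which entails no conceptual change since $\Lambda^\iota$ only costs a small loss at low frequencies that is absorbed by a Sobolev embedding $L^p \hookrightarrow L^2$.

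For $A_2$ we use the equation \eqref{duha2}, which schematically says $\pa_s \hat{f}_{\pm\alpha}$ is the Fourier transform of $N(u) + L(w) + N_1(u,w) + N_2(w,w) + N_3(u,w)$. The genuinely cubic pieces in $u$ coming from $N(u)$ are treated as in \cite{Germain2012}. What is new are the trilinear expressions formed by pairing the original factor $\hat f$ with symbols from $L(w), N_1(u,w), N_2(w,w), N_3(u,w)$. These are bilinear or trilinear operators with one or two entries replaced by $V_\omega$. For each such term I would use Theorem~C.1 of \cite{Germain2012} to reduce the $L^\infty$ or $L^2$ norm of the bilinear operator to a product of $L^p$ norms of the entries, then bound the factors of $u$ by interpolating between $\|u\|_{W^{4,\infty}}$ and $\|u\|_{H^{N_0}}$ via \eqref{interpolationgms}, and bound the factors of $V_\omega$ in $W^{r,p}(\R^2)$ by $\|\omega\|_{H^{N_1}_w(\D_s)}$ via Proposition~\ref{vomegabd}. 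Under the bootstrap \eqref{bootstrap1}--\eqref{bootstrap3} this produces contributions of size $\ve_0 \ve_1(1+s)^{-1+\sigma+\delta}$ time-integrated, yielding the $\ve_1(1+t)^{1+2\delta}$ bound in $L^\infty$ and similarly for the weighted $L^2$ norm after accounting for one extra power of $s$ from the $\pa_\xi$ landing on the exponential.

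The main obstacle is a bookkeeping one: the new vorticity terms are numerous (for each bilinear symbol $\mu \in \mathcal{B}_1$ and each of the operators $L, N_1, N_2, N_3$ one gets a trilinear operator), and for some of them one entry is non-localized in frequency because $V_\omega$ is only controlled in standard Sobolev spaces and not in the weighted $L^2$ scale that $u$ enjoys. The key point that makes the whole argument go through is that $V_\omega$ always enters at most quadratically, so we never have to use a third power of the slowly-growing $\|\omega\|_{H^{N_1}_w}$, and every factor of $V_\omega$ can be converted to an $\omega$-interior estimate via Proposition~\ref{vomegabd}. Balancing the resulting powers of $(1+t)$ using \eqref{interpolationgms} and recalling $\ve_1 \ll \ve_0$ closes the estimate in the stated form.
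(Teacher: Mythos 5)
Your outline matches the paper's proof: decompose $N(u) = B(u) + T(u) + R(u)$, handle the cubic and higher-order pieces directly via interpolation between the $L^\infty$ decay and the $H^{N_0}$ growth, integrate by parts in time on the bilinear piece, estimate the boundary term $A_1$ exactly as in Section~5 of \cite{Germain2012} with the bootstrap bound on $\|\Lambda^\iota x e^{it\Lambda^{1/2}} u\|_{L^2}$ replacing the unweighted version, and treat the new vorticity terms appearing in the bulk term $A_2$ (where $\pa_s \hat f$ produces one factor of $V_\omega$ or a quadratic expression in $V_\omega$ through \eqref{duha2}) using Theorem~C.1 of \cite{Germain2012} in place of H\"older, Proposition~\ref{vomegabd} to convert $V_\omega$ factors to $\|\omega\|_{H^{N_1}_w}$, and \eqref{interpolationgms} to close. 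This is exactly the paper's route.

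One correction worth flagging: the phases $\varphi_{\alpha\beta}(\xi,\eta) = |\xi|^{1/2} \pm |\xi-\eta|^{1/2} \pm |\eta|^{1/2}$ are \emph{not} bounded below. For instance $\varphi_{+-}$ vanishes identically on $\{\xi = 0\}$, and the other sign choices vanish on $\{\eta = 0\}$ or $\{\xi-\eta = 0\}$. The normal form works because these degeneracies occur only at the output or input frequency being zero, where the water-wave multipliers $m_j(\xi,\eta)$ vanish in a compensating way, so that the quotient $\mu = m_j/\varphi_{\alpha\beta}$ still lies in the symbol class $\mathcal{B}_1$ after suitable frequency localization. That is the actual content of the normal-form step in \cite{Germain2012}; asserting a uniform lower bound on the phase would make the normal form trivial, which it is not. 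With this replaced by a citation to the $\mathcal{B}_1$-membership argument, the rest of your proposal is sound.
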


\section{Acknowledgments}
The author wishes to thank Hans Lindblad for suggesting this problem
and Pierre Germain for many helpful discussions and suggestions.

\appendix

\section{Interpolation and Sobolev inequalities}

In this section we will assume that $\pa \D_t$ is given
by the graph of a function, $\pa \D_t = \{ (x,h(t,x)), x \in \R^2\}$, and
further that we have a bound for the second fundamental form and
injectivity radius of $\pa \D_t$, as well as a bound for $|\nabla h|$:
\begin{equation}
 |\theta| + \frac{1}{\iota_0} + |\nabla h| \leq K.
 \label{}
\end{equation}
Note that $\theta \sim \nabla^2 h$. We then have the following Sobolev
inequalities:

\begin{lemma}
  If
$||u||_{L^6(\D_t)} + ||\na u||_{L^2(\D_t)} < \infty$,
then:
\begin{equation}
 || u||_{L^6(\D_t)} \leq C (K) ||\na u||_{L^2(\D_t)}.
 \label{sob1}
\end{equation}

If $u \in W^{k,p}(\D_t)$ then for $k > \frac{3}{p}$:
\begin{align}
 ||u||_{L^\infty(\D_t)} \leq C(K) ||u||_{W^{k,p}(\D_t)},
 \label{appsobint}
\end{align}
and if $u \in W^{k,p}(\pa \D_t)$, then for
$k > \frac{2}{p}$:
\begin{align}
 ||u||_{L^\infty(\pa \D_t)} \leq C(K) ||u||_{W^{k,p}(\pa \D_t)},
 \label{appsobbdy}
\end{align}
\end{lemma}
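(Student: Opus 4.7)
The plan is to reduce each of the three inequalities to the corresponding standard inequality on the lower half-space $\R^3_- = \{(x,y) : x \in \R^2, y \leq 0\}$ (or its boundary $\R^2$) via the graph diffeomorphism that flattens $\pa \D_t$. Specifically, define
\begin{equation}
 \Phi : \R^3_- \to \D_t, \qquad \Phi(x,y) = (x, y + h(x)).
\end{equation}
Since $|\nabla h| \leq K$, the map $\Phi$ is a global bilipschitz diffeomorphism whose Jacobian and inverse Jacobian are uniformly bounded by constants depending only on $K$. In particular, for any $1 \leq p \leq \infty$ and any $k \geq 0$, the composition $u \mapsto u \circ \Phi$ gives norm equivalences
\begin{equation}
 C(K)^{-1} \|u\|_{W^{k,p}(\D_t)} \leq \|u \circ \Phi\|_{W^{k,p}(\R^3_-)} \leq C(K) \|u\|_{W^{k,p}(\D_t)},
\end{equation}
and similarly on the boundary, where $\Phi|_{\pa \R^3_-} : \R^2 \to \pa \D_t$ is $(x,0) \mapsto (x, h(x))$, bilipschitz with constants depending on $K$. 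It therefore suffices to prove all three inequalities on $\R^3_-$ and on its flat boundary $\R^2$.

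For the first inequality, I would extend $u \circ \Phi \in L^6(\R^3_-) \cap \dot{H}^1(\R^3_-)$ to all of $\R^3$ by even reflection across $\{y=0\}$. This preserves $L^6$ and yields a function in $\dot{H}^1(\R^3)$ with norms bounded by a fixed multiple of the original norms. The standard Gagliardo-Nirenberg-Sobolev inequality on $\R^3$ gives $\|u\|_{L^6(\R^3)} \lesssim \|\nabla u\|_{L^2(\R^3)}$; pulling back via $\Phi^{-1}$ picks up a factor depending only on $K$, giving \eqref{sob1}.

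For the second inequality, extend $u \circ \Phi$ from $\R^3_-$ to $\R^3$ using the standard reflection/Stein extension for $W^{k,p}$, whose operator norm depends only on the flat geometry. Morrey's inequality on $\R^3$ gives $W^{k,p}(\R^3) \hookrightarrow L^\infty(\R^3)$ for $k > 3/p$, and pulling back through $\Phi^{-1}$ gives \eqref{appsobint} with constant depending only on $K$. The boundary estimate \eqref{appsobbdy} is proved in exactly the same way, now on $\R^2$ and using $k > 2/p$ together with the boundary graph parametrization $x \mapsto (x, h(x))$, whose Sobolev-norm distortion is again controlled by $\|\nabla h\|_{L^\infty} \leq K$.

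There is no real obstacle here: every step is standard once the graph diffeomorphism $\Phi$ is set up. The only point requiring care is that all constants in the norm equivalences and in the extension operators can genuinely be taken to depend only on $K$, which follows because the bound $|\nabla h| \leq K$ controls the Jacobian of $\Phi$, its inverse, and the first-order bilipschitz constants needed for the reflection extension at the flat boundary $\{y = 0\}$.
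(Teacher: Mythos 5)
The paper does not prove this lemma directly; it cites the appendix of \cite{Christodoulou2000}, whose covering argument works directly with the geometry of $\D_t$ (bounded $\theta$, positive injectivity radius, Lipschitz graph) and produces Sobolev constants depending only on $K$. Your approach via the global graph diffeomorphism $\Phi(x,y)=(x,y+h(x))$ is different, and for the higher-order embeddings it has a real gap.

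Your argument for \eqref{sob1} is fine: $\det D\Phi \equiv 1$, only first derivatives of $\Phi$ enter the chain rule, and $|D\Phi|,|D\Phi^{-1}|\leq C(K)$, so the $L^6$--$\dot H^1$ comparison picks up only $K$-dependent constants. The problem is in the claimed $W^{k,p}$ norm equivalence
\begin{equation}
 C(K)^{-1}\|u\|_{W^{k,p}(\D_t)} \leq \|u\circ\Phi\|_{W^{k,p}(\R^3_-)} \leq C(K)\|u\|_{W^{k,p}(\D_t)}.
\end{equation}
Differentiating $u\circ\Phi$ to order $k$ via the Fa\`a di Bruno formula produces terms containing $\nabla^m\Phi$ for every $1\leq m\leq k$, and $\nabla^m\Phi$ contains $\nabla^m h$. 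Since $K$ only bounds $|\nabla h|$ and $|\nabla^2 h|$ (via $\theta\sim\nabla^2 h$), the equivalence constant for $k\geq 3$ genuinely depends on $\|\nabla^3 h\|_{L^\infty},\dots,\|\nabla^k h\|_{L^\infty}$, which are not controlled by $K$. The boundary estimate \eqref{appsobbdy} has the same defect through the parametrization $x\mapsto(x,h(x))$. So after you apply the flat-space Sobolev embedding and "pull back through $\Phi^{-1}$," you have not proved the stated constant dependence.

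The fix is to avoid flattening: the correct mechanism, which is what makes the lemma true as stated, is that a uniformly Lipschitz (graph) domain admits a Stein-type total extension operator $E:W^{k,p}(\D_t)\to W^{k,p}(\R^3)$ whose operator norm depends only on the Lipschitz bound $\|\nabla h\|_{L^\infty}\leq K$ (and on $k,p$), together with the flat $\R^3$ Sobolev embedding. In other words, the embedding is a Lipschitz-regularity fact about the domain, not a $C^k$-regularity fact, and your change of variables unnecessarily couples the domain regularity to $k$. Equivalently, one can follow \cite{Christodoulou2000} and prove the embedding by a covering/integral-representation argument that only sees $\theta$ and $\iota_0$.
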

These estimates all follow from the estimates in the appendix of
\cite{Christodoulou2000}. The estimates
there are all stated for the case of a bounded domain but it is
clear that the proof goes through for an unbounded domain.

We will also need interpolation estimates on $\pa \D_t$ and
$\D_t$:

\begin{lemma}
  Let $2 \leq p \leq s \leq q \leq \infty$ and $0\leq k \leq m$. Suppose that:
  \begin{equation}
   \frac{m}{s} = \frac{k}{p} + \frac{m-k}{q}
   \label{}
  \end{equation}
  If $\alpha$ is a $(0,r)$ tensor then with $a = \frac{k}{m}$,
 \begin{align}
  ||\nabla^k \alpha||_{L^s(\pa\D_t)}
  \leq C ||\alpha||^{1-a}_{L^q(\pa \D_t)}
  ||\nabla^m \alpha||^a_{L^p(\pa \D_t)}
  \label{interp1}
 \end{align}
 and if $|\theta| + \frac{1}{\iota_0} \leq K$, then:
 \begin{align}
  \sum_{j = 0}^k ||\na^j \alpha||_{L^s(\D_t)}
  \leq C(K) ||\alpha||^{1-a}_{L^q(\D_t)}
  \bigg( \sum_{j = 0}^m ||\na^j \alpha||_{L^p(\D_t)}\bigg)^{a}.
  \label{interp2}
 \end{align}
\end{lemma}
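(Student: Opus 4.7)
My plan is to reduce both inequalities to the classical Euclidean Gagliardo--Nirenberg inequality by introducing local coordinates adapted to the geometry. For the boundary estimate \eqref{interp1}, I would cover $\pa\D_t$ by charts of coordinate size comparable to $\min(1,1/K)$. Since $\pa\D_t$ is the graph of $h$ with $|\nabla h|\lesssim K$ and $\theta\sim \nabla^2 h/\sqrt{1+|\nabla h|^2}$, each chart straightens to a region of $\R^2$ with $C^{1,1}$ bounds depending only on $K$. In a chart, a covariant derivative $\tn$ of a $(0,r)$-tensor $\alpha$ differs from the coordinate derivative $\pa$ by terms involving Christoffel symbols $\Gamma$ with $|\pa^j\Gamma|\lesssim |\tn^{j-1}\theta|\lesssim C(K)$ up to order $m-1$. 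Thus, modulo lower-order errors controlled by the right-hand side (via an induction on $k$), $|\tn^k\alpha|$ and $|\pa^k \alpha|$ are pointwise comparable. One then applies the flat Gagliardo--Nirenberg inequality on $\R^2$ in each chart, raises to the power $s$, and sums using a bounded multiplicity partition of unity. The sharp scaling \eqref{interp1} survives because GN is scale invariant when $\tfrac{m}{s}=\tfrac{k}{p}+\tfrac{m-k}{q}$.

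For the interior estimate \eqref{interp2}, I would split $\D_t$ into a neighborhood $\{d<\iota_0/4\}$ of $\pa\D_t$ and the bulk $\{d\geq \iota_0/8\}$ (with $d$ the geodesic distance to the boundary), using a smooth partition of unity whose derivatives are bounded by $C(K)$. On the bulk piece, geodesic normal balls of radius $\sim 1/K$ are bi-Lipschitz to Euclidean balls with constants depending on $K$, so classical GN applies directly. Near $\pa\D_t$ I would introduce Fermi coordinates $(\overline{x},d)$ where $\overline{x}$ are boundary coordinates, reducing the geometry to a slab $\{0<d<\iota_0/4\}$ with a metric whose components are $C^{1,1}$ with bounds depending on $K$. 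On this slab the half-space GN inequality applies, and the comparison between covariant and Euclidean derivatives produces the sum on the right-hand side --- this is why the statement is phrased with $\sum_{j=0}^{m}\|\na^j\alpha\|_{L^p(\D_t)}$ rather than a single norm $\|\na^m\alpha\|_{L^p}$.

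The sum on the \emph{left} of \eqref{interp2} then comes for free from interpolating each individual $\|\na^j\alpha\|_{L^{s_j}}$ for $0\leq j\leq k$ (with the same $s$ thanks to the scaling relation), and combining. The main technical nuisance --- and the only real obstacle --- is bookkeeping the error terms produced when passing between $\na$ and the flat coordinate derivative $\pa$: these involve products of Christoffel symbols and lower-order covariant derivatives of $\alpha$, and one must verify at each step that such products can be absorbed into $C(K)\sum_{j<m}\|\na^j\alpha\|_{L^p}$ without disturbing the GN exponent $a=k/m$. A clean way to organize this is an induction on $m$: assuming \eqref{interp2} at levels $<m$, handle the top-order term by flat GN in charts and absorb every lower-order chart error by the inductive hypothesis applied with $(k,m)$ replaced by lower values. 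This is exactly the bounded-domain argument from Chapter 13 of \cite{Christodoulou2000}, and no step uses finiteness of the volume of $\D_t$, so it transfers verbatim to our unbounded graph domain.
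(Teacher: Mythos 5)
Your overall strategy --- passing to coordinate charts, straightening the boundary, comparing covariant to coordinate derivatives with Christoffel-symbol errors absorbed by an induction, then invoking the flat Gagliardo--Nirenberg inequality and a bounded-multiplicity partition of unity --- is exactly the argument the paper defers to the appendix of \cite{Christodoulou2000}, and it is the right one. (A minor point: that reference is a journal article, not a book, so you should cite its appendix on interpolation inequalities rather than ``Chapter 13.'')

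The one step that does not go through as written is the claim that the left-hand sum in \eqref{interp2} ``comes for free'' from interpolating each $\|\na^j\alpha\|_{L^{s_j}}$. The scaling constraint $\tfrac{m}{s}=\tfrac{k}{p}+\tfrac{m-k}{q}$ is tied to $k$: if you apply GN between derivative levels $0$ and $m$ at an intermediate level $j<k$ you get an exponent $s_j>s$, and since $\D_t$ has infinite measure there is no nesting of $L^r$ spaces, so an $L^{s_j}$ bound gives you nothing in $L^s$. The fix is to vary the top derivative order rather than the exponent: for each $j\le k$ apply GN between levels $0$ and $m_j := jm/k$, chosen precisely so that $j/m_j=k/m=a$; one then recovers the \emph{same} exponent $s$ and the \emph{same} power $a$, and since $m_j\le m$ the factor $\|\na^{m_j}\alpha\|_{L^p}$ is dominated by $\sum_{i\le m}\|\na^i\alpha\|_{L^p}$. (When $m_j$ is not an integer, interpolate between $\lfloor m_j\rfloor$ and $\lceil m_j\rceil$.) With that correction, the rest of your outline --- Fermi coordinates in the collar $\{d<\iota_0/4\}$, geodesic normal balls in the bulk, and the induction on $m$ that absorbs the chart errors --- is sound and reproduces the argument the paper relies on.
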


Finally, we will use the following interpolation inequality
which is Lemma 5.1 in \cite{Germain2012}:
\begin{lemma}
  \label{interpolationgms}
 If $2 \leq p \leq \infty, k \leq N_0 + \frac{2}{p} -1$, then:
 \begin{equation}
  ||\nabla^k u||_{L^p(\R^2)}
  \lesssim (1+t)^{-1 + \frac{2}{p} + \sigma} \big((1 + t) ||u(t)||_{W^{4,\infty}(\R^2)}
  + (1 + t)^{-\delta} ||u(t)||_{H^{N_0}(\R^2)}\big),
  \label{}
 \end{equation}
 where $\sigma = \sigma(k, p, N_0,\delta) = \frac{k}{N_0 + \frac{2}{p} -1}(\delta
 - \frac{2}{p} + 1)$.
\end{lemma}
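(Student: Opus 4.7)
The plan is to prove the lemma by a standard time-weighted Gagliardo--Nirenberg argument: I would combine the critical Sobolev embedding on $\R^2$ with a Littlewood--Paley interpolation, and then substitute the bootstrap hypotheses to track the power of $(1+t)$.

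First I would establish the ``high-derivative'' endpoint. Writing $k^* = N_0 + \tfrac{2}{p} - 1$, the condition $N_0 - k^* = 1 - \tfrac{2}{p} = \tfrac{n}{2} - \tfrac{n}{p}$ on $\R^n = \R^2$ is exactly the sharp Sobolev embedding threshold, so $H^{N_0}(\R^2) \hookrightarrow W^{k^*, p}(\R^2)$, giving $\|\nabla^{k^*} u\|_{L^p} \lesssim \|u\|_{H^{N_0}}$. This is precisely the upper bound $k \leq N_0 + \tfrac{2}{p} - 1$ assumed in the lemma. Combined with the second bootstrap bound, $\|\nabla^{k^*} u\|_{L^p} \lesssim (1+t)^{\delta} B$, where I write $A = (1+t)\|u\|_{W^{4,\infty}}$ and $B = (1+t)^{-\delta}\|u\|_{H^{N_0}}$ for brevity.

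Second I would decompose $u = \sum_{j} P_j u$ dyadically and apply Bernstein's inequality to each block in two ways: a ``low-frequency'' estimate $\|\nabla^k P_j u\|_{L^p} \lesssim 2^{j(k-2/p)}\|u\|_{L^\infty}$ and a ``high-frequency'' estimate $\|\nabla^k P_j u\|_{L^p} \lesssim 2^{j(k+1-2/p-N_0)}\|u\|_{H^{N_0}}$. Summing the first over $j \leq J$, the second over $j > J$, and balancing the two contributions at $J$ chosen by $2^{J(N_0 + 2/p - 1)} \sim \|u\|_{H^{N_0}}/\|u\|_{L^p\text{-endpoint}}$, one obtains an interpolation inequality of the form $\|\nabla^k u\|_{L^p} \lesssim X^{1-\alpha} Y^{\alpha}$ with $\alpha = k/k^*$, where $X$ is the $L^p$-endpoint and $Y$ is the $H^{N_0}$-endpoint. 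Substituting $Y \lesssim (1+t)^{\delta}(A+B)$ and the $L^p$-endpoint bound (discussed below) $X \lesssim (1+t)^{-1 + 2/p}(A+B)$, the exponent on $(1+t)$ becomes
\[
(1-\alpha)\!\left(-1 + \tfrac{2}{p}\right) + \alpha \delta \;=\; -1 + \tfrac{2}{p} + \alpha\!\left(\delta + 1 - \tfrac{2}{p}\right) \;=\; -1 + \tfrac{2}{p} + \sigma,
\]
which is exactly the claimed exponent.

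The main obstacle is the $L^p$-endpoint bound $\|u\|_{L^p} \lesssim (1+t)^{-1 + 2/p}(A+B)$. The naive route, interpolating $\|u\|_{L^p} \leq \|u\|_{L^\infty}^{1-2/p}\|u\|_{L^2}^{2/p}$ with $\|u\|_{L^\infty} \lesssim A/(1+t)$ and $\|u\|_{L^2} \lesssim \|u\|_{H^{N_0}} \lesssim (1+t)^{\delta} B$, produces an extra factor of $(1+t)^{2\delta/p}$ that is too large by precisely the amount $\sigma|_{k=0}$ would permit were it nonzero --- so the cleanest fix is to avoid claiming a separate endpoint bound at $k=0$ and instead run the Littlewood--Paley argument directly, absorbing the slack into $\sigma$ via the optimization of $J$. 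Concretely, one uses the low-frequency Bernstein bound only for frequencies $j \geq 0$ (estimating the very low frequencies by $L^2$ directly via $\|P_{\leq 0} u\|_{L^p} \lesssim \|u\|_{L^2} \lesssim \|u\|_{H^{N_0}}$), which removes the divergence of the low-frequency sum when $k < 2/p$ and yields the stated bound upon careful bookkeeping of the optimization.
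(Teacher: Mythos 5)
The paper gives no proof of this lemma --- it is cited verbatim as Lemma~5.1 of \cite{Germain2012} --- so you are constructing an argument from scratch, and it has a genuine gap. The decisive error is the claimed ``low-frequency'' estimate $\|\nabla^k P_j u\|_{L^p}\lesssim 2^{j(k-2/p)}\|u\|_{L^\infty}$: Bernstein raises the Lebesgue exponent, so in fact $\|P_ju\|_{L^\infty}\lesssim 2^{2j/p}\|P_ju\|_{L^p}$, the reverse of what you need. To control $\|P_ju\|_{L^p}$ from $L^\infty$ data one must interpolate, $\|P_ju\|_{L^p}\leq\|P_ju\|_{L^\infty}^{1-2/p}\|P_ju\|_{L^2}^{2/p}$, and the $L^2$ factor is only available through $\|u\|_{H^{N_0}}\lesssim(1+t)^\delta$. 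When one carries this out and optimizes the splitting level $J$ against the high-frequency tail (equivalently, interpolates $\|\nabla^k u\|_{L^p}\lesssim\|u\|_{L^p}^{1-k/k^*}\|\nabla^{k^*}u\|_{L^p}^{k/k^*}$ and then bounds each factor), the time exponent that actually comes out is $-1+\tfrac{2}{p}+\sigma+\tfrac{2\delta}{p}\bigl(1-\tfrac{k}{k^*}\bigr)$, strictly worse than the stated $-1+\tfrac{2}{p}+\sigma$ for every $2\le p<\infty$ and $k<k^*$. You correctly notice this slack at $k=0$, but the proposed remedy --- re-choosing $J$ and absorbing the slack into $\sigma$ --- does not remove it: the extra $(1+t)^{2\delta/p}$ is structural, reflecting that $\|u\|_{L^2}$ is only controlled via $\|u\|_{H^{N_0}}$, and no relocation of the scale split eliminates it. A purely Gagliardo--Nirenberg argument therefore only yields the lemma with $\sigma$ enlarged by $\tfrac{2\delta}{p}\bigl(1-\tfrac{k}{k^*}\bigr)$; to obtain the stated exponent one needs an additional input, for instance the weighted bootstrap bound $\|\Lambda^\iota x e^{it\Lambda^{1/2}}u\|_{L^2}\lesssim(1+t)^\delta$ from \eqref{bootstrap2} together with a dispersive estimate for $e^{-it\Lambda^{1/2}}$.

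Separately, the ``high-derivative endpoint'' embedding $H^{N_0}(\R^2)\hookrightarrow W^{k^*,p}(\R^2)$ you invoke fails at $p=\infty$, where $k^*=N_0-1$ and the assertion reduces to $\dot H^1(\R^2)\hookrightarrow L^\infty(\R^2)$, which is false by the usual logarithmic example. For $2\le p<\infty$ the embedding $\dot H^{1-2/p}(\R^2)\hookrightarrow L^p(\R^2)$ is valid and your endpoint claim is fine in that range.
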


\section{Schauder estimates}

The following result is well-known (see e.g. Theorem 7.3 in \cite{Agmon1959}):
\begin{prop}
  If $f = F$ on $\pa \D_t$ and $\pa \D_t$ is given by the graph
  of $h :\R^2 \to \R$, then for $k \geq 2$:
 \begin{equation}
  ||f||_{C^{k,\alpha}(\D_t)}
  \leq C(||h||_{C^{k,\alpha}(\R^2)}) \big( ||\Delta f||_{C^{k-2,\alpha}(\D_t)}
  + ||F||_{C^{k-2,\alpha}(\pa \D_t)}
  + ||f||_{L^\infty(\D_t)}\big).
  \label{schauder}
 \end{equation}
\end{prop}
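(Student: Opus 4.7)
The statement is essentially the classical Agmon--Douglis--Nirenberg Schauder estimate for the Dirichlet problem, with the only nonstandard feature being that $\D_t$ is unbounded. My plan is to reduce to the standard bounded-domain estimate via a partition of unity and a flattening diffeomorphism, and to keep track of the dependence of the constants on $h$.

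First, I would cover $\D_t$ by two families of balls: interior balls $B^{\mathrm{int}}_i$ at distance $\geq \tfrac{1}{4}\iota_0$ from $\pa\D_t$, and boundary-adapted coordinate charts $U_j$ on which $\pa\D_t$ can be straightened. Because $\pa\D_t = \{y = h(x)\}$ globally, a single global diffeomorphism $\Phi:\Omega \to \D_t$, $\Phi(x,z) = (x, z + h(x))$, already flattens the boundary, so in fact one does not need local charts for the boundary portion: on $\Omega = \R^2\times(-\infty,0]$ one obtains a uniformly elliptic operator $\widetilde \Delta$ with coefficients depending on up to two derivatives of $h$, and boundary values $\widetilde F = F\circ \Phi|_{\{z=0\}}$. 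The ellipticity constants of $\widetilde \Delta$ are controlled by $\|h\|_{C^1(\R^2)}$ and the $C^{k-2,\alpha}$ norms of its coefficients by $\|h\|_{C^{k,\alpha}(\R^2)}$.

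Next, I would invoke the standard local Schauder estimate for $\widetilde \Delta \widetilde f = \widetilde G$ in $\Omega$ with $\widetilde f = \widetilde F$ on $\{z=0\}$ (as in Theorem 7.3 of Agmon--Douglis--Nirenberg, or Gilbarg--Trudinger Theorem~6.6): on a unit-size ball $B \cap \Omega$ anchored at the boundary,
\begin{equation*}
\|\widetilde f\|_{C^{k,\alpha}(\tfrac{1}{2}B\cap\Omega)}
\leq C\bigl(\|h\|_{C^{k,\alpha}}\bigr)\bigl(\|\widetilde G\|_{C^{k-2,\alpha}(B\cap\Omega)} + \|\widetilde F\|_{C^{k-2,\alpha}(B\cap\{z=0\})} + \|\widetilde f\|_{L^\infty(B\cap\Omega)}\bigr).
\end{equation*}
For the interior balls, the purely interior Schauder estimate (Theorem~6.2 of Gilbarg--Trudinger) gives the same bound without the boundary term. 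The constants in both local estimates depend only on $\|h\|_{C^{k,\alpha}}$, the ellipticity constants, and the radii of the balls, hence are uniform across the cover.

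Finally, I would sum over the cover using a Besicovitch-type finite-overlap property to pass from the local estimates to the global bound on $\D_t$. Pulling back by $\Phi^{-1}$ converts norms of $\widetilde f,\widetilde G,\widetilde F$ into norms of $f,\Delta f, F$ at the cost of a constant depending on $\|h\|_{C^{k,\alpha}(\R^2)}$ (via the chain rule and Faà di Bruno for H\"older norms). This yields the stated inequality. The main obstacle, and the only point where unboundedness of $\D_t$ matters, is arranging the covering so that the finite-overlap constants and the $C^{k,\alpha}$-norms of the coordinate change are uniform in position; this is precisely what the global graph structure of $\pa\D_t$ buys us, so no genuinely new analytic input beyond the classical local Schauder theory is required.
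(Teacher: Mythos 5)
The paper does not prove this proposition; it simply cites Theorem 7.3 of Agmon--Douglis--Nirenberg, so there is no internal proof to compare against. Your reduction strategy (global flattening via $\Phi(x,z)=(x,z+h(x))$, uniform local Schauder on a finite-overlap cover, then pulling back with the chain rule) is the standard and correct way to pass from the half-space/local case to a global estimate on $\D_t$, and your bookkeeping of how the ellipticity constants and coefficient regularity depend on $\|h\|_{C^1}$ and $\|h\|_{C^{k,\alpha}}$ respectively is right.

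There is, however, a real gap in the step where you ``invoke the standard local Schauder estimate.'' The estimate you write down, with $\|\widetilde F\|_{C^{k-2,\alpha}}$ on the right, is not what ADN Theorem 7.3 or Gilbarg--Trudinger Theorem 6.6 say: for the Dirichlet problem the boundary datum must carry the \emph{same} H\"older regularity as the solution, i.e.\ $\|\widetilde F\|_{C^{k,\alpha}}$, not $\|\widetilde F\|_{C^{k-2,\alpha}}$. With the exponent $k-2$ the local (and hence the global) inequality is simply false: take $\D_t$ to be the flat half-space and $f(x,y)=\cos(\xi\cdot x)\,e^{|\xi|y}$, which is harmonic and bounded by $1$, yet $\|f\|_{C^{k,\alpha}}\sim|\xi|^{k+\alpha}$ while $\|F\|_{C^{k-2,\alpha}}+\|f\|_{L^\infty}\sim|\xi|^{k-2+\alpha}+1$, so no constant works as $|\xi|\to\infty$. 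You have faithfully reproduced what is almost certainly a typo in the paper's Proposition (it should read $\|F\|_{C^{k,\alpha}(\pa\D_t)}$), but because the misstated exponent also appears inside your quoted local estimate, the proof as written relies on a nonexistent theorem. Flagging the discrepancy and running the argument with $C^{k,\alpha}$ on the boundary trace would close the gap.
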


We will also need standard $L^p$ estimates
 (see e.g. Theorem 15.2 in \cite{Agmon1959}):
\begin{prop}
  If $f \in W^{2,p}(\D_t)$, $f = F$ on $\pa \D_t$
  and
  $\pa \D_t$ is given by the graph
  of $h :\R^2 \to \R$, , then:
  \begin{equation}
   ||f||_{W^{k,p}(\D_t)}
   \leq C( ||h||_{C^k(\R^2)} ) \big( ||\Delta f||_{W^{k-2,p}(\D_t)}
   + ||F||_{W^{k - 1/p}(\pa \D_t)} + ||f||_{L^p(\D_t)}\big).
   \label{lpschauder}
  \end{equation}
\end{prop}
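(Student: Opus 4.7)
The statement is the classical $L^p$ estimate of Agmon--Douglis--Nirenberg; essentially everything beyond a direct citation to Theorem 15.2 of \cite{Agmon1959} is checking that (a) their result, typically stated on bounded smooth domains, adapts to $\D_t$, and (b) the constant depends only on $||h||_{C^k(\R^2)}$ and not on $\mathrm{Vol}(\D_t) = \infty$. My plan proceeds in three steps.

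First, I would flatten the boundary via the diffeomorphism $\Phi(x,y) = (x, y + h(x))$, which maps the lower half-space $\Omega = \R^2 \times (-\infty, 0]$ onto $\D_t$ with Jacobian and $C^k$ derivatives uniformly bounded in terms of $||h||_{C^k(\R^2)}$. Under pullback, the equation $\Delta f = g$ becomes $\widetilde{L}\widetilde{f} = \widetilde{g}$ on $\Omega$ with $\widetilde{f}|_{\pa \Omega} = \widetilde{F}$, where $\widetilde{L}$ is a uniformly elliptic second-order operator with coefficients in $C^{k-2}(\overline{\Omega})$ whose norms are controlled by $||h||_{C^k(\R^2)}$. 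Since $W^{k,p}$ norms on $\D_t$ and $\Omega$ are then equivalent (with the same dependence), it suffices to establish the analogous estimate for $\widetilde{L}$ on $\Omega$.

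Second, I would cover $\Omega$ by unit balls $\{B_1(z)\}_{z \in \Lambda}$ centered on a lattice $\Lambda \subset \Omega$, so that the doubled cover $\{B_2(z)\}$ has finite (dimension-only) overlap. The local form of Theorem 15.2 of \cite{Agmon1959}, combined with a standard cutoff applied to a ball of radius $2$, yields
\begin{equation}
  ||\widetilde{f}||_{W^{k,p}(B_1(z))} \leq C \Bigl( ||\widetilde{g}||_{W^{k-2,p}(B_2(z))} + ||\widetilde{F}||_{W^{k-1/p,p}(B_2(z) \cap \pa \Omega)} + ||\widetilde{f}||_{L^p(B_2(z))} \Bigr),
\end{equation}
with $C = C(||h||_{C^k(\R^2)})$ independent of $z$, since the coefficients of $\widetilde{L}$ depend only on $h$ and are translation-invariant in $x \in \R^2$, while the $y$-direction is already truncated at unit scale. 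Raising to the $p$-th power, summing over $z \in \Lambda$, and using the bounded-overlap property to recover the global Sobolev norms, one obtains the full estimate on $\Omega$.

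The only subtle point is that one must verify the boundary-trace term is genuinely $W^{k-1/p,p}$ rather than its real-interpolation variant; this is the standard trace characterization on the half-space, with constants again governed by $||h||_{C^k(\R^2)}$. No real obstacle arises here, since translation invariance in $x$ converts the unboundedness of $\D_t$ into a summation rather than a blow-up of constants.
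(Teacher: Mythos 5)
The paper does not give a proof of this proposition at all; it simply states the result and cites Theorem 15.2 of Agmon--Douglis--Nirenberg, so there is no paper argument to compare against line by line. Your outline supplies exactly the two pieces that the citation leaves to the reader: (i) the adaptation from a bounded smooth domain to the unbounded graph domain $\D_t$, via boundary-flattening by $\Phi(x,y)=(x,y+h(x))$, and (ii) the passage from a local estimate to a global one by covering the flattened half-space with unit balls of bounded overlap and summing $p$-th powers. Both steps are standard and correct, and this is indeed the natural route; I believe it matches what the authors had in mind when they wrote $C(\|h\|_{C^k(\R^2)})$ for the constant.

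One small imprecision worth fixing: the coefficients of the pulled-back operator $\widetilde L$ are built from $\nabla h(x), \nabla^2 h(x),\dots$, and these are \emph{not} literally translation-invariant in $x$. What makes the local constant independent of the center $z$ of the ball is that the coefficient $C^{k-2}$ norms on each $B_2(z)$ are uniformly controlled by the global quantity $\|h\|_{C^k(\R^2)}$, together with the fact that the flattened boundary is exactly $\{y=0\}$, so the geometry near the boundary is the same on every ball. Equivalently, if you translate $h$, you translate the coefficients and the Hölder/Sobolev seminorms are preserved; but ``translation-invariant coefficients'' as stated is misleading. You should also note that the norm $\|F\|_{W^{k-1/p}(\pa\D_t)}$ in the paper's statement is a typographical shorthand for the fractional Sobolev (Besov) trace space $W^{k-1/p,p}(\pa\D_t)$, which your proposal correctly uses; the bounded-overlap summation is compatible with this space since the $W^{s,p}$ norm on $\R^2$ can be localized with an error controlled by lower-order terms. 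With these minor clarifications the argument is complete.
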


\section{Estimates from harmonic analysis}

We collect a few results that we will use frequently.

\begin{lemma}
 \begin{itemize}
  \item If $1 < p \leq q < \infty$ and
 $\alpha = \frac{2}{p} - \frac{2}{q}$ then:
 \begin{align}
  ||\Lambda^{-\alpha} f||_{L^q} \lesssim ||f||_{L^p}
  \label{fracint}
 \end{align}
 \item If $1 < p < \infty$ and $s \geq 0$, then for any
 $1 < p_1,p_2,q_1,q_2 < \infty$ with $1/p_1 + 1/p_2 = 1/q_1 + 1/q_2 = 1/p$,
\begin{align}
 ||\Lambda^s (fg) ||_p \lesssim ||\Lambda^s f||_{p_1}
 ||g||_{p_2} +
 ||f||_{q_1} ||\Lambda^s g||_{q_2}.
 \label{fracprod}
\end{align}
 \end{itemize}
\end{lemma}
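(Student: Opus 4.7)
The plan is to prove these two classical inequalities using standard harmonic analysis techniques, since both are well-known textbook results. Most of the work is in invoking the right machinery rather than inventing new arguments.

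For the fractional integration estimate \eqref{fracint}, the strategy is to identify $\Lambda^{-\alpha}$ (for $0 < \alpha < 2$) on $\mathbb{R}^2$ with convolution by the Riesz potential kernel $c_\alpha |x|^{-(2-\alpha)}$, reducing the bound to the classical Hardy--Littlewood--Sobolev inequality for this kernel. For this, I would split $\int |x-y|^{-(2-\alpha)} |f(y)|\,dy$ into the region $|x-y| \leq R$, controlled by $R^\alpha$ times the Hardy--Littlewood maximal function $Mf(x)$, and the region $|x-y| > R$, controlled by Hölder's inequality in terms of $\|f\|_{L^p}$ and $R^{-\alpha(p-1)/p \cdot p/p} = R^{\alpha - 2/p}$. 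Optimizing over $R$ gives the pointwise bound $|I_\alpha f(x)| \lesssim (Mf(x))^{p/q} \|f\|_{L^p}^{1 - p/q}$, and $L^p \to L^p$ boundedness of $M$ yields the claim. The scaling condition $\alpha = 2/p - 2/q$ is exactly what makes the optimization dimensionally consistent.

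For the fractional product rule \eqref{fracprod}, the plan is to use Bony's paraproduct decomposition $fg = T_f g + T_g f + \Pi(f,g)$, where $T_f g = \sum_k S_{k-3} f \cdot P_k g$ is the piece with $f$ at low frequency and $g$ at high frequency, and $\Pi(f,g)$ is the resonant part. For the low-high paraproduct $T_f g$, I would apply a Littlewood--Paley square function argument: $\Lambda^s T_f g \sim \sum_k 2^{ks} S_{k-3} f \cdot P_k g$ is frequency-localized near $2^k$, so by the square function characterization of $L^p$ (valid for $1 < p < \infty$ via the Mikhlin multiplier theorem), its $L^p$ norm is bounded by $\bigl\|\bigl(\sum_k |S_{k-3}f|^2 |2^{ks} P_k g|^2\bigr)^{1/2}\bigr\|_{L^p} \lesssim \|Mf\|_{L^{q_1}} \|(\sum_k |2^{ks} P_k g|^2)^{1/2}\|_{L^{q_2}} \lesssim \|f\|_{L^{q_1}} \|\Lambda^s g\|_{L^{q_2}}$. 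The symmetric paraproduct $T_g f$ contributes $\|\Lambda^s f\|_{L^{p_1}} \|g\|_{L^{p_2}}$ by the same argument with roles swapped, and the resonant term $\Pi(f,g)$ can be absorbed into either bound since there the factors have comparable frequencies and $\Lambda^s$ can be placed freely.

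The only real obstruction is the endpoint exclusion $p \in \{1, \infty\}$, which is forced by the use of Mikhlin's multiplier theorem and the maximal function, both of which fail at these endpoints; this is why the statement restricts to $1 < p < \infty$ (and analogously $1 < p \leq q < \infty$ for the fractional integration estimate, with $q = \infty$ also excluded). Since both inequalities are entirely standard, I would simply cite references such as Grafakos's \emph{Modern Fourier Analysis} or the original Kato--Ponce paper rather than reproduce the full proofs here.
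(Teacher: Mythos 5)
Your proposal is correct and ultimately lands in the same place as the paper, which offers no proof at all and simply cites Tao (2006) for the fractional integration inequality and Christ--Weinstein (1991) for the fractional Leibniz rule. Your sketches via Hedberg's maximal-function trick and Bony's paraproduct decomposition are the standard arguments one finds in those references, so this is consistent.
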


The estimate \eqref{fracint} is known as the Hardy-Littlewood fractional
integration lemma; for a proof, see \cite{Tao2006}. For a proof of \eqref{fracprod},
see \cite{Christ1991}.

We will also use the following estimate for the Dirichlet-to-Neumann map,
which is Proposition 2.2 from \cite{Germain2012}. As mentioned there, this is not
optimal (both in terms of the regularity assumed of $h$ and the
number of derivatives of $\varphi$ on the right-hand side) but this
will suffice for our purposes.
\begin{prop}
 If $\varphi: \pa\Omega \to \R$ where $\pa \Omega$ is the graph of a function
 $h$ with $h \in W^{4,\infty}(\R^2)$ then:
 \begin{equation}
  ||\N \varphi||_{W^{2,\infty}(\R^2)} \lesssim
  ||\nabla \varphi||_{W^{3,\infty}(\R^2)} +
  ||\Lambda^{1/2} \varphi||_{L^\infty(\R^2)},
  \label{dnmap}
 \end{equation}
 with implicit constant depending on $||h||_{W^{4,\infty}(\R^2)}$.
\end{prop}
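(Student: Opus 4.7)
The plan is to reduce the estimate to the flat case and then handle error terms via the explicit expansion of $G(h)$ from \eqref{formal1}-\eqref{formal2}. Since $\N \varphi = (1 + |\nabla h|^2)^{-1/2} G(h)\varphi$, the Leibniz rule (where the derivatives of $(1+|\nabla h|^2)^{-1/2}$ are controlled by $\|h\|_{W^{4,\infty}}$) reduces the problem to bounding $\|G(h)\varphi\|_{W^{2,\infty}(\R^2)}$. Writing $G(h)\varphi = \Lambda\varphi + G_2(h)\varphi + G_3(h)\varphi + G_4(h)\varphi$, the main contribution is the flat operator $\Lambda$, and the remaining terms should be lower order thanks to the powers of $h$ they contain.

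For the flat piece, the key tool is a Littlewood-Paley decomposition $\varphi = \sum_k P_k \varphi$. The multiplier $|\xi|$ associated with $\Lambda$ is not bounded by a multiple of $|\xi|$ in a way that preserves $L^\infty$, so one must split: for high frequencies $k > 0$, Bernstein's inequality gives
\begin{equation}
\|P_k \Lambda \varphi\|_{L^\infty} \lesssim 2^k \|P_k \varphi\|_{L^\infty} \lesssim 2^{-2k} \|\nabla^3 \varphi\|_{L^\infty},
\end{equation}
which is summable. For low frequencies $k \leq 0$, factor $2^k = 2^{k/2}\cdot 2^{k/2}$ to get $\|P_k\Lambda\varphi\|_{L^\infty} \lesssim 2^{k/2}\|\Lambda^{1/2}\varphi\|_{L^\infty}$, also summable. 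To obtain $\|\nabla^2 \Lambda \varphi\|_{L^\infty}$ one repeats the argument on $\nabla^2 \varphi$, which accounts for the appearance of $\|\nabla \varphi\|_{W^{3,\infty}}$ on the right. The same scheme handles $G_2(h)\varphi = -\nabla \cdot(h\nabla\varphi) + \Lambda(h\Lambda\varphi)$ and the analogous $G_3$ terms: the Leibniz rule plus the high/low Littlewood-Paley splitting applied to each factor of $\Lambda$ yields a bound by $\|h\|_{W^{4,\infty}}\|\nabla\varphi\|_{W^{3,\infty}} + \|h\|_{W^{4,\infty}}\|\Lambda^{1/2}\varphi\|_{L^\infty}$ (using that $\Lambda$ acting on an $h$-factor supported only at high frequencies costs one derivative of $h$).

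The main obstacle is the remainder $G_4(h)\varphi$, which does not have a closed formula but is defined implicitly through the harmonic extension problem. Here one flattens the boundary via the change of variables $(x,y)\mapsto (x, y - h(x))$, turning the harmonic extension $\psi$ into the solution of a second-order elliptic equation on the half-space with coefficients depending on $\nabla h$, and then invokes $L^\infty$ Schauder estimates such as \eqref{schauder} (with two derivatives on the coefficients from $\|h\|_{W^{4,\infty}}$) to control $\psi$ and its first two normal derivatives on the boundary. Because $G_4$ vanishes to cubic order in $h$, the estimates coming out carry at least three factors of $h$ or $\nabla h$ and hence a constant depending only on $\|h\|_{W^{4,\infty}}$; assembling everything with the reference to Appendix F of \cite{Germain2012} for the quantitative $G_4$ bound yields \eqref{dnmap}. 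The bookkeeping is routine but must be arranged carefully so that no more than $W^{4,\infty}$ regularity of $h$ and $W^{3,\infty}$ regularity of $\nabla\varphi$ (beyond $\|\Lambda^{1/2}\varphi\|_{L^\infty}$) are used.
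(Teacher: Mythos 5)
Note first that the paper does not prove this proposition: it is quoted verbatim as Proposition~2.2 of \cite{Germain2012} and used as a black box, so there is no in-paper argument with which to compare your proposal.

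Read on its own terms, your sketch is plausible for the explicit pieces but has a real gap at $G_4$. The Littlewood--Paley/Bernstein treatment of the flat $\Lambda$ is correct, and the same tools can handle $G_2,G_3$, but one caution: splitting each factor of $\Lambda$ into high/low \emph{independently}, as literally written, risks charging $\varphi$ with up to six derivatives (two per $\Lambda$ at high frequency, plus two from the $\nabla^2$ of $W^{2,\infty}$), while $\nabla\varphi\in W^{3,\infty}$ supplies only four. Closing the count requires a paraproduct-type split of $h\Lambda\varphi$ so that the outer $\Lambda\nabla^2$ falls on whichever factor actually carries the high frequency; your remark that ``$\Lambda$ acting on an $h$-factor supported at high frequencies costs a derivative of $h$'' suggests you intend something of this kind, but it needs to be made explicit to see that the budget is not exceeded.

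The step that would actually fail is $G_4$. The Schauder estimate \eqref{schauder} carries $\|f\|_{L^\infty(\D_t)}$ on its right-hand side, and under the hypotheses of the proposition neither $\|\varphi\|_{L^\infty}$ nor $\|\psi\|_{L^\infty}$ is controlled --- only $\nabla\varphi\in W^{3,\infty}$ and $\Lambda^{1/2}\varphi\in L^\infty$ are --- so \eqref{schauder} is not applicable, and flattening the boundary does not fix this since the obstruction is the undamped zero mode of $\psi$, not the geometry. Nor can interior H\"older/Schauder theory produce the $\|\Lambda^{1/2}\varphi\|_{L^\infty}$ term, which reflects a low-frequency cancellation built into the nonlocal operator rather than an elliptic regularity gain. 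Finally, Appendix~F of \cite{Germain2012} furnishes $L^2$-based Sobolev bounds for $G_4$, not the $W^{2,\infty}$ bound required here, so citing it does not close the argument; some genuinely symbol-level or paralinearization analysis of $G(h)$ is needed for this step, which is what the cited proof in \cite{Germain2012} supplies.
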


\section{Elliptic systems}
\label{ellsyssec}

We follow the approach of \cite{Cheng2017} and \cite{Choi2013a}. First,
we define the space $Y$ to be closure of $C^\infty(\D_t)$ with respect to the
norm:
\begin{equation}
 ||u||_Y \equiv ||u||_{L^6(\D_t)} + ||\na u ||_{L^2(\D_t)}.
 \label{}
\end{equation}
We note that by the Sobolev inequality \eqref{sob1},
$Y$ is actually a Hilbert space with inner product:
\begin{equation}
 (u, v)_Y \equiv \int_{\D_t} \na u \cdot \na v.
 \label{}
\end{equation}

The goal of this section is to construct a solution
$\beta$ to the system:
\begin{align}
 \div \beta & = 0,&&\textrm{ in } \D_t,\label{div1}\\
 \curl \beta &= \alpha, && \textrm{ in } \D_t\label{div2},\\
 \beta \cdot N &= 0 && \textrm{ on } \pa \D_t,
 \label{div3}
\end{align}
where $\alpha\in L^{6/5}(\D_t)$.
Suppose for the moment that the following system has a unique weak
solution $\beta'$:
\begin{align}
 \Delta \beta' &= \alpha && \textrm{ in } \D_t,\label{betap1}\\
 \gamma^i_j \beta_i' &=0 && \textrm{ on } \pa \D_t,\label{betap2}\\
 \na_N \beta_N' &= - H \beta_N', && \textrm{ on } \pa \D_t,
 \label{betap3}
\end{align}
where $\na_N =N^j\na_j$, $H$ is the mean curvature of $\pa \D_t,
H = \tr \theta$ and $\beta_N' = N^i\beta_i'$. We now recall that by the
definition of the second fundamental form we have
$\div \beta'|_{\pa \D_t} =
\tr \na \beta'|_{\pa \D_t} = \div_{\pa \D_t} (\Pi \beta') +  H \beta_N$.
Taking the divergence of \eqref{betap1} and applying this formula
shows that $\beta'$ satisfies:
\begin{align}
 \Delta \div \beta' &= 0 && \textrm{ in } \D_t,\\
 \div \beta' & =0 && \textrm{ on } \pa \D_t,
 \label{}
\end{align}
so that $\div \beta' = 0$
in $\D_t$. In particular this implies that
$\Delta \beta' = \curl^2 \beta'$. If we then set $\beta = \curl \beta'$,
it follows that $\beta$ satisfies \eqref{div1} and \eqref{div2}. To
see that $\beta$ satisfies \eqref{div3}, we just note that $N\cdot \curl \beta$
only inolves tangential derivatives of $\gamma\cdot \beta'$ and thus
this vanishes if \eqref{betap2} holds. We also remark that this choice of $\beta$
is actually unique; if $\beta_1, \beta_2$ satisfy \eqref{div1}-\eqref{div3}
it follows that $\beta_1 - \beta_2 = \na \phi$ for some harmonic function $\phi$
which satisfies a Neumann problem with zero boundary data and is thus a constant.

We now prove that \eqref{betap1}-\eqref{betap3} has a unique weak solution:
\begin{prop}
  \label{divcurllem}
  Let $\alpha \in L^{6/5}(\D_t)$ and suppose that $H$, the mean curvature,
  satisfies:
  \begin{equation}
   ||H||_{L^3(\pa \D_t)} + ||\nabla H||_{L^{3/2}(\pa \D_t)}
   \ll 1.
   \label{smallcurv}
  \end{equation}
 Then the problem
  \eqref{betap1}-\eqref{betap3} has a unique solution $\beta' \in H^1(\D_t)$.
  Furthermore, with $\beta = \curl \beta'$, under the above hypotheses we have:
  \begin{equation}
   |\beta (z)| \lesssim \frac{1}{(1 + |z|)^2} \int_{\D_t}
   (1 + |z'|) |\alpha(z')|\, dz'
   \label{decay}
  \end{equation}
\end{prop}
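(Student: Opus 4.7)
The plan is to prove existence and uniqueness via a Lax--Milgram argument on an appropriate subspace of $Y$, and then obtain the pointwise decay by constructing a Green's matrix for the mixed boundary value problem following the Grüter--Widman/Hofmann--Kim approach.

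For existence, I would set $V = \{\phi \in Y : \gamma^i_j \phi_i = 0 \text{ on } \pa \D_t\}$ and test the interior equation $\Delta \beta' = \alpha$ against $\phi \in V$. After integration by parts, using that the tangential part of $\phi$ vanishes on $\pa \D_t$ (so that on the boundary $\phi^i \nabla_N \beta'_i = \phi_N \nabla_N \beta'_N$, with the help of the identity $\nabla_N N = 0$ for the geodesic-normal extension of $N$), and substituting the Neumann-type condition \eqref{betap3}, I obtain the bilinear form
$$a(\phi, \beta') = \int_{\D_t} \nabla \phi : \nabla \beta'\, dz + \int_{\pa \D_t} H \phi_N \beta'_N\, dS, \qquad \ell(\phi) = -\int_{\D_t} \phi \cdot \alpha\, dz.$$
Continuity of $\ell$ follows from Hölder and \eqref{sob1}: $|\ell(\phi)| \lesssim \|\alpha\|_{L^{6/5}}\|\phi\|_{L^6} \lesssim \|\alpha\|_{L^{6/5}}\|\nabla\phi\|_{L^2}$. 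For coercivity, I would apply Hölder with exponents $(3,3,3)$ to the boundary term and control $\|\phi_N\|_{L^3(\pa\D_t)}$ via the trace inequality \eqref{trace} and \eqref{sob1}, so that $|\int H\phi_N^2| \lesssim \|H\|_{L^3(\pa\D_t)}\|\nabla\phi\|_{L^2}^2$; the smallness assumption \eqref{smallcurv} then makes $a$ coercive on $V$, and Lax--Milgram produces a unique $\beta' \in V$. Uniqueness of $\beta = \curl \beta'$ in the full system \eqref{div1}--\eqref{div3} then follows as discussed in the preceding paragraph of the excerpt.

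For the pointwise decay, I would construct a Green's matrix $G^{ij}(z,z')$ for the system \eqref{betap1}--\eqref{betap3}, obtaining the bounds $|G(z,z')| \lesssim |z-z'|^{-1}$ and $|\nabla_z G(z,z')| \lesssim |z-z'|^{-2}$ that match the Newton potential in $\R^3$. In the flat case this is explicit via the reflection formulas for $G_D, G_N$ recalled in the introduction; in the curved case I would treat the problem as a perturbation of the half-space under the smallness hypothesis \eqref{smallcurv}, using an energy estimate together with $L^p$-integrability of the right-hand side to show convergence of the Neumann series defining $G$, as in Hofmann--Kim. Writing $\beta^i(z) = \varepsilon^{ijk}\partial_j\int G_{k\ell}(z,z')\alpha^\ell(z')\,dz' = \int K^{i\ell}(z,z')\alpha_\ell(z')\,dz'$ with $|K(z,z')| \lesssim |z-z'|^{-2}$, I would split the integral into $\{|z'| \leq |z|/2\}$ (where $|z-z'| \geq |z|/2$ gives $|K| \lesssim |z|^{-2}$) and $\{|z'| > |z|/2\}$ (where $1 \leq 2(1+|z'|)/(1+|z|)$). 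Combined with the Dirichlet-type cancellation in the tangential Green's function components (which gives improved $|z'|/|z|^2$ decay and controls the near-diagonal singularity in the second region), this yields the bound
$$|\beta(z)| \lesssim \frac{1}{(1+|z|)^2}\int_{\D_t} (1+|z'|)|\alpha(z')|\,dz'.$$

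The main obstacle will be the construction of the Green's matrix for the curved mixed boundary value problem. The bilinear form $a$ is a perturbation of the flat-space one by the mean curvature term $\int H\phi_N\beta'_N$, and controlling the pointwise behavior of the resulting Green's matrix near the diagonal and at infinity requires that the perturbation from the half-space be small in the appropriate norms---this is exactly where $\|H\|_{L^3} + \|\nabla H\|_{L^{3/2}} \ll 1$ enters, both to invert the perturbed operator and to ensure the near-diagonal bounds survive. The pointwise splitting at the end is then routine given the kernel estimates.
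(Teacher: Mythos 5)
Your overall architecture — Lax--Milgram on the space of vector fields with vanishing tangential trace, then a Green's-matrix construction in the style of Gr\"uter--Widman and Hofmann--Kim to get pointwise kernel bounds, then a dyadic split of the convolution — is the same as the paper's. The decay portion of your sketch is fine in spirit. The coercivity step, however, contains a genuine gap.

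You claim to bound the boundary term by
\[
\Big|\int_{\pa \D_t} H\,\phi_N^2\Big| \leq ||H||_{L^3(\pa \D_t)}\,||\phi_N||_{L^3(\pa \D_t)}^2
\quad\text{and then}\quad
||\phi_N||_{L^3(\pa \D_t)} \lesssim ||\nabla\phi||_{L^2(\D_t)},
\]
invoking \eqref{trace} and \eqref{sob1}. The second inequality cannot hold uniformly: under the rescaling $\phi \mapsto \phi(\lambda \cdot)$, the left side scales like $\lambda^{-2/3}$ while $||\phi||_Y = ||\phi||_{L^6(\D_t)} + ||\nabla\phi||_{L^2(\D_t)}$ scales like $\lambda^{-1/2}$, so there is no scale-invariant trace estimate of this form. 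Note also that \eqref{trace} with $p=3$ requires $||\nabla\phi||_{L^3(\D_t)}$ and $||\phi||_{L^3(\D_t)}$ on the right, neither of which is controlled by membership in $Y = L^6 \cap \dot H^1$ on an unbounded domain. A symptom of the gap is that your argument never uses the hypothesis $||\nabla H||_{L^{3/2}(\pa\D_t)} \ll 1$ from \eqref{smallcurv}, which the statement of the proposition does assume.

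The paper avoids a trace estimate entirely. Introducing a cutoff $\chi$ supported in a collar neighborhood of $\pa\D_t$ and setting $\tilde H = \chi H$, it writes
\[
\int_{\pa\D_t} \tilde H\,|w_N|^2 = \int_{\D_t} \nabla_k\big(N^k \tilde H\,|w|^2\big)
= \int_{\D_t} \nabla_k(N^k\tilde H)\,|w|^2 + 2\int_{\D_t}\tilde H\, N^k\nabla_k w \cdot w,
\]
and then applies H\"older in the \emph{interior}: the first term is bounded by $||\nabla\tilde H||_{L^{3/2}(\D_t)}||w||_{L^6(\D_t)}^2$ and the second by $||\tilde H||_{L^3(\D_t)}||\nabla w||_{L^2(\D_t)}||w||_{L^6(\D_t)}$. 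Both are compatible with the scaling of $||\cdot||_Y$, and this is where the full hypothesis \eqref{smallcurv} enters — the gradient term $||\nabla H||_{L^{3/2}}$ controls $||\nabla\tilde H||_{L^{3/2}(\D_t)}$. You should replace the trace-and-H\"older argument by this Stokes-theorem argument; without it the bilinear form cannot be shown coercive on $Y_0$ with constants independent of the domain's scale.
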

\begin{proof}
  We let $C^\infty_{\textrm{tan}}(\D_t)$ denote the collection of smooth
  one-forms $\alpha$ on $\D_t$ so that $\gamma_i^j\alpha_j$ is compactly supported
  in $\D_t$, and
  we let $Y_0$ denote the closure of $C^\infty_{\textrm{tan}}(\D_t)$
  with respect to the norm $||u||_Y = ||u||_{L^6(\D_t)} + ||\na u||_{L^2(\D_t)}$.
  We define the bilinear form:
 \begin{equation}
  B[u, \varphi] \equiv \int_{\D_t} \delta^{ik}\delta^{j\ell}
   (\na_i u_j)( \na_k \varphi_\ell) + \int_{\pa \D_t} H u_N \varphi_N,
  \label{}
 \end{equation}
 for $u, \varphi \in Y_0$ and with $w_N = N^iw_i$. We want to find $u \in Y_0$ so that:
 \begin{equation}
  B[u,\varphi] = \int_{\D_t} \delta^{ij} \alpha_i\varphi_j,
  \label{}
 \end{equation}
 for all $\varphi \in Y_0$. The map $\varphi \mapsto \int_{\D_t} \alpha \cdot
 \varphi$ is a continuous linear map on $Y$ since $\alpha \in L^{6/5}(\D_t)$,
 and so by the Lax-Milgram theorem if suffices to prove that $B$ is bounded
 and coercive.

 Fix a smooth
 cutoff function $\chi = \chi(x,y)$ so that $\chi \equiv 1$ when $|y - h(x)| \leq
 \rho$ and $\chi \equiv 0$ when $|y - h(x)| \geq 2\rho$ for some fixed
 $\rho > 0$.
Let $\tilde{H} = \chi H$, and note that by Stokes' theorem we have:
 \begin{multline}
  ||\sqrt{\tilde{H}} w||_{L^2(\pa \D_t)}^2 =
  \int_{\D_t} \na_k \Big( N^k \tilde{H} |w|^2\Big)\, dxdy
  = \int_{\D_t} \na_k \Big(N^k \tilde{H}\Big) |w|^2\,dxdy
  + 2\int_{\D_t} \tilde{H} N^k \na_k w\cdot w\, dxdy
  \\
  \lesssim ||\nabla \tilde{H}||_{L^{3/2}(\D_t)}
  || w||_{L^6(\D_t)}^2 +
  ||\tilde{H}||_{L^3(\D_t)} ||\nabla w||_{L^2(\D_t)} ||w||_{L^6(\D_t)}
  \lesssim \epsilon^* || w ||_{Y}^2,
  \label{}
 \end{multline}
 where we used that
  $||\nabla \tilde{H}||_{L^{3/2}(\D_t)} \lesssim
 ||\nabla H||_{L^{3/2}(\D_t)}$ and $||\tilde{H}||_{L^3(\D_t)} \lesssim
 ||H||_{L^3(\D_t)}$. In particular this shows that the bilinear form
 $B$ is bounded on $Y$ and also, for sufficiently small $\epsilon^*$,
 that it is coercive on $Y$.

 We now prove the decay estimate \eqref{decay}. For this, we will construct
 a Green's function $G$ for the problem \eqref{betap1}-\eqref{betap3}, following
 the approach of \cite{Hofmann2007} and \cite{Choi2013a}.
 We fix $\rho > 0$ and let $G_\rho
= G_\rho(z,z')$ denote the weak solution to the problem \eqref{betap1}-\eqref{betap3}
with
$\alpha_i = \frac{1}{|\rho|^3}\chi_{B_{\rho}(z')}(z)$, $i = 1,2,3$, where
$B_\rho(z')$ denotes the ball of radius $\rho$ centered at $z'$
and $\chi$ is the cutoff function supported on this ball.
Following the
argument in section 4 of \cite{Hofmann2007},
one can prove that
\begin{align}
 ||G_\rho(z,\cdot)||_{W^{1,p}(B_{d_z(z)})} \leq C(d_y),
 \label{}
\end{align}
for some $p$ with $p \in (1, 3/2)$ and where $d_z$ denotes the distance
from $z$ to $\pa \D_t$. The constant here depends on $||h||_{C^1(\R^2)}$.
Taking a diagonal subsequence, for each $z$ we get a function $G(z,\cdot)
\in W^{1,p}(B_{d_z}(z))$
with $G_\rho(z,\cdot) \to G(z,\cdot)$ weakly in $W^{1,p}(B_{d_z}(z))$.
We would like to conclude the following two estimates:
\begin{equation}
 |G(z,z')| \leq C|z-z'|^{-1}, \quad
 |\na_{z} G(z,z')| \leq C|z-z'|^{-2},
 \label{}
\end{equation}
where $C = C(||h||_{W^{1,\infty}(\R^2)})$.
These estimates follows
as in Section 5 of \cite{Choi2013a} and Theorem 3.13 in
\cite{Kang2010}, provided that the system \eqref{betap1}-\eqref{betap3} satisfies
the condition ``(LH)'' in \cite{Kang2010}. However this follows from
Corollary 4.9 there provided that the system \eqref{betap1}-\eqref{betap3}
is sufficiently close to a diagonal system. Since we are assuming that
$||h||_{W^{4,\infty}(\R^2)}$ is small, this follows after straightening
the boundary.
We can now prove \eqref{decay}. We can assume $|z| \geq 1$. If
$|z'| \leq \frac{1}{2}|z|$, then $|z-z'| \geq \frac{1}{2} |z|$ so that:
\begin{equation}
 \big|\int_{\D_t} \na_z G(z,z') \alpha(z')\, dz'\big|
 \lesssim \int_{\D_t} \frac{1}{|z-z'|^2} |\alpha(z')|\, dz'
 \frac{1}{|z|^2} ||\alpha||_{L^1(\D_t)}.
 \label{}
\end{equation}
When $|z'| \geq \frac{1}{2} |z|$, we instead estimate::
\begin{equation}
 \big|\int_{\D_t} \na_z G(z,z') \alpha(z')\, dz'\big|
 \lesssim \int_{\D_t} \frac{1}{|z'|^2} |\alpha(z-z')|\, dz'
 \lesssim \int_{\D_t} \frac{1}{|z|^2} |\alpha(z-z')|\, dz'
 \lesssim \frac{1}{|z|^2} ||\alpha||_{L^1(\D_t)},
 \label{}
\end{equation}
as required.
\end{proof}
\begin{remark}
For the applications we have in mind, the assumption $||H||_{L^3(\pa \D_t)} \ll 1$
is not a serious restriction because we will actually have $||H||_{L^3(\pa \D_t)}
\leq \ve_0$. On the other hand, the assumption $||\nabla H||_{L^{3/2}(\pa \D_t)}
\ll 1$ only holds until $t \sim \ve_0^{-1/\delta} \sim \ve_0^{-N}$.
This condition is already forced on us by essentially
the result of Proposition \ref{uenlem},
`and so this is not a serious restriction for our purposes.
We note that it may be possible to remove this assumption
by arguing as in section 5.2 of \cite{Cheng2017}, but the arguments there are
somewhat involved.
\end{remark}




\end{document}